\documentclass[11pt]{gsm-l}

\usepackage{extsizes}

\usepackage{hyperref}

\usepackage{tikz, tikz-cd}
\usepackage{rotating}

\usepackage{enumitem}
\setlist[itemize]{topsep=0pt,itemsep=0pt}
\setlist[enumerate]{topsep=0pt,itemsep=0pt}

\usepackage{scalerel}
\usepackage{tkz-euclide}
\usetikzlibrary{arrows, arrows.meta, bending, decorations.pathmorphing, decorations.markings} 

\usepackage{auto-pst-pdf}

\usepackage{pict2e, amssymb,latexsym,amstext,amsmath,amsthm,epsfig,ifthen}
\usepackage{color}
\usepackage{blkarray}
\usepackage[capitalize, nameinlink, noabbrev, nosort]{cleveref}
\usepackage{youngtab}
\usepackage[boxsize=.35 em]{ytableau}

\usepackage[dvips,centering,includehead,width=12.61cm,height=22.1cm,paperheight=23.8cm,paperwidth=17cm]{geometry}

\immediate \write16 {This is `epsf.tex' v2.7.4 <14 February 2011>}%
\newread \epsffilein    
\newif \ifepsfatend     
\newif \ifepsfbbfound   
\newif \ifepsfdraft     
\newif \ifepsffileok    
\newif \ifepsfframe     
\newif \ifepsfshow      
\epsfshowtrue          
\newif \ifepsfshowfilename 
\newif \ifepsfverbose   
\newdimen \epsfframemargin 
\newdimen \epsfframethickness 
\newdimen \epsfrsize    
\newdimen \epsftmp      
\newdimen \epsftsize    
\newdimen \epsfxsize    
\newdimen \epsfysize    
\newdimen \pspoints     
\pspoints = 1bp        
\epsfxsize = 0pt       
\epsfysize = 0pt       
\epsfframemargin = 0pt 
\epsfframethickness = 0.4pt 
\def \epsfbox #1{%
    \global \def \epsfllx {72}%
    \global \def \epsflly {72}%
    \global \def \epsfurx {540}%
    \global \def \epsfury {720}%
    \def \lbracket {[}%
    \def \testit {#1}%
    \ifx \testit \lbracket
        \let \next = \epsfgetlitbb
    \else
        \let \next = \epsfnormal
    \fi
    \next{#1}%
}%
%
%
\def \epsfgetlitbb #1#2 #3 #4 #5]#6{%
   \epsfgrab #2 #3 #4 #5 .\\%
   \epsfsetsize
   \epsfstatus{#6}%
   \epsfsetgraph{#6}%
}%
\def \epsfnormal #1{%
    \epsfgetbb{#1}%
    \epsfsetgraph{#1}%
}%
\def \epsfgetbb #1{%
%
%
    \openin\epsffilein=#1
    \immediate \write16 {(#1}%
    \ifeof \epsffilein
        \errmessage{Could not open file #1, ignoring it}%
    \else                       
        {
            \chardef \other = 12%
            \def \do ##1{\catcode`##1=\other}%
            \dospecials
            \catcode `\ = 10%
            \epsffileoktrue        
            \epsfatendfalse        
            \loop                  
                \read \epsffilein to \epsffileline
                \ifeof \epsffilein 
                \epsffileokfalse   
            \else                  
                \expandafter \epsfaux \epsffileline :. \\%
            \fi
            \ifepsffileok
            \repeat
            \ifepsfbbfound
            \else
                \ifepsfverbose
                    \immediate \write16 {No BoundingBox comment found in %
                                         file #1; using defaults}%
                \fi
            \fi
        }
        \closein\epsffilein
    \fi                         
    \epsfsetsize                
    \epsfstatus{#1}%
    \immediate \write16 {)}%
}%
%
%
\def \epsfclipoff {\def \epsfclipstring {\ifepsfdraft \space clip\fi}}%
\epsfclipoff 
%
%
\def \epsfspecial #1{%
     \epsftmp=10\epsfxsize
     \divide \epsftmp by \pspoints
     \ifnum \epsfrsize = 0%
       \relax
       \includegraphics{\ifepsfdraft}%
     \else
       \epsfrsize=10\epsfysize
       \divide \epsfrsize by \pspoints
       \includegraphics{\ifepsfdraft}%
     \fi
}%
%
\def \epsfframe #1%
{%
  \ifx \documentstyle \epsfundefined
    \relax
  \else
    %
    %
  \fi
  \setbox0 = \hbox{#1}%
  \dimen0 = \wd0                                
  \advance \dimen0 by 2\epsfframemargin         
  \advance \dimen0 by 2\epsfframethickness      
  \relax
  \hbox{%
    \vbox
    {%
      \hrule height \epsfframethickness depth 0pt
      \hbox to \dimen0
      {%
	\hss
	\vrule width \epsfframethickness
	\kern \epsfframemargin
	\vbox {\kern \epsfframemargin \box0 \kern \epsfframemargin }%
	\kern \epsfframemargin
	\vrule width \epsfframethickness
	\hss
      }
      \hrule height 0pt depth \epsfframethickness
    }
  }
  \relax
}%
\def \epsfsetgraph #1%
{%
   %
   %
   \ifvmode \leavevmode \fi
   \relax
   \hbox{
     \ifepsfframe \expandafter \epsfframe \fi
     {\vbox to\epsfysize
     {%
        \ifepsfshow
            \vfil
            \hbox to \epsfxsize{\epsfspecial{#1}\hfil}%
        \else
            \vfil
            \hbox to\epsfxsize{%
               \hss
               \ifepsfshowfilename
               {%
                  \epsfframemargin=3pt 
                  \epsfframe{{\tt #1}}%
               }%
               \fi
               \hss
            }%
            \vfil
        \fi
     }%
   }}%
   \relax
   %
   %
   \global \epsfxsize = 0pt
   \global \epsfysize = 0pt
}%
%
%
\def \epsfsetsize
{%
   \epsfrsize = \epsfury \pspoints
   \advance \epsfrsize by -\epsflly \pspoints
   \epsftsize = \epsfurx \pspoints
   \advance \epsftsize by -\epsfllx \pspoints
%
%
   \epsfxsize = \epsfsize{\epsftsize}{\epsfrsize}%
   \ifnum \epsfxsize = 0
      \ifnum \epsfysize = 0
	\epsfxsize = \epsftsize
        \epsfysize = \epsfrsize
	\epsfrsize = 0pt
%
%
      \else
	\epsftmp = \epsftsize
        \divide \epsftmp by \epsfrsize
	\epsfxsize = \epsfysize
        \multiply \epsfxsize by \epsftmp
	\multiply \epsftmp by \epsfrsize
        \advance \epsftsize by -\epsftmp
	\epsftmp = \epsfysize
	\loop
        \advance \epsftsize by \epsftsize
        \divide \epsftmp by 2
	\ifnum \epsftmp > 0
	   \ifnum \epsftsize < \epsfrsize
           \else
	      \advance \epsftsize -\epsfrsize
              \advance \epsfxsize \epsftmp
           \fi
	\repeat
	\epsfrsize = 0pt
      \fi
   \else
     \ifnum \epsfysize = 0
       \epsftmp = \epsfrsize
       \divide \epsftmp by \epsftsize
       \epsfysize = \epsfxsize
       \multiply \epsfysize by \epsftmp
       \multiply \epsftmp by \epsftsize
       \advance \epsfrsize by -\epsftmp
       \epsftmp = \epsfxsize
       \loop
	 \advance \epsfrsize by \epsfrsize
	 \divide \epsftmp by 2
       \ifnum \epsftmp > 0
	  \ifnum \epsfrsize < \epsftsize
          \else
	     \advance \epsfrsize by -\epsftsize
             \advance \epsfysize by \epsftmp
          \fi
       \repeat
       \epsfrsize = 0pt
     \else
       \epsfrsize = \epsfysize
     \fi
   \fi
}%
%
%
\def \epsfstatus #1{
   \ifepsfverbose
     \immediate \write16 {#1: BoundingBox:
			  llx = \epsfllx \space lly = \epsflly \space
			  urx = \epsfurx \space ury = \epsfury \space}%
     \immediate \write16 {#1: scaled width = \the\epsfxsize \space
			  scaled height = \the\epsfysize}%
   \fi
}%
%
%
{\catcode`\%=12 \global \let \epsfpercent=
\global \def \epsfatend{(atend)}%
%
%
%
%
%
%
%
\long \def \epsfaux#1#2:#3\\%
{%
   \def \testit {#2}
   \ifx#1\epsfpercent           
       \ifx \testit \epsfbblit  
            \epsfgrab #3 . . . \\%
            \ifx \epsfllx\epsfatend 
                \global \epsfatendtrue
            \else               
                \ifepsfatend    
                \else           
                    \epsffileokfalse
                \fi
                \global \epsfbbfoundtrue
            \fi
       \fi
   \fi
}%
%
%
\def \epsfempty {}%
\def \epsfgrab #1 #2 #3 #4 #5\\{%
   \global \def \epsfllx {#1}\ifx \epsfllx\epsfempty
      \epsfgrab #2 #3 #4 #5 .\\\else
   \global \def \epsflly {#2}%
   \global \def \epsfurx {#3}\global \def \epsfury {#4}\fi
}%
%
%
\def \epsfsize #1#2{\epsfxsize}%
%
%

\includeonly{
tp-examples,
combinatorics-of-mutations, 
geometric-type,
bibliography
}



\newtheorem{theorem}{Theorem}[section]
\newtheorem{proposition}[theorem]{Proposition}

\newtheorem{corollary}[theorem]{Corollary}
\newtheorem{lemma}[theorem]{Lemma}

\theoremstyle{definition}

\newtheorem{remark}[theorem]{Remark}
\newtheorem{example}[theorem]{Example}
\newtheorem{definition}[theorem]{Definition}
\newtheorem{problem}[theorem]{Problem}

\newtheorem{exercise}[theorem]{Exercise}

\numberwithin{section}{chapter}
\numberwithin{equation}{section}
\numberwithin{figure}{chapter}

\def\endproof{\hfill$\square$\medskip}

\def\ZZ{\mathbb{Z}}

\def\CC{\mathbb{C}}

\def\RR{\mathbb{R}}
\def\QQ{\mathbb{Q}}
\def\TT{\mathbb{T}}

\def\AA{\mathcal{A}}
\def\FFcal{\mathcal{F}}

\def\xx{\mathbf{x}}
\def\yy{\mathbf{y}}
\def\Xcal{\mathcal{X}}
\def\Gr{\operatorname{Gr}}
\def\SL{\operatorname{SL}}

\def\Qsf{\QQ_{\,\rm sf}}

\newcommand{\light}[1]{{#1}}
\newcommand{\dark}[1]{{#1}}

\newcommand{\overunder}[2]{
\!\begin{array}{c}
\scriptstyle{#1}\\[-.1in]
-\!\!\!-\!\!\!-\\[-.1in]
\scriptstyle{#2}
\end{array}
\!
}

\newcommand{\smalloverunder}[2]{
\!\!\begin{array}{c}
\scriptstyle{#1}\\[-.1in]
-\!\!\!-\\[-.1in]
\scriptstyle{#2}
\end{array}
\!\!
}

\def\Trop{\operatorname{Trop}}

\newcommand{\notch}{\scriptstyle\bowtie}

\newsavebox{\digon}
\setlength{\unitlength}{4pt} 
\savebox{\digon}(10,10)[bl]{
\qbezier(5,0)(0,5)(5,10)
\qbezier(5,0)(10,5)(5,10)
\put(5,0){\circle*{1}} 
\put(5,5){\circle*{1}} 
\put(5,10){\circle*{1}} 
}

\newsavebox{\lowbar}
\savebox{\lowbar}(10,10)[bl]{
\put(5,0){\line(0,1){5}}
}

\newsavebox{\lowtag}
\setlength{\unitlength}{4pt} 
\savebox{\lowtag}(10,10)[bl]{
\put(5,3.5){\makebox(0,0){$\notch$}}
}

\newsavebox{\highbar}
\setlength{\unitlength}{4pt} 
\savebox{\highbar}(10,10)[bl]{
\put(5,5){\line(0,1){5}}
}

\newsavebox{\hightag}
\setlength{\unitlength}{4pt} 
\savebox{\hightag}(10,10)[bl]{
\put(5,6.5){\makebox(0,0){$\notch$}}
}



\begin{document}


\frontmatter

\title{
\textsf{\Huge \hbox{Introduction to Cluster Algebras}}\\[.1in]
\textsf{\Huge Chapters 1--3} \\[.2in]
{\rm\textsf{\LARGE (preliminary version)}}
}

\author{\Large \textsc{Sergey Fomin}}

\author{\Large \textsc{Lauren Williams}}

\author{\Large \textsc{Andrei Zelevinsky}}







\maketitle

\noindent
\textbf{\Huge Preface}

\vspace{1in}


\noindent

\makeatletter
\newcommand{\manuallabel}[2]{\phantomsection\def\@currentlabel{#2}\label{#1}}
\makeatother

\manuallabel{ch:finitetype}{5}
\manuallabel{ch:rings}{6}
\manuallabel{sec:rings-baseaffine}{6.5}
\manuallabel{sec:rings-matrices}{6.6}
\manuallabel{sec:plucker-rings}{6.7}
\manuallabel{sec:generators+relations}{6.8}
\manuallabel{ex:Gr36}{6.8.13}
\manuallabel{ch:plabic}{7}
\manuallabel{ch:Grassmannians}{8}
\manuallabel{ch:surfaces}{10}
\manuallabel{ch:dynamical}{11}
\manuallabel{ch:general-cluster-algebras}{12}


This is a preliminary draft of Chapters 1--3 of our forthcoming textbook
\textsl{Introduction to cluster algebras}, 
joint with Andrei Zelevinsky (1953--2013). 
Other chapters have been posted as 
\begin{itemize}[leftmargin=.2in]
	\item \href{https://arxiv.org/abs/1707.07190}{(Chapters~4--5)}, 
	\item \href{https://arxiv.org/abs/2008.09189}{(Chapter~6)}, and
	\item \href{https://arxiv.org/abs/2106.02160}{(Chapter~7)}. 
\end{itemize}
We~expect to post additional chapters in the not so distant future. 

\medskip

This book grew from the ten lectures given by Andrei at the
NSF CBMS conference on Cluster Algebras and Applications at North
Carolina State University 
in June 2006. 
The material of his lectures is much expanded but we still follow
the original plan aimed at giving an accessible introduction to the subject
for a general mathematical audience.

Since its inception in \cite{ca1}, the theory of cluster algebras has
been actively developed in many directions. 
We do not attempt to give a comprehensive treatment of the many
connections and applications of this young theory.
Our choice of topics reflects our personal taste; 
much of the book is based on the work done by Andrei and ourselves.

\medskip


Comments and suggestions are welcome. 

\bigskip

\rightline{Sergey Fomin}
\rightline{Lauren Williams}

\vfill

\noindent
Partially supported by the NSF grants DMS-1049513, 
DMS-1361789, DMS-1664722, DMS-215299, and DMS-2348501. 
\medskip

\noindent
2020 \emph{Mathematics Subject Classification.} Primary 13F60.

\bigskip

\noindent
\copyright \ 2016--2025 by 
Sergey Fomin, Lauren Williams, and Andrei Zelevinsky

\tableofcontents

\newpage

\thispagestyle{empty}

\section*{Acknowledgments}

We are grateful to Ari Krishna, Gregory Li, 
Stella Li, Annabel Ma, Jacob Paltrowitz, and Katherine Tung for a number
of valuable suggestions on the earlier version of Chapters 1--3.

\mainmatter



\chapter{\hbox{Total positivity}}
\label{ch:tp-examples}

Total positivity, along with G.~Lusztig's theory of
canonical bases, was one of the main motivations
for the development of cluster algebras. 
In this chapter, we present the basic notions of total positivity,
focusing on three important examples (to be re-examined again in
the future):
square matrices, Grassmannians, and basic affine spaces. 
As our main goal here is to provide motivation rather than develop a
rigorous theory, the exposition is somewhat informal.
Additional details and
  references can be found in~\cite{sf-icm}. 

\section{Totally positive matrices} 
\label{sec:TP}

An $n \times n$ matrix with real entries is called  
\emph{totally positive} 
\index{matrix!totally positive}
(TP for short) if all its minors---that is, determinants of square
submatrices---are positive. 
A real matrix is called \emph{totally nonnegative} (or~TNN) if all
its minors are nonnegative. 
The first systematic study of these classes of matrices was conducted
in the 1930s by F.~Gantmacher and M.~Krein~\cite{gantmacher-krein},  
following the pioneering work of I.~Schoenberg~\cite{schoenberg}. 
In particular, they 
showed that the eigenvalues of an $n\times n$ totally positive matrix
are real, positive, and distinct. 

Total positivity is a remarkably widespread phenomenon:
TP and TNN matrices play an important role, \emph{inter alia}, 
in classical mechanics, 
probability, 
discrete potential theory, 
asymptotic representation theory, 
algebraic and enumerative combinatorics, 
and the theory of integrable systems. 

The \emph{Binet-Cauchy Theorem} implies that TP 
(resp., TNN) matrices in $G=\operatorname{SL}_n$ are closed under
multiplication; they thus form a
  multiplicative semigroup, denoted by~$G_{>0}$ (resp.,~$G_{\ge 0}$). 
The following ``splitting lemma'' due to C.~Cryer \cite{cryer,
  cryer76} shows that the study of $G_{\ge 0}$ can be reduced
to the investigation of its subsemigroup  
of upper-triangular unipotent TNN matrices, i.e. upper-triangular
TNN matrices with $1$'s on the diagonal: 
\begin{lemma}
\label{lem:cryer}
A matrix $z\in \operatorname{SL}_n$ 
is totally
nonnegative if and only if it has a 
Gaussian decomposition
\[
z=\left[\!\!\begin{array}{ccccc}
1      & 0      & 0      & \cdots & 0      \\
*      & 1      & 0      & \cdots & 0      \\
*      & *      & 1      & \cdots & 0      \\
\vdots & \vdots & \vdots & \ddots & \vdots \\
*      & *      & *      & \cdots & 1 \\
\end{array}\!\right]
\left[\!\!\begin{array}{ccccc}
*      & 0      & 0      & \cdots & 0      \\
0      & *      & 0      & \cdots & 0      \\
0      & 0      & *      & \cdots & 0      \\
\vdots & \vdots & \vdots & \ddots & \vdots \\
0      & 0      & 0      & \cdots & * \\
\end{array}\!\right]
\left[\!\!\begin{array}{ccccc}
1      & *      & *      & \cdots & *      \\
0      & 1      & *      & \cdots & *      \\
0      & 0      & 1      & \cdots & *      \\
\vdots & \vdots & \vdots & \ddots & \vdots \\
0      & 0      & 0      & \cdots & 1 \\
\end{array}\!\right]
\]
in which all three factors 
(lower-triangular unipotent, diagonal, and upper-tri\-an\-gu\-lar unipotent)
are totally nonnegative. 
\end{lemma}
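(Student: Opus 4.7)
The ``if'' direction is immediate from the Binet-Cauchy theorem already invoked before the lemma: TNN matrices form a multiplicative semigroup, so $z=LDU$ is TNN whenever all three factors are.

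For ``only if'', the first step in my plan is to show that every leading principal minor $\Delta_{[1,k],[1,k]}(z)$ is strictly positive, so that $z$ admits a unique Gaussian decomposition $z=LDU$. For $k=1$, the $2\times 2$ minor inequalities $-z_{1j}z_{i1}\ge 0$ (forced by $z_{11}=0$ together with TNN) combined with nonnegativity of entries make either the first row or the first column of $z$ vanish, contradicting $\det(z)=1$; for larger $k$, Sylvester's determinant identity gives $\Delta_{[1,k],[1,k]}(z)=z_{11}\cdot\Delta_{[2,k],[2,k]}(S)$ where $S$ is the Schur complement (itself invertible TNN by the same identity applied to general minors), allowing an induction on $n$ to close the argument. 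The diagonal factor $D$ then has positive entries $d_k=\Delta_{[1,k],[1,k]}(z)/\Delta_{[1,k-1],[1,k-1]}(z)$, hence is TNN.

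The core task is to show $L$ and $U$ are TNN. Restricting the identity $z=LDU$ to its first $k$ columns (and using that the relevant top block of $U$ is unipotent of determinant $1$) yields the key identity
\[
\Delta_{I,[1,k]}(L)\;=\;\frac{\Delta_{I,[1,k]}(z)}{\Delta_{[1,k],[1,k]}(z)}\;\ge\;0
\]
for every $k$-subset $I$, and symmetrically $\Delta_{[1,k],J}(U)\ge 0$. So all initial-column minors of $L$ and initial-row minors of $U$ are nonnegative; in particular the entries of $L$ and $U$ are nonnegative.

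The main obstacle is upgrading this ``flag positivity'' of initial minors to full TNN. My plan is to handle the totally positive case first: when $z$ is TP, the displayed identity gives strict positivity of all initial minors of $L$, and a general minor $\Delta_{I,J}(L)$ can be expressed in terms of initial minors via a Desnanot-Jacobi / Sylvester-type three-term relation, proceeding by induction on the discrepancy between $J$ and the initial segment $[1,|I|]$. The general TNN case then follows by approximation: TP matrices are dense in the TNN part of $\SL_n$, and the LDU decomposition varies continuously wherever the leading principal minors are positive, so the TNN property of the three factors passes to the limit.
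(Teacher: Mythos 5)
The paper states this lemma without proof, citing Cryer, so there is no in-text argument to compare against; I am therefore assessing your proposal on its own terms. Most of your skeleton is correct and well justified: the ``if'' direction via the Cauchy--Binet theorem; the positivity of all leading principal minors of an invertible TNN matrix (both your $k=1$ argument and the Schur-complement induction, which rests on Sylvester's identity $\Delta_{A,B}(S)=\Delta_{\{1\}\cup A,\{1\}\cup B}(z)/z_{11}$, are sound); the identity $\Delta_{I,[1,k]}(z)=\Delta_{I,[1,k]}(L)\cdot\Delta_{[1,k],[1,k]}(z)$, which indeed follows from Cauchy--Binet applied to the factorization of the first $k$ columns of $z$ as the first $k$ columns of $L$ times the top-left $k\times k$ block of $DU$; and the reduction of the TNN case to the TP case via Whitney's density theorem together with continuity of the $LDU$ factors where the leading principal minors remain positive.

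The genuine gap is the step you yourself identify as the main obstacle: passing from ``all initial-column minors $\Delta_{I,[1,k]}(L)$ are positive'' to ``$L$ is totally nonnegative.'' This is not a routine upgrade --- it is essentially the whole content of the lemma in the triangular case, a unipotent-triangular analogue of the criteria in Sections \ref{sec:baseaffine} and \ref{sec:matrices}. Your proposed induction ``on the discrepancy between $J$ and $[1,|I|]$'' via Desnanot--Jacobi or three-term relations is underspecified and, read literally, does not close: a three-term Grassmann--Pl\"ucker relation with fixed row set $I$ relates $\Delta_{I,J}$ only to other maximal minors of the $k\times n$ matrix $L[I,\cdot]$, and knowing the single Pl\"ucker coordinate $\Delta_{I,[1,k]}>0$ does not control the signs of the others, so one is forced to mix row and column sets. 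Worse, for a triangular matrix many minors vanish identically (namely $\Delta_{I,J}(L)=0$ whenever $i_t<j_t$ for some $t$), so the minors appearing as pivots or denominators in such identities can be identically zero, and the induction requires a careful combinatorial choice of which identity to apply at each stage. Closing this gap needs real machinery: either the subtraction-free-expression technology of wiring diagrams (showing that every non-identically-vanishing minor of a unipotent triangular matrix is a subtraction-free expression in the initial minors), or Neville elimination, or the Loewner--Whitney factorization into elementary Jacobi matrices combined with Lindstr\"om's lemma. As written, your proposal records the correct reductions but leaves the hard combinatorial kernel of the lemma unproved.
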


There is also a counter\-part of this statement for totally positive
matrices. 

The \emph{Loewner-Whitney Theorem} \cite{loewner, whitney}
  identifies the infinitesimal generators of $G_{\ge 0}$ 
as the \emph{Chevalley generators} of the corresponding Lie algebra. 
In pedestrian terms, each  
$n\times n$ TNN matrix can be written as a product of
matrices of the form $x_i(t)$, $y_i(t)$, and $z_i(t)$, where 
each parameter $t$ is positive, 
the matrices $x_i(t)$ are defined by 
\[
x_i(t)=
\left[\begin{array}{cccccc}
1      & \cdots & 0      & 0      & \cdots & 0      \\
\vdots & \ddots & \vdots & \vdots & \ddots & \vdots \\
0      & \cdots & 1      & t      & \cdots & 0      \\
0      & \cdots & 0      & 1      & \cdots & 0      \\
\vdots & \ddots & \vdots & \vdots & \ddots & \vdots \\
0      & \cdots & 0      & 0      & \cdots & 1 \\
\end{array}\right],
\]
where the $t$ is in row $i$ of $x_i(t)$,
$y_i(t)$ is the transpose of $x_i(t)$, and $z_i(t)$ is the 
diagonal matrix with diagonal entries $(1,\dots,1, t, t^{-1}, 1, \dots, 1)$
where $t$ and $t^{-1}$ are in positions $i$ and $i+1$.
This led G.~Lusztig \cite{lusztig} to the idea of extending the notion
of total positivity to 
other semisimple Lie groups~$G$, by defining the set~$G_{\ge 0}$ of
TNN elements in~$G$ as the semigroup generated by the
Chevalley generators. 
Lusztig proved that $G_{\ge 0}$ is a semialgebraic subset of~$G$,
and described it by inequalities of the form $\Delta(x)\ge 0$ where
$\Delta$ lies in the appropriate \emph{dual canonical basis};
see \cite[Section~5]{lusztig-survey}. 
A~simpler description in terms of  \emph{generalized~minors}~\cite{fz-dbc}
was given in~\cite{fz-osc}. 

A yet more general (if informal) concept is one of a 
\emph{totally positive} (or \emph{totally nonnegative}) (sub)variety
of a given complex algebraic variety~$Z$. 
Vaguely, the idea is this: 
suppose that $Z$ comes equipped with a family $\mathbf\Delta$ of
``important'' regular functions on~$Z$.
The corresponding TP (resp., TNN) 
variety $Z_{>0}$ (resp., $Z_{\ge 0}$)
is the set of points 
at which all of these functions
take positive (resp., nonnegative) real values: 
\[
Z_{>0}=\{z\in Z:  \Delta(z)>0\ \text{for all $\Delta\in\mathbf\Delta$}\}. 
\]
If $Z$ is the affine space of $n\times n$ matrices (or 
$Z=\operatorname{GL}_n(\CC)$, or $Z=\operatorname{SL}_n(\CC)$), 
and $\mathbf\Delta$ is the set of all
minors, then we recover the classical notions. 
One can restrict this construction to matrices lying in a given
stratum of a Bruhat decomposition, or in a given \emph{double Bruhat
  cell}~\cite{fz-dbc, lusztig}. 
Another important example is the \emph{totally positive (resp.,
  nonnegative) Grassmannian}
consisting of the points in a usual Grassmann manifold where all 
Pl\"ucker coordinates can be chosen to be positive (resp.,
nonnegative). 
This construction can be extended to arbitrary partial flag manifolds,
and more generally to homogeneous spaces $G/P$ associated to
semisimple complex Lie groups. 

We note that in each of the examples alluded to above, the notion of positivity depends on 
a particular choice of a coordinate system: a basis in a vector
space allows us to view linear transformations as matrices,
determines a system of Pl\"ucker coordinates, etc. 

Among many questions which one may ask about totally
positive/non\-negative varieties $Z_{>0}$ and~$Z_{\ge 0}\,$,  
let us restrict our attention to the problem of \emph{efficient TP
  testing}:
how many inequalities (and which ones) does one need to check in order to
ascertain that a given point in~$Z$ is totally positive? 
In~particular, are there efficient ways for testing a very large matrix for total 
positivity? 
(It is not hard to see that an $n \times n$ matrix
has altogether $\binom{2n}{n} - 1$ minors,
a number which grows exponentially in~$n$.) 
Examples~\ref{ex:SL_2} and~\ref{ex:U-in-SL_3} provide a glimpse into the
tricks used to construct efficient TP
criteria. 

\begin{example}
\label{ex:SL_2}
A $2 \times 2$ matrix $z = \left[\begin{smallmatrix}{a}&{b}\\{c}& {d}\end{smallmatrix}\right]$ has
five minors: the matrix entries $a,b,c,d$ and the determinant $\Delta = \det(z)=ad - bc$.
Now the identity
\begin{equation}
\label{eq:2by2-3term-relation}
ad = \Delta + bc 
\end{equation}
shows that we do not have to check all five minors:
if $a$, $b$, $c$, and~$\Delta$ are positive, then so is 
$d = (\Delta + bc)/a$.
(Alternatively, test the minors $d, b, c, \Delta$.) 

\end{example}

\begin{example}
\label{ex:U-in-SL_3}
Now let $n=3$.
To keep it simple (cf.\ also Lemma~\ref{lem:cryer}), 
let us consider the subgroup 
of unipotent upper triangular matrices
\begin{equation*}
\label{eq:3by3}
z= \left[\begin{smallmatrix}
1  & a & b \\[2pt]
0  & 1 & c \\[2pt]
0  & 0 & 1
\end{smallmatrix}\right] \in \operatorname{SL}_3.
\end{equation*}
Since some of the entries of $z$ are equal to $0$, we modify the
definition of  total positivity by requiring that 
$\Delta(z) > 0$ for each minor $\Delta$
which does not identically vanish on the subgroup.
This leaves us with four minors to check for positivity: the
matrix entries $a, b, c$, and the $2 \times 2$ minor
$P = ac-b$.
Again we can reduce the number of needed checks from $4$ to $3$
using the~identity 
\begin{equation}
\label{eq:3by3-unipotent-3term-relation}
ac = P + b \, .
\end{equation}
Thus each of the sets $\{a,b,P\}$ and $\{b,c,P\}$ provides
an efficient TP test. 
\end{example}

We note that in each of the above examples, the number of checks
involved in each positivity test 
was equal to the dimension of the variety at
hand. It seems implausible that one could do better.

\section{The Grassmannian of $2$-planes in $m$-space}
\label{sec:Ptolemy}

Before developing efficient total positivity tests for square
matrices, we shall discuss the somewhat simpler case of Grassmannians
of $2$-planes. 

Recall that the complex \emph{Grassmann manifold} (the \emph{Grassmannian}
for short), 
denoted $\Gr_{k,m}=\Gr_{k,m}(\mathbb{C})$, 
is the variety of all  
\hbox{$k$-dimensional} subspaces in an $m$-dimensional complex vector
space. 
Let us fix a basis in this space, thereby identifying it with~$\mathbb{C}^m$. 
Now any $k \times m$ matrix~$z$ of rank~$k$ defines a point
$[z]\in\Gr_{k,m}$,
the \emph{row span} of~$z$. 

Given a $k$-element subset 
$J\subset \{1,\dots,m\}$, 
the \emph{Pl\"ucker coordinate} $P_J(z)$ (evaluated at a matrix~$z$ as
above) is, by definition, the $k \times k$ minor
of $z$ determined by the column set~$J$.
The collection $(P_J(z))_{|J|=k}$ 
only depends on the row span $[z]$ (up to common rescaling),
and in fact provides an embedding
of  $\Gr_{k,m}$ into the complex projective space of dimension
$\binom{m}{k}-1$, called the \emph{Pl\"ucker embedding}.

The \emph{Pl\"ucker ring} $R_{k,m}$ is the ring generated by the 
 Pl\"ucker coordinates $P_J$ for all $k$-element subsets $J$ of $\{1,\dots,m\}$.
This is the homogeneous coordinate ring of~$\Gr_{k,m}$ with respect to the
Pl\"ucker embedding. 
The ideal of relations that the generators $P_{J}$ satisfy is
generated by certain quadratic  relations called
\emph{Grassmann-Pl\"ucker relations}.

The Pl\"ucker coordinates are used to define the totally 
positive points of the Grassmannian, as follows.

\begin{definition}
The \emph{totally positive Grassmannian} $\Gr_{k,m}^+$ 
is the subset of $\Gr_{k,m}$ consisting of points 
whose Pl\"ucker coordinates can be chosen so that all of them are
positive real numbers. 
(Recall that Pl\"ucker coordinates are defined up to a common
rescaling.) 
\end{definition}

In simple terms, an element $[z]\in \Gr_{k,m}$ 
defined by a full-rank $k\times m$ matrix~$z$ 
(without loss of generality, $z$~can be assumed to have real entries)
is TP if all the maximal (i.e., $k\times k$) minors $P_J(z)$
are of the same sign. 
We can begin by checking one particular value $P_J(z)$, 
and if it happens to be negative, replace the top row of $z$ by its negative.
Thus the problem at hand can be restated as follows:
find an efficient method for checking whether all 
maximal minors of a given $k\times m$ matrix are positive. 
A~brute force test requires $\binom{m}{k}$ checks.
Can this number be reduced?

In this section, we systematically study 
the case $k=2$ (Grassmannians of $2$-planes). 
Arbitrary Grassmannians $\Gr_{k,m}$ will be treated in Chapter~\ref{ch:Grassmannians}.

In the case of the Grassmannian $\Gr_{2,m}$, 
there are $\binom{m}{2}$ Pl\"ucker coordinates $P_{ij}=P_{\{i,j\}}$
labeled by pairs of integers $1\le i<j\le m$. 
It turns out however that in order to verify that all the 
$2\times 2$ minors $P_{ij}(z)$ of a given $2\times m$ matrix~$z$ are positive, 
it suffices to check the positivity of only $2m-3$ special minors.
(Note that $2m-3
$ is the dimension of the affine cone over~$\Gr_{2,m}$.)

\begin{exercise}
Show that $2 \times 2$ minors of a $2 \times m$ matrix
(equivalently, the Pl\"ucker coordinates~$P_{ij}$)
satisfy 
the
\emph{three-term Grass\-mann-Pl\"ucker relations} 
\begin{equation}
\label{eq:grassmann-plucker-3term}
P_{ik}\,P_{jl} = P_{ij}\,P_{kl} + P_{il}\,P_{jk}\qquad (1 \leq i < j <
k < l \leq m). 
\end{equation}
\end{exercise}

We are going to construct a family of ``optimal" tests for 
total positivity in $\Gr_{2,m}$ using 
triangulations of an $m$-gon.
Consider a convex $m$-gon~$\mathbf{P}_{m}$ with
its vertices labeled clockwise.
We associate the Pl\"ucker coordinate $P_{ij}$ with the chord
(i.e., a side or a diagonal of~$\mathbf{P}_{m}$)
whose endpoints are $i$ and~$j$. 

Now, let $T$ be a \emph{triangulation} of $\mathbf{P}_m$ 
by pairwise noncrossing diagonals.
(Common endpoints are allowed.) 
We view~$T$ as a maximal collection of 
noncrossing chords; as such, it consists of 
$m$ sides and $m-3$ diagonals, giving rise to a collection 
$\mathbf{\tilde x}(T)$ of $2m-3$ Pl\"ucker coordinates, which we call an
\emph{extended cluster}.  The Pl\"ucker coordinates
corresponding to the sides of $\mathbf{P}_m$ are called 
\emph{frozen variables}. 
They are present in every extended cluster~$\mathbf{\tilde x}(T)$, 
hence the term ``frozen;'' 
an alternative terminology is \emph{coefficient variables}. 
The remaining $m-3$ Pl\"ucker coordinates corresponding
to diagonals of $\mathbf{P}_m$ are called \emph{cluster variables};
they form a \emph{cluster}.\linebreak[3]
Thus each extended cluster consists of $m-3$ cluster variables
and $m$ frozen variables.
We note that these $2m\!-\!3$ quantities are algebraically independent. 
See Figure~\ref{fig:octagon}. 

\begin{figure}[ht] 
\begin{center} 
\vspace{-5pt}
\setlength{\unitlength}{2.4pt} 
\begin{picture}(60,65)(0,-3) 
\thicklines 
  \multiput(0,20)(60,0){2}{\line(0,1){20}} 
  \multiput(20,0)(0,60){2}{\line(1,0){20}} 
  \multiput(0,40)(40,-40){2}{\line(1,1){20}} 
  \multiput(20,0)(40,40){2}{\line(-1,1){20}} 
 
  \multiput(20,0)(20,0){2}{\circle*{1}} 
  \multiput(20,60)(20,0){2}{\circle*{1}} 
  \multiput(0,20)(0,20){2}{\circle*{1}} 
  \multiput(60,20)(0,20){2}{\circle*{1}} 
 
\thinlines 
\put(40,0){\line(1,2){20}} 
\put(0,40){\line(1,0){60}} 
\put(0,20){\line(2,-1){40}} 
\put(0,40){\line(1,-1){40}} 
\put(40,60){\line(-2,-1){40}} 
 
\put(15,16){\makebox(0,0){$P_{46}$}} 
\put(23,22){\makebox(0,0){$P_{47}$}} 
\put(47,22){\makebox(0,0){$P_{24}$}} 
\put(34,42.5){\makebox(0,0){$P_{27}$}} 
\put(23,48){\makebox(0,0){$P_{17}$}} 
 
\put(40,-3){\makebox(0,0){$4$}} 
\put(63,20){\makebox(0,0){$3$}} 
\put(63,40){\makebox(0,0){$2$}} 
\put(40,63){\makebox(0,0){$1$}} 
\put(20,63){\makebox(0,0){$8$}} 
\put(-3,40){\makebox(0,0){$7$}} 
\put(-3,20){\makebox(0,0){$6$}} 
\put(20,-3){\makebox(0,0){$5$}}

\end{picture} 
\vspace{-10pt}
\end{center} 
\caption{A triangulation $T$ of an octagon~$\mathbf{P}_8\,$. 
The extended cluster $\mathbf{\tilde x}(T)$ consists of the cluster variables $P_{17},
P_{24}, P_{27}, P_{46}, P_{47}$ and the frozen variables
$P_{12},P_{23},\dots,P_{78}, P_{18}\,$.} 
\label{fig:octagon} 
\end{figure} 

\begin{theorem}
\label{th:Grassmannian-TP-criterion}
Each Pl\"ucker coordinate $P_{ij}$ 
can be written as a sub\-trac\-tion-free 
rational expression in the 
elements of
a given extended cluster~$\mathbf{\tilde x}(T)\,$. 
Thus, if the $2m-3$ Pl\"ucker coordinates $P_{ij}\in \mathbf{\tilde x}(T)$
evaluate positively at a given $2\times m$ matrix~$z$,
then all $2\times 2$ minors of $z$ are positive. 
\end{theorem}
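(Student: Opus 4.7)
The plan is to prove the subtraction-free expressibility claim by induction on the number of diagonals of~$T$ that are crossed by the chord associated to $P_{ij}$; the second sentence of the theorem then follows immediately, since any subtraction-free rational expression evaluates positively when all its variables do.

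For the base case, a chord $\alpha=(i,j)$ crossing no diagonal of~$T$ is either a side of $\mathbf{P}_m$ or a diagonal lying in~$T$, so $P_{ij}$ is already an element of $\mathbf{\tilde x}(T)$. For the inductive step, suppose $\alpha=(a,b)$ crosses at least one diagonal of~$T$. Let $\tau$ be the triangle of~$T$ incident to the vertex~$a$ that $\alpha$ enters first as we travel from $a$ to~$b$; write the vertices of $\tau$ as $a, c, d$ with $(c,d)$ the side opposite~$a$. Since $\alpha$ leaves $\tau$ through a side, and the two sides incident to~$a$ do not cross $\alpha$, the chord $\alpha$ must cross $(c,d)$. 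The four vertices $a, c, b, d$ therefore appear in this cyclic order around $\mathbf{P}_m$, and $\alpha$ together with $(c,d)$ are the two diagonals of the quadrilateral they span. Relabeling cyclically so that $a<c<b<d$ and applying the three-term Grassmann-Pl\"ucker relation \eqref{eq:grassmann-plucker-3term} with $(i,j,k,l)=(a,c,b,d)$ yields
\[
P_{ab}\,P_{cd} \;=\; P_{ac}\,P_{bd} + P_{ad}\,P_{bc},
\]
hence
\[
P_{ab} \;=\; \frac{P_{ac}\,P_{bd} + P_{ad}\,P_{bc}}{P_{cd}}\,.
\]
The three Pl\"ucker coordinates $P_{ac}$, $P_{ad}$, $P_{cd}$ correspond to the three sides of the triangle $\tau\in T$, so they belong to $\mathbf{\tilde x}(T)$. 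Thus it remains to show that $P_{bc}$ and $P_{bd}$ can be handled by the inductive hypothesis.

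The main obstacle---really the only nonroutine point---is the combinatorial claim that the chords $(b,c)$ and $(b,d)$ each cross strictly fewer diagonals of~$T$ than $\alpha$ does. I~would prove this by observing that since $(c,d)\in T$, any diagonal $\delta\in T$ lies entirely in one of the two half-disks cut out by $(c,d)$. The chord $(b,c)$ lies in the half containing~$b$; any $\delta$ crossing $(b,c)$ must therefore lie in that same half, and by tracing endpoints one checks that $\delta$ also crosses the portion of $\alpha$ between its intersection with $(c,d)$ and~$b$. Hence every $\delta\in T$ that crosses $(b,c)$ also crosses $\alpha$; the same argument applies to $(b,d)$. Moreover $(c,d)$ itself crosses $\alpha$ but not $(b,c)$ or $(b,d)$ (as it shares the endpoint~$c$ or~$d$), so the drop in the crossing count is strict. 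The inductive hypothesis then expresses $P_{bc}$ and $P_{bd}$ subtraction-freely in $\mathbf{\tilde x}(T)$, and substituting into the displayed formula completes the induction.
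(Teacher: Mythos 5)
Your proof is correct, but it takes a genuinely different route from the one in the text. The book deduces the theorem from three facts: every Pl\"ucker coordinate lies in \emph{some} extended cluster, any two triangulations of $\mathbf{P}_m$ are connected by flips, and each flip is a subtraction-free exchange governed by \eqref{eq:grassmann-plucker-3term}; one then walks from the given triangulation $T$ to one containing the diagonal $(i,j)$ and composes the exchange maps. You instead argue by induction on the number of diagonals of $T$ crossed by the chord $(i,j)$, peeling off the first triangle $\tau=\{a,c,d\}$ that the chord meets and applying a single Ptolemy relation to write $P_{ab}=(P_{ac}P_{bd}+P_{ad}P_{bc})/P_{cd}$ with three of the five coordinates already in $\mathbf{\tilde x}(T)$. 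Your argument is self-contained and more elementary: it does not invoke flip-connectivity of the set of triangulations (a separate combinatorial fact the text states without proof), and it produces an explicit recursive formula. What it gives up is the structural viewpoint the book is setting up --- the exchange graph and the interpretation of each flip as a mutation --- which is precisely the payload the authors want from this example. Two small points you should make explicit: in the inductive step the side $(c,d)$ opposite $a$ is necessarily a \emph{diagonal} of $T$ (a chord cannot cross a boundary side, so if $\alpha$ exits $\tau$ it must do so through $(c,d)$, forcing $(c,d)$ to be interior), which is what makes the crossing count drop strictly; and the relation \eqref{eq:grassmann-plucker-3term} as stated assumes $i<j<k<l$ in linear order, so your ``cyclic relabeling'' should be justified by the cyclic invariance of the three-term relation (the two products of opposite sides swap under a cyclic shift of the four indices). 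Neither issue is a gap, just a matter of filling in the routine details.
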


To clarify, a \emph{subtraction-free} expression is a formula involving
variables and positive integers that uses only the 
operations of addition, multiplication, and division.

\subsection*{Proof of Theorem \ref{th:Grassmannian-TP-criterion}}
Let us visualize the three-term relations
\eqref{eq:grassmann-plucker-3term}
using the $m$-gon~$\mathbf{P}_m\,$.
Take four vertices $i<j<k<l$ of~$\mathbf{P}_m\,$,
cf.\ Figure~\ref{fig:type-a-exch}.
Then the relation \eqref{eq:grassmann-plucker-3term}
is reminiscent of the 
classical Ptolemy Theorem which asserts that for an inscribed
quadrilateral, 
the products of the lengths of two pairs of
opposite sides add up to the product of the two diagonals.

\begin{figure}[ht]
\begin{center}
\vspace{-10pt}
\setlength{\unitlength}{1pt}
\begin{picture}(60,62)(0,2)
\thicklines
  \put(0,20){\line(1,2){20}}
  \put(0,20){\line(1,-1){20}}
  \put(0,20){\line(3,1){60}}
  \put(20,0){\line(0,1){60}}
  \put(20,0){\line(1,1){40}}
  \put(20,60){\line(2,-1){40}}

  \put(20,0){\circle*{1}}
  \put(20,60){\circle*{1}}
  \put(0,20){\circle*{1}}
  \put(60,40){\circle*{1}}

\put(20,67){\makebox(0,0){$i$}}
\put(65,40){\makebox(0,0){$j$}}
\put(20,-6){\makebox(0,0){$k$}}
\put(-4,20){\makebox(0,0){$l$}}

\end{picture}
\vspace{-10pt}
\end{center}
\caption{Three-term Grassmann-Pl\"ucker (or Ptolemy)
  relation~\eqref{eq:grassmann-plucker-3term}. 
}
\label{fig:type-a-exch}
\end{figure}

\noindent
Now Theorem \ref{th:Grassmannian-TP-criterion}
is an immediate consequence of the following three facts:
\begin{enumerate}[leftmargin=.25in]
\item Every Pl\"ucker coordinate appears as an element of 
an extended cluster $\mathbf{\tilde x}(T)$ for some triangulation $T$ of the polygon~$\mathbf{P}_m$.
\item
Any two triangulations of $\mathbf{P}_m$ can be
transformed into each other by a sequence of 
\emph{flips}.
Each flip removes a diagonal of a triangulation 
to create a quadrilateral, then
replaces it with the other diagonal of the
same quadrilateral.
See Figure~\ref{fig:flip}.

\begin{figure}[ht]
\begin{center}
\vspace{-15pt}
\setlength{\unitlength}{1pt}
\begin{picture}(60,58)(0,2)
\thicklines
  \put(0,20){\line(1,2){20}}
  \put(0,20){\line(1,-1){20}}
  \put(20,0){\line(0,1){60}}
  \put(20,0){\line(1,1){40}}
  \put(20,60){\line(2,-1){40}}

  \put(20,0){\circle*{1}}
  \put(20,60){\circle*{1}}
  \put(0,20){\circle*{1}}
  \put(60,40){\circle*{1}}

\end{picture}
\begin{picture}(40,66)(0,0)
\put(20,30){\makebox(0,0){$\longrightarrow$}}
\end{picture}
\begin{picture}(60,66)(0,0)
\thicklines
  \put(0,20){\line(1,2){20}}
  \put(0,20){\line(1,-1){20}}
  \put(0,20){\line(3,1){60}}
  \put(20,0){\line(1,1){40}}
  \put(20,60){\line(2,-1){40}}

  \put(20,0){\circle*{1}}
  \put(20,60){\circle*{1}}
  \put(0,20){\circle*{1}}
  \put(60,40){\circle*{1}}

\end{picture}
\vspace{-15pt}
\end{center}
\caption{A flip of a diagonal in a quadrilateral.}
\label{fig:flip}
\end{figure}

\item
Each flip, say one involving the diagonals $ik$ and~$jl$, 
acts on extended clusters by
exchanging the Pl\"ucker coordinates $P_{ik}$ and~$P_{jl}$. 
This exchange can be viewed as a subtraction-free
transformation determined by the corresponding 
three-term 
relation~\eqref{eq:grassmann-plucker-3term}. 
\hfil \qed
\end{enumerate}

\begin{remark}
\label{rem:laurent-pos-gr2m}
In fact something stronger than Theorem
\ref{th:Grassmannian-TP-criterion} holds: 
every Pl\"ucker coordinate can be written as a 
\emph{Laurent polynomial} with 
\emph{positive coefficients}
in the  Pl\"ucker coordinates from~$\mathbf{\tilde x}(T)$. 
This is an instance of very general phenomena of 
Laurentness and positivity in cluster algebras,
which will be discussed later in the book. 
\end{remark}

\pagebreak[3]

The combinatorics of flips is captured by 
the graph whose vertices are labeled by the triangulations of the
polygon~$\mathbf{P}_m$ and whose edges correspond to flips.
Each vertex of this graph
has degree~$m-3$.  
Moreover, this graph
 is the $1$-skeleton of an $(m-3)$-dimensional convex polytope 
(discovered by J.~Stasheff \cite{stasheff}) 
called the \emph{associahedron}. 
See Figure~\ref{fig:A3assoc_poly}.

\begin{figure}[ht!]
\begin{center}
\raisebox{-0.7mm}{\includegraphics[width=100mm]{a3.eps}}
\end{center}
\caption{The 3-dimensional associahedron.  Each $2$-dimensional face corresponds to a 
	diagonal $d$ in a hexagon; the vertices of that face are labeled by 
	the triangulations containing $d$.}
\label{fig:A3assoc_poly}
\end{figure}

In the forthcoming terminology of cluster algebras, 
this graph is an example of an \emph{exchange graph}.  
Its vertices correspond to 
extended clusters (all of which have the same cardinality) 
while its edges correspond to \emph{exchange
  relations}~\eqref{eq:grassmann-plucker-3term}:
adjacent extended clusters are related to each other 
by interchanging the cluster variables appearing on the left-hand
side of an exchange relation.

Recall that the Pl\"ucker ring $R_{2,m}$ is generated by the Pl\"ucker
coordinates~$P_{ij}$ subject to the three-term
relations~\eqref{eq:grassmann-plucker-3term}.
The combinatorics of extended clusters provides an elegant way to
construct a linear basis for this ring. 
Define a \emph{cluster monomial} to be 
a monomial in (i.e., a product of a multiset of) 
cluster and/or frozen variables, all of which belong to one extended cluster; 
in other words, the corresponding collection of arcs 
does not contain two  diagonals which cross each other.
These monomials appeared already in the classical $19^{\rm th}$ century
literature on invariant theory; 
in particular, it is known \cite{kungrota,stur} that the set
of all cluster monomials  form a
linear basis of~$R_{2,m}$. 
Later on we shall discuss a far-reaching generalization of this
result in the context of cluster algebras.

As mentioned above, the ideas we have discussed
for the Grassmannian $\Gr_{2,m}$ can be generalized
to a beautiful theory that works for 
arbitrary Grassmannians $\Gr_{k,m}$, {\it cf}.\ Chapter
\ref{ch:Grassmannians}. 
In Chapter \ref{ch:surfaces}, we shall describe a generalization of the 
$\Gr_{2,m}$ example in a different direction,
which involves the combinatorics of flips for triangulations of a
Riemann surface with boundary and punctures. 
That construction has an intrinsic interpretation 
in hyperbolic geometry where an analogue of the Ptolemy relation
holds for the exponentiated hyperbolic distances between horocycles drawn
around vertices of a polygon with geodesic sides and cusps at the
vertices (the ``Penner coordinates'' on the corresponding
\emph{decorated Teichm\"uller space}~\cite{penner}).

\section{The basic affine space}
\label{sec:baseaffine}

We next turn our attention to total positivity criteria for square 
matrices. 
In view of Lemma~\ref{lem:cryer}, it makes sense to study
lower- and upper-triangular matrices first, and then proceed to the whole
group~$\operatorname{SL}_n$. 
We choose a related but different strategy:
first study matrices for which a certain subset of ``flag minors'' are
positive, then treat the general case. 

\begin{definition}
The \emph{flag minor} $P_J$ labeled by a nonempty proper subset
$J \subsetneq \{1,\dots,n\}$ 
is a function on the special linear group $G=\SL_n$
defined by
\[
P_J: z=(z_{ij}) \mapsto \det(z_{ij} \ \vert \ i \leq |J|, 
  j \in J).
\]
Thus $P_J(z)$ is a $|J| \times |J|$ minor of $z$ that 
occupies the top $|J|$ rows and the columns in $J$.
Since an $n$-element set has $2^n-2$ proper nonempty subsets,
a matrix in $\SL_n$ has $2^n-2$ flag minors.
\end{definition}

Let $U \subset G$ be the subgroup of 
unipotent lower-triangular matrices, i.e. the lower-triangular
matrices with $1$'s on the diagonal. 
The group $U$ acts on~$G$ 
by multiplication on the left.
It consequently acts on the coordinate ring~$\CC[G]$, that is,
the ring of polynomials in the matrix entries of a generic matrix of
determinant~1. 
It is easy to see that each flag minor $P_J$ is an invariant of this action: 
for any $z\in G$ and $y\in U$, we have $P_J(yz)=P_J(z)$. 
Similarly to the case of Pl\"ucker coordinates in a Pl\"ucker ring,
we have 
(thanks to the appropriate versions of the First and Second Fundamental Theorems
of invariant theory): 
\begin{enumerate}
\item
the flag minors generate the ring $\CC[G]^U$ 
of $U$-invariant polynomials in the matrix entries, and 
\item
the ideal of relations among the flag minors is generated by 
certain quadratic \emph{generalized Pl\"ucker relations}.
\end{enumerate}
The ring $\CC[G]^U$ plays an important role in the representation theory
of the semisimple Lie group~$G$:
it naturally carries all irreducible representations of~$G$, each with
multiplicity~1. (We will not rely on this in what follows.) 
This ring is the coordinate ring of the
\emph{basic affine space}, the (Geometric Invariant Theory) 
quotient~$U\backslash G$.  
This space is also known as the \emph{base affine space}, the 
\emph{fundamental affine space}, and the \emph{principal 
affine space}.
In this section, this space plays the role analogous to the role that
the Grassmannians played in Section~\ref{sec:Ptolemy}. 
As~before, we can state everything we need in an elementary fashion,
in terms of matrices and their~minors.

\begin{definition}
An element $z\in G$ is \emph{flag totally positive} (FTP) if all 
flag minors $P_J$ take positive values at~$z$.
\end{definition}

We would like to detect flag total positivity in an efficient way, 
by testing as few of the $2^n-2$ flag minors as possible. 
It turns out that the optimal test of this kind probes only
$\frac{(n-1)(n+2)}{2}$ flag minors.
We note that $\frac{(n-1)(n+2)}{2}=n^2-1-\binom{n}{2}$\
is the dimension of the basic affine space. 

Following \cite{bfz-tp}, we construct a family of tests for flag total positivity 
labeled by combinatorial objects called \emph{wiring diagrams}
which play the same role as triangulations did in Section~\ref{sec:Ptolemy}.
 
This notion is best explained by an
example such as the one in Figure~\ref{fig:wiring}.
A~wiring diagram consists of a family of $n$ piecewise-straight 
lines, considered up to isotopy, which can be viewed as graphs of $n$ continuous
piecewise-linear functions defined on the same interval. 
The lines are labeled $1,\dots,n$ as shown in Figure~\ref{fig:wiring}.  
The key requirement is that each pair of lines intersects exactly once. 

The notion of a wiring diagram is closely related to that of a \emph{reduced word}
for an element of maximal length in the symmetric group~$\mathcal{S}_n$. 
See \cite[Section~2.3]{bfz-tp} for detailed explanations. 

\begin{figure}[hb]
\setlength{\unitlength}{1.2pt}
\begin{center}
\vspace{-10pt}
\begin{picture}(180,45)(6,0)
\thicklines
\dark{

  \put(0,0){\line(1,0){40}}
  \put(60,0){\line(1,0){100}}
  \put(180,0){\line(1,0){10}}
  \put(0,20){\line(1,0){40}}
  \put(60,20){\line(1,0){10}}
  \put(90,20){\line(1,0){70}}
  \put(180,20){\line(1,0){10}}
  \put(0,40){\line(1,0){70}}
  \put(90,40){\line(1,0){100}}

  \put(40,0){\line(1,1){20}}
  \put(70,20){\line(1,1){20}}
  \put(160,0){\line(1,1){20}}

  \put(40,20){\line(1,-1){20}}
  \put(70,40){\line(1,-1){20}}
  \put(160,20){\line(1,-1){20}}
  \put(193,-4){$\mathbf{3}$}
  \put(193,16){$\mathbf{2}$}
  \put(193,36){$\mathbf{1}$}

  \put(-6,-4){$\mathbf{1}$}
  \put(-6,16){$\mathbf{2}$}
  \put(-6,36){$\mathbf{3}$}
}

\end{picture}
\vspace{-10pt}
\end{center}
\caption{A wiring diagram.}
\label{fig:wiring}
\end{figure}

A \emph{chamber} of a wiring diagram 
is a connected component of its complement inside the vertical strip; we 
usually don't include the top and bottom regions.
We label each chamber by a subset
of 
$[1,n]\!=\!\{1,\dots,n\}$ indicating which lines 
pass \emph{below} that chamber; 
cf.\ Figure~\ref{fig:chamber-sets1}.

\begin{figure}[ht]
\setlength{\unitlength}{1.2pt}
\begin{center}
\vspace{-10pt}
\begin{picture}(180,44)(6,1)
\thicklines
\dark{

  \put(0,0){\line(1,0){40}}
  \put(60,0){\line(1,0){100}}
  \put(180,0){\line(1,0){10}}
  \put(0,20){\line(1,0){40}}
  \put(60,20){\line(1,0){10}}
  \put(90,20){\line(1,0){70}}
  \put(180,20){\line(1,0){10}}
  \put(0,40){\line(1,0){70}}
  \put(90,40){\line(1,0){100}}

  \put(40,0){\line(1,1){20}}
  \put(70,20){\line(1,1){20}}
  \put(160,0){\line(1,1){20}}

  \put(40,20){\line(1,-1){20}}
  \put(70,40){\line(1,-1){20}}
  \put(160,20){\line(1,-1){20}}

  \put(193,-4){$\mathbf{3}$}
  \put(193,16){$\mathbf{2}$}
  \put(193,36){$\mathbf{1}$}

  \put(-6,-4){$\mathbf{1}$}
  \put(-6,16){$\mathbf{2}$}
  \put(-6,36){$\mathbf{3}$}
}

  \put(18,10){$_{\mathbf{\dark{1}}}$}
  \put(76,10){$_{\mathbf{\dark{2}}}$}
  \put(181,10){$_{\mathbf{\dark{3}}}$}

  \put(46,30){$_{\mathbf{\dark{12}}}$}
  \put(102,30){$_{\mathbf{\dark{23}}}$}

\end{picture}
\vspace{-10pt}
\end{center}
\caption{Chamber minors $P_1$, $P_2$, $P_3$, $P_{12}$, and $P_{23}$.}
\label{fig:chamber-sets1}
\end{figure}

Thus every chamber is naturally associated with a flag minor $P_J$, 
called a \emph{chamber minor},
that occupies the columns specified by the set~$J$ in the chamber, 
and the rows 
$1,2,\dots,|J|$.
The total number of chamber minors is always~$\frac{(n-1)(n+2)}{2}$.

The chamber minors associated to a wiring diagram
make up an \emph{extended cluster}. 
Such an extended cluster will always contain the $2n-2$ flag minors
associated to the unbounded chambers: 
\[
P_{1}, P_{1,2}, \dots, P_{1,2,\dots,n-1}
\text{\ and\ }  
P_{n}, P_{n-1,n}, \dots, P_{2,3,\dots,n};
\] 
these are the \emph{frozen variables}. 
The $\binom{n-1}{2}$ chamber minors 
associated with bounded chambers are the \emph{cluster variables};
they form the corresponding \emph{cluster}.

\begin{theorem}[\cite{bfz-tp}]
\label{th:diagram-TP-criterion}
Every flag minor 
can be written as a subtraction-free
rational expression in the chamber minors of any given wiring
diagram.
Thus, if these $\frac{(n-1)(n+2)}{2}$ chamber minors 
evaluate positively at a matrix $z\in\SL_n$, then $z$ is FTP. 
\end{theorem}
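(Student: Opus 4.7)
The plan is to mirror the proof of Theorem~\ref{th:Grassmannian-TP-criterion}, replacing triangulations of a polygon with wiring diagrams and flips with \emph{braid moves}. The three facts to establish in parallel are: \textbf{(i)} every flag minor $P_J$ occurs as a chamber minor in some wiring diagram; \textbf{(ii)} any two wiring diagrams on $n$ strands are connected by a sequence of local braid moves (each of which flips a small triangular chamber bounded by three pseudolines); and \textbf{(iii)} each braid move alters exactly one chamber minor, with the new minor expressible as a subtraction-free rational function of its neighbors via a three-term generalized Pl\"ucker relation.

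For (ii) I would invoke the classical bijection between wiring diagrams and reduced words for the longest element $w_0$ of the symmetric group $S_n$, together with Tits' theorem on reduced expressions in Coxeter groups. The short braid relations $s_i s_{i+1} s_i = s_{i+1} s_i s_{i+1}$ correspond precisely to the triangle-flip moves on wiring diagrams, while the commutation relations correspond to trivial isotopies. For (i) I would give an explicit construction: given $J\subsetneq\{1,\dots,n\}$ with $|J|=k$, one can arrange a wiring diagram so that the $k$ strands labeled by $J$ are the topmost strands in some vertical slice, making $J$ the label of the chamber immediately below them.

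The heart of the proof is (iii). Before and after a braid move, six chambers are involved in the local triangular region; a direct inspection of which strands pass below each of them shows that the corresponding chamber sets are $S\cup\{i\}$, $S\cup\{j\}$, $S\cup\{k\}$, $S\cup\{i,j\}$, $S\cup\{i,k\}$, $S\cup\{j,k\}$ for a suitable $S\subset\{1,\dots,n\}\setminus\{i,j,k\}$ and $i<j<k$; the central chamber carries $P_{S\cup\{i,k\}}$ before the move and $P_{S\cup\{j\}}$ after (or vice versa). These minors are linked by the standard three-term generalized Pl\"ucker identity
\begin{equation*}
P_{S\cup\{i,k\}}\cdot P_{S\cup\{j\}}\;=\;P_{S\cup\{i,j\}}\cdot P_{S\cup\{k\}}+P_{S\cup\{j,k\}}\cdot P_{S\cup\{i\}},
\end{equation*}
which can be established by Desnanot--Jacobi (Lewis Carroll) condensation applied to an appropriate submatrix, or by recognizing it as a specialization of the quadratic relations for flag minors. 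Solving the identity for the minor being introduced gives a subtraction-free rational substitution between adjacent extended clusters.

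Combining (i)--(iii): given any wiring diagram $D$ and any flag minor $P_J$, pick $D'$ containing $P_J$ as a chamber minor by (i), connect $D$ to $D'$ by braid moves using (ii), and compose the subtraction-free substitutions from (iii) along the path. This writes $P_J$ as a subtraction-free rational expression in the chamber minors of $D$, so positivity of those $\frac{(n-1)(n+2)}{2}$ chamber minors at $z\in\SL_n$ forces every flag minor to be positive at $z$, establishing FTP. The main obstacle is the bookkeeping in step~(iii): one must verify that the six chamber labels around a braid-move triangle always assemble into the precise $\{S\cup I\}_{I}$ pattern needed to invoke the three-term identity. Once this local calculation is done, the connectivity-and-composition argument proceeds exactly as in the Grassmannian case.
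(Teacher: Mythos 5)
Your proposal follows the paper's proof exactly: the same three facts (every flag minor appears as a chamber minor of some wiring diagram; connectivity of wiring diagrams under braid moves via Ringel/Tits; and the three-term exchange identity $YZ=AC+BD$, which the paper derives as a generalized Pl\"ucker relation via Muir's Law of extensible minors), and your explicit labels $S\cup\{i\},\dots,S\cup\{j,k\}$ with $i<j<k$ are the correct local chamber sets. The only slip is in your sketch of (i): since a chamber is labeled by the lines passing \emph{below} it, you want the strands of $J$ to be the \emph{bottommost} strands in some vertical slice and take the chamber just above them, not the topmost strands with the chamber below.
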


\begin{proof}
Theorem \ref{th:diagram-TP-criterion}
is implied by the following three facts:
\begin{enumerate}[leftmargin=.25in]
\item Each flag minor appears as a chamber minor in
some wiring diagram.

\item
Any two wiring diagrams can be
transformed into each other by a sequence of local \emph{braid moves}
of the form shown in Figure~\ref{fig:moves1}.

\item
Under each braid move, the
corresponding collections of chamber minors are obtained from each
other by exchanging the minors $Y$ and~$Z$
(cf.\ Figure~\ref{fig:moves1}), and these minors satisfy
the identity
\begin{equation}
\label{eq1:ac+bd=yz}
YZ = AC+BD \ .
\end{equation}
\end{enumerate}
Statement (1) is easily checked by direct inspection.
Statement (2) is a theorem of G.~Ringel~\cite{ringel}; 
it is also equivalent to a well known property 
of reduced words in the symmetric group (Tits' Lemma):
any two such words are related by braid moves.
Finally, formula \eqref{eq1:ac+bd=yz} is one of the 
aforementioned generalized Pl\"ucker relations; 
see \cref{ex:identity-Plucker} below.
\end{proof}

\begin{figure}[ht]
\setlength{\unitlength}{1pt}
\begin{center}
\vspace{-10pt}
\begin{picture}(120,45)(10,0)
\thicklines
\dark{

  \put(0,0){\line(1,0){10}}
  \put(30,0){\line(1,0){40}}
  \put(50,0){\line(1,0){10}}
  \put(90,0){\line(1,0){10}}
  \put(0,20){\line(1,0){10}}
  \put(30,20){\line(1,0){10}}
  \put(60,20){\line(1,0){10}}
  \put(90,20){\line(1,0){10}}
  \put(0,40){\line(1,0){40}}
  \put(60,40){\line(1,0){40}}

  \put(10,0){\line(1,1){20}}
  \put(40,20){\line(1,1){20}}
  \put(70,0){\line(1,1){20}}

  \put(10,20){\line(1,-1){20}}
  \put(40,40){\line(1,-1){20}}
  \put(70,20){\line(1,-1){20}}
}
  \put(115,24){\vector(1,0){10}}
  \put(125,16){\vector(-1,0){10}}

  \put(2,8){$A$}
  \put(46,8){$Y$}
  \put(89,8){$D$}

  \put(16,28){$B$}
  \put(76,28){$C$}

\end{picture}
\begin{picture}(100,40)(-5,0)
\thicklines
\dark{

  \put(0,40){\line(1,0){10}}
  \put(30,40){\line(1,0){40}}
  \put(50,40){\line(1,0){10}}
  \put(90,40){\line(1,0){10}}
  \put(0,20){\line(1,0){10}}
  \put(30,20){\line(1,0){10}}
  \put(60,20){\line(1,0){10}}
  \put(90,20){\line(1,0){10}}
  \put(0,0){\line(1,0){40}}
  \put(60,0){\line(1,0){40}}

  \put(10,20){\line(1,1){20}}
  \put(40,0){\line(1,1){20}}
  \put(70,20){\line(1,1){20}}

  \put(10,40){\line(1,-1){20}}
  \put(40,20){\line(1,-1){20}}
  \put(70,40){\line(1,-1){20}}
}

  \put(2,28){$B$}
  \put(46,28){$Z$}
  \put(89,28){$C$}

  \put(16,8){$A$}
  \put(76,8){$D$}

\end{picture}
\vspace{-10pt}
\end{center}
\caption{A braid move.}
\label{fig:moves1}
\end{figure}

To prove \eqref{eq1:ac+bd=yz}, we will need
Propositions~\ref{prop:Muir}--\ref{prop:Muir-flag}~below.

\begin{proposition}[Muir's Law of extensible minors~\cite{muir1960}]
\label{prop:Muir}
Suppose that $(I)$ is a polynomial identity involving minors $\Delta_{A,B}$ of a
generic matrix, which is homogeneous in that  every term in $(I)$
is a product of the same number of determinants.  Let $R$ and $C$
be finite sets of positive integers such that~$R$ (resp.,~$C$) is disjoint from
every row set $A$ (resp., every column set~$B$) appearing in a determinant in~$(I)$.
Then one can get a new identity $(I')$ from $I$ by replacing each minor
$\Delta_{A,B}$ by $\Delta_{A \cup R, B \cup C}$.
\end{proposition}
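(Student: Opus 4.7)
The plan is to exhibit each extended minor $\Delta_{A \cup R, B \cup C}$ as a scalar multiple of the corresponding original minor $\Delta_{A,B}$ evaluated on a single Schur-complement matrix $M'$, so that $(I')$ collapses (up to a common factor) to $(I)$ applied to $M'$. The two ingredients are a row-reduction trick and a sign-compatibility check.

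First I would reduce $(I)$ to the multi-homogeneous case: by decomposing $(I)$ according to its multi-degree in each row and each column variable of the generic matrix, we may assume every term of $(I)$ shares the same row and column multisets $\sqcup_i A_{t,i}$ and $\sqcup_i B_{t,i}$. This does not change the content of the identity but will be crucial for controlling signs below.

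Next, work with a generic matrix $\tilde M$ having rows indexed by $A_0 \cup R$ and columns by $B_0 \cup C$, where $A_0$ and $B_0$ collect all row/column indices appearing in $(I)$. Since $|R|=|C|$ (forced by each $\Delta_{A \cup R, B \cup C}$ being square) and the locus where $\tilde M_{R,C}$ is invertible is Zariski dense, we may assume $\tilde M_{R,C}$ invertible and row-reduce by subtracting suitable multiples of the rows indexed by $R$ from those indexed by $A_0$ so as to zero out the block $\tilde M_{A_0,C}$. The resulting matrix has the Schur complement $M' := \tilde M_{A_0,B_0} - \tilde M_{A_0,C}\,\tilde M_{R,C}^{-1}\,\tilde M_{R,B_0}$ in its $(A_0,B_0)$ block; since such row operations preserve every minor that contains all of the rows of $R$, and since after rearranging rows (placing $A$ before $R$) and columns (placing $B$ before $C$) the submatrix in question becomes block lower triangular, we obtain
\[
\Delta_{A \cup R,\, B \cup C}(\tilde M) \;=\; \epsilon_{A,B}\,\Delta_{A,B}(M')\,\Delta_{R,C}(\tilde M),
\]
where $\epsilon_{A,B}=\sgn(\pi_{A,R})\,\sgn(\pi_{B,C})$ is the product of the two shuffle signs.

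Substituting into $(I')(\tilde M)$ yields $\Delta_{R,C}(\tilde M)^{k}\sum_t c_t\,\sigma_t\,\prod_i \Delta_{A_{t,i},B_{t,i}}(M')$, where $k$ is the common number of determinants per term and $\sigma_t=\prod_i \epsilon_{A_{t,i},B_{t,i}}$. By the multi-homogeneity reduction, $\sigma_t$ depends only on the multisets $\sqcup_i A_{t,i}$ and $\sqcup_i B_{t,i}$ (via parities of counts such as $\sum_{r\in R}\#\{a\in\sqcup_i A_{t,i}:a>r\}$), hence is independent of $t$. Pulling this common sign out, the remaining sum is precisely $(I)$ evaluated at $M'$ and therefore vanishes, so $(I')(\tilde M)=0$ on an open dense set and hence as a polynomial identity. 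The main obstacle is exactly this sign bookkeeping: without the reduction to multi-homogeneous $(I)$ the signs $\sigma_t$ could differ between terms and destroy the identity, whereas once multi-homogeneity is in place the Schur-complement computation is routine.
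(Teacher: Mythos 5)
The paper states Muir's Law without proof, simply citing Muir's 1960 treatise, so there is no in-text argument to compare yours against; your proof must therefore stand on its own, and it does. The Schur-complement mechanism is sound: on the dense locus where $\tilde M_{R,C}$ is invertible, row-reducing the rows indexed by $A_0$ against those indexed by $R$ preserves every minor whose row set contains $R$, and after a block-triangular rearrangement each extended minor factors as $\Delta_{A\cup R,B\cup C}(\tilde M)=\epsilon_{A,B}\,\Delta_{A,B}(M')\,\Delta_{R,C}(\tilde M)$; the hypothesis that every term of $(I)$ contains the same number $k$ of determinants is exactly what lets $\Delta_{R,C}(\tilde M)^k$ factor out uniformly. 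The only genuinely delicate point is the sign $\sigma_t=\prod_i\epsilon_{A_{t,i},B_{t,i}}$, and you handle it correctly: since each product of minors is multihomogeneous in the row and column variables, $(I)$ splits into vanishing multihomogeneous components, and within each component the shuffle-sign count $\sum_{r\in R}\#\{a\in\sqcup_i A_{t,i}: a>r\}$ (and its column analogue) depends only on the shared multisets, so $\sigma_t$ is constant and factors out. You also correctly record that $|R|=|C|$ is forced and that the vanishing of $(I')$ on a dense open set extends to a polynomial identity. I see no gap; this is a complete and standard proof of the result the paper takes on faith.
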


\begin{proposition}
\label{prop:Muir-flag}
Suppose $(I)$ is a polynomial identity involving flag minors
$P_{B}$ of a
generic matrix, such that every term in $(I)$  
is a product of the same number of flag minors.  Let $C$
be a finite set of positive integers disjoint from
every column set $B$ appearing  in a flag minor $P_B$ in~$(I)$.
 Then one
can get a new identity $(I^{''})$ from $(I)$ by replacing each term
$P_{B}$ by $P_{B \cup C}$.
\end{proposition}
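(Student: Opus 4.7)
The plan is to deduce Proposition~\ref{prop:Muir-flag} from Muir's Law of extensible minors (Proposition~\ref{prop:Muir}) by writing each flag minor as an ordinary minor, $P_B = \Delta_{[|B|], B}$, where $[k] = \{1,\ldots,k\}$. Then $(I)$ becomes a polynomial identity in ordinary minors, and the goal is to produce an identity in $\Delta_{[|B|+|C|],\, B\cup C} = P_{B\cup C}$, which requires simultaneously adjoining $C$ to every column set and the indices $\{|B|+1,\ldots,|B|+|C|\}$ to every row set.

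The clean case is when all flag minors appearing in $(I)$ share a common size~$k$: then every row set equals $[k]$, and Muir's Law applied with $R = \{k+1,\ldots,k+|C|\}$ (disjoint from $[k]$) and the given $C$ (disjoint from every column set by hypothesis) produces the identity $(I'')$ in the minors $\Delta_{[k+|C|],\, B\cup C} = P_{B\cup C}$ directly. I would handle the general case by first reducing to $|C|=1$ via induction on $|C|$, since adding $C$ one element at a time suffices.

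The main obstacle is the mixed-size case, as Muir's Law only permits a single fixed row addition~$R$, whereas different-size flag minors require different row increments to remain flag minors. To get around this, I would exploit the fact that $P_B$ is homogeneous of degree $|B|$ in the matrix entries, so that $(I)$ decomposes into polynomial components of fixed total degree $\sum_i |B_{t,i}|$, allowing us to work one homogeneous component at a time. Within such a component, I would apply Muir's Law with $R = \{k_p+1\}$ (where $k_p$ is the largest flag-minor size in $(I)$) and $C=\{c\}$, so that size-$k_p$ minors become genuine flag minors $P_{B\cup\{c\}}$, while smaller minors become non-flag minors of the form $\Delta_{[|B|]\cup\{k_p+1\},\, B\cup\{c\}}$. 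These latter minors would then be converted into flag minors $P_{B\cup\{c\}}$ by an auxiliary block-matrix specialization: in a larger ambient matrix one sets the rows $|B|+1,\ldots,k_p$ to standard basis vectors so that the offending row $k_p+1$ effectively slides down to position $|B|+1$, yielding $P_{B\cup\{c\}}$ up to a sign $(-1)^{(k_p-|B|)\cdot\text{(something)}}$.

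The hardest step will be carrying out this block-matrix specialization rigorously and checking that all sign contributions cancel consistently across the identity. This is precisely where the hypothesis that each term has the same number of flag minors --- combined with homogeneity --- will be used: in each homogeneous component of $(I)$ all terms share a common total degree $s$, so the net sign factors out uniformly and the specialized identity is equivalent to the original one. Once Proposition~\ref{prop:Muir-flag} is established for $|C|=1$, iterating gives the general statement.
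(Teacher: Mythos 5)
Your reduction to ordinary minors, your treatment of the equal-size case, and your identification of the real difficulty (Muir's Law permits only one fixed row set $R$, while flag minors of different sizes seem to need different row increments) are all correct. But the workaround you propose for the mixed-size case does not go through. First, splitting $(I)$ into homogeneous components only fixes the total degree $\sum_i |B_i|$ of each term, not the individual sizes $|B_i|$; a single component can still contain, say, $P_{\{1\}}P_{\{2,3\}}$ alongside $P_{\{1,2\}}P_{\{3\}}$, so you are not reduced to the clean case. Second, and more seriously, the ``block-matrix specialization'' is not a well-defined operation: the rows you want to specialize, $\{|B|+1,\dots,k_p\}$, depend on $|B|$, which varies from minor to minor within a single identity (indeed within a single term), whereas any actual substitution into the generic matrix is global and acts on every minor at once. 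Worse, the minor you are trying to repair, $\Delta_{\{1,\dots,|B|\}\cup\{k_p+1\},\,B\cup\{c\}}$, does not involve rows $|B|+1,\dots,k_p$ at all, so specializing those rows leaves its value unchanged rather than sliding row $k_p+1$ into position $|B|+1$.

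The paper's proof sidesteps all of this by adjoining the new rows at the \emph{top} rather than the bottom. Since $(I)$ holds for any generic matrix, it holds for the matrix obtained by deleting the first $c=|C|$ rows; this replaces each $\Delta_{\{1,\dots,b\},B}$ by $\Delta_{\{c+1,\dots,c+b\},B}$, a uniform shift that works for all sizes $b$ simultaneously. Now the single row set $R=\{1,\dots,c\}$ is disjoint from every (shifted) row set, and Muir's Law produces $\Delta_{\{1,\dots,c+b\},\,B\cup C}=P_{B\cup C}$ in every position, uniformly in $b$ and with no signs or specializations to track. If you replace your mixed-size argument with this shift, the rest of your write-up (including the optional induction on $|C|$, which is sound but unnecessary here) becomes a complete proof.
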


\begin{proof}
Let $b = |B|$ and $c=|C|$.
Recall that the flag minor $P_B$ is equal to 
$\Delta_{\{1,2,\dots,b\},B}$.
Note that if the identity $(I)$ is true, then the identity $(I')$ is true, 
where $(I')$ is obtained from $(I)$ by replacing each term 
$\Delta_{\{1,2,\dots,b\},B}$ by the term
$\Delta_{\{c+1,c+2,\dots,c+b\},B}$.
But now if we apply Muir's Law with 
$R=\{1,2,\dots,c\}$ and $C$, then we get a new identity $(I^{''})$
from $(I')$ by replacing each term 
$\Delta_{\{c+1,c+2,\dots,c+b\},B}$ by 
$\Delta_{\{1,2,\dots,c+b\},B \cup C}= P_{B\cup C}$. 
\end{proof}

\begin{exercise}\label{ex:identity-Plucker}
Prove \eqref {eq1:ac+bd=yz}, thereby completing
the proof of
\cref{th:diagram-TP-criterion}.
\end{exercise}

\begin{remark}
\label{rem:laurent-pos-base}
Just as in the case of $\Gr_{2,m}$
(cf.\ Remark~\ref{rem:laurent-pos-gr2m}), something much 
stronger than Theorem \ref{th:diagram-TP-criterion} is true:
each flag minor can be written
as a \emph{Laurent polynomial with positive coefficients} in the chamber
minors of a given wiring diagram.
\end{remark}

Before moving on to our last example (total positivity for square matrices), 
let us pause to make 
a few observations  based on our study of total positivity in the
Grassmannian~$\Gr_{2,m}$, 
flag total positivity in $G=\SL_n$, 
and the related rings $R_{2,m}$ and~$\CC[G]^U$.
In both cases, we observed the following common features:
\begin{itemize}[leftmargin=.15in]
\item a family of distinguished generators of the 
ring (Pl\"ucker coordinates and flag minors, respectively);
\item a finite subset of \emph{frozen} generators;
\item a grouping of the generators into overlapping \emph{extended
  clusters} all of which have the same size;
each extended cluster contains the frozen generators; 
\item combinatorial data accompanying each extended cluster
(triangulations and wiring diagrams, respectively); 
\item \emph{exchange relations} that can be written using those data;
they lead to subtraction-free birational maps relating
extended clusters to each other; 
\item a ``mutation rule" for producing new combinatorial data 
from the current one (flips of triangulations and braid moves, respectively).
\end{itemize}

In the case of the Grassmannian $\Gr_{2,m}$, we defined a graph
whose vertices are indexed by the set of triangulations of $\mathbf{P}_m$, 
and whose edges correspond to flips.  
This graph is \emph{regular}, i.e., all its vertices have the same degree. 
Indeed, in any triangulation, we can flip any of the participating diagonals. 
Put another way, given an 
extended cluster and a cluster variable within it, there is a unique
way to construct a new extended cluster by replacing that cluster variable by 
another one.

We could construct an analogous graph related to the flag TP elements 
of $\SL_n$, with 
vertices corresponding to wiring diagrams, and edges corresponding to 
braid moves. However, this graph is not regular for $n \geq 4$.
The framework of cluster algebras will rectify this issue 
by providing a recipe for constructing the missing cluster variables and 
clusters.

Another important property that we observed for the Grassmannian 
$\Gr_{2,m}$ concerned cluster monomials. Recall that a cluster monomial is a product of 
cluster and frozen (=coefficient) variables (not necessarily distinct) all of which belong to 
one extended cluster. Cluster monomials form a linear
basis for the Pl\"ucker ring~$R_{2,m}\,$. 
Unfortunately the analogous statement does not hold for the ring~$\CC[G]^U$. 
However, the cluster monomials are 
still linearly independent, and hence can be included in an additive 
basis for the ring. 
Explicit constructions of such additive bases that possess ``nice''
properties (e.g., various versions of positivity) 
remain at the center of current research on cluster algebras. 

\vspace{-10pt}

\section{The general linear group}
\label{sec:matrices}

We now turn to total positivity criteria 
for the general linear group, or equivalently,
for square matrices of arbitrary size.
It turns out that to test whether a given $n \times n$
matrix is TP it suffices
to check the positivity of only $n^2$ special minors.

For an $n\times n$ matrix~$z$, let $\Delta_{I,J}(z)$ denote the minor
of $z$ determined by the row set~$I$ 
and the column set~$J$;
here $I$ and $J$ are nonempty
subsets of $[1,n] = \{1, \dots, n\}$ of the same cardinality.
Thus $z$ is TP if and only if $\Delta_{I,J}(z) > 0$
for all such $I$ and~$J$.

Following \cite{fz-dbc,fz-intel}
we construct a family of ``optimal" tests for total positivity,
labeled by combinatorial objects called
\emph{double wiring diagrams}. 
They generalize the wiring diagrams
we saw in the previous section.
A double wiring diagram 
is basically a superposition of two ordinary wiring diagrams, each
colored in its own color (`thin' or `thick'); 
see Figure~\ref{fig:double-wiring}.

\begin{figure}[ht]
\setlength{\unitlength}{1.2pt}
\begin{center}
\begin{picture}(180,45)(6,0)
\thicklines
\dark{

  \put(0,0){\line(1,0){40}}
  \put(60,0){\line(1,0){100}}
  \put(180,0){\line(1,0){10}}
  \put(0,20){\line(1,0){40}}
  \put(60,20){\line(1,0){10}}
  \put(90,20){\line(1,0){70}}
  \put(180,20){\line(1,0){10}}
  \put(0,40){\line(1,0){70}}
  \put(90,40){\line(1,0){100}}

  \put(40,0){\line(1,1){20}}
  \put(70,20){\line(1,1){20}}
  \put(160,0){\line(1,1){20}}

  \put(40,20){\line(1,-1){20}}
  \put(70,40){\line(1,-1){20}}
  \put(160,20){\line(1,-1){20}}
  \put(193,-6){$\mathbf{3}$}
  \put(193,14){$\mathbf{2}$}
  \put(193,34){$\mathbf{1}$}

  \put(-6,-6){$\mathbf{1}$}
  \put(-6,14){$\mathbf{2}$}
  \put(-6,34){$\mathbf{3}$}
}

\light{
\thinlines

  \put(0,2){\line(1,0){100}}
  \put(120,2){\line(1,0){70}}
  \put(0,22){\line(1,0){10}}
  \put(30,22){\line(1,0){70}}
  \put(120,22){\line(1,0){10}}
  \put(150,22){\line(1,0){40}}
  \put(0,42){\line(1,0){10}}
  \put(30,42){\line(1,0){100}}
  \put(150,42){\line(1,0){40}}

  \put(10,22){\line(1,1){20}}
  \put(100,2){\line(1,1){20}}
  \put(130,22){\line(1,1){20}}

  \put(10,42){\line(1,-1){20}}
  \put(100,22){\line(1,-1){20}}
  \put(130,42){\line(1,-1){20}}

  \put(193,2){${1}$}
  \put(193,22){${2}$}
  \put(193,42){${3}$}

  \put(-6,2){${3}$}
  \put(-6,22){${2}$}
  \put(-6,42){${1}$}
}
\end{picture}
\vspace{-10pt}
\end{center}
\caption{A double wiring diagram.}
\label{fig:double-wiring}
\end{figure}

The lines in a double wiring diagram are numbered separately within
each color.
(Note the difference in the numbering schemes for the two colors.) 
We then assign to every \emph{chamber} of a diagram a pair of subsets
of $[1,n]$: each subset indicates which lines of the corresponding
color pass below that chamber; see Figure~\ref{fig:chamber-sets}.
Thus every chamber is naturally associated with a minor $\Delta_{I,J}$
(again called a \emph{chamber minor})
that occupies the rows and columns of an $n\times n$ matrix 
specified by the sets $I$ and $J$ written into that chamber.
The total number of chamber minors in a given double wiring diagram is always~$n^2$.

\begin{figure}[ht]
\setlength{\unitlength}{1.2pt}
\begin{center}
\begin{picture}(180,50)(6,0)
\thicklines
\dark{

  \put(0,0){\line(1,0){40}}
  \put(60,0){\line(1,0){100}}
  \put(180,0){\line(1,0){10}}
  \put(0,20){\line(1,0){40}}
  \put(60,20){\line(1,0){10}}
  \put(90,20){\line(1,0){70}}
  \put(180,20){\line(1,0){10}}
  \put(0,40){\line(1,0){70}}
  \put(90,40){\line(1,0){100}}

  \put(40,0){\line(1,1){20}}
  \put(70,20){\line(1,1){20}}
  \put(160,0){\line(1,1){20}}

  \put(40,20){\line(1,-1){20}}
  \put(70,40){\line(1,-1){20}}
  \put(160,20){\line(1,-1){20}}

  \put(193,-6){$\mathbf{3}$}
  \put(193,14){$\mathbf{2}$}
  \put(193,34){$\mathbf{1}$}

  \put(-6,-6){$\mathbf{1}$}
  \put(-6,14){$\mathbf{2}$}
  \put(-6,34){$\mathbf{3}$}
}

\light{
\thinlines

  \put(0,2){\line(1,0){100}}
  \put(120,2){\line(1,0){70}}
  \put(0,22){\line(1,0){10}}
  \put(30,22){\line(1,0){70}}
  \put(120,22){\line(1,0){10}}
  \put(150,22){\line(1,0){40}}
  \put(0,42){\line(1,0){10}}
  \put(30,42){\line(1,0){100}}
  \put(150,42){\line(1,0){40}}

  \put(10,22){\line(1,1){20}}
  \put(100,2){\line(1,1){20}}
  \put(130,22){\line(1,1){20}}

  \put(10,42){\line(1,-1){20}}
  \put(100,22){\line(1,-1){20}}
  \put(130,42){\line(1,-1){20}}

  \put(193,2){${1}$}
  \put(193,22){${2}$}
  \put(193,42){${3}$}

  \put(-6,2){${3}$}
  \put(-6,22){${2}$}
  \put(-6,42){${1}$}

  \put(14,10){$_{\light{3},\mathbf{\dark{1}}}$}
  \put(76,10){$_{\light{3},\mathbf{\dark{2}}}$}
  \put(136,10){$_{\light{1},\mathbf{\dark{2}}}$}
  \put(181,10){$_{\light{1},\mathbf{\dark{3}}}$}

  \put(-1,30){$_{\light{23},\mathbf{\dark{12}}}$}
  \put(42,30){$_{\light{13},\mathbf{\dark{12}}}$}
  \put(102,30){$_{\light{13},\mathbf{\dark{23}}}$}
  \put(162,30){$_{\light{12},\mathbf{\dark{23}}}$}

  \put(84,50){$_{\light{123},\mathbf{\dark{123}}}$}
}
\end{picture}
\end{center}
\caption{
This double wiring diagram has $3^2=9$ chamber minors:
$\Delta_{3,1}\,$, $\Delta_{3,2}\,$, $\Delta_{1,2}\,$, $\Delta_{1,3}\,$,
$\Delta_{23,12}\,$, $\Delta_{13,12}\,$, $\Delta_{13,23}\,$,
$\Delta_{12,23}\,$, and~$\Delta_{123,123}$.}
\label{fig:chamber-sets}
\end{figure}

\begin{theorem}[\cite{fz-dbc}]
\label{th:double-diagram-TP-criterion}
Every minor of a 
square matrix can be written as a subtraction-free
rational expression in the chamber minors of a given double wiring
diagram.
Thus, if these $n^2$ chamber minors evaluate positively at a given
$n\times n$ matrix~$z$, 
then $z$ is totally positive.
\end{theorem}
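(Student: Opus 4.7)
The plan is to follow the three-step template used in the proof of Theorem~\ref{th:diagram-TP-criterion}. The statement will follow from the conjunction of three facts: (1) every minor $\Delta_{I,J}$ of an $n\times n$ matrix arises as a chamber minor of \emph{some} double wiring diagram; (2) any two double wiring diagrams can be connected by a sequence of local moves; and (3) each such move replaces a single chamber minor by another via a subtraction-free exchange relation, leaving all other chamber minors unchanged. Given these facts, starting from the given double wiring diagram we can reach any other by a chain of moves, expressing each newly appearing chamber minor as a subtraction-free rational function of the preceding chamber minors; by Fact~(1) every minor of $z$ is eventually produced, and the positivity claim follows immediately from the subtraction-free form of the resulting expressions.

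For Fact~(1) I would argue by explicit construction. Since each half (thin or thick) of a double wiring diagram is an ordinary wiring diagram, and since every set $J\subset [1,n]$ of size $k$ arises as the below-set of some chamber in some single wiring diagram (this is what underlies Theorem~\ref{th:diagram-TP-criterion}), it suffices to pick the thin and thick wirings independently so that the chosen pair $(I,J)$ labels a common chamber of the superposition. For Fact~(2) the relevant local moves are of three types: a thick braid move, a thin braid move (each as in Figure~\ref{fig:moves1}), and a \emph{color-commutation} move that slides a thin crossing past an adjacent thick crossing. The first two connect any two wiring diagrams of a fixed color by Tits' lemma, as used in the proof of Theorem~\ref{th:diagram-TP-criterion}; the third provides the additional freedom needed to shuffle the two colors into any desired interleaving.

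For Fact~(3), a same-color braid move is locally indistinguishable from a single-color braid move, so it gives precisely the exchange relation~\eqref{eq1:ac+bd=yz}, applied either to a thin triple of chamber minors or to a thick one. A color-commutation move alters exactly one chamber minor (the one sitting between the two crossings that are being swapped), and the old and new minors are related by a short Desnanot--Jacobi (Lewis~Carroll) type identity whose other terms are products of chamber minors sitting in the adjacent chambers. Both kinds of identity can be derived from their ``minimal'' low-dimensional instances via Muir's Law of extensible minors (Proposition~\ref{prop:Muir}), in exactly the style of Exercise~\ref{ex:identity-Plucker}.

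The main obstacle I expect is the clean bookkeeping in Fact~(3) for the color-commutation move: one must pin down exactly which chamber minors change under the move, and verify that the relation among them has the form (product on the left) $=$ (sum of positive products on the right), so that the exchanged minor is a subtraction-free rational expression in the others. Once this combinatorial matching is nailed down, and once one has the transitivity of moves on double wiring diagrams in Fact~(2), the subtraction-free propagation runs mechanically and Theorem~\ref{th:double-diagram-TP-criterion} follows.
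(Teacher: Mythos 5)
Your proposal follows the same three-step template as the paper's proof: (1) every minor appears as a chamber minor of some double wiring diagram, (2) any two double wiring diagrams are connected by the three kinds of local moves (same-color braid moves plus the move interchanging adjacent crossings of different colors), and (3) each move exchanges a single chamber minor via the relation $YZ=AC+BD$, which for the mixed-color move is exactly the Lewis Carroll (Desnanot--Jacobi) identity, just as the paper states. This is essentially the paper's argument, so no further comparison is needed.
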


By now the reader can  guess the strategy for proving this theorem.

\begin{proof}
Theorem \ref{th:double-diagram-TP-criterion}
is a consequence of the following facts:
\begin{enumerate}[leftmargin=.25in]
\item Every minor is a chamber minor for
some double wiring diagram.

\item
Any two double wiring diagrams are related to each other via a
sequence of \emph{local moves} of 
three different kinds, shown in Figure~\ref{fig:moves}.

\item
Under each local move, the corresponding collections of chamber minors
transform by exchanging the minors $Y$ and~$Z$, and these minors
satisfy the identity
\begin{equation}
\label{eq:ac+bd=yz}
YZ = AC+BD \ .
\end{equation}
\end{enumerate}
		(By convention, the ``chamber minor'' associated to the region
		at the very bottom of the double wiring diagram is equal to $1$.)

\begin{figure}[ht]
\setlength{\unitlength}{1pt}
\begin{center}
\begin{picture}(120,45)(10,0)
\thicklines
\dark{

  \put(0,0){\line(1,0){10}}
  \put(30,0){\line(1,0){40}}
  \put(50,0){\line(1,0){10}}
  \put(90,0){\line(1,0){10}}
  \put(0,20){\line(1,0){10}}
  \put(30,20){\line(1,0){10}}
  \put(60,20){\line(1,0){10}}
  \put(90,20){\line(1,0){10}}
  \put(0,40){\line(1,0){40}}
  \put(60,40){\line(1,0){40}}

  \put(10,0){\line(1,1){20}}
  \put(40,20){\line(1,1){20}}
  \put(70,0){\line(1,1){20}}

  \put(10,20){\line(1,-1){20}}
  \put(40,40){\line(1,-1){20}}
  \put(70,20){\line(1,-1){20}}
}
  \put(115,24){\vector(1,0){10}}
  \put(125,16){\vector(-1,0){10}}

\thinlines
\light{
  \put(0,2){\line(1,0){100}}
  \put(0,22){\line(1,0){100}}
  \put(0,42){\line(1,0){100}}
}

  \put(2,8){$A$}
  \put(46,8){$Y$}
  \put(89,8){$D$}

  \put(16,28){$B$}
  \put(76,28){$C$}

\end{picture}
\begin{picture}(100,40)(-5,0)
\thicklines
\dark{

  \put(0,40){\line(1,0){10}}
  \put(30,40){\line(1,0){40}}
  \put(50,40){\line(1,0){10}}
  \put(90,40){\line(1,0){10}}
  \put(0,20){\line(1,0){10}}
  \put(30,20){\line(1,0){10}}
  \put(60,20){\line(1,0){10}}
  \put(90,20){\line(1,0){10}}
  \put(0,0){\line(1,0){40}}
  \put(60,0){\line(1,0){40}}

  \put(10,20){\line(1,1){20}}
  \put(40,0){\line(1,1){20}}
  \put(70,20){\line(1,1){20}}

  \put(10,40){\line(1,-1){20}}
  \put(40,20){\line(1,-1){20}}
  \put(70,40){\line(1,-1){20}}
}

\thinlines
\light{
  \put(0,2){\line(1,0){100}}
  \put(0,22){\line(1,0){100}}
  \put(0,42){\line(1,0){100}}
}
  \put(2,28){$B$}
  \put(46,28){$Z$}
  \put(89,28){$C$}

  \put(16,8){$A$}
  \put(76,8){$D$}

\end{picture}
\end{center}
\begin{center}
\begin{picture}(120,60)(10,0)
\thinlines
\light{

  \put(0,2){\line(1,0){10}}
  \put(30,2){\line(1,0){40}}
  \put(50,2){\line(1,0){10}}
  \put(90,2){\line(1,0){10}}
  \put(0,22){\line(1,0){10}}
  \put(30,22){\line(1,0){10}}
  \put(60,22){\line(1,0){10}}
  \put(90,22){\line(1,0){10}}
  \put(0,42){\line(1,0){40}}
  \put(60,42){\line(1,0){40}}

  \put(10,2){\line(1,1){20}}
  \put(40,22){\line(1,1){20}}
  \put(70,2){\line(1,1){20}}

  \put(10,22){\line(1,-1){20}}
  \put(40,42){\line(1,-1){20}}
  \put(70,22){\line(1,-1){20}}
}
  \put(2,8){$A$}
  \put(46,8){$Y$}
  \put(89,8){$D$}

  \put(16,28){$B$}
  \put(76,28){$C$}

\thicklines
  \put(115,24){\vector(1,0){10}}
  \put(125,16){\vector(-1,0){10}}
\dark{
  \put(0,0){\line(1,0){100}}
  \put(0,20){\line(1,0){100}}
  \put(0,40){\line(1,0){100}}
}

\end{picture}
\begin{picture}(100,40)(-5,0)
\thinlines
\light{

  \put(0,42){\line(1,0){10}}
  \put(30,42){\line(1,0){40}}
  \put(50,42){\line(1,0){10}}
  \put(90,42){\line(1,0){10}}
  \put(0,22){\line(1,0){10}}
  \put(30,22){\line(1,0){10}}
  \put(60,22){\line(1,0){10}}
  \put(90,22){\line(1,0){10}}
  \put(0,2){\line(1,0){40}}
  \put(60,2){\line(1,0){40}}

  \put(10,22){\line(1,1){20}}
  \put(40,2){\line(1,1){20}}
  \put(70,22){\line(1,1){20}}

  \put(10,42){\line(1,-1){20}}
  \put(40,22){\line(1,-1){20}}
  \put(70,42){\line(1,-1){20}}
}

\thicklines
\dark{
  \put(0,0){\line(1,0){100}}
  \put(0,20){\line(1,0){100}}
  \put(0,40){\line(1,0){100}}
}
  \put(2,28){$B$}
  \put(46,28){$Z$}
  \put(89,28){$C$}

  \put(16,8){$A$}
  \put(76,8){$D$}

\end{picture}
\end{center}
\begin{center}
\begin{picture}(100,60)(5,-10)
\thicklines
\dark{

  \put(0,0){\line(1,0){10}}
  \put(30,0){\line(1,0){40}}

  \put(0,20){\line(1,0){10}}
  \put(30,20){\line(1,0){40}}

  \put(10,0){\line(1,1){20}}
  \put(10,20){\line(1,-1){20}}
}

\thinlines
\light{

  \put(0,2){\line(1,0){40}}
  \put(60,2){\line(1,0){10}}

  \put(0,22){\line(1,0){40}}
  \put(60,22){\line(1,0){10}}

  \put(40,2){\line(1,1){20}}
  \put(40,22){\line(1,-1){20}}
}
  \put(2,8){$A$}
  \put(32,8){$Y$}
  \put(62,8){$C$}

  \put(32,28){$B$}
  \put(32,-12){$D$}

\thicklines
  \put(100,15){\vector(1,0){10}}
  \put(110,7){\vector(-1,0){10}}

\end{picture}
\begin{picture}(100,60)(-30,-10)
\thicklines
\dark{
  \put(0,0){\line(1,0){40}}
  \put(60,0){\line(1,0){10}}

  \put(0,20){\line(1,0){40}}
  \put(60,20){\line(1,0){10}}

  \put(40,0){\line(1,1){20}}
  \put(40,20){\line(1,-1){20}}
}

\thinlines
\light{

  \put(0,2){\line(1,0){10}}
  \put(30,2){\line(1,0){40}}

  \put(0,22){\line(1,0){10}}
  \put(30,22){\line(1,0){40}}

  \put(10,2){\line(1,1){20}}
  \put(10,22){\line(1,-1){20}}
}
  \put(2,8){$A$}
  \put(32,8){$Z$}
  \put(62,8){$C$}

  \put(32,28){$B$}
  \put(32,-12){$D$}

\end{picture}
\end{center}
\caption{Local ``moves.''}
\label{fig:moves}
\end{figure}

Statements (1) and (2) can be easily derived from their counterparts
for ordinary wiring diagrams, which appeared in the proof of Theorem
\ref{th:diagram-TP-criterion}. 
Each instance of the relation \eqref{eq:ac+bd=yz} is a well known
determinantal identity; the readers may enjoy finding its proof on
their own. 
The identities corresponding to the top two cases in
Figure~\ref{fig:moves} are nothing but the three-term 
relation \eqref{eq1:ac+bd=yz} which we discussed earlier;
the third one is sometimes called the ``Lewis Carroll identity,''
due to the role it plays in C.~L.~Dodgson's condensation method
\cite[pp.\ 170--180]{dodgson}. 
All of these identities were proved by P.~Desnanot as early as in 1819,
see \cite[pp.\ 140--142]{muir1960}.
\end{proof}

\begin{exercise}
\label{exercise:solid-minors}
A minor $\Delta_{I,J}$ is called \emph{solid}
if both $I$ and $J$ consist of  consecutive indices. 
It is easy to see that 
an $n\times n$ matrix~$z$ has $n^2$ solid minors $\Delta_{I,J}$ 
such that $I\cup J$ contains~$1$
(see Figure~\ref{fig:gasca}).   
Show that $z$ is TP if and only if all these $n^2$ minors are positive. 
\end{exercise}

\begin{figure}[ht]
\setlength{\unitlength}{1pt} 

\begin{center}
\begin{picture}(80,80)(0,3)
\thicklines

  \put(0,0){\line(1,0){80}}
  \put(0,0){\line(0,1){80}}
  \put(80,0){\line(0,1){80}}
  \put(0,80){\line(1,0){80}}

\dark{
  \put(40,50){\line(1,0){28}}
  \put(40,50){\line(0,1){28}}
  \put(40,78){\line(1,0){28}}
  \put(68,50){\line(0,1){28}}
}

\light{
  \put( 2,10){\line(1,0){58}}
  \put( 2,10){\line(0,1){58}}
  \put( 2,68){\line(1,0){58}}
  \put(60,10){\line(0,1){58}}
}
\end{picture}
\end{center}
\caption{
Solid minors $\Delta_{I,J}$ with $1\in I\cup J$. 
}
\label{fig:gasca}

\end{figure}

\begin{remark}
\label{rem:laurent-pos-gln}
Similarly to Remarks \ref{rem:laurent-pos-gr2m} and~\ref{rem:laurent-pos-base}, 
Theorem~\ref{th:double-diagram-TP-criterion} can be strengthened as
follows: 
every minor of a 
square matrix can be written as a \emph{Laurent polynomial with 
positive coefficients}
in the chamber minors of a given double wiring diagram.  
\end{remark}

The algebraic/combinatorial construction described above possesses the
same key features that we identified in the two previous settings. 
The collections of chamber minors associated to double wiring
diagrams are again examples of \emph{extended clusters} in the future
cluster algebra setup.
As noted above, all these extended clusters have the same cardinality~$n^2$.
Each of them contains the $2n-1$ minors of the form
$\Delta_{[1,p],[n-p+1,n]}$ and/or $\Delta_{[n-p+1,n],[1,p]}$ (for $p \in [1,n]$),
which correspond to unbounded chambers.
These $2n-1$ minors are the \emph{frozen variables}.
Removing them from an extended cluster, we obtain a
\emph{cluster} consisting of $(n-1)^2$ \emph{cluster
  variables},
the chamber minors associated with the bounded chambers.
A ``mutation'' of one of the three kinds depicted in
Figure~\ref{fig:moves} replaces a single cluster variable in a cluster by a new
one; the product of these two cluster variables (minors) appears on
the left-hand side of the corresponding \emph{exchange
  relation}~\eqref{eq:ac+bd=yz}. 

For $n = 3$, there are $34$ clusters corresponding to double wiring diagrams.
They are shown in Figure~\ref{fig:schemes} 
as vertices of a graph
whose edges correspond to local moves.
Looking closely at this graph, we
see that it is not regular: of the $34$ vertices, $18$ have
degree~$4$, and $16$ have degree~$3$.
Thus, for each of the $16$ clusters corresponding to vertices of degree~$3$,
there is one minor that cannot be exchanged from this cluster to
form another cluster.
This ``irregularity" can be repaired using 
two additional polynomials in the matrix entries,
see Exercise~\ref{exercise:KL}.

\begin{figure}[ht!]
\setlength{\unitlength}{1.45pt}
\begin{center}
\begin{picture}(200,322)(-100,-164)

\put(0,-165){\line(2,3){110}}
\put(0,-165){\line(-2,3){110}}
\put(0,-165){\line(0,1){55}}

\put(0,-110){\line(0,1){30}}
\put(0,-110){\line(2,1){40}}
\put(0,-110){\line(-2,1){40}}

\put(40,-90){\line(2,3){40}}
\put(40,-90){\line(0,1){30}}
\put(40,-90){\line(-2,1){40}}

\put(0,-80){\line(-2,1){40}}
\put(0,-80){\line(2,1){40}}

\put(-40,-90){\line(2,1){40}}
\put(-40,-90){\line(0,1){30}}
\put(-40,-90){\line(-2,3){40}}

\put(0,-70){\line(0,1){30}}

\put(40,-60){\line(-2,1){40}}
\put(-40,-60){\line(2,1){40}}
\put(40,-60){\line(0,1){30}}
\put(-40,-60){\line(0,1){30}}

\put(0,-40){\line(0,1){20}}

\put(0,-20){\line(1,1){20}}
\put(0,-20){\line(-1,1){20}}
\put(40,-30){\line(2,3){20}}
\put(40,-30){\line(-2,3){20}}
\put(80,-30){\line(1,1){30}}
\put(80,-30){\line(-2,3){20}}
\put(-40,-30){\line(2,3){20}}
\put(-40,-30){\line(-2,3){20}}
\put(-80,-30){\line(2,3){20}}
\put(-80,-30){\line(-1,1){30}}

\put(0,165){\line(2,-3){110}}
\put(0,165){\line(-2,-3){110}}
\put(0,165){\line(0,-1){55}}

\put(0,110){\line(0,-1){30}}
\put(0,110){\line(2,-1){40}}
\put(0,110){\line(-2,-1){40}}

\put(40,90){\line(2,-3){40}}
\put(40,90){\line(0,-1){30}}
\put(40,90){\line(-2,-1){40}}

\put(0,80){\line(-2,-1){40}}
\put(0,80){\line(2,-1){40}}

\put(-40,90){\line(2,-1){40}}
\put(-40,90){\line(0,-1){30}}
\put(-40,90){\line(-2,-3){40}}

\put(0,70){\line(0,-1){30}}

\put(40,60){\line(-2,-1){40}}
\put(-40,60){\line(2,-1){40}}
\put(40,60){\line(0,-1){30}}
\put(-40,60){\line(0,-1){30}}

\put(0,40){\line(0,-1){20}}

\put(0,20){\line(1,-1){20}}
\put(0,20){\line(-1,-1){20}}
\put(40,30){\line(2,-3){20}}
\put(40,30){\line(-2,-3){20}}
\put(80,30){\line(1,-1){30}}
\put(80,30){\line(-2,-3){20}}
\put(-40,30){\line(2,-3){20}}
\put(-40,30){\line(-2,-3){20}}
\put(-80,30){\line(2,-3){20}}
\put(-80,30){\line(-1,-1){30}}

{\small
\put(3,-22){$abcG$}
\put(-17,-1){$acFG$}
\put(-37,-32){$ceFG$}
\put(-34,-61){$cdeG$}
\put(2,-40){$bcdG$}
\put(42,-58){$bdfG$}
\put(12,-31){$bfEG$}
\put(23,-1){$abEG$}
\put(-9,-88){$defG$}
\put(2,-69){$bcdA$}
\put(-34,-92){$cdeA$}
\put(2,-114){$defA$}
\put(15,-93){$bdf\!A$}
\put(3,-166){$efgA$}
\put(-107,-1){$egAB$}
\put(-77,-32){$ceAB$}
\put(-57,-1){$ceBF$}
\put(64,1){$bfCE$}
\put(53,-33){$bfAC$}
\put(80,-6){$fgAC$}

\put(2,20){$aEFG$}

\put(-77,29){$egBF$}
\put(-37,29){$acBF$}
\put(14,29){$abCE$}
\put(51,28){$fgCE$}

\put(3,37){$aDEF$}

\put(-60,49){$aBDF$}
\put(28,50){$aCDE$}

\put(-13.5,62){$gD\,EF$}

\put(-13,82){$aB\,CD$}

\put(-34,89){$gBDF$}
\put(8.5,90.5){$gCDE$}

\put(2,112){$gBCD$}
\put(2,165){$gABC$}

\put(0,-20){\circle*{3}}
\put(-20,0){\circle*{3}}
\put(-40,-30){\circle*{3}}
\put(-40,-60){\circle*{3}}
\put(0,-40){\circle*{3}}
\put(40,-60){\circle*{3}}
\put(40,-30){\circle*{3}}
\put(20,0){\circle*{3}}
\put(0,-80){\circle*{3}}
\put(0,-70){\circle*{3}}
\put(-40,-90){\circle*{3}}
\put(0,-110){\circle*{3}}
\put(40,-90){\circle*{3}}
\put(0,-165){\circle*{3}}
\put(-110,0){\circle*{3}}
\put(-80,-30){\circle*{3}}
\put(-60,0){\circle*{3}}
\put(60,0){\circle*{3}}
\put(80,-30){\circle*{3}}
\put(110,0){\circle*{3}}

\put(0,20){\circle*{3}}

\put(-80,30){\circle*{3}}
\put(-40,30){\circle*{3}}
\put(40,30){\circle*{3}}
\put(80,30){\circle*{3}}

\put(0,40){\circle*{3}}

\put(-40,60){\circle*{3}}
\put(40,60){\circle*{3}}

\put(0,70){\circle*{3}}

\put(0,80){\circle*{3}}

\put(-40,90) {\circle*{3}}
\put(40,90){\circle*{3}}

\put(0,110){\circle*{3}}
\put(0,165){\circle*{3}}

{\small

\put(-123,-128){
$\begin{array}{ll}
a=z_{11} \\[.05in]
b=z_{12} \\[.05in]
c=z_{21} \\[.05in]
d=z_{22} \\[.05in]
e=z_{23} \\[.05in]
f=z_{32} \\[.05in]
g=z_{33}
\end{array}
$}

\put(60,-128){
$\begin{array}{ll}
A=\Delta_{23,23}\\[.05in]
B=\Delta_{23,13}\\[.05in]
C=\Delta_{13,23}\\[.05in]
D=\Delta_{13,13}\\[.05in]
E=\Delta_{13,12}\\[.05in]
F=\Delta_{12,13}\\[.05in]
G=\Delta_{12,12}
\end{array}
$}

} 

} 

\end{picture}
\vspace{-.05in}
\end{center}
\caption{
Total positivity tests for a $3\times 3$ matrix $z\!=\!(z_{ij})$.
Each test checks $9$ minors.
The frozen minors 
$z_{31}, z_{13}, \Delta_{23,12}(z), \Delta_{12,23}(z), \det(z)$ 
are common to all tests.
The remaining 4 minors form a cluster shown near the corresponding vertex.
To illustrate, the test derived from
Figure~\ref{fig:chamber-sets} involves the cluster
$bfCE=\{z_{32}, z_{12}, \Delta_{13,12}, \Delta_{13,23}\}$.
The edges of the graph correspond to local moves.}
\label{fig:schemes}
\end{figure}

\begin{exercise}
\label{exercise:KL}
For a $3\times 3$ matrix $z=(z_{ij})$, let
\begin{align}
\label{eq:K(z)}
K(z) &= z_{33} \Delta_{12,12}(z) - \det(z), \\
\label{eq:L(z)}
L(z) &= z_{11} \Delta_{23,23}(z)  - \det(z).
\end{align}
Use $K$ and $L$ 
to add $16$ more clusters to the graph in
Figure~\ref{fig:schemes}.
The resulting graph will be regular of degree~$4$.
As an example, the cluster $\{e,f,g,A\}$ at the bottom of 
Figure~\ref{fig:schemes}
will be joined to the new cluster $\{e,f,g,K\}$ by an edge corresponding
to a new exchange relation
\begin{equation}
\label{eq:new-exchange}
\Delta_{23,23} K = \Delta_{12,23} \Delta_{23,12} z_{33} + \det(z)
z_{23} z_{32} \,.
\end{equation}
This construction will yield 16 additional TP tests for $3\times 3$
matrices. 
\end{exercise}

The theory of cluster algebras, to be developed in subsequent
chapters, will unify the three examples we have treated here, 
and will provide a systematic way to produce the ``missing''
clusters and exchange relations, thereby 
generating a large class of new total positivity tests.

\chapter{Mutations of quivers and matrices}
\label{ch:combinatorics-of-mutations}

\vspace{-.4in}

In this chapter we discuss
mutations of quivers and of skew-symmetrizable matrices. 
These constructions 
lie at the heart of the combinatorial framework
underlying the general theory of cluster algebras. 

Quivers (or more generally, skew-symmetrizable matrices) are
the combinatorial data which accompany (extended) clusters 
and determine exchange relations between them. 
The notion of mutation generalizes many examples of ``local" transformations of combinatorial
objects, including those discussed in Chapter~\ref{ch:tp-examples}: 
flips in triangulations, braid moves in wiring diagrams, etc.  

In some guise, quiver mutation appeared in the work of theoretical
physicists 
(cf.\ \cite{cecotti-vafa, seiberg} and the discussion in
\cite[Section~6]{feng-hanany-he-uranga}) 
several years before its discovery by mathematicians~\cite{ca1}. 
However, the systematic study of the combinatorics of mutations 
has only begun with the advent of cluster~algebras.

\section{Quiver mutation}\label{sec:quivermutation}

\begin{definition}
\label{def:quiver}
A \emph{quiver} is a finite oriented graph, consisting of 
	vertices and directed edges (called \emph{arrows}).
We allow multiple edges, but we disallow loops
(i.e., an arrow may not connect a vertex to itself)
and oriented 2-cycles
(i.e., no arrows of opposite orientation may connect the same pair of
vertices).
A quiver does not have to be connected. 

In what follows, we will need a slightly richer notion, with some
vertices in a quiver designated as \emph{frozen}.
The remaining vertices are called \emph{mutable}.
We will always assume that there are no arrows between pairs of frozen vertices.
(Such arrows would make no difference in the future construction of
a cluster algebra associated with a~quiver.)
\end{definition}

The terminology in Definition~\ref{def:quiver} anticipates
the role that quiver muta\-tions play in the forthcoming definition of a cluster
algebra; wherein the ver\-tices of a quiver are labeled by 
the elements of an extended cluster, 
so that the frozen vertices correspond to
frozen variables, and the mutable
vertices to the cluster variables. 
In this chapter, all of this remains in the~background. 

\pagebreak[3]

\begin{definition}
Let $k$ be a mutable vertex in a quiver~$Q$.
The \emph{quiver mutation} $\mu_k$ transforms $Q$ into a new
quiver~$Q'=\mu_k(Q)$ via a sequence of three steps:
\begin{enumerate}[leftmargin=.25in]
\item For each oriented two-arrow path  $i\to k\to j$,
add a new arrow $i\to j$ (unless both
$i$ and~$j$ are frozen, in which case do nothing).
\item Reverse the direction of all arrows incident
to the vertex~$k$.
\item 
Repeatedly remove oriented 2-cycles
until unable to do so. 
\end{enumerate}
An example is given in  Figure~\ref{fig:quiver-mutation}.
\end{definition}

\begin{figure}[ht]
\begin{center}
\setlength{\unitlength}{1.8pt}
\begin{picture}(40,20)(0,0)

\put( 0,0){\makebox(0,0){$a$}}
\put(20,1){\makebox(0,0){$b$}}

\put(0,19.5){\makebox(0,0){$q$}}
\multiput(-3,17)(6,0){2}{\line(0,1){6}}
\multiput(-3,17)(0,6){2}{\line(1,0){6}}
\put(20,21){\makebox(0,0){$k$}}
\put(40,20){\makebox(0,0){$r$}}
\multiput(37,17)(6,0){2}{\line(0,1){6}}
\multiput(37,17)(0,6){2}{\line(1,0){6}}

\thicklines
\put(16,0){\vector(-1,0){12}}
\put(16,20){\vector(-1,0){11}}
\put(35,21){\vector(-1,0){12}}
\put(35,18.5){\vector(-1,0){12}}

\put(20,16.5){\vector(0,-1){12}}
\put(0,3.5){\vector(0,1){11}}

\put(3,3){\vector(1,1){14}}

\end{picture}
\begin{picture}(40,20)(0,0)
\put(20,10){\makebox(0,0){$\stackrel{\displaystyle\mu_k}{\longmapsto}$}}
\end{picture}
\begin{picture}(40,20)(0,0)

\put( 0,0){\makebox(0,0){$a$}}
\put(20,1){\makebox(0,0){$b$}}

\put(0,19.5){\makebox(0,0){$q$}}
\multiput(-3,17)(6,0){2}{\line(0,1){6}}
\multiput(-3,17)(0,6){2}{\line(1,0){6}}

\put(20,21){\makebox(0,0){$k$}}
\put(40,20){\makebox(0,0){$r$}}
\multiput(37,17)(6,0){2}{\line(0,1){6}}
\multiput(37,17)(0,6){2}{\line(1,0){6}}

\thicklines

\put(6,20){\vector(1,0){10}}
\put(23,21){\vector(1,0){12}}
\put(23,19){\vector(1,0){12}}

\put(-1.3,3.5){\vector(0,1){11}}
\put(1.3,3.5){\vector(0,1){11}}
\put(20,5){\vector(0,1){12}}

\put(17,17){\vector(-1,-1){14}}
\put(35.2,16.8){\vector(-1,-1){13}}
\put(37,15){\vector(-1,-1){13}}

\end{picture}
\end{center}
\caption{A quiver mutation~$\mu_k$. Vertices $q$ and $r$ are frozen.
Step~1 adds arrows $a\to b$, $a\to q$, and two arrows $r\to b$.
Step~2 reverses five arrows connecting~$k$ to $a,b,q,r$.
Step~3 removes the arrows $a\to b$ and $b\to a$.}
\label{fig:quiver-mutation}
\end{figure}

\begin{remark}
\label{rem:mut-source-sink}
If a vertex $k$ of a quiver is a sink or a source,
then mutation at $k$ reverses the orientations of all 
arrows incident to~$k$, and does nothing else.  This operation was first considered
in the context of quiver representation theory (the 
reflection functors of Bernstein-Gelfand-Ponomarev~\cite{BGP}). 
\end{remark}

We next formulate some  simple but important properties of
quiver mutation. 

\begin{exercise}
\label{ex:mut-simple}
Verify the following properties of quiver mutation.
\begin{enumerate}[leftmargin=.25in]
\item
Mutation is an involution: 
$\mu_{k}(\mu_k(Q))=Q$.  
\item
Mutation commutes with the simultaneous reversal of orientations of
all arrows of a quiver. 
\item
\label{ex:mut-commute}
Let $k$ and $\ell$ be two mutable vertices such that there is no arrow from $k$ to $\ell$, nor from $\ell$ to $k$.
Then mutations at $k$ and~$\ell$ commute with each other:
$\mu_{\ell}(\mu_k(Q))=\mu_{k}(\mu_{\ell}(Q))$.
\end{enumerate}
\end{exercise}

In particular, mutations in different
connected components of a quiver do not interact with each other. 

\pagebreak[3]

\section{Triangulations of polygons}
\label{sec:triangulations}

Triangulations of polygons  were  discussed
in Section~\ref{sec:Ptolemy}
in the context of studying total positivity in the Grassmannian
of $2$-planes in $m$-space. 

We now associate a quiver to each triangulation of a convex 
$m$-gon~$\mathbf{P}_m$
and explain how flips of such triangulations correspond to quiver mutations.

\begin{definition}
\label{def:Q(T)-polygon}
Let $T$ be a triangulation of the polygon $\mathbf{P}_m$ by pairwise
noncrossing diagonals. 
The quiver~$Q(T)$  associated to $T$
is defined as follows.
The frozen vertices of $Q(T)$ are labeled by the sides of~$\mathbf{P}_m$,
and the mutable vertices of $Q(T)$ are labeled by the diagonals of~$T$.
If~two diagonals, or a diagonal and a boundary segment, belong to the same triangle,
we connect the corresponding vertices in $Q(T)$ by an arrow
whose orientation is determined by the clockwise orientation of the
boundary of the triangle. See Figure~\ref{fig:quiver-triangulation}.
\end{definition}

\begin{figure}[ht]
\begin{center}
\vspace{-10pt}
\setlength{\unitlength}{3pt}
\begin{picture}(60,60)(0,0)
\thinlines
  \multiput(0,20)(60,0){2}{\line(0,1){20}}
  \multiput(20,0)(0,60){2}{\line(1,0){20}}
  \multiput(0,40)(40,-40){2}{\line(1,1){20}}
  \multiput(20,0)(40,40){2}{\line(-1,1){20}}
 
  \multiput(20,0)(20,0){2}{\circle*{1}}
  \multiput(20,60)(20,0){2}{\circle*{1}}
  \multiput(0,20)(0,20){2}{\circle*{1}}
  \multiput(60,20)(0,20){2}{\circle*{1}}
 
\put(40,0){\line(1,2){20}}
\put(0,40){\line(1,0){60}}
\put(0,20){\line(2,-1){40}}
\put(0,40){\line(1,-1){40}}
\put(20,60){\line(2,-1){40}}

\thicklines

\multiput(-1.5,28.5)(3,0){2}{\line(0,1){3}}
\multiput(-1.5,28.5)(0,3){2}{\line(1,0){3}}

\multiput(58.5,28.5)(3,0){2}{\line(0,1){3}}
\multiput(58.5,28.5)(0,3){2}{\line(1,0){3}}

\multiput(28.5,-1.5)(3,0){2}{\line(0,1){3}}
\multiput(28.5,-1.5)(0,3){2}{\line(1,0){3}}

\multiput(28.5,58.5)(3,0){2}{\line(0,1){3}}
\multiput(28.5,58.5)(0,3){2}{\line(1,0){3}}

\multiput(8.5,8.5)(3,0){2}{\line(0,1){3}}
\multiput(8.5,8.5)(0,3){2}{\line(1,0){3}}

\multiput(48.5,8.5)(3,0){2}{\line(0,1){3}}
\multiput(48.5,8.5)(0,3){2}{\line(1,0){3}}

\multiput(8.5,48.5)(3,0){2}{\line(0,1){3}}
\multiput(8.5,48.5)(0,3){2}{\line(1,0){3}}

\multiput(48.5,48.5)(3,0){2}{\line(0,1){3}}
\multiput(48.5,48.5)(0,3){2}{\line(1,0){3}}

{\put(20,10){\circle{3}}}
{\put(20,20){\circle{3}}}
{\put(30,40){\circle{3}}}
{\put(40,50){\circle{3}}}
{\put(50,20){\circle{3}}}

\linethickness{1.5pt}

\put(12,10){{\vector(1,0){6}}}
\put(22,8){{\vector(1,-1){6}}}

\put(2,29){{\vector(2,-1){16}}}
\put(20,18){{\vector(0,-1){6}}}
\put(18,12){{\vector(-1,1){16}}}

\put(21,22){{\vector(1,2){8}}}
\put(32,38){{\vector(1,-1){16}}}
\put(48,20){{\vector(-1,0){26}}}

\put(28,41){{\vector(-2,1){16}}}
\put(12,50){{\vector(1,0){26}}}
\put(38,48){{\vector(-1,-1){6}}}

\put(38,52){{\vector(-1,1){6}}}
\put(48,50){{\vector(-1,0){6}}}

\put(50,12){{\vector(0,1){6}}}
\put(52,22){{\vector(1,1){6}}}

\end{picture}
\vspace{-10pt}
\end{center}

\caption{The quiver $Q(T)$ associated to a triangulation $T$ of an octagon.}
\label{fig:quiver-triangulation}
\end{figure}

\begin{exercise}
\label{ex:flip-is-mutation}
Let $T$ be a triangulation of $\mathbf{P}_m$ as above. 
Let $T'$ be the triangulation obtained from $T$
by flipping a diagonal~$\gamma$. 
Verify that the quiver $Q(T')$ is obtained from $Q(T)$ by mutating at the
vertex labeled~by~$\gamma$. 
\end{exercise}

The construction of Definition~\ref{def:Q(T)-polygon} can be
generalized to triangulations of more general oriented surfaces with
boundary and punctures; this will be discussed in
Chapter~\ref{ch:surfaces}. 
Another generalization~\cite{fock-goncharov-ihes} 
was developed in the study of cluster structures arising in \emph{higher
  Teichm\"uller theory}; 
a very special case is described in the exercise below. 

\begin{exercise} 
\label{exercise:FG-sl3-d4}
To each triangulation $T$ of a convex polygon by noncrossing diagonals, 
let us associate a quiver $Q_3(T)$ as follows.
Place two mutable vertices of $Q_3(T)$ on each diagonal of~$T$,
two frozen vertices on each side of the polygon, 
and one mutable vertex in the interior of each triangle of~$T$. 
For a triangle in~$T$, 
let $A_1, A_2, B_1, B_2, C_1, C_2$ be the vertices of $Q_3(T)$ lying
on the boundary of the triangle, listed clockwise
so that $A_1$ and~$A_2$ (resp., $B_1$ and~$B_2$, $C_1$ and~$C_2$) lie
on the same side of the triangle. 
Let $K$ denote the vertex of $Q_3(T)$ lying inside the triangle.
Draw the arrows $A_1\to K\to B_2\to C_1\to K\to A_2\to B_1\to K\to
C_2\to A_1$. 
Doing so for each triangle of~$T$,
and removing the arrows between frozen vertices, yields the quiver~$Q_3(T)$.
Show that if $T$ and $T'$ are connected by a flip, then 
$Q_3(T)$ and $Q_3(T')$ are connected by a sequence of mutations.
See Figure \ref{fig:Q_3}.
\end{exercise}

\begin{figure}[ht]
\begin{center}
\vspace{-10pt}
\setlength{\unitlength}{2.8pt}
\begin{picture}(60,60)(0,0)
\thinlines
\put(0,0){\line(1,0){60}}
\put(0,60){\line(1,0){60}}
\put(0,0){\line(0,1){60}}
\put(60,0){\line(0,1){60}}
\put(0,60){\line(1,-1){60}}

\put(0,0){\circle*{1}}
\put(60,0){\circle*{1}}
\put(0,60){\circle*{1}}
\put(60,60){\circle*{1}}

\thicklines

\multiput(18.5,-1.5)(3,0){2}{\line(0,1){3}}
\multiput(18.5,-1.5)(0,3){2}{\line(1,0){3}}

\multiput(38.5,-1.5)(3,0){2}{\line(0,1){3}}
\multiput(38.5,-1.5)(0,3){2}{\line(1,0){3}}

\multiput(18.5,58.5)(3,0){2}{\line(0,1){3}}
\multiput(18.5,58.5)(0,3){2}{\line(1,0){3}}

\multiput(38.5,58.5)(3,0){2}{\line(0,1){3}}
\multiput(38.5,58.5)(0,3){2}{\line(1,0){3}}

\multiput(-1.5,18.5)(3,0){2}{\line(0,1){3}}
\multiput(-1.5,18.5)(0,3){2}{\line(1,0){3}}

\multiput(-1.5,38.5)(3,0){2}{\line(0,1){3}}
\multiput(-1.5,38.5)(0,3){2}{\line(1,0){3}}

\multiput(58.5,18.5)(3,0){2}{\line(0,1){3}}
\multiput(58.5,18.5)(0,3){2}{\line(1,0){3}}

\multiput(58.5,38.5)(3,0){2}{\line(0,1){3}}
\multiput(58.5,38.5)(0,3){2}{\line(1,0){3}}

{\put(20,20){\circle{3}}}
{\put(20,40){\circle{3}}}
{\put(40,40){\circle{3}}}
{\put(40,20){\circle{3}}}

\linethickness{1.5pt}

\put(2,40){{\vector(1,0){16}}}
\put(2,20){{\vector(1,0){16}}}
\put(22,20){{\vector(1,0){16}}}

\put(20,38){{\vector(0,-1){16}}}
\put(40,18){{\vector(0,-1){16}}}
\put(20,18){{\vector(0,-1){16}}}

\put(38,2){{\vector(-1,1){16}}}
\put(18,22){{\vector(-1,1){16}}}

\put(20,42){{\vector(0,1){16}}}
\put(40,42){{\vector(0,1){16}}}
\put(40,22){{\vector(0,1){16}}}

\put(38,40){{\vector(-1,0){16}}}
\put(58,40){{\vector(-1,0){16}}}
\put(58,20){{\vector(-1,0){16}}}

\put(22,58){{\vector(1,-1){16}}}
\put(42,38){{\vector(1,-1){16}}}

\end{picture}
\vspace{-10pt}
\end{center}

\caption{The quiver $Q_3(T)$ associated to a triangulation $T$ of a 
quadrilateral.}
\label{fig:Q_3}
\end{figure}

\section{Wiring diagrams}
\label{sec:mut-wiring}

Wiring diagrams were introduced 
in Section~\ref{sec:baseaffine}
in the context of studying total positivity in basic affine spaces. 
We also explained how to label each chamber of a wiring 
diagram by a subset of $[1,n]$, cf.\ Figure \ref{fig:chamber-sets1}.

We now associate a quiver to each wiring diagram and demonstrate that
braid moves in wiring diagrams translate into quiver mutations.  

The \emph{left end} (resp., \emph{right end}) of a chamber is a
crossing point of two wires located at the very left (resp., right)
of the chamber. 
Each bounded chamber has two ends; an unbounded chamber has one. 

\begin{definition}
\label{def:quiverwd}
The quiver $Q(D)$ associated to a wiring diagram~$D$ is defined as
follows. 
The vertices of $Q(D)$ are labeled by the chambers~of~$D$. 
The bounded chambers 
correspond to mutable vertices; 
the unbounded chambers correspond to frozen vertices.
Let $c$ and $c'$ be two chambers, at least one of which is bounded. 
Then there is an arrow $c\to c'$ in $Q(D)$ if and only if 
one of the following conditions is met: 
\begin{itemize}[leftmargin=.3in]
\item[(i)] the right end of~$c$ coincides with the left end of~$c'$;
\item[(ii)] the left end of $c$ lies directly above~$c'$, 
and the right end of~$c'$ lies directly below~$c$; 
\item[(iii)] the left end of~$c$ lies directly  below~$c'$,
and the right end of $c'$ lies directly above~$c$.
\end{itemize}
\end{definition}

An example is shown in Figure~\ref{fig:chamber-sets0}.

\begin{figure}[ht]
\setlength{\unitlength}{1.4pt}
\begin{center}
\vspace{-10pt}
\begin{picture}(210,60)(10,-10)
\thicklines
\dark{

  \put(10,0){\line(1,0){30}}
  \put(60,0){\line(1,0){100}}
  \put(180,0){\line(1,0){30}}
  \put(10,20){\line(1,0){30}}
  \put(60,20){\line(1,0){10}}
  \put(90,20){\line(1,0){70}}
  \put(180,20){\line(1,0){30}}
  \put(10,40){\line(1,0){60}}
  \put(90,40){\line(1,0){120}}

  \put(40,0){\line(1,1){20}}
  \put(70,20){\line(1,1){20}}
  \put(160,0){\line(1,1){20}}

  \put(40,20){\line(1,-1){20}}
  \put(70,40){\line(1,-1){20}}
  \put(160,20){\line(1,-1){20}}

  \put(213,-4){$\mathbf{3}$}
  \put(213,16){$\mathbf{2}$}
  \put(213,36){$\mathbf{1}$}

  \put(2,-4){$\mathbf{1}$}
  \put(2,16){$\mathbf{2}$}
  \put(2,36){$\mathbf{3}$}
}

  \put(18,10){$_{\mathbf{\dark{1}}}$}
  \put(76,10){$_{\mathbf{\dark{2}}}$}
  \put(195,10){$_{\mathbf{\dark{3}}}$}

  \put(46,30){$_{\mathbf{\dark{12}}}$}
  \put(102,30){$_{\mathbf{\dark{23}}}$}

\linethickness{1.5pt}

\put(24,10){\vector(1,0){50}}
\put(82, 10){\vector(1,0){111}}
\put(75, 15){\vector(-3,2){20}}
\put(101,27){\vector(-3,-2){20}}

\end{picture}
\vspace{-.1in}
\end{center}
\caption{A quiver associated with a wiring diagram. 
All vertices but~\scriptsize{$\mathbf{2}$} are frozen 
	(vertex $\mathbf{2}$ corresponds to the only bounded chamber).  
Consequently the quiver does not include the
arrow~{\scriptsize $\mathbf{{12}}\to\mathbf{{23}}$} because 
	{\scriptsize $\mathbf{{12}}$} and 
	{\scriptsize $\mathbf{{23}}$} are both 
	frozen/ unbounded.
}
\label{fig:chamber-sets0}
\end{figure}

The somewhat technical construction 
of Definition~\ref{def:quiverwd} is justified by the
fact that braid moves on wiring diagrams translate into mutations of
associated quivers: 

\begin{proposition} 
\label{pr:braid-mut}
Let $D$ and~$D'$ be wiring diagrams related
by a braid move at chamber~$Y$ (cf.\ Figure~\ref{fig:moves1}).  
Then $Q(D') = \mu_Y(Q(D))$.
\end{proposition}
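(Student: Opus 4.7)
The plan is to reduce the statement to a purely local computation. The braid move alters the wiring diagram $D$ only inside a small neighborhood $\Omega$ that contains the three crossings of the move and the five chambers $A, B, C, D, Y$ (replaced by $A, B, C, D, Z$ in $D'$); outside $\Omega$, the two diagrams coincide chamber-by-chamber and crossing-by-crossing. This locality should let me match the quivers $\mu_Y(Q(D))$ and $Q(D')$ in two independent parts: the ``far'' part, where nothing interesting happens, and the ``local'' part, where a finite case check settles everything.

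First I would verify that every arrow of $Q(D)$ or $Q(D')$ witnessed by a crossing outside $\Omega$ is identical in the two quivers. By Definition \ref{def:quiverwd}, each arrow is determined by an ``end'' (a crossing) and the chambers adjacent to it. Such an arrow cannot be incident to $Y$ or $Z$, since these chambers are entirely inside $\Omega$, so the mutation $\mu_Y$ leaves those arrows fixed. This reduces the problem to matching the portions of the two quivers supported on the vertex set $\{A,B,C,D,Y\}$ (resp.\ $\{A,B,C,D,Z\}$).

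Next I would enumerate, directly from Definition \ref{def:quiverwd}, the arrows of $Q(D)$ incident to $Y$ (these come from conditions (i)--(iii) applied at the three interior crossings) together with the direct arrows among $\{A,B,C,D\}$ coming from those same crossings; and similarly for $Q(D')$ at $Z$. Then I would apply the three steps of $\mu_Y$ to the local picture: add an arrow $i\to j$ for every 2-path $i\to Y\to j$, reverse every arrow incident to $Y$ (so that $Y$ becomes $Z$ in the correct configuration), and cancel the resulting 2-cycles among $\{A,B,C,D\}$. The output should agree arrow-for-arrow with the local part of $Q(D')$.

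The main obstacle is bookkeeping in this last step: the existing direct arrows among $A, B, C, D$ in $Q(D)$ partially cancel against the new arrows created by the 2-paths through $Y$, and I must verify that after cancellation the surviving arrows among $A,B,C,D$ are exactly those that Definition \ref{def:quiverwd} prescribes for $D'$. A convenient way to organize this is to exploit the top/bottom symmetry of the braid picture (Figure~\ref{fig:moves1}), which allows me to pair ``above'' and ``below'' conditions and halves the number of cases. Once the local verification is complete for the two orientations of the braid move, combining it with the ``far part'' observation from the first step proves $Q(D')=\mu_Y(Q(D))$.
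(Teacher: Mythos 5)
The paper gives no proof to compare against: Proposition~\ref{pr:braid-mut} is explicitly left as an exercise, and your strategy---isolate the braid move in a neighborhood $\Omega$ containing the three crossings, argue that nothing changes far away, and settle the rest by a finite check---is certainly the intended route. However, as written there is a genuine gap in the reduction step. You claim the problem reduces to matching the portions of the two quivers supported on $\{A,B,C,D,Y\}$, with those portions read off from Definition~\ref{def:quiverwd} ``at the three interior crossings.'' That is not quite true: conditions (ii) and (iii) each involve \emph{two} ends, one from each chamber, and for the pairs $(A,B)$ and $(C,D)$ one of the relevant ends lies outside~$\Omega$. For instance, whether $Q(D)$ contains the arrow $B\to A$ (condition (ii), using the right end of $A$, which is the leftmost crossing of the move, together with the \emph{left} end of $B$, which lies outside~$\Omega$) depends on whether the left end of $B$ sits directly above $A$---that is, on the relative horizontal positions of the left ends of $A$ and of $B$, data invisible in the local picture. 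The same external data governs whether $Q(D')$ contains $A\to B$. So the local picture alone does not determine the sub-quiver on $\{A,B,C,D,Y\}$, and a finite case check confined to $\Omega$ cannot settle everything.

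The repair is not difficult but it is a real additional step that your plan is missing: for each such pair one observes that exactly one of the two complementary external conditions holds (e.g., exactly one of ``the left end of $B$ lies directly above $A$'' and ``the left end of $A$ lies directly below $B$,'' since those two crossings occupy distinct horizontal positions), and then checks that the $2$-cycle cancellation in step (3) of $\mu_Y$---the new arrow $A\to B$ created by the path $A\to Y\to B$ either cancels a pre-existing $B\to A$ or survives---produces exactly the arrow that Definition~\ref{def:quiverwd} prescribes for $D'$ under the corresponding condition. A similar (easier) verification is needed for the chambers directly above and below the move, whose boundaries also contain the relocated crossings. Finally, you never actually carry out the enumeration of the genuinely local arrows ($A\to Y$, $C\to Y$, $Y\to B$, $Y\to D$, $B\to C$, and their images under $\mu_Y$, versus $B\to Z$, $D\to Z$, $Z\to A$, $Z\to C$, $A\to D$); since that computation together with the dichotomy above is the entire content of the proposition, what you have is a sound plan rather than a proof.
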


We leave the proof of Proposition~\ref{pr:braid-mut} as an exercise for the reader. 

\begin{exercise}
Draw the wiring diagrams corresponding to the quivers in
Figure~\ref{fig:two-quivers}. 
Verify that these wiring diagrams are related by a braid move, 
and that the quivers are related by a quiver mutation. 
\end{exercise}

\begin{figure}[ht]
\begin{center}
\setlength{\unitlength}{2.5pt} 
\begin{picture}(60,20)(0,7) 
\thinlines

\put( 0,10){\makebox(0,0){$\mathbf{_{1}}$}}
\put(20,10){\makebox(0,0){$\mathbf{_{2}}$}}
\put(40,10){\makebox(0,0){$\mathbf{_{3}}$}}
\put(60,10){\makebox(0,0){$\mathbf{_{4}}$}}

\put(10,20){\makebox(0,0){$\mathbf{_{12}}$}}
\put(30,20){\makebox(0,0){$\mathbf{_{23}}$}}
\put(50,20){\makebox(0,0){$\mathbf{_{34}}$}}

\put(20,30){\makebox(0,0){$\mathbf{_{123}}$}}
\put(42,30){\makebox(0,0){$\mathbf{_{234}}$}}

\thicklines 

\put(4,10){\vector(1,0){12}}
\put(24,10){\vector(1,0){12}}
\put(44,10){\vector(1,0){12}}

\put(14,20){\vector(1,0){12}}
\put(34,20){\vector(1,0){12}}

\put(28,18){\vector(-1,-1){6}}
\put(48,18){\vector(-1,-1){6}}
\put(38,28){\vector(-1,-1){6}}

\put(17,12){\vector(-1,1){5}}
\put(37,12){\vector(-1,1){5}}
\put(27,22){\vector(-1,1){5}}

\end{picture}
\qquad\qquad
\begin{picture}(60,20)(0,7) 

\put( 0,10){\makebox(0,0){$\mathbf{_{1}}$}}
\put(20,10){\makebox(0,0){$\mathbf{_{13}}$}}
\put(40,10){\makebox(0,0){$\mathbf{_{3}}$}}
\put(60,10){\makebox(0,0){$\mathbf{_{4}}$}}

\put(10,20){\makebox(0,0){$\mathbf{_{12}}$}}
\put(30,20){\makebox(0,0){$\mathbf{_{23}}$}}
\put(50,20){\makebox(0,0){$\mathbf{_{34}}$}}

\put(20,30){\makebox(0,0){$\mathbf{_{123}}$}}
\put(42,30){\makebox(0,0){$\mathbf{_{234}}$}}

\thicklines 

\put(16,10){\vector(-1,0){12}}
\put(36,10){\vector(-1,0){12}}
\put(44,10){\vector(1,0){12}}

\put(34,20){\vector(1,0){12}}

\put(21,12){\vector(1,1){6}}
\put(48,18){\vector(-1,-1){6}}
\put(37,28){\vector(-1,-1){6}}

\put(12,17){\vector(1,-1){5}}
\put(27,22){\vector(-1,1){5}}

\qbezier(37,8)(20,2)(3,8)
\put(35.5,7.6){\vector(3,1){2}}

\end{picture}
\vspace{-10pt}
\end{center}
\caption{Quivers for two wiring diagrams related by a braid move. 
}
\label{fig:two-quivers}
\end{figure}

\begin{remark}\label{rem:reduceddecomp}
We note that the wiring diagrams introduced
in \cref{sec:baseaffine} can be identified with reduced decompositions
of the longest permutation $w_0$ of the symmetric group.  One 
can extend the
 notion of wiring diagram to the setting of 
decompositions (not necessarily reduced) of an arbitrary element
of the symmetric group.  The quiver $Q(D)$ and the correspondence
between braid moves and mutations also make sense in this setting.
\end{remark}

\section{Double wiring diagrams}
\label{sec:mut-double-wiring}

We next extend the constructions of Section~\ref{sec:mut-wiring}
to the double wiring diagrams discussed in
Section~\ref{sec:matrices}. 

Recall that each chamber of a double wiring 
diagram~$D$ is labeled by a pair of subsets of~$[1,n]$, 
cf.\ Figure~\ref{fig:chamber-sets}. 
Similarly to the case of ordinary wiring diagrams, 
each chamber of~$D$ 
has either one or two ``ends," and each end is either
``thick" or ``thin" (formed by two thick lines or two thin 
lines).

\begin{definition}
\label{def:quiverdwd}
The quiver $Q(D)$ associated with a double wiring diagram~$D$ is defined as
follows. 
The vertices of $Q(D)$ are labeled by the chambers of~$D$. 
The bounded chambers 
correspond to mutable vertices; 
the unbounded chambers correspond to frozen vertices.
Let $c$ and $c'$ be two chambers, at least one of which is bounded. 
Then there is an arrow $c\to c'$ in $Q(D)$ if and only if 
one of the following conditions is met
(cf.\ Figure~\ref{fig:chamber-quiver2}): 
\begin{itemize}[leftmargin=.3in]
\item[(i)] the right (resp., left) end of~$c$ is thick (resp., thin), 
and coincides with the left (resp., right) end of~$c'$;
\item[(ii)] the left end of $c'$ is thin, the right end of $c'$ is thick,
and the entire chamber $c'$ lies directly above or directly below~$c$;
\item[(iii)] the left end of $c$ is thick, the right end of $c$ is thin,
and the entire chamber $c$  lies directly above or directly below~$c'$;
\item[(iv)] the left (resp., right) end of $c'$ is above $c$ 
and the right (resp., left) end of $c$ is below $c'$ 
and both of these ends are thin (resp., thick);
\item[(v)] the left (resp., right) end of $c$ is above $c'$ 
and the right (resp., left) end of $c'$ is below $c$ 
and both of these ends are thick (resp., thin). 
\end{itemize}
\end{definition}

\begin{figure}[ht]
\begin{center}
\setlength{\unitlength}{1.4pt}
\begin{picture}(180,60)(6,-10)
\thicklines
\dark{
\linethickness{1.8pt}
  \put(0,0){\line(1,0){40}}
  \put(60,0){\line(1,0){100}}
  \put(180,0){\line(1,0){10}}
  \put(0,20){\line(1,0){40}}
  \put(60,20){\line(1,0){10}}
  \put(90,20){\line(1,0){70}}
  \put(180,20){\line(1,0){10}}
  \put(0,40){\line(1,0){70}}
  \put(90,40){\line(1,0){100}}

  \put(40,0){\line(1,1){20}}
  \put(70,20){\line(1,1){20}}
  \put(160,0){\line(1,1){20}}

  \put(40,20){\line(1,-1){20}}
  \put(70,40){\line(1,-1){20}}
  \put(160,20){\line(1,-1){20}}

  \put(193,-6){$\mathbf{3}$}
  \put(193,14){$\mathbf{2}$}
  \put(193,34){$\mathbf{1}$}

  \put(-6,-6){$\mathbf{1}$}
  \put(-6,14){$\mathbf{2}$}
  \put(-6,34){$\mathbf{3}$}
}

\light{
\thinlines

  \put(0,2){\line(1,0){100}}
  \put(120,2){\line(1,0){70}}
  \put(0,22){\line(1,0){10}}
  \put(30,22){\line(1,0){70}}
  \put(120,22){\line(1,0){10}}
  \put(150,22){\line(1,0){40}}
  \put(0,42){\line(1,0){10}}
  \put(30,42){\line(1,0){100}}
  \put(150,42){\line(1,0){40}}

  \put(10,22){\line(1,1){20}}
  \put(100,2){\line(1,1){20}}
  \put(130,22){\line(1,1){20}}

  \put(10,42){\line(1,-1){20}}
  \put(100,22){\line(1,-1){20}}
  \put(130,42){\line(1,-1){20}}

  \put(193,2){${1}$}
  \put(193,22){${2}$}
  \put(193,42){${3}$}

  \put(-6,2){${3}$}
  \put(-6,22){${2}$}
  \put(-6,42){${1}$}

  \put(14,10){$_{\light{3},\mathbf{\dark{1}}}$}
  \put(76,10){$_{\light{3},\mathbf{\dark{2}}}$}
  \put(136,10){$_{\light{1},\mathbf{\dark{2}}}$}
  \put(181,10){$_{\light{1},\mathbf{\dark{3}}}$}

  \put(-1,30){$_{\light{23},\mathbf{\dark{12}}}$}
  \put(42,30){$_{\light{13},\mathbf{\dark{12}}}$}
  \put(102,30){$_{\light{13},\mathbf{\dark{23}}}$}
  \put(162,30){$_{\light{12},\mathbf{\dark{23}}}$}

  \put(84,50){$_{\light{123},\mathbf{\dark{123}}}$}
}
\end{picture}
\end{center}
\begin{center}
\setlength{\unitlength}{1.4pt}
\begin{picture}(180,40)(6,15)
\thicklines
\put(84,50){\vector(-2,-1){30}}
\put(40,30){\vector(-1,0){22}}
\put(60,30){\vector(1,0){40}}
\put(108,34){\vector(-1,2){6}}
\put(112,26){\vector(3,-2){22}}
\put(162,30){\vector(-1,0){40}}
\put(24,10){\vector(1,0){50}}
\put(76,14){\vector(-3,2){18}}
\put(134,10){\vector(-1,0){44}}
\put(148,10){\vector(1,0){32}}
\light{
\thinlines

  \put(14,10){$_{\light{3},\mathbf{\dark{1}}}$}
  \put(76,10){$_{\light{3},\mathbf{\dark{2}}}$}
  \put(136,10){$_{\light{1},\mathbf{\dark{2}}}$}
  \put(181,10){$_{\light{1},\mathbf{\dark{3}}}$}

  \put(-1,30){$_{\light{23},\mathbf{\dark{12}}}$}
  \put(42,30){$_{\light{13},\mathbf{\dark{12}}}$}
  \put(102,30){$_{\light{13},\mathbf{\dark{23}}}$}
  \put(162,30){$_{\light{12},\mathbf{\dark{23}}}$}

  \put(84,50){$_{\light{123},\mathbf{\dark{123}}}$}
}
\end{picture}
\end{center}
\caption{A double wiring diagram~$D$ and the corresponding quiver~$Q(D)$.}
\label{fig:chamber-quiver2}
\end{figure}

\begin{remark}
One can check that the quiver $Q(D)$ defined as above
depends only on the isotopy type of the double wiring diagram~$D$. 
\end{remark}

As before, local moves translate into quiver mutations: 

\begin{proposition}
Suppose that double wiring diagrams $D$ and $D'$ are related
by a local move (cf.\ Figure~\ref{fig:moves})
at chamber~$Y$. Then 
$Q(D') = \mu_Y(Q(D))$.
\end{proposition}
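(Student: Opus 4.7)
The plan is to verify the proposition by a local case analysis, one case for each of the three types of local move pictured in Figure~\ref{fig:moves}. The crucial preliminary observation is that a local move alters only a small window of the double wiring diagram~$D$: the five chambers $A,B,C,D,Y$ involved in the move are replaced by $A,B,C,D,Z$, while the rest of~$D$ is unchanged. Moreover, the rules (i)--(v) in Definition~\ref{def:quiverdwd} for drawing an arrow between two chambers depend only on the wires forming the two ends of those chambers, which stay the same unless both chambers lie in the window. So, outside of the six relevant vertices $A,B,C,D,Y$ of $Q(D)$ (resp.\ $A,B,C,D,Z$ of $Q(D')$), the two quivers agree. It then suffices to check, for each of the three move types, that the restriction of $Q(D')$ to $\{A,B,C,D,Z\}$ together with all arrows between these vertices and the rest of the quiver coincides with what the mutation rule $\mu_Y$ would produce from the restriction of $Q(D)$.

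Next I would dispense with the two ``pure'' braid moves (top two rows of Figure~\ref{fig:moves}), where the crossing that changes involves two wires of the same color. These are exactly the situation of Proposition~\ref{pr:braid-mut} applied inside one color. The only new issue is that one must also check arrows connecting $Y$ or its neighbors to chambers across wires of the \emph{other} color. Here one uses that the other-colored wires pass through the window as horizontal segments that are unaffected by the move, so the ends of $A,B,C,D$ that involve the other color are unchanged, and clauses (i)--(v) in Definition~\ref{def:quiverdwd} behave additively in the two colors. Thus the proof of Proposition~\ref{pr:braid-mut} essentially transplants unchanged.

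The substantive new case is the third local move, where a thick--thin crossing is swapped (bottom row of Figure~\ref{fig:moves}). Here I would explicitly enumerate the ends of $Y$ and $Z$: $Y$ has a thick left end and a thin right end (or vice versa, depending on orientation), whereas $Z$ has the opposite configuration. I would then draw the complete list of arrows among $\{A,B,C,D,Y\}$ predicted by (i)--(v), then do the same for $\{A,B,C,D,Z\}$, and finally compare the difference with $\mu_Y$. In particular, one checks: every arrow incident to $Y$ is reversed in going to~$Z$ (matching mutation step~2); the new pairs of through-paths $A\to Y\to B$ etc.\ create the correct shortcut arrows (matching step~1); and any induced $2$-cycles cancel (step~3). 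One must also verify that arrows between neighbors of $Y$ and chambers outside the window are correctly preserved: an arrow $c\to A$ predicted by a clause such as (ii) using the right end of~$A$ still exists because that right end is unchanged by the move.

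The main obstacle is the bookkeeping in the third case, because clauses (ii)--(v) are sensitive to which of the two ends of a chamber is thick and which is thin, and the swap flips precisely this data at the windowed crossing. A clean way to control the casework is to introduce a sign or color bookkeeping at each end of a chamber and re-express (i)--(v) as a single combinatorial rule; then the verification that swapping the color of one crossing yields the mutation $\mu_Y$ becomes a short symbolic computation rather than a long geometric case check. I would present the argument in that compressed form, and then remark that the two pure-braid cases are special instances in which the other color does not participate.
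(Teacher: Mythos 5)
The paper does not actually prove this proposition: it states that ``the verification of this statement, which generalizes Proposition~\ref{pr:braid-mut}, is left as an exercise for the reader'' (and Proposition~\ref{pr:braid-mut} itself is likewise left as an exercise). So there is no argument in the text to compare yours against; the intended proof is exactly the kind of direct finite check you describe, and your organizing ideas --- localize to a window around the move, handle the two single-color moves by analogy with the ordinary wiring-diagram case, and do an explicit arrow-by-arrow comparison for the mixed thick--thin move --- are the right ones.

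That said, as written your proposal is a plan rather than a proof: the entire mathematical content of the statement lives in the enumeration you repeatedly defer (``I would explicitly enumerate\dots'', ``I would then draw the complete list\dots''), so nothing is yet verified. Two points deserve more care when you carry it out. First, the locality claim is stated too strongly: clauses (ii)--(v) of Definition~\ref{def:quiverdwd} are not functions of the two ends alone --- they refer to an \emph{entire chamber} lying directly above or below another, and to the relative vertical positions of ends --- so the chambers whose incident arrows can change are not only $A,B,C,D,Y$ but also chambers vertically adjacent to the window, and the ends of $A,B,C,D$ themselves move (e.g.\ in the top move of Figure~\ref{fig:moves}, $A$ is adjacent to $Y$ before the move but to $D$ after). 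Second, the reduction of the two pure-color moves to Proposition~\ref{pr:braid-mut} is not automatic: clause (i) orients arrows across thin crossings oppositely to the single-color rule of Definition~\ref{def:quiverwd}, and clauses (iv)--(v) genuinely couple the two colors, so even for a pure thick (or pure thin) braid move one must recheck the arrows created by the other color's clauses rather than ``transplant the proof unchanged.'' None of this invalidates the approach, but the proof is not done until that finite casework is actually displayed.
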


The verification of this statement, which generalizes
Proposition~\ref{pr:braid-mut}, is left as an exercise for the reader.  

\begin{remark}
While the constructions of the quiver associated to a (double) 
wiring diagram given in \cref{def:quiverwd} and
\cref{def:quiverdwd} appear a bit complicated, we will explain
in Chapter~\ref{ch:plabic}
 how these constructions can be viewed as a special case
of the quiver associated to a planar bicolored graph in a disk.
\cref{urban} below will give a first introduction to quivers
associated to planar bipartite graphs.
\end{remark}

\pagebreak[3]

\section{Spider moves}\label{urban}

The spider move~\cite{kpw} is an operation on bipartite graphs which 
arises in several different contexts including 
statistical mechanics 
~\cite{goncharov-kenyon}, 
gauge theory
(Seiberg duality action on brane tilings~\cite{franco-hanany-et-al}), 
and total positivity (square moves in 
Postnikov's 
plabic 
graphs~\cite{postnikov}).  

We will give spider moves a more thorough treatment in 
Chapter~\ref{ch:plabic}.  
In Chapter \ref{ch:Grassmannians}, 
it will play an important role in the study of 
cluster structures on Grassmannians.

\begin{definition}\label{def:graphquiver}
Let $G$ be a  connected planar bipartite graph, 
properly embedded in a disk, and considered up to isotopy.
More precisely, we require the following: 
\begin{itemize}[leftmargin=.15in]
\item
each vertex in $G$ is colored either white or black
and lies either in the interior of the disk or on
its boundary; 
\item
each edge in $G$ connects two vertices of different colors,
and is represented by a simple curve whose interior 
is disjoint from the other edges and from the boundary;
\item
the closure of each \emph{face} (i.e, a connected component of the complement of~$G$)
is simply connected;
\item 
 each internal vertex has degree at least~$2$;
\item 
each boundary vertex has degree~$1$.
		(This condition 
		could be lifted but we keep it here for the sake of simplicity.)
\end{itemize}
To such a bipartite planar graph~$G$, we associate a quiver~$Q(G)$ as follows.
The vertices of $Q(G)$ are labeled by the faces of~$G$. 
A vertex of $Q(G)$ is frozen if the corresponding face is 
incident to the boundary of the disk, and is mutable otherwise. 
For each edge $e$ in~$G$, 
we introduce an arrow 
connecting the (distinct) faces separated by~$e$;
this arrow is oriented so that it 
``sees'' the white endpoint of~$e$ to the left and the black endpoint to the 
right as it crosses over~$e$, see Figure~\ref{fig:arrow}.
We then remove oriented $2$-cycles from the resulting quiver, 
one by one, to get~$Q(G)$. 
\end{definition}

\begin{figure}[htbp] 
\begin{center}
\setlength{\unitlength}{1.1pt} 
\begin{picture}(30,17)(0,0) 
\thicklines
\put(2,10){\line(1,0){29}} 
\put(0,10){\circle{4}}
\put(30,10){\circle*{4}}
\linethickness{1.5pt}
\put(15,1){\vector(0,1){20}} 
\end{picture} 
\vspace{-8pt}
\end{center}
\vspace{-.1in} 
\caption{Constructing a quiver associated to a bipartite graph.} 
\label{fig:arrow} 
\end{figure} 

We assume that the quiver $Q(G)$ is connected. 

\begin{definition}
Given a bivalent vertex $v$ adjacent to two internal vertices,
the \emph{contraction move} 
contracts the two edges incident to $v$.
	The reverse \emph{decontraction move}
 ``decontracts'' a vertex $v'$ into three vertices of alternating 
	colors, 
	see \cref{fig:contraction}. 
 \begin{figure}[h]
        \centering
	 \resizebox{2.5in}{!}{
        \begin{tikzpicture}
        \node[circle, draw = black, scale = 0.75, fill,
		label={90:$v$}] (A) at (0,1) {};
        \node[circle, draw = black, scale = 0.75,
		] (B) at (-1,1) {};
        \node[circle, draw = black, scale = 0.75,
		] (C) at (1,1) {};

        \draw[thick] (B)--(A)--(C);
        \draw[thick] (-1.73205, 2)--(B)--(-1.73205,0);
        \draw[thick] (2,1)--(C);
        \draw[thick] (1.73205, 2)--(C)--(1.73205,0); 
   
        \draw[>=triangle 45, <->] (2.5,1)--(3.5,1);

        \node[circle, draw = black, scale = 0.75,
		label={180:$v'$}] (D) at (4.5,1) {};

        \draw[thick] (4.5-.73205, 2)--(D)--(4.5-.73205,0);
        \draw[thick] (D)--(5.5,1);
        \draw[thick] (4.5+.73205, 2)--(D)--(4.5+.73205,0); 
    \end{tikzpicture}
	}
\caption{The contraction/ decontraction move.
	(There is a similar move with all colors reversed.)
	\label{fig:contraction} }
    \end{figure}
\end{definition}

\begin{definition}
\label{def:urban-renewal}
The \emph{spider move} is a local transformation of 
a bipartite graph  as in 
\cref{def:graphquiver}
	which contains a quadrilateral
face, whose vertices alternate in color and have  degree at least $3$. The move switches the colors of the four vertices of the quadrilateral and
 adds four ``legs,'' as shown in \cref{fig:urban}.

 \begin{figure}[h]
        \centering
	 \resizebox{4in}{!}{
        \begin{tikzpicture}
		\node[circle, draw = black, scale = 0.75] (A) at (-2,2) {};
        \node[circle, draw = black, scale = 0.75, fill] (B) at (2,2) {};
        \node[circle, draw = black, scale = 0.75] (C) at (2,-2) {};
        \node[circle, draw = black, scale = 0.75,fill] (D) at (-2,-2) {};
	\draw[thick, red] (A)--(B)--(C)--(D)--(A);
        \draw[thick] (-2.1,3)--(A)--(-3,2.1);
        \draw[thick] (-2.8, 2.8)--(A);
        \draw[thick] (B)--(2.8,2.8);
        \draw[thick] (2.1,-3)--(C)--(3,-2.1);
        \draw[thick] (D)--(-2.8,-2.8);
   
        \draw[>=triangle 45, ->] (3,0)--(4,0);
    \end{tikzpicture}
        \begin{tikzpicture}
		\node[circle, draw = black, scale = 0.75] (A) at (-2,2) {};
        \node[circle, draw = black, scale = 0.75, fill] (B) at (2,2) {};
        \node[circle, draw = black, scale = 0.75] (C) at (2,-2) {};
        \node[circle, draw = black, scale = 0.75,fill] (D) at (-2,-2) {};
	\node[circle, draw = black, scale = 0.75, fill] (E) at (-1.5,1.5) {};
        \node[circle, draw = black, scale = 0.75] (F) at (1.5,1.5) {};
        \node[circle, draw = black, scale = 0.75, fill] (G) at (1.5,-1.5) {};
        \node[circle, draw = black, scale = 0.75] (H) at (-1.5,-1.5) {};
	\draw[thick, red] (E)--(F)--(G)--(H)--(E);
		\draw[thick, red] (A)--(E);
		\draw[thick, red] (B)--(F);
		\draw[thick, red] (C)--(G);
		\draw[thick, red] (D)--(H);
        \draw[thick] (-2.1,3)--(A)--(-3,2.1);
        \draw[thick] (-2.8, 2.8)--(A);
        \draw[thick] (B)--(2.8,2.8);
        \draw[thick] (2.1,-3)--(C)--(3,-2.1);
        \draw[thick] (D)--(-2.8,-2.8);
   
        \draw[>=triangle 45, ->] (3,0)--(4,0);
    \end{tikzpicture}
        \begin{tikzpicture}
		\node[circle, draw = black, scale = 0.75] (A) at (-2,2) {};
        \node[circle, draw = black, scale = 0.75, fill] (B) at (2,2) {};
        \node[circle, draw = black, scale = 0.75] (C) at (2,-2) {};
        \node[circle, draw = black, scale = 0.75,fill] (D) at (-2,-2) {};
	\node[circle, draw = black, scale = 0.75, fill] (E) at (-1.5,1.5) {};
        \node[circle, draw = black, scale = 0.75] (F) at (1.5,1.5) {};
        \node[circle, draw = black, scale = 0.75, fill] (G) at (1.5,-1.5) {};
        \node[circle, draw = black, scale = 0.75] (H) at (-1.5,-1.5) {};
		\node[circle, draw = black, scale = 0.75] (I) at (-1,1) {};
        \node[circle, draw = black, scale = 0.75, fill] (J) at (1,1) {};
        \node[circle, draw = black, scale = 0.75] (K) at (1,-1) {};
        \node[circle, draw = black, scale = 0.75,fill] (L) at (-1,-1) {};
	\draw[thick, red] (I)--(J)--(K)--(L)--(I);
		\draw[thick, red] (A)--(E);
		\draw[thick, red] (E)--(I);
		\draw[thick, red] (B)--(F);
		\draw[thick, red] (F)--(J);
		\draw[thick, red] (C)--(G);
		\draw[thick, red] (G)--(K);
		\draw[thick, red] (D)--(H);
		\draw[thick, red] (H)--(L);
        \draw[thick] (-2.1,3)--(A)--(-3,2.1);
        \draw[thick] (-2.8, 2.8)--(A);
        \draw[thick] (B)--(2.8,2.8);
        \draw[thick] (2.1,-3)--(C)--(3,-2.1);
        \draw[thick] (D)--(-2.8,-2.8);
    \end{tikzpicture}
	}
\caption{Two spider moves, performed in 
succession. Note that 
the third graph is related to the first
graph via the contraction move. In this sense  the
spider move is an involution.}
\label{fig:urban} 
    \end{figure}
\end{definition}

\begin{exercise}
\label{ex:urban} 
Verify that if bipartite graphs are related by a 
contraction-decontraction move, the corresponding quivers are identical.
Then verify that if bipartite graphs as above 
are related via the spider move, 
the corresponding quivers are related by a mutation. 
\end{exercise}

\begin{remark} 
It is possible to work with an alternative to the 
spider move, which is shown in 
\cref{fig:spider2}.  In order to apply this move
we require that the two internal black vertices
have degree exactly three.
	\begin{figure}[h]
	 \resizebox{2.2in}{!}{
        \begin{tikzpicture}
        \node[circle, draw = black, scale = 0.5] (A) at (0,0) {};
        \node[circle, draw = black, scale = 0.5] (B) at (2,0) {};
        \node[circle, draw = black, scale = 0.5] (C) at (0,2) {};
        \node[circle, draw = black, scale = 0.5] (D) at (2,2) {};
        \node[circle, fill, draw = black, scale = 0.5] (E) at (0.5, 0.5) {};
        \node[circle, fill, draw = black, scale = 0.5] (F) at (1.5, 1.5) {};

        \draw (A)--(E)--(B)--(F)--(C)--(E);
        \draw (F)--(D);
        \draw (0, -0.3)--(A)--(-0.3, 0); 
        \draw (2, -0.3)--(B)--(2.3, 0); 
        \draw (-0.3, 2)--(C)--(0, 2.3);
        \draw (2.3, 2)--(D)--(2, 2.3);

        \draw[>=triangle 45, <->] (2.5,1)--(3.5,1);

        \node[circle, draw = black, scale = 0.5] (A) at (4,0) {};
        \node[circle, draw = black, scale = 0.5] (B) at (6,0) {};
        \node[circle, draw = black, scale = 0.5] (C) at (4,2) {};
        \node[circle, draw = black, scale = 0.5] (D) at (6,2) {};
        \node[circle, fill, draw = black, scale = 0.5] (E) at (5.5, 0.5) {};
        \node[circle, fill, draw = black, scale = 0.5] (F) at (4.5, 1.5) {};

        \draw (A)--(E)--(D)--(F)--(A);
        \draw (F)--(C);
        \draw (E)--(B);
        \draw (4, -0.3)--(A)--(3.7, 0); 
        \draw (6, -0.3)--(B)--(6.3, 0); 
        \draw (3.7, 2)--(C)--(4, 2.3);
        \draw (6.3, 2)--(D)--(6, 2.3);
    \end{tikzpicture}
		}
		\caption{An alternative version of the
		spider move.  
	(There is  a similar move with all colors reversed.)
		\label{fig:spider2}}
	\end{figure}
\end{remark}

Definition~\ref{def:urban-renewal} can be generalized 
to bipartite graphs properly embedded into an oriented surface.

\section{Mutation equivalence}\label{sec:mutationequivalence}

\begin{definition}
\label{def:mut-equiv-quivers}
Two quivers $Q$ and $Q'$ are called \emph{mutation equivalent}
if $Q$ can be transformed into a quiver isomorphic to~$Q'$ 
by a sequence of mutations. 
(Equivalently, $Q'$ can be transformed into a quiver isomorphic~to~$Q$.) 
The \emph{mutation equivalence class} $[Q]$ of a quiver $Q$ is the set
of all quivers (up to isomorphism) which are mutation equivalent to~$Q$.
\end{definition}

\begin{definition}\label{def:sametype}
Two quivers $Q$ and $Q'$ are said to have the same \emph{type}
if their mutable parts are mutation equivalent. Here the 
	\emph{mutable part} of the quiver refers to the mutable
	vertices together with all arrows between them.  When the mutation
	equivalence class of the mutable part has a name (e.g., type ADE)
then we will use that name to describe the type.
\end{definition}

\begin{example}
Consider the quiver $Q$ at the left of \cref{fig:A3}; this 
is an orientation of the type $A_3$ Dynkin diagram.  The 
	mutation equivalence class $[Q]$ of $Q$
	consists of quivers isomorphic to 
 one of 
those shown in \cref{fig:A3}.  In particular, we consider an 
oriented $3$-cycle to be a quiver of type $A_3$.
\begin{figure}[ht]
\begin{center}
\setlength{\unitlength}{2.8pt}
\begin{picture}(12,8)(0,0)
\put(0,0){\circle*{2}}
\put(12,0){\circle*{2}}
\put(6,10){\circle*{2}}
\thicklines
\put(4.4,8.66){\vector(-3,-5){4}}
\put(7.4,8.66){\vector(3,-5){4}}
\end{picture} \hspace{.5in}
\begin{picture}(12,8)(0,0)
\put(0,0){\circle*{2}}
\put(12,0){\circle*{2}}
\put(6,10){\circle*{2}}
\thicklines
\put(.5,2.06){\vector(3,5){4}}
\put(11.6,2.06){\vector(-3,5){4}}
\end{picture} \hspace{.5in}
\begin{picture}(12,8)(0,0)
\put(0,0){\circle*{2}}
\put(12,0){\circle*{2}}
\put(6,10){\circle*{2}}
\thicklines
\put(.5,2.06){\vector(3,5){4}}
\put(7.4,8.66){\vector(3,-5){4}}
\end{picture} \hspace{.5in}
\begin{picture}(12,8)(0,0)
\put(0,0){\circle*{2}}
\put(12,0){\circle*{2}}
\put(6,10){\circle*{2}}
\thicklines
\put(2,-0.7){\vector(1,0){8}}
\put(11.6,2.06){\vector(-3,5){4}}
\put(4.4,8.66){\vector(-3,-5){4}}
\end{picture}
\end{center}
\caption{The isomorphism classes of the quivers of type $A_3$.  
 All three vertices are mutable.}
\label{fig:A3}
\end{figure}
\end{example}

\begin{example}
The \emph{Markov quiver} is a quiver $Q$ of the form 
shown in Figure~\ref{fig:markov-quiver}. 
Mutating $Q$ at any of its $3$ vertices produces a quiver isomorphic to~$Q$,
so $[Q]$ 
consists of a single element (up to isomorphism).
\end{example}

\begin{figure}[ht]
\begin{center}
\vspace{-10pt}
\setlength{\unitlength}{2.8pt}
\begin{picture}(12,8)(0,0)
\put(0,0){\circle*{2}}
\put(12,0){\circle*{2}}
\put(6,10){\circle*{2}}
\thicklines
\put(2,-0.7){\vector(1,0){8}}
\put(2,0.7){\vector(1,0){8}}
\put(11.6,2.06){\vector(-3,5){4}}
\put(10.4,1.34){\vector(-3,5){4}}
\put(5.6,7.94){\vector(-3,-5){4}}
\put(4.4,8.66){\vector(-3,-5){4}}
\end{picture}
\vspace{-15pt}
\end{center}
\caption{The Markov quiver. All three vertices are mutable.}
\label{fig:markov-quiver}
\end{figure}

\begin{exercise}
\label{ex:orientations-of-a-tree}
Show that all orientations of a tree (with no frozen vertices)
are mutation equivalent to each
other via mutations at sinks and sources.
\end{exercise}

An $a\times b$ \emph{grid quiver} is an orientation of an $a\times b$
grid in which each $4$-cycle is oriented either clockwise or counterclockwise; 
see Figure~\ref{fig:grid}. 
All vertices are mutable.

\begin{exercise}
\label{exercise:grid=triang.grid} 
Show that a grid quiver 
is mutation equivalent to the 
corresponding \emph{triangulated grid quiver} (see Figure~\ref{fig:grid}).
\end{exercise}

\begin{figure}[ht]
\begin{center}
\vspace{-10pt}
\setlength{\unitlength}{2.8pt}
\begin{picture}(40,20)(0,0)
\put(0,0){\circle*{2}}
\put(10,0){\circle*{2}}
\put(20,0){\circle*{2}}
\put(30,0){\circle*{2}}
\put(0,10){\circle*{2}}
\put(10,10){\circle*{2}}
\put(20,10){\circle*{2}}
\put(30,10){\circle*{2}}
\put(0,20){\circle*{2}}
\put(10,20){\circle*{2}}
\put(20,20){\circle*{2}}
\put(30,20){\circle*{2}}
\thicklines
\put(2,0){\vector(1,0){6}}
\put(2,20){\vector(1,0){6}}
\put(12,10){\vector(1,0){6}}
\put(22,0){\vector(1,0){6}}
\put(22,20){\vector(1,0){6}}
\put(18,0){\vector(-1,0){6}}
\put(18,20){\vector(-1,0){6}}
\put(8,10){\vector(-1,0){6}}
\put(28,10){\vector(-1,0){6}}
\put(0,8){\vector(0,-1){6}}
\put(20,8){\vector(0,-1){6}}
\put(10,18){\vector(0,-1){6}}
\put(30,18){\vector(0,-1){6}}
\put(0,12){\vector(0,1){6}}
\put(20,12){\vector(0,1){6}}
\put(10,2){\vector(0,1){6}}
\put(30,2){\vector(0,1){6}}
\end{picture}
\qquad
\setlength{\unitlength}{2.8pt}
\begin{picture}(40,20)(0,0)
\put(0,0){\circle*{2}}
\put(10,0){\circle*{2}}
\put(20,0){\circle*{2}}
\put(30,0){\circle*{2}}
\put(0,10){\circle*{2}}
\put(10,10){\circle*{2}}
\put(20,10){\circle*{2}}
\put(30,10){\circle*{2}}
\put(0,20){\circle*{2}}
\put(10,20){\circle*{2}}
\put(20,20){\circle*{2}}
\put(30,20){\circle*{2}}
\thicklines
\put(2,0){\vector(1,0){6}}
\put(2,10){\vector(1,0){6}}
\put(2,20){\vector(1,0){6}}
\put(12,0){\vector(1,0){6}}
\put(12,10){\vector(1,0){6}}
\put(12,20){\vector(1,0){6}}
\put(22,10){\vector(1,0){6}}
\put(22,0){\vector(1,0){6}}
\put(22,20){\vector(1,0){6}}
\put(0,2){\vector(0,1){6}}
\put(0,12){\vector(0,1){6}}
\put(20,12){\vector(0,1){6}}
\put(10,2){\vector(0,1){6}}
\put(20,2){\vector(0,1){6}}
\put(10,12){\vector(0,1){6}}
\put(30,2){\vector(0,1){6}}
\put(30,12){\vector(0,1){6}}
\put(8,8){\vector(-1,-1){7}}
\put(8,18){\vector(-1,-1){7}}
\put(18,8){\vector(-1,-1){7}}
\put(18,18){\vector(-1,-1){7}}
\put(28,8){\vector(-1,-1){7}}
\put(28,18){\vector(-1,-1){7}}

\end{picture}
\vspace{-10pt}
\end{center}
\caption{The $3 \times 4$ grid quiver, and the corresponding
  triangulated grid quiver.} 
\label{fig:grid}
\end{figure}

\begin{exercise}
\label{exercise:D4-E8}
Verify that in each row of Figure~\ref{fig:E6E8},
the quiver on the left is mutation equivalent to any orientation of
the Dynkin diagram on the right.
\end{exercise}

\begin{figure}[ht]
\begin{center}
\begin{tabular}{ll}
\setlength{\unitlength}{2.8pt}
\begin{picture}(30,10)(0,0)
\thicklines
\multiput(0,0)(10,0){2}{\circle*{2}}
\multiput(0,10)(10,0){2}{\circle*{2}}
\multiput(0,2)(10,0){2}{\vector(0,1){6}}
\multiput(8,8)(10,0){1}{\vector(-1,-1){6}}
\multiput(2,0)(10,0){1}{\vector(1,0){6}}
\multiput(2,10)(10,0){1}{\vector(1,0){6}}
\end{picture}
&
\setlength{\unitlength}{2.8pt}
\begin{picture}(40,10)(0,0)
\multiput(10,10)(10,0){3}{\circle*{2}}
\put(20,0){\circle*{2}}
\thicklines
\put(10,10){\line(1,0){20}}
\put(20,10){\line(0,-1){10}}
\end{picture}
\\[.3in]
\setlength{\unitlength}{2.8pt}
\begin{picture}(30,10)(0,0)
\thicklines
\multiput(0,0)(10,0){2}{\circle*{2}}
\multiput(0,10)(10,0){3}{\circle*{2}}
\multiput(0,2)(10,0){2}{\vector(0,1){6}}
\multiput(8,8)(10,0){2}{\vector(-1,-1){6}}
\multiput(2,0)(10,0){1}{\vector(1,0){6}}
\multiput(2,10)(10,0){2}{\vector(1,0){6}}
\end{picture}
&
\setlength{\unitlength}{2.8pt}
\begin{picture}(40,10)(0,0)
\multiput(0,10)(10,0){4}{\circle*{2}}
\put(20,0){\circle*{2}}
\thicklines
\put(0,10){\line(1,0){30}}
\put(20,10){\line(0,-1){10}}
\end{picture}
\\[.3in]
\setlength{\unitlength}{2.8pt}
\begin{picture}(30,10)(0,0)
\thicklines
\multiput(0,0)(10,0){3}{\circle*{2}}
\multiput(0,10)(10,0){3}{\circle*{2}}
\multiput(0,2)(10,0){3}{\vector(0,1){6}}
\multiput(8,8)(10,0){2}{\vector(-1,-1){6}}
\multiput(2,0)(10,0){2}{\vector(1,0){6}}
\multiput(2,10)(10,0){2}{\vector(1,0){6}}
\end{picture}
&
\setlength{\unitlength}{2.8pt}
\begin{picture}(40,10)(0,0)
\multiput(0,10)(10,0){5}{\circle*{2}}
\put(20,0){\circle*{2}}
\thicklines
\put(0,10){\line(1,0){40}}
\put(20,10){\line(0,-1){10}}
\end{picture}
\\[.3in]
\setlength{\unitlength}{2.8pt}
\begin{picture}(30,10)(0,0)
\thicklines
\multiput(0,0)(10,0){3}{\circle*{2}}
\multiput(0,10)(10,0){4}{\circle*{2}}
\multiput(0,2)(10,0){3}{\vector(0,1){6}}
\multiput(8,8)(10,0){3}{\vector(-1,-1){6}}
\multiput(2,0)(10,0){2}{\vector(1,0){6}}
\multiput(2,10)(10,0){3}{\vector(1,0){6}}
\end{picture}
&
\setlength{\unitlength}{2.8pt}
\begin{picture}(40,10)(0,0)
\multiput(0,10)(10,0){6}{\circle*{2}}
\put(20,0){\circle*{2}}
\thicklines
\put(0,10){\line(1,0){50}}
\put(20,10){\line(0,-1){10}}
\end{picture}
\\[.3in]
\setlength{\unitlength}{2.8pt}
\begin{picture}(40,10)(0,0)
\thicklines
\multiput(0,0)(10,0){4}{\circle*{2}}
\multiput(0,10)(10,0){4}{\circle*{2}}
\multiput(0,2)(10,0){4}{\vector(0,1){6}}
\multiput(8,8)(10,0){3}{\vector(-1,-1){6}}
\multiput(2,0)(10,0){3}{\vector(1,0){6}}
\multiput(2,10)(10,0){3}{\vector(1,0){6}}
\end{picture}
&
\setlength{\unitlength}{2.8pt}
\begin{picture}(60,10)(0,0)
\multiput(0,10)(10,0){7}{\circle*{2}}
\put(20,0){\circle*{2}}
\thicklines
\put(0,10){\line(1,0){60}}
\put(20,10){\line(0,-1){10}}
\end{picture}
\end{tabular}
	\vspace{-5pt}
\end{center}
\caption{Quivers mutation equivalent to orientations of Dynkin
  diagrams of types $D_4, D_5, E_6, E_7, E_8$.} 
\label{fig:E6E8}
\end{figure}

A \emph{triangular grid quiver} with $k$ vertices on each side
is a quiver with $\binom{k+1}{2}$ vertices and $3\binom{k}{2}$ arrows
that has the form shown in Figure~\ref{fig:trigrid}. 
All vertices are mutable. 

\begin{figure}[ht]
\begin{center}
\setlength{\unitlength}{2.8pt}
\begin{picture}(24,20)(0,0)
\put(0,0){\circle*{2}}
\put(12,0){\circle*{2}}
\put(24,0){\circle*{2}}
\put(6,10){\circle*{2}}
\put(18,10){\circle*{2}}
\put(12,20){\circle*{2}}
\put(2,0){\vector(1,0){8}}
\put(14,0){\vector(1,0){8}}
\put(8,10){\vector(1,0){8}}
\put(11.1,1.5){\vector(-3,5){4.2}}
\put(23.1,1.5){\vector(-3,5){4.2}}
\put(17.1,11.5){\vector(-3,5){4.2}}
\put(5.1,8.5){\vector(-3,-5){4.2}}
\put(17.1,8.5){\vector(-3,-5){4.2}}
\put(11.1,18.5){\vector(-3,-5){4.2}}
\end{picture}
	\vspace{-5pt}
\end{center}
\caption{A triangular grid quiver.}
\label{fig:trigrid}
\end{figure}

\begin{exercise}
Show that the triangular grid quiver with three vertices on each side 
(see Figure~\ref{fig:trigrid}) is mutation equivalent to an orientation of a tree.  (However, this is no longer true for a triangular grid
quiver with four vertices on each side.)
\end{exercise}

\begin{exercise}
	\emph{(Difficult but elementary.)}
Show that the $k \times (2k+1)$ grid quiver
is mutation equivalent to the triangular grid 
quiver with $2k$ vertices on each side. 
\end{exercise}

\begin{definition}
\label{def:finitemutationtype}
A quiver $Q$ 
is said to have \emph{finite mutation type} 
if the mutation equivalence class $[Q]$ of $Q$ 
is finite.  
\end{definition}

Quivers with no frozen vertices
of finite mutation type can be completely classified in
explicit combinatorial terms. 
This classification will be described in Chapter~\ref{ch:surfaces}. 

We conclude this section by stating, without proof, an
innocent-looking but rather nontrivial result 
about quiver mutation. 

A quiver is called \emph{acyclic} if it has no oriented cycles.

\begin{theorem}[see \cite{caldero-keller}]
\label{th:acyclic-mut-equiv}
Let $Q$ and $Q'$ be acyclic quivers mutation equivalent to each other.
Then $Q$ can be transformed into a quiver isomorphic to~$Q'$ 
via a sequence of mutations at sources and sinks. 
Consequently (cf.\ Remark~\ref{rem:mut-source-sink}), 
all acyclic quivers in a given mutation equivalence
class have the same underlying undirected graph. 
\end{theorem}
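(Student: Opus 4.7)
The plan is to prove the theorem via the categorification of cluster combinatorics provided by cluster categories. First, I would attach to an acyclic quiver $Q$ its path algebra $kQ$ over a field $k$; since $Q$ is acyclic, $kQ$ is finite-dimensional and hereditary, so one can form the bounded derived category $D^b(\operatorname{mod} kQ)$ and the associated cluster category $\mathcal{C}_Q = D^b(\operatorname{mod} kQ)/\langle \tau^{-1}[1]\rangle$, where $\tau$ is the Auslander--Reiten translate and $[1]$ is the shift (Buan--Marsh--Reineke--Reiten--Todorov). The decisive general fact I would invoke is that cluster-tilting objects $T \in \mathcal{C}_Q$ are in bijection with seeds in the mutation class of $Q$, in a way that interchanges mutation of seeds with mutation of cluster-tilting objects, and that the quiver of the seed is the Gabriel quiver of $\operatorname{End}_{\mathcal{C}_Q}(T)^{\mathrm{op}}$.

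Second, I would identify exactly which cluster-tilting objects $T$ give rise to acyclic seeds. The claim to establish, at the heart of the argument, is that the Gabriel quiver of $\operatorname{End}_{\mathcal{C}_Q}(T)^{\mathrm{op}}$ is acyclic if and only if $T$ lifts to a classical tilting module $\bar T$ over some hereditary path algebra $kQ''$ of an acyclic quiver $Q''$. Under this correspondence, the endomorphism algebra $\operatorname{End}(T)^{\mathrm{op}}$ is itself the hereditary algebra $kQ''$, so the quiver $Q''$ associated with the acyclic seed is precisely the one appearing as a tilted quiver of $Q$. In particular, two acyclic quivers $Q$ and $Q'$ in the same mutation class both arise as Gabriel quivers of tilting modules living inside the same (equivalent) cluster category.

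Finally, I would invoke Happel's theorem from tilting theory: two hereditary path algebras $kQ$ and $kQ'$ are derived equivalent if and only if $Q'$ can be obtained from $Q$ by a sequence of BGP reflection functors, applied at sources and sinks. Combining this with the previous step, mutation-equivalent acyclic quivers give rise to the same cluster category, hence to derived-equivalent hereditary path algebras, and therefore are connected by BGP reflections. By Remark~\ref{rem:mut-source-sink}, BGP reflections coincide with quiver mutations at sources and sinks; since such mutations only reverse orientations of arrows incident to the chosen vertex, the underlying undirected graph is preserved, yielding the ``consequently'' statement at once.

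The main obstacle is the middle step: pinning down the precise relationship between acyclic seeds of the cluster algebra and tilting modules over hereditary algebras, and showing that an acyclic endomorphism quiver forces the cluster-tilting object to come from such a tilting module. This requires controlling when $\operatorname{Ext}^1$ relations in $\mathcal{C}_Q$ create cycles in the endomorphism quiver, and the argument leans on a careful use of the Auslander--Reiten structure of $\mathcal{C}_Q$ together with the no-loops/no-2-cycles properties inherited from mutation. Once this representation-theoretic bridge is built, Happel's classical result supplies the combinatorial conclusion essentially for free.
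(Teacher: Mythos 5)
The first thing to note is that the paper does not prove this statement at all: it is introduced with ``We conclude this section by stating, without proof, an innocent-looking but rather nontrivial result,'' and the reader is referred to \cite{caldero-keller}. So there is no in-paper argument to compare yours against; the relevant benchmark is the cited categorical proof, and your outline does follow that route (cluster categories in the sense of Buan--Marsh--Reineke--Reiten--Todorov, compatibility of seed mutation with mutation of cluster-tilting objects, and Happel's theorem that derived-equivalent path algebras of acyclic quivers are linked by reflections at sources and sinks). The overall architecture is the correct one, and you have honestly flagged the hard middle step.

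Two cautions about the places where the sketch is thinnest. First, the inference ``mutation-equivalent acyclic quivers give rise to the same cluster category, hence to derived-equivalent hereditary path algebras'' is not a formal implication: an equivalence of orbit categories $D^b(kQ)/\langle\tau^{-1}[1]\rangle\simeq D^b(kQ')/\langle\tau^{-1}[1]\rangle$ does not by itself yield $D^b(kQ)\simeq D^b(kQ')$. The working argument instead shows that the cluster-tilted algebra $\operatorname{End}_{\mathcal{C}_Q}(T')^{\mathrm{op}}$ attached to an acyclic seed has no relations --- every minimal relation of a cluster-tilted algebra lies on an oriented cycle of its quiver --- so it equals the hereditary algebra $kQ'$; being a tilted algebra of (an algebra reflection-equivalent to) $kQ$ that is hereditary, it is derived equivalent to $kQ$ by Happel, and only then does the reflection theorem apply. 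This is exactly your ``main obstacle,'' and as written it remains an unproved black box, as does the compatibility of quiver mutation with mutation of cluster-tilting objects (Buan--Marsh--Reiten), which is itself a substantial theorem rather than a formality. Second, the statement in the book allows disconnected and multiply-laced acyclic quivers; the reduction to connected components is harmless (mutations in distinct components commute and do not interact), but it should be said. So: right strategy, consistent with the source the paper cites, but a roadmap rather than a proof until the hereditarity of acyclic cluster-tilted algebras is actually established.
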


\begin{corollary}
\label{cor:trees-inequivalent}
An acyclic quiver which is mutation equivalent to an orientation of a
tree is itself an orientation of the same tree.  
In particular, orientations of non-isomorphic trees are not mutation equivalent.
\end{corollary}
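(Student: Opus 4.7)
The plan is to derive Corollary~\ref{cor:trees-inequivalent} directly from Theorem~\ref{th:acyclic-mut-equiv} combined with Remark~\ref{rem:mut-source-sink}. Let $Q$ be an acyclic quiver that is mutation equivalent to an orientation $T$ of a tree. By Theorem~\ref{th:acyclic-mut-equiv}, there is a sequence of mutations
\[
Q = Q_0 \stackrel{\mu_{k_1}}{\longrightarrow} Q_1 \stackrel{\mu_{k_2}}{\longrightarrow} \cdots \stackrel{\mu_{k_r}}{\longrightarrow} Q_r \cong T,
\]
in which each $k_i$ is a source or a sink of $Q_{i-1}$.

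The key observation is that a mutation at a source or a sink preserves the underlying undirected graph. Indeed, if $k$ is a source of $Q_{i-1}$, then there are no arrows of the form $j \to k$, so no oriented two-arrow paths $j \to k \to \ell$ exist. Consequently Step~(1) of the mutation adds no new arrows, Step~(2) merely reverses the directions of all arrows incident to~$k$, and Step~(3) removes nothing, since no 2-cycles are created. The case of a sink is symmetric. Thus each $Q_i$ has the same underlying undirected graph as $Q_{i-1}$, and by induction the underlying undirected graph of $Q$ coincides with that of $Q_r \cong T$. Since $T$ is an orientation of a tree, $Q$ must itself be an orientation of the very same tree.

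For the second assertion, suppose that $T$ and $T'$ are mutation-equivalent orientations of trees. Applying the first part with $Q = T'$ (which is acyclic, being an orientation of a tree) shows that $T'$ is an orientation of the same underlying tree as $T$, so the two trees are isomorphic. This proves the contrapositive: orientations of non-isomorphic trees cannot be mutation equivalent.

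The only nontrivial input is Theorem~\ref{th:acyclic-mut-equiv} itself, which is cited without proof from~\cite{caldero-keller}; everything else in the argument is a direct unpacking of the definition of mutation in the special cases of sources and sinks, and therefore presents no real obstacle.
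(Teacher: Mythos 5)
Your argument is correct and is essentially the paper's own: the corollary is stated as an immediate consequence of Theorem~\ref{th:acyclic-mut-equiv} together with Remark~\ref{rem:mut-source-sink}, which already records that mutation at a source or sink only reverses the incident arrows and hence preserves the underlying undirected graph. You have simply unpacked that observation explicitly, which is exactly what the paper intends.
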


The proof of \cref{th:acyclic-mut-equiv} 
(and hence \cref{cor:trees-inequivalent})  given in \cite{caldero-keller}
uses the machinary of \emph{cluster categories}; 
it would be very interesting to find a purely combinatorial proof of
either of these results.

\begin{exercise}
Which orientations of an $n$-cycle are
muta\-tion equivalent? 
\end{exercise}

In general, it can be very hard to determine whether two quivers are
mutation equivalent to each other. In what follows, we use the 
designation ``problem'' to refer to an open problem.

\begin{problem}
\label{probem:decide-mut-equiv-quivers}
Design a (reasonably efficient) algorithm for deciding whether two
quivers are mutation equivalent or not. 
\end{problem}

\section{Matrix mutation}
\label{sec:mutmatrix}

In this section, we extend the notion of mutation from quivers to 
a certain class of  matrices. 
We begin by explaining how matrices can be viewed as generalizations of quivers. 

\begin{definition}
\label{def:Bmatrix}
Let $Q$ be a quiver (as in Definition~\ref{def:quiver})
with $m$ vertices, $n$~of them mutable. 
Let us label the vertices of $Q$ by the indices $1,\dots,m$
so that the mutable vertices are labeled $1,\dots,n$. 
The \emph{extended exchange matrix} of~$Q$ is the $m \times n$
matrix $\tilde B(Q) = (b_{ij})$ defined by
\[
b_{ij} = 
\begin{cases}
\ell & \text{if there are $\ell$ arrows from vertex $i$ to vertex $j$ in~$Q$;}\\
-\ell & \text{if there are $\ell$ arrows from vertex $j$ to vertex $i$
  in~$Q$;}\\
0 &\text{otherwise.}
\end{cases}
\]
The \emph{exchange matrix} $B(Q)$ is the 
$n\times n$ skew-symmetric submatrix of $\tilde B(Q)$ occupying
the first $n$ rows:
\[
B(Q)=(b_{ij})_{i,j\in [1,n]}\,.
\]
\end{definition}

To illustrate, consider the Markov quiver~$Q$
shown in Figure~\ref{fig:markov-quiver}.
Then
\[
\tilde B(Q)=B(Q)=
\pm\begin{bmatrix}
0 & 2 & -2\\
-2& 0 & 2\\
2 & -2& 0
\end{bmatrix},
\]
where the sign depends on the labeling of the vertices.

\begin{remark}
While the definition of $\tilde B(Q)$ depends on the choice of labeling of
the vertices of~$Q$ by the integers $1,\dots,m$, 
we often consider
extended exchange matrices up to a simultaneous relabeling of 
rows and columns $1,2,\dots,n$, and a relabeling of the rows
$n+1, n+2, \dots, m$.
\end{remark}

The proof of the following lemma is straightforward. 

\begin{lemma}
\label{lem:adjmatrix}
Let $k$ be a mutable vertex of a quiver~$Q$. 
The extended exchange matrix $\tilde B(\mu_k(Q))=(b'_{ij})$ of the mutated
quiver $\mu_k(Q)$ 
is given by
\begin{equation}
\label{eq:matrix-mutation}
b'_{ij} =
\begin{cases}
-b_{ij} & \text{if $i=k$ or $j=k$;} \\[.05in]
b_{ij}+b_{ik}b_{kj} & \text{if $b_{ik}>0$ and $b_{kj}>0$;}\\[.05in]
b_{ij}-b_{ik}b_{kj} & \text{if $b_{ik}<0$ and $b_{kj}<0$;}\\[.05in]
b_{ij} & \text{otherwise.}
\end{cases}
\end{equation}
\end{lemma}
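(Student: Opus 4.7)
The plan is to verify the formula by tracking how each entry $b_{ij}$ of the extended exchange matrix changes through the three steps in the definition of quiver mutation $\mu_k$, reading off the cases in \eqref{eq:matrix-mutation} directly. The key observation is that the signed convention defining $b_{ij}$ (positive when arrows go $i\to j$, negative when they go $j\to i$) already encodes ``$i\to j$ arrows minus $j\to i$ arrows,'' so that cancellation of oriented $2$-cycles in Step 3 is automatic: once we have computed the net signed count after Step 1, no further bookkeeping is required for Step 3.

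First I would handle the easy row/column: Step 1 only introduces arrows $i\to j$ coming from two-arrow paths $i\to k\to j$, which forces $i\neq k$ and $j\neq k$ (no loops are allowed). Therefore $b_{ik}$ and $b_{kj}$ are unchanged by Step 1. Step 2 then flips the direction of every arrow incident to $k$, which in terms of $\tilde B$ is precisely the negation of row $k$ and column $k$. Step 3 cannot produce further changes in this row or column because no $2$-cycles through $k$ have been created. This yields $b'_{ij}=-b_{ij}$ whenever $i=k$ or $j=k$, matching the first case of \eqref{eq:matrix-mutation}.

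Next I would treat entries with $i,j\neq k$ by a case analysis on the signs of $b_{ik}$ and $b_{kj}$. If $b_{ik}>0$ and $b_{kj}>0$, there are exactly $b_{ik}b_{kj}$ two-arrow paths $i\to k\to j$, so Step 1 adds $b_{ik}b_{kj}$ arrows $i\to j$, contributing $+b_{ik}b_{kj}$ to the signed count. If $b_{ik}<0$ and $b_{kj}<0$, the paths $j\to k\to i$ number $|b_{ik}|\,|b_{kj}|=b_{ik}b_{kj}$, adding that many arrows $j\to i$, hence contributing $-b_{ik}b_{kj}$ to $b_{ij}$. In the remaining sign patterns (including the cases where $b_{ik}$ or $b_{kj}$ vanishes) there are no oriented two-arrow paths through $k$ between $i$ and $j$, so Step 1 does nothing. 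Steps 2 and 3 leave these entries untouched (Step 2 only affects row/column $k$, and Step 3 is captured by signed counting), so the formula holds in each case.

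Finally I would dispose of two bookkeeping points: the stipulation in Step 1 that no arrow is added when $i$ and $j$ are both frozen is vacuous for an entry $b'_{ij}$ of $\tilde B$, since $\tilde B$ is $m\times n$ and the column index $j$ is always mutable; and the sign convention ensures that if, say, $b_{ij}<0$ to begin with and Step 1 introduces new $i\to j$ arrows, then the resulting $b^{(1)}_{ij}$ is exactly the signed count after the $2$-cycles of Step 3 have been cancelled. No real obstacle arises; the only place where one must be slightly attentive is confirming that the ``otherwise'' case of \eqref{eq:matrix-mutation} truly covers all sign patterns other than the two productive ones, which one checks by inspecting each of the remaining sign combinations $(+,0)$, $(0,+)$, $(-,0)$, $(0,-)$, $(+,-)$, $(-,+)$, $(0,0)$.
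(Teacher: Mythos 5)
Your proof is correct and is precisely the direct verification the paper has in mind when it declares the lemma ``straightforward'' and omits the argument. The key observations --- that the signed-count convention makes Step 3 automatic, that Step 1 never touches row or column $k$, and that the frozen-vertex caveat is vacuous because the columns of $\tilde B$ are indexed by mutable vertices --- are exactly the points one needs to check.
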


We next move from skew-symmetric matrices 
to a more general class of matrices.

\begin{definition}
\label{def:skew-symmetrizable}
An $n \times n$ matrix $B=(b_{ij})$ with integer entries is called 
\emph{skew-symmetrizable} if $d_i b_{ij} = -d_j b_{ji}$
for some positive integers $d_1, \dots, d_n$.  
In other words, a matrix is skew-symmetrizable if it differs from a 
skew-symmetric matrix by a rescaling of its rows by positive scalars.

An $m \times n$ integer matrix, 
with $m \geq n$, whose top $n\times n$ submatrix is 
skew-symmetrizable is called an \emph{extended skew-symmetrizable} matrix.
\end{definition}

\begin{exercise}
Show that the class of matrices $B$ described in 
Definition~\ref{def:skew-symmetrizable} would not change if instead of
rescaling the rows of~$B$, we rescale its columns; 
alternatively, we could  conjugate~$B$ by a diagonal matrix with positive real diagonal
entries. 
\end{exercise}

We are now ready to define the notion of matrix mutation.

\begin{definition}
\label{def:matrix-mutation}
Let $\tilde{B} =(b_{ij})$ be an $m \times n$ 
extended skew-symmetrizable integer matrix.  
For $k \in [1,n]$, the
\emph{matrix mutation $\mu_k$ in direction~$k$} transforms 
$\tilde{B}$ into the $m \times n$ matrix $\mu_k(\tilde{B}) = (b'_{ij})$ whose
entries are given by~\eqref{eq:matrix-mutation}.
\end{definition}

By Lemma~\ref{lem:adjmatrix}, matrix mutation 
generalizes quiver mutation.

\begin{exercise}
\label{ex:mat-mut-simple}
Under the conventions of Definitions
\ref{def:skew-symmetrizable} and~\ref{def:matrix-mutation}, 
verify that
\begin{enumerate}[leftmargin=.25in]
\item
the mutated matrix $\mu_k(\tilde B)$ is again extended skew-symmetrizable,
with the same choice of $d_1, \dots, d_n$; 
\item
$\mu_k(\mu_k(\tilde B))=\tilde B$;
\item
\label{ex:mat-mut-simple-3}
$\mu_k(-\tilde B)=-\mu_k(\tilde B)$;
\item
\label{ex:mat-mut-simple-3.5}
$\mu_k(B^T)=(\mu_k(B))^T$, where $B^T$ denotes
the transpose of $B$;
\item
\label{ex:mat-mut-simple-4}
if $b_{ij}=b_{ji}=0$, then $\mu_i(\mu_j(\tilde B))=\mu_j(\mu_i(\tilde B))$.
\end{enumerate}
\end{exercise}

For $b\in\RR$, let ${\rm sgn}(b)$ be $1$, $0$, or $-1$, depending on whether $b$
is positive, zero, or negative.

\begin{definition}
\label{def:skew-symmetrization} 
Let $B$ be a skew-symmetrizable matrix. 
The skew-symmetric matrix $S(B)=(s_{ij})$ defined by
\begin{equation} 
\label{eq:s-matrix} 
s_{ij} = 
{\rm sgn}(b_{ij}) \textstyle\sqrt{|b_{ij} b_{ji}|}
\end{equation} 
is called the \emph{skew-symmetrization} of~$B$. 
Note that $S(B)$ has real (not necessarily integer) entries. 
	\cref{ex:skew-symmetrization}
shows that skew-symmetrization commutes with mutation
(extended verbatim to matrices with real entries). 
\end{definition}

\begin{exercise} 
\label{ex:skew-symmetrization} 
Prove that for any skew-symmetrizable matrix~$B$ and any~$k$, we have
\begin{equation}
\label{eq:skew-symmetrization} 
S(\mu_k(B))=\mu_k(S(B)).
\end{equation}
\end{exercise}

\begin{definition}
\label{def:diagramofB}
The \emph{diagram} of a skew-symmetrizable $n \times n$
matrix \hbox{$B=(b_{ij})$} is the weighted directed graph $\Gamma (B)$
with the vertices $1, \dots, n$ such that there is a directed edge from
$i$ to~$j$
if and only if $b_{ij} > 0$, and this edge is assigned the weight
$|b_{ij}b_{ji}|\,$.
In particular, if $b_{ij}\in\{-1,0,1\}$ for all $i$ and~$j$, then
$\Gamma (B)$ is a quiver whose exchange matrix is~$B$.
\end{definition}

To illustrate Definition~\ref{def:diagramofB}, 
consider $B=\left[\begin{smallmatrix}0 & 2 & -2\\ -1 & 0 & 2\\ 1 & -2 & 0\end{smallmatrix}\right]$.
Then $\Gamma (B)$ is an oriented cycle with edge weights $2$, $4$, and~$2$.

More generally, we use the term \emph{diagram} in the rest of this
chapter to mean a finite directed graph~$\Gamma$
(no loops, multiple edges, or $2$-cycles allowed)
whose edges are assigned positive real weights.

We note that the diagram $\Gamma (B)$ does \emph{not} determine $B$: for
instance,
the matrix $(-B^T)$ has the same diagram as $B$.
Here is another example:
\[
\Gamma\left(\begin{bmatrix}
0 & 1\\
-4 & 0
\end{bmatrix}\right)
=\Gamma\left(
\begin{bmatrix}
0 & 2\\
-2 & 0
\end{bmatrix}\right). 
\]

Note that the diagram $\Gamma(B)$ and the
skew-symmetric matrix $S(B)$ encode the same information about~$B$:
having an edge $i\to j$ in $\Gamma (B)$ supplied
with weight~$c$ is the same as saying that $s_{ij} = \sqrt {c}$ and
$s_{ji} = - \sqrt {c}$.

\begin{proposition}
\label{pr:diagram-mutation}
For a skew-symmetrizable matrix $B$,
the diagram $\Gamma'\!\!=\!\Gamma(\mu_k(B))$ is uniquely determined by
the diagram $\Gamma=\Gamma(B)$ and an index~$k$.
\end{proposition}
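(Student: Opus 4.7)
The plan is to derive the proposition as a formal consequence of Exercise~\ref{ex:skew-symmetrization}, which states that $S(\mu_k(B)) = \mu_k(S(B))$. The key observation I would emphasize first is that the passage $B \mapsto \Gamma(B)$ factors through the skew-symmetrization $B \mapsto S(B)$: as noted in the text immediately preceding the proposition, the diagram $\Gamma(B)$ and the skew-symmetric matrix $S(B)$ carry exactly the same information, since an edge $i \to j$ of weight $c$ in $\Gamma(B)$ corresponds bijectively to the pair of entries $s_{ij} = \sqrt{c}$, $s_{ji} = -\sqrt{c}$. In particular, the assignment $S(B) \mapsto \Gamma(B)$ is a bijection between real skew-symmetric matrices and diagrams (on the vertex set $\{1,\dots,n\}$).

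Granted this equivalence, the proof is immediate: $\Gamma(\mu_k(B))$ is by construction the diagram of the skew-symmetric matrix $S(\mu_k(B))$, and Exercise~\ref{ex:skew-symmetrization} identifies this with $\mu_k(S(B))$, which depends only on $S(B)$ (and on $k$), hence only on $\Gamma(B)$ and $k$. Here I would note that the mutation formula \eqref{eq:matrix-mutation} makes sense verbatim for real skew-symmetrizable matrices, and in particular for the skew-symmetric $S(B)$, so the right-hand side $\mu_k(S(B))$ is well-defined.

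To make the statement useful in practice, I would then unpack what the mutation rule says about $\Gamma$ itself, by substituting entries of $S(B)$ into \eqref{eq:matrix-mutation}. Writing $s_{ij}$ for the entries of $S(B)$ and $s'_{ij}$ for those of $S(\mu_k(B))$, one has $s'_{ij} = -s_{ij}$ when $i = k$ or $j = k$ (so edges incident to $k$ reverse orientation and keep their weights), while for $i,j \neq k$,
\begin{equation*}
s'_{ij} =
\begin{cases}
s_{ij} + s_{ik} s_{kj} & \text{if } s_{ik} s_{kj} > 0,\\
s_{ij} & \text{otherwise.}
\end{cases}
\end{equation*}
Translating back via $\mathrm{weight}(i \to j) = s_{ij}^2$ and $\mathrm{sgn}(s_{ij}) = \pm 1$ according to the edge orientation yields the explicit rule for transforming $\Gamma$ at $k$.

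There is no real obstacle; the only thing to be careful about is the observation, made above, that the diagram--to--skew-symmetric-matrix correspondence is genuinely a bijection, so that $\Gamma(B)$ really does determine $S(B)$ (and not merely the unordered data of matrix entries up to overall sign, which would be the case if one only retained $B$ modulo $B \leftrightarrow -B^T$). Once this is clear, the proposition follows from Exercise~\ref{ex:skew-symmetrization} with essentially no computation.
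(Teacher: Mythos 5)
Your argument is correct and is essentially the paper's own proof: the paper likewise invokes Exercise~\ref{ex:skew-symmetrization} (i.e.\ $S(\mu_k(B))=\mu_k(S(B))$) and then simply remarks that it remains to translate this into the language of diagrams, relying on exactly the bijection between diagrams and real skew-symmetric matrices that you make explicit. One correction to your supplementary ``unpacking'' (which is not needed for the proposition itself): for $i,j\neq k$, the rule \eqref{eq:matrix-mutation} gives $s'_{ij}=s_{ij}+s_{ik}s_{kj}$ only when $s_{ik}>0$ and $s_{kj}>0$, whereas when both are negative (so that $s_{ik}s_{kj}>0$ still holds) it gives $s'_{ij}=s_{ij}-s_{ik}s_{kj}$; the correct condensed form is $s'_{ij}=s_{ij}+|s_{ik}|\,s_{kj}$ whenever $s_{ik}s_{kj}>0$.
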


\begin{proof}
By Exercise~\ref{ex:skew-symmetrization}, $S(\mu_k(B))=\mu_k(S(B))$.
It remains to translate this property into the language of diagrams.
\end{proof}

In the situation of Proposition~\ref{pr:diagram-mutation},
we write $\Gamma' = \mu_k (\Gamma)$, and call~the transformation
$\mu_k$ a \emph{diagram mutation in direction~$k$}.
A~detailed description of diagram mutation can be found in
\cite[Proposition~8.1]{ca2}. 
Two~diagrams $\Gamma$ and $\Gamma'$ related by a sequence of mutations are called
\emph{mutation equivalent,} and we write $\Gamma\sim\Gamma'$.

\begin{remark}
While the entries of~$B$ are integers,
the entries of $S(B)$ may be irrational,
as the weights of $\Gamma(B)$ may not be perfect squares.
On the other hand, one can deduce from
the skew-symmetrizability of $B$ that the product of weights
over the edges of any cycle in the underlying graph of $\Gamma(B)$
is a perfect square.
\end{remark}

\begin{lemma} \label{lem:vector}
If the diagram $\Gamma(B)$ of an $n \times n$ skew-symmetrizable
matrix $B$ is connected, then the skew-symmetrizing vector
$(d_1,\dots,d_n)$ is unique up to rescaling.
\end{lemma}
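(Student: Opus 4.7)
The plan is to exploit the fact that along every edge of the diagram, the skew-symmetrizing condition pins down the ratio of the two corresponding entries of the vector $(d_1,\dots,d_n)$; connectedness of $\Gamma(B)$ then forces the whole vector to be determined up to a single common scalar.

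More concretely, I would first observe that the skew-symmetrizability condition $d_i b_{ij}=-d_j b_{ji}$ with $d_i,d_j>0$ implies that $b_{ij}$ and $b_{ji}$ are either both zero or of opposite signs (and in particular nonzero together). The latter case is precisely when $\{i,j\}$ is an edge of the underlying undirected graph of $\Gamma(B)$, and in that case one reads off
\[
\frac{d_j}{d_i} \;=\; -\,\frac{b_{ij}}{b_{ji}} \;>\; 0,
\]
a positive rational number determined entirely by the entries of~$B$ (equivalently, by the weight and orientation of the edge in $\Gamma(B)$).

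Next, suppose $(d_1,\dots,d_n)$ and $(d_1',\dots,d_n')$ are two skew-symmetrizing vectors, and set $r_i = d_i'/d_i$. For every edge $\{i,j\}$ of $\Gamma(B)$, the identity above applied to both vectors gives $d_j/d_i = d_j'/d_i'$, hence $r_i = r_j$. Thus $i\mapsto r_i$ is constant on each connected component of the underlying graph of $\Gamma(B)$. Since $\Gamma(B)$ is assumed connected, all $r_i$ are equal to a common positive constant~$r$, which yields $d_i'=r\,d_i$ for all~$i$ and proves uniqueness up to rescaling.

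There is no real obstacle here: the only point requiring a line of care is verifying that $b_{ij}=0 \iff b_{ji}=0$ so that ``edge of $\Gamma(B)$'' correctly captures the indices where the ratio $d_j/d_i$ is constrained. The existence of a skew-symmetrizing vector in the hypotheses makes the walk around cycles automatically consistent, so no separate cocycle computation is needed.
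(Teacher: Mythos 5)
Your proof is correct and follows essentially the same route as the paper's: along each edge of $\Gamma(B)$ the relation $d_i b_{ij}=-d_j b_{ji}$ fixes the ratio $d_j/d_i$ in terms of the matrix entries, and connectedness propagates the resulting equality of ratios $d_i'/d_i$ across all vertices. The paper phrases the last step via an ordering of the vertices in which each new vertex is adjacent to an earlier one, but this is just a concrete implementation of your ``constant on connected components'' argument.
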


\begin{proof}
Let $(d_1,\dots, d_n)$ and $(d'_1,\dots, d'_n)$
be two skew-symmetrizing vectors.  We have 
$d_i b_{ij} = -d_j b_{ji}$ and 
$d'_i b_{ij} = -d'_j b_{ji}$ for all $i$ and~$j$.
So if $b_{ij}$ is nonzero, then  
$\frac{b_{ij}}{b_{ji}} = 
\frac{-d_j}{d_i} = \frac{-d'_j}{d'_i}$ and hence
$\frac{d_j}{d'_j} = \frac{d_i}{d'_i}.$
Since $\Gamma(B)$ is connected, there exists an ordering
$\ell_1, \ell_2,\dots, \ell_n$ of its vertices
such that every vertex~$\ell_j$ with $2 \leq j \leq n$ 
is connected by an edge in $\Gamma(B)$ to a vertex $\ell_i$  with
$i<j$; in other words, $b_{\ell_i \ell_j} \neq 0$.  It follows that 
$\frac{d_{\ell_1}}{d'_{\ell_1}} = 
\frac{d_{\ell_2}}{d'_{\ell_2}} =  \dots = 
\frac{d_{\ell_n}}{d'_{\ell_n}}$, as desired.
\end{proof}

\pagebreak[3]

\section{Invariants of matrix mutations}

The following notion is a straightforward extension of
Definition~\ref{def:mut-equiv-quivers}. 

\begin{definition}
\label{def:mut-equiv-matrices}
Two skew-symmetrizable matrices
$B$ and $B'$ are \emph{mutation equivalent}
if one can get from $B$ to $B'$ by a sequence of mutations,
possibly followed by simultaneous renumbering of rows and columns.  
The \emph{mutation equivalence class} $[B]$ of $B$ is the set
of all matrices 
mutation equivalent to~$B$.
These notions generalize to extended skew-symmetrizable matrices in an obvious way. 
\end{definition}

It is natural to extend Problem~\ref{probem:decide-mut-equiv-quivers}
to the setting of matrix mutations: 

\begin{problem}
\label{problem:decide-mut-equiv}
Find an effective way 
to determine whether two given 
$n\times n$ skew-symmetrizable matrices are mutation equivalent.
\end{problem}

Problem~\ref{problem:decide-mut-equiv} remains wide open, even in the case of
skew-symmetric matrices (or equivalently quivers). 
For $n=2$, the question is trivial, since 
mutation simply negates the entries of the matrix.  
For $n=3$, there is an explicit algorithm for determining
whether two skew-symmetric matrices are mutation equivalent, 
see~\cite{ABBS}.  

Problem~\ref{problem:decide-mut-equiv} is closely related to the
problem of identifying explicit nontrivial invariants of matrix (or quiver) mutation. 
Unfortunately, very few invariants of this kind are known at present. 

\begin{theorem}[{\cite[Lemma 3.2]{ca3}}]
\label{thm:rank-invariant}
Mutations preserve the rank of a matrix. 
\end{theorem}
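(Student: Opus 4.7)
The plan is to prove rank invariance by exhibiting an explicit factorization $\mu_k(\tilde{B}) = E_k\, \tilde{B}\, F_k$ in which $E_k$ is an invertible $m \times m$ matrix and $F_k$ is an invertible $n \times n$ matrix. Since pre- and post-multiplying by invertible matrices preserves rank, the theorem follows at once.

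Writing $[x]_+ = \max(x, 0)$, I would define $E_k$ to be the identity matrix modified in its $k$-th column: set $(E_k)_{kk} = -1$, and $(E_k)_{ik} = [-b_{ik}]_+$ for $i \neq k$; all other entries agree with those of the identity. Symmetrically, define $F_k$ to be the identity modified only in its $k$-th row, with $(F_k)_{kk} = -1$ and $(F_k)_{kj} = [b_{kj}]_+$ for $j \neq k$. Each of these matrices differs from the identity in only one row or one column, so by Leibniz expansion along the unchanged columns the only surviving permutation is the identity; the determinant of each is $-1$, and in particular $E_k$ and $F_k$ are invertible.

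The main verification is to check that $E_k\, \tilde{B}\, F_k$ reproduces $\mu_k(\tilde{B})$ entry by entry. Using that $b_{kk} = 0$ (which holds because the top $n \times n$ block of $\tilde{B}$ is skew-symmetrizable), a direct calculation gives $(E_k\, \tilde{B}\, F_k)_{ij} = -b_{ij}$ whenever $i = k$ or $j = k$, and for $i, j \neq k$
\[
(E_k\, \tilde{B}\, F_k)_{ij} = b_{ij} + [-b_{ik}]_+\, b_{kj} + b_{ik}\, [b_{kj}]_+\,.
\]
Splitting into the four cases based on the signs of $b_{ik}$ and $b_{kj}$, and using $[x]_+ = x$ for $x > 0$ and $[x]_+ = 0$ for $x \le 0$, this expression evaluates to $b_{ij} + b_{ik} b_{kj}$ when both are positive, to $b_{ij} - b_{ik} b_{kj}$ when both are negative, and to $b_{ij}$ in the mixed or zero cases, exactly matching the rule in \eqref{eq:matrix-mutation}.

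I expect the main obstacle to be purely bookkeeping: one must carefully track which entries of $E_k$ and $F_k$ contribute to each entry of the double product, and handle all four sign cases for $b_{ik}$ and $b_{kj}$ uniformly. No deeper idea is required, and the argument works verbatim for extended skew-symmetrizable matrices since only $b_{kk}=0$ was used, and the sizes of $E_k$ and $F_k$ were chosen to accommodate the $m\times n$ shape of $\tilde B$.
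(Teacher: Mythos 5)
Your proposal is correct and is essentially the paper's own argument: the paper factors $\mu_k(\tilde B)=(J_{m,k}+E_k)\,\tilde B\,(J_{n,k}+F_k)$, where $J_{m,k}+E_k$ and $J_{n,k}+F_k$ are exactly your matrices (the paper allows an auxiliary sign $\varepsilon=\pm1$ in the off-diagonal entries; your choice corresponds to $\varepsilon=1$), and concludes via $\det=-1$ for both factors. Your entrywise verification, including the use of $b_{kk}=0$ and the four sign cases, matches the paper's restatement \eqref{eq:matrix-mutation-restated} of the mutation rule.
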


\begin{proof}
Let~$\tilde B$ be an $m\times n$ extended skew-symmetrizable integer matrix.
Fix an index $k \in [1,n]$ and a sign $\varepsilon \in \{1, -1\}$.
The rule \eqref{eq:matrix-mutation}
describing the matrix mutation in direction~$k$
can be rewritten as follows:
\begin{equation}
\label{eq:matrix-mutation-restated}
b'_{ij} =
\begin{cases}
-b_{ij} & \text{if $i=k$ or $j=k$;} \\[.05in]
b_{ij} 
+\max(0, - \varepsilon b_{ik})\,b_{kj} 
+ b_{ik} \max(0, \varepsilon b_{kj}) 
& \text{otherwise.}
\end{cases}
\end{equation}
(To verify this, examine the four possible sign patterns for $b_{ik}$
and~$b_{kj}$.) \linebreak[3]
Next observe that \eqref{eq:matrix-mutation-restated}
can be restated as
\begin{align}
\label{eq:mutation-product}
\mu_k(\tilde B) &=J_{m,k}\,\tilde B J_{n,k} + J_{m,k}\,\tilde B F_k + E_k \,\tilde B J_{n,k} \\
\notag &= (J_{m,k} + E_k) \,\tilde B\, (J_{n,k} + F_k)
\end{align}
where
\begin{itemize}[leftmargin=.15in]
\item
$J_{m,k}$ (respectively, $J_{n,k}$) denotes the diagonal matrix 
of size $m\times m$ (respectively,~$n\times n$) 
whose diagonal entries are all~$1$, except for the $(k,k)$ entry,
which is $-1$;
\item
$E_k=(e_{ij})$
is the $m\times m$ matrix with
$e_{ik}=\max(0, -\varepsilon b_{ik})$, and all other entries equal to $0$;
\item
$F_k=(f_{ij})$
is the $n\times n$ matrix with
$f_{kj}=\max(0, \varepsilon b_{kj})$, and all other entries equal to $0$.
\end{itemize}
(Here we used that $E_k\,\tilde B F_k=0$ because $b_{ii}=0$ for all~$i$.)
Since 
\begin{equation}
\label{eq:det=-1}
\det(J_{m,k} + {E}_k) = \det(J_{n,k} + F_k) = -1, 
\end{equation}
 it follows that
$\operatorname{rank}(\mu_k(\tilde B))=\operatorname{rank}(\tilde B)$. \end{proof}

\begin{theorem}
\label{thm:invariant}
The determinant of a skew-symmetrizable matrix is invariant under mutation.
\end{theorem}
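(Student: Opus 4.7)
The plan is to harvest the matrix factorization already established in the proof of Theorem~\ref{thm:rank-invariant} and push it one step further. Since $B$ is square (the $n\times n$ skew-symmetrizable case), we specialize \eqref{eq:mutation-product} with $m=n$ to get
\[
\mu_k(B) = (J_{n,k} + E_k)\, B\, (J_{n,k} + F_k),
\]
where $J_{n,k}$, $E_k$, $F_k$ are the matrices described in the proof of Theorem~\ref{thm:rank-invariant} (with an arbitrary choice of sign $\varepsilon\in\{1,-1\}$).

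Taking determinants of both sides and invoking \eqref{eq:det=-1}, which asserts that $\det(J_{n,k}+E_k)=\det(J_{n,k}+F_k)=-1$, the proof is essentially immediate:
\[
\det(\mu_k(B)) = \det(J_{n,k}+E_k)\cdot\det(B)\cdot\det(J_{n,k}+F_k) = (-1)(-1)\det(B) = \det(B).
\]
A small but worthwhile sanity check: the factors $J_{n,k}+E_k$ and $J_{n,k}+F_k$ depend on a sign choice $\varepsilon$, but \eqref{eq:det=-1} shows their determinants are $-1$ regardless of~$\varepsilon$, so there is no ambiguity in the conclusion.

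Finally, to deduce invariance under the full mutation equivalence relation of Definition~\ref{def:mut-equiv-matrices}, one must also account for simultaneous renumbering of rows and columns. This operation replaces $B$ by $PBP^{-1}$ for a permutation matrix~$P$, so it visibly preserves~$\det(B)$. Combining this with the computation above and iterating over any sequence of mutations yields the theorem. I expect no real obstacle here: the heavy lifting was already done in Theorem~\ref{thm:rank-invariant}, and this result is essentially a free corollary of the factorization \eqref{eq:mutation-product}.
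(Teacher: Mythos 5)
Your proof is correct and follows exactly the paper's route: the paper's own proof of Theorem~\ref{thm:invariant} is precisely to take $m=n$ in the factorization \eqref{eq:mutation-product} and apply \eqref{eq:det=-1}. The extra remarks about the sign choice $\varepsilon$ and about permutation of indices are fine but not needed beyond what the paper already records.
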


\begin{proof}
This follows from \eqref{eq:mutation-product} and~\eqref{eq:det=-1}
(taking $m=n$ and $B=\tilde B$). 
\end{proof}

Another invariant of matrix mutations
is the greatest common divisor of the matrix elements of~$B$. 
A finer invariant is the greatest common divisor of the 
matrix elements of the $i$th row (or column) of 
~$B$, for a 
fixed index~$i$~\cite{seven2}.

\begin{remark}
For skew-symmetric matrices (equivalently, quivers with no
frozen vertices), formulas \eqref{eq:mutation-product}
and~\eqref{eq:det=-1} allow us to interpret mutation as a
transformation of a skew-symmetric bilinear form over the integers
under a particular unimodular change of basis. 
One can then use the general theory of invariants of such
transformations (the \emph{skew Smith normal form}, 
see \cite[Section~IV.3]{newman})
to identify some invariants of quiver mutation.
Unfortunately this approach does not yield much beyond the facts
established above. 
\end{remark}

\chapter{Clusters and seeds}

\vspace{-.1in}

This chapter introduces cluster algebras of
\emph{geometric type}. 
A~more general construction of cluster algebras over an arbitrary
semifield will be discussed in Chapter~\ref{ch:general-cluster-algebras}.

\section{Basic definitions}
\label{sec:geometric-type-basic}

Let us recall the three motivating examples discussed in
Chapter~\ref{ch:tp-examples}: 
Grassmannians of $2$-planes, basic affine spaces, and general linear
groups.
In each of these examples, we manipulated two kinds of data:
\begin{itemize}[leftmargin=.15in]
\item
combinatorial data (triangulations, wiring diagrams) and
\item
algebraic data (Pl\"ucker coordinates, chamber minors). 
\end{itemize}
Accordingly, transformations applied to these data occurred on two
levels: 
\begin{itemize}[leftmargin=.15in]
\item
on the ``primary'' level, we saw the combinatorial data 
evolve via local moves
(flips in triangulations, braid moves in wiring diagrams);
as shown in Chapter~\ref{ch:combinatorics-of-mutations},
a unifying description of this dynamics
 can be given using the language of quiver
mutations; 
\item
on the ``secondary" level, we saw the algebraic data 
evolve in a way that was ``driven" by the combinatorial
dynamics, with subtraction-free birational transformations,
		called exchange relations,
encoded by the current combinatorial data.
\end{itemize}
An attempt to write the exchange relations in terms of the quiver at hand
naturally leads to the axiomatic setup of cluster algebras of
geometric type, which we will now describe. 

Let~$m$ and~$n$ be two positive integers such that~$m \geq n$.
As an \emph{ambient field} for a cluster algebra, we take a field $\FFcal$
isomorphic to the field of rational functions over~$\CC$ 
(alternatively, over~$\QQ$) 
in $m$ independent variables.

\begin{definition}
\label{def:seed}
A \emph{labeled seed} of geometric type in $\FFcal$ is a pair
$(\tilde \xx,\! \tilde B)$~where
\begin{itemize}[leftmargin=.15in]
\item $\tilde \xx = (x_1, \dots, x_m)$ is an $m$-tuple of
elements of~$\FFcal$ forming a
\emph{free generating set}; that is, $x_1, \dots, x_m$
are algebraically independent, and $\FFcal = \CC(x_1, \dots, x_m)$;
\item $\tilde B=(b_{ij})$ is an $m \times n$ extended
  skew-symmetrizable integer matrix, see 
Definition~\ref{def:skew-symmetrizable}.
\end{itemize}
We shall use the following terminology:
\begin{itemize}[leftmargin=.15in]
\item 
$\tilde \xx$ is the (labeled)
\emph{extended cluster}
\index{cluster!extended}
of the labeled seed $(\tilde \xx, \tilde B)$;
\item 
the $n$-tuple~$\xx = (x_1, \dots, x_n)$ is the (labeled)
\emph{cluster}
\index{cluster}
of this seed; 
\item 
the elements $x_1, \dots, x_n$ are its \emph{cluster variables}; 
\item 
the remaining elements $x_{n+1}, \dots, x_m$ of $\tilde \xx$ are the 
\emph{frozen variables} (or \emph{coefficient variables}); 
\item 
the matrix~$\tilde B$ is the \emph{extended exchange matrix} of the seed;
\item 
its top $n\times n$ submatrix~$B$ is the \emph{exchange matrix}.
\index{exchange matrix}
\end{itemize}
\end{definition}

\begin{example}
\label{eg:seed-small}
Let $m=3$.
Let $\FFcal=\CC(x_1,x_2,x_3)$ be the field of rational functions in the formal variables $x_1,x_2,x_3$, and 
set $n=2$.
Figure~\ref{fig:table-terminology} illustrates
\cref{def:seed} with
 two seeds $\Sigma=(\tilde\xx,\tilde B)$ and $\Sigma'=(\tilde\xx',\tilde B')$.

\begin{figure}[ht]
{
\vspace{-10pt}
\begin{equation*}
\begin{array}{|c|c|c|}
\hline
&&\\[-4mm]
	 & \Sigma & \Sigma' \\[1mm]
\hline
&&\\[-4mm]
	\text{extended cluster} & \tilde\xx=(x_1, x_2, x_3) & \tilde\xx'=(x_1,\frac{x_1+x_3}{x_2}, x_3) \\[1mm]
\hline
&&\\[-3.5mm]
	\text{cluster variables} & x_1, x_2 & x_1, \frac{x_1+x_3}{x_2} \\[1mm]
\hline
&&\\[-3.5mm]
	\text{frozen variables} & x_3 & x_3 \\[1mm]
\hline
&&\\[-3.5mm]
	\text{	extended exchange matrix} & 
	\tilde B=\begin{bmatrix} 
		0 & 1\\
		-1 & 0\\
		1 & -1
	\end{bmatrix} 
	& 
	\tilde B'=\begin{bmatrix} 
		0 & -1\\
		1 &  0 \\
		0 & 1 
	\end{bmatrix} 
	\\[6mm]
\hline
&&\\[-3.5mm]
	\text{exchange matrix} & 
	B=\begin{bmatrix} 
		0 & 1\\
		-1 & 0
	\end{bmatrix} 
	 & 
	B'=\begin{bmatrix} 
		0 & -1\\
		1 & 0
	\end{bmatrix} 
	 \\[4mm]
\hline
\end{array}
\end{equation*}
}
\vspace{-.2in}
	\caption{Two labeled seeds illustrating 
	\cref{def:seed}.
	}
\label{fig:table-terminology}
\end{figure}
One can alternatively describe these seeds using quivers, see below. 
The quivers on the left encode
the extended exchange matrix whereas the quiver labels on the right 
encode the extended cluster.
The boxed node indicates the frozen vertex (resp., frozen variable). 
\begin{equation}
\label{eq:two-seeds}
\begin{tikzcd}[arrows={-stealth}, sep=2em]
\text{\raisebox{-2pt}{$\Sigma$}} & [3pt]   1  \arrow[r]   & 2 \arrow[r] & \boxed{3} \arrow[ll,bend left=20] &[3pt]  x_1  \arrow[r]   & x_2 \arrow[r] & \boxed{x_3} \arrow[ll,bend left=20] \\[-15pt]
\text{\raisebox{-2pt}{$\Sigma'$}} &   1    & 2 \arrow[l] & \boxed{3}  \arrow[l] & x_1    & \frac{x_1+x_3}{x_2} \arrow[l] & \boxed{x_3}  \arrow[l] 
  \end{tikzcd} 
\vspace{-5pt}
\end{equation} 
\end{example}

\pagebreak[3]

\begin{definition}
\label{def:seed-mutation}
Let $(\tilde \xx, \tilde B)$ be a labeled seed as above. 
Take an index $k\in\{1,\dots,n\}$. 
The \emph{seed mutation} $\mu_k$ in direction~$k$ 
transforms
$(\tilde \xx, \tilde B)$ into the new labeled seed
$\mu_k(\tilde \xx, \tilde B) = (\tilde \xx', \tilde B')$ 
defined as follows:
\begin{itemize}[leftmargin=.15in]
\item
$\tilde B'= \mu_k(\tilde B)$ 
(cf.\ Definition~\ref{def:matrix-mutation}).
\item
the extended cluster $\tilde \xx' =(x_1',\dots,x_m')$ is
given by $x_j'=x_j$ for~$j\neq k$,
whereas $x'_k \in \FFcal$ is determined
by the \emph{exchange relation} 
\begin{equation}
\label{eq:exch-rel-geom}
x_k \, x'_k = \prod_{b_{ik}>0} x_i^{b_{ik}} 
+ \prod_{b_{ik}<0} x_i^{-b_{ik}}.
\end{equation}
\vspace{-10pt}
\end{itemize}
We note that if the indexing set for one of the two monomials above is the empty
set, then by convention we set the corresponding product equal to~$1$.
\end{definition}

\begin{remark}
	For a labeled seed described by a quiver,
	the first (resp., second) monomial on the right-hand side of 
	\eqref{eq:exch-rel-geom} corresponds to 
	the arrows pointing \emph{towards} (resp., \emph{away from}) vertex~$k$.
\end{remark}

\begin{example}
\label{eg:A2-6seeds}
\cref{fig:A2} shows the result of mutating the seed~$\Sigma$ from Example~\ref{eg:seed-small} (cf.~\eqref{eq:two-seeds})
in directions $2,1,2,1,2$.  
Note that the final seed miraculously agrees with the initial seed,  
upon relabeling the vertices. 
\end{example}

\begin{figure}[h]
\vspace{-17pt}
	\begin{center}
\begin{tikzcd}[arrows={-stealth}, sep=2em]
1  \arrow[thick, r]   & 2 \arrow[thick, r] & \boxed{3} \arrow[thick, ll,bend left=20] 
&& x_1  \arrow[thick, r]   & x_2 \arrow[thick, r] & [22pt] \boxed{x_3} \arrow[thick, ll,bend left=15] \\[-21pt]
&{\ } \arrow[d,blue,<->,"\mu_2"]&&&& {\ } \arrow[d,blue,<->,"\mu_2"] \\[-5pt]
&{\ }&&&& {\ } \\[-28pt]
1    & 2 \arrow[thick, l] & \boxed{3}  \arrow[thick, l] && x_1    & \dfrac{x_1\!+\!x_3}{x_2} \arrow[thick, l] & [5pt] \boxed{x_3}  \arrow[thick, l] \\[-28pt]
&{\ } \arrow[d,blue,<->,"\mu_1"]&&&& {\ } \arrow[d,blue,<->,"\mu_1"] \\[-5pt]
&{\ }&&&& {\ } \\[-28pt]
1  \arrow[thick, r]  & 2   & \boxed{3} \arrow[thick, l] && \dfrac{x_1\!+\!x_2\!+\!x_3}{x_1x_2}  \arrow[thick, r]   & \dfrac{x_1\!+\!x_3}{x_2}   &  [5pt] \boxed{x_3} \arrow[thick, l] \\[-28pt]
&{\ } \arrow[d,blue,<->,"\mu_2"]&&&& {\ } \arrow[d,blue,<->,"\mu_2"] \\[-5pt]
&{\ }&&&& {\ } \\[-28pt]
1     & 2  \arrow[thick, r] \arrow[thick, l]&  \boxed{3} && \dfrac{x_1\!+\!x_2\!+\!x_3}{x_1x_2}     & \dfrac{x_2\!+\!x_3}{x_1} \arrow[thick, r] \arrow[thick, l] & [5pt] \boxed{x_3} \\[-28pt]
&{\ } \arrow[d,blue,<->,"\mu_1"]&&&& {\ } \arrow[d,blue,<->,"\mu_1"] \\[-5pt]
&{\ }&&&& {\ } \\[-28pt]
1  \arrow[thick, r]   & 2  \arrow[thick, r] & \boxed{3} && x_2  \arrow[thick, r]   & \dfrac{x_2\!+\!x_3}{x_1}  \arrow[thick, r] & [5pt] \boxed{x_3}  \\[-28pt]
&{\ } \arrow[d,blue,<->,"\mu_2"]&&&& {\ } \arrow[d,blue,<->,"\mu_2"] \\[-5pt]
&{\ }&&&& {\ } \\[-31pt]
1  \arrow[thick, rr,bend right=20]  & 2  \arrow[thick, l] & \boxed{3} \arrow[thick, l] && x_2  \arrow[thick, rr,bend right=15]  & x_1  \arrow[thick, l] & [5pt] \boxed{x_3}  \arrow[thick, l] 
  \end{tikzcd} 
\vspace{-15pt}
\end{center} 
\caption{A sequence of five consecutive seed mutations. The seeds appearing in the top two rows are the seeds $\Sigma$ and $\Sigma'$ from~\eqref{eq:two-seeds}. 
}
\label{fig:A2}
\end{figure}

\begin{exercise}
\label{exercise:quiverexchange}
Consider each of the three settings that we discussed in
Sections \ref{sec:Ptolemy}, \ref{sec:baseaffine},
and~\ref{sec:matrices}. 
Construct a seed $(\tilde \xx, \tilde B(Q))$ where $Q$ is a quiver 
associated with a particular triangulation, wiring diagram, or double wiring
diagram 
(see Definitions~\ref{def:Q(T)-polygon}, \ref{def:quiverwd},
and~\ref{def:quiverdwd}, respectively),
and $\tilde \xx$ is the extended cluster consisting of the
corresponding Pl\"ucker coordinates or chamber minors. 
 Verify that applying the recipe \eqref{eq:exch-rel-geom} to these
 data recovers the appropriate exchange relations
\eqref{eq:grassmann-plucker-3term}, \eqref{eq1:ac+bd=yz},
and~\eqref{eq:ac+bd=yz}, respectively. 
\end{exercise}

\begin{definition}
\label{def:Tn}
Let~$\TT_n$ denote the \emph{$n$-regular tree}
whose edges are labeled by the numbers $1, \dots, n$,
so that the $n$ edges incident to each vertex receive
different labels.
We shall write $t \overunder{k}{} t'$ to indicate that vertices
$t,t'\in\TT_n$ are joined by an edge with label~$k$. 
See Figure~\ref{fig:treeT3}. 
\end{definition}

\begin{figure}[ht]
\begin{center}
\vspace{-10pt}
\setlength{\unitlength}{2pt}
\begin{picture}(140,25)(0,-3)
\thicklines
\put(-7,20){\makebox(0,0){$\TT_1$}}

  \put(10,20){\line(1,0){20}}

\multiput(10,20)(20,0){2}{\circle*{2}}

{\tiny
\put(20,22.5){\makebox(0,0){$1$}}
}

  \put(0,5){\line(1,0){140}}

\multiput(10,5)(20,0){7}{\circle*{2}}

\put(-7,5){\makebox(0,0){$\TT_2$}}

{\tiny
\multiput(20,7.5)(40,0){3}{\makebox(0,0){$2$}}
\multiput(40,7.5)(40,0){3}{\makebox(0,0){$1$}}
}
\end{picture}

\begin{picture}(120,53)(0,-5)
\thicklines
  \put(0,20){\line(3,1){30}}
  \put(30,0){\line(3,1){30}}
  \put(60,20){\line(3,1){30}}

  \put(60,20){\line(-3,1){30}}
  \put(90,0){\line(-3,1){30}}
  \put(120,20){\line(-3,1){30}}

  \put(30,30){\line(0,1){10}}
  \put(60,10){\line(0,1){10}}
  \put(90,30){\line(0,1){10}}

  \put(30,40){\line(3,1){10}}
  \put(90,0){\line(3,1){10}}
  \put(90,40){\line(3,1){10}}
  \put(120,20){\line(3,1){10}}

  \put(0,20){\line(-3,1){10}}
  \put(30,40){\line(-3,1){10}}
  \put(90,40){\line(-3,1){10}}
  \put(30,0){\line(-3,1){10}}

  \put(0,20){\line(0,-1){5}}
  \put(30,0){\line(0,-1){5}}
  \put(90,0){\line(0,-1){5}}
  \put(120,20){\line(0,-1){5}}

  \put(0,20){\circle*{2}}
  \put(30,0){\circle*{2}}
  \put(30,30){\circle*{2}}
  \put(30,40){\circle*{2}}
  \put(60,10){\circle*{2}}
  \put(60,20){\circle*{2}}
  \put(90,0){\circle*{2}}
  \put(90,30){\circle*{2}}
  \put(90,40){\circle*{2}}
  \put(120,20){\circle*{2}}

\put(-16,40){\makebox(0,0){$\TT_3$}}

{\tiny
\put(15,28){\makebox(0,0){$2$}}
\put(75,28){\makebox(0,0){$2$}}
\put(45,8){\makebox(0,0){$2$}}

\put(45,28){\makebox(0,0){$1$}}
\put(105,28){\makebox(0,0){$1$}}
\put(75,8){\makebox(0,0){$1$}}

\put(32,35){\makebox(0,0){$3$}}
\put(92,35){\makebox(0,0){$3$}}
\put(62,15){\makebox(0,0){$3$}}
}

\end{picture}
\vspace{-20pt}
\end{center}
\caption{The $n$-regular trees $\TT_n$ for $n=1,2,3$.}
\label{fig:treeT3}
\end{figure}

We will use notation associated with the tree $\TT_n$ to keep track of the 
various labeled seeds that can be obtained by iterated mutations from a given initial seed.

\begin{figure}[ht]
\begin{center}
\vspace{-8pt}
\setlength{\unitlength}{1.5pt}
\begin{picture}(200,75)(0,13)

\put(100,50){\makebox(0,0){$x_1,x_2,x_3$}}
\put(100,50){\oval(36,12)}

\put(100,18){\makebox(0,0){$x_1,x_2,x'_3$}}
\put(100,18){\oval(32,12)}
\put(103,34){\makebox(0,0){\scriptsize 3}}
\put(100,44){\line(0,-1){20}}
\put(114.3,13.7){\line(1,-1){7}}
\put(85.7,13.7){\line(-1,-1){7}}

\put(114.5,55.5){\line(1,1){17.3}}
\put(145,78){\makebox(0,0){$x_1,x'_2,x_3$}}
\put(145,78){\oval(32,12)}
\put(120,66.5){\makebox(0,0){\scriptsize 2}}
\put(145,84){\line(0,1){10}}
\put(159.3,73.7){\line(1,-1){7}}

\put(85.5,55.5){\line(-1,1){17.3}}
\put(55,78){\makebox(0,0){$x'_1,x_2,x_3$}}
\put(55,78){\oval(32,12)}
\put(55,84){\line(0,1){10}}

\put(40.7,73.7){\line(-1,-1){7}}

\put(78,68){\makebox(0,0){\scriptsize 1}}

\end{picture}
\vspace{-8pt}
\end{center}
\caption{Clusters in a seed pattern, cf.\ Definition~\ref{def:seed-pattern}.} 
\label{fig:seed-pattern}
\end{figure}

\pagebreak[3]

\begin{definition}
\label{def:seed-pattern}
A \emph{seed pattern of rank $n$} is defined by assigning
a labeled seed $(\tilde \xx(t), \tilde B(t))$
to every vertex $t \in \TT_n$, so that the seeds assigned to the
endpoints of any edge $t \overunder{k}{} t'$ are obtained from each
other by the seed mutation in direction~$k$.
A~seed pattern is uniquely determined
by any one of its seeds.
See Figure~\ref{fig:seed-pattern}. 
\end{definition}

Now everything is in place for defining cluster algebras.

\begin{definition}
\label{def:cluster-algebra}
Let $(\tilde \xx(t), \tilde B(t))_{t\in\TT_n}$ be a seed pattern.
Let $\Xcal$ be the set of all cluster variables
	appearing in the various seeds $\xx(t)$ for $t\in \TT_n$.
We let the \emph{ground ring} be $R = \CC[x_{n+1}, \dots, x_{m}]$,
the polynomial ring generated by the frozen variables.
(A~common alternative is to take $R = \CC[x_{n+1}^{\pm 1}, \dots, x_{m}^{\pm 1}]$,
the ring of Laurent polynomials in the frozen variables, see 
Section~\ref{sec:rings-matrices} for an example.  
Sometimes the scalars are restricted to~$\QQ$, or even to~$\ZZ$.)

The \emph{cluster algebra}~$\AA$ (of geometric type, over~$R$)
associated with the given seed pattern 
is the $R$-subalgebra of the ambient field~$\FFcal$ 
generated by all cluster variables: $\AA= R[\Xcal]$.
To be more precise, a cluster algebra is the $R$-subalgebra~$\AA$
as above  together with a fixed seed pattern in it. 
	By definition, the \emph{rank} of~$\AA$ is the rank of the underlying
	seed pattern, i.e. the cardinality of any cluster of~$\AA$.
\end{definition}

A common way to describe a cluster algebra 
is to pick an \emph{initial (labeled) seed} $(\tilde \xx_\circ, \tilde B_\circ)$
in~$\FFcal$ and build a seed pattern from it. 
The corresponding cluster algebra, denoted $\AA(\tilde \xx_\circ,
\tilde B_\circ)$,
is generated over the ground ring~$R$ by all cluster variables
appearing in the seeds mutation
equivalent to~$(\tilde \xx_\circ, \tilde B_\circ)$.

\begin{remark}[cf.\ Exercise \ref{exercise:quiverexchange}] 
It can be shown that applying this construction
in each of the three settings discussed in
Chapter~\ref{ch:tp-examples},
one obtains cluster algebras naturally identified with 
the Pl\"ucker ring~$R_{2,m}$ (cf. Section~\ref{sec:plucker-rings}), 
the ring of invariants~$\CC[\SL_k]^U$ (cf. Section~\ref{sec:rings-baseaffine}),
	and the polynomial ring~$\CC[z_{11},z_{12},\dots,z_{kk}]$ 
(cf. Section~\ref{sec:rings-matrices}), respectively.

\end{remark}

\begin{remark}
As \cref{fig:A2} suggests, it 
is often more natural to work with 
\emph{(unlabeled) seeds}, 
which differ from the labeled ones in that we identify two seeds
$(\tilde \xx,\tilde B)$
and $(\tilde \xx',\tilde B')$
in which $\xx'$ is a permutation of~$\xx$,
and $\tilde B'$ is obtained from~$\tilde B$
by the corresponding permutations of rows and columns. 
We note that ignoring the labeling does not affect the resulting cluster
algebra in a meaningful way.
Accordingly, we will occasionally treat 
	$\xx$ (and/or~$\tilde\xx$) as a set rather than as a labeled
	sequence. 
\end{remark}

\begin{remark}
Many questions arising in cluster algebra theory and its applications 
do not really concern cluster algebras as such. 
These are questions which are not about commutative rings carrying a cluster
structure;
rather, they are about seed patterns and the birational
transformations that relate extended clusters to each other. 
For those questions, the choice of the ground ring 
is immaterial: the formulas remain the same regardless. 
\end{remark}

\begin{remark}\label{rem:opposite}
Since any free generating collection of $m$ elements in~$\FFcal$ 
can be mapped to any other such collection by an automorphism
of~$\FFcal$, the choice of the initial extended cluster~$\tilde
\xx_\circ$ is largely inconsequential: 
the cluster algebra $\AA(\tilde \xx_\circ, \tilde B_\circ)$ is determined, 
up to an isomorphism preserving all the matrices~$\tilde B(t)$,  
by the initial extended exchange matrix~$\tilde B_\circ$,
and indeed by its mutation equivalence class.
Also, replacing $\tilde{B}_\circ$ by $-\tilde{B}_\circ$ yields
essentially the same cluster algebra (all matrices $\tilde{B}(t)$
change their sign). 
\end{remark}

\begin{remark}
The same commutative ring (or two isomorphic rings) 
can carry very different cluster structures.
One can construct two seed patterns whose 
sets of exchange matrices are disjoint from each other,
yet the two rings generated by their respective sets of cluster variables
are isomorphic. 
See for example \cite[Example 6.3.1 and 6.3.2]{chapter6}.
\end{remark}

\begin{remark}
We will soon encounter many examples in which different vertices of the
tree~$\TT_n$ correspond to identical labeled or unlabeled seeds.
In spite of that, the set $\mathcal{X}$ of cluster variables will
typically be infinite. 
Note that this does not preclude a cluster algebra $\AA$ from being finitely
generated (which is often the case). 
We shall also see in Section~\ref{sec:generators+relations} 
	(see Example~\ref{ex:Gr36}) that even when $\mathcal{X}$ is finite, 
the exchange relations \eqref{eq:exch-rel-geom} do not always
generate the defining ideal of~$\AA$, i.e.
	 the ideal of all relations satisfied
	by the cluster variables $\mathcal{X}$.
\end{remark}

\section{Examples of rank $1$ and~$2$}
\label{sec:rank12}

In this section, we look at some examples of cluster algebras of small
rank. 

\subsection*{Rank~1} 
This case is very simple.  
The tree $\TT_1$ has two vertices, so we~only have two seeds, and two
clusters $(x_1)$ and~$(x_1')$.  
The extended exchange matrix $\tilde B_\circ$ can be any $m\times 1$ matrix whose top
entry is~$0$.

The~single exchange relation has the form $x_1\,x'_1=M_1+M_2$ 
where 
$M_1$~and~$M_2$ are mono\-mials in the frozen variables $x_2,\dots,x_m$ which do not
share a common factor~$x_i$. 
The cluster algebra is generated by $x_1, x'_1, x_2,\dots,x_m$,
subject to this relation, and lies inside 
the ambient field $\FFcal=\CC(x_1, x_2,\dots, x_m)$. 

Simple as they might be, cluster algebras of rank~1
do arise ``in nature," 
cf.\ Examples~\ref{ex:SL_2} and~\ref{ex:U-in-SL_3}. 
We  give two more examples here.
Additional examples will appear in Chapter~\ref{ch:rings}.

\begin{example}[cf.\ Section~\ref{sec:baseaffine}]
\label{ex:base-affine-SL3}
Let $U\subset G=\SL_3(\CC)$ be the subgroup of unipotent lower
triangular $3 \times 3$ matrices.
The ring $\CC[G]^U$
is generated by the six flag minors~$P_J$,
for $J$ a nonempty proper subset of $\{1,2,3\}$.
This ring has the structure of a cluster algebra of rank~1 in which
\begin{itemize}[leftmargin=.15in]
\item
the ambient field is $\CC(P_1,P_2,P_3,P_{12},P_{23})$;
\item
the frozen variables are $P_1,P_3,P_{12},P_{23}$;
\item
the cluster variables are $P_2$ and $P_{13}$;
\item
the single exchange relation is
$P_2 P_{13} = P_1 P_{23} + P_3 P_{12}$.
\end{itemize}
The two seeds of this cluster algebra correspond to the two wiring diagrams with 
$3$ strands.
Their respective sets of chamber minors are the two extended clusters
$\{P_2, P_1, P_3, P_{12}, P_{23}\}$ (cf.\ Figure~\ref{fig:chamber-sets0})
and $\{P_{13}, P_1, P_3, P_{12}, P_{23}\}$.
\end{example}

\begin{example}[cf.\ Example~\ref{ex:U-in-SL_3}]
\label{ex:U}
The coordinate ring of the subgroup $U^+$ of unipotent upper-triangular $3\times 3$ matrices
\[
\begin{bmatrix}
1  & a & b \\
0  & 1 & c \\
0  & 0 & 1
\end{bmatrix}
\in\SL_3(\CC)
\]
is 
$\CC[a,b,c]$.
This ring has the structure of a cluster algebra of rank~1 in~which
\begin{itemize}[leftmargin=.15in]
\item
	the ambient field is $\FFcal=\CC(a,b,c)=\CC(a,b,ac-b)$;
\item
the frozen variables are $b$ and $P=ac-b$;
\item
the cluster variables are $a$ and~$c$;
\item
the single exchange relation is
$ac = P+b$.
\end{itemize}
\end{example}

\subsection*{Rank~2} 

Any $2\times 2$ skew-symmetrizable matrix looks like this: 
\begin{equation}
\label{eq:Bbc}
\pm 
\begin{bmatrix}
0 & b\\
-c & 0
\end{bmatrix}, 
\end{equation}
for some integers $b$ and $c$ which are either both positive, or both
equal to~$0$. 
Applying a mutation $\mu_1$ or $\mu_2$
to a matrix of the form~\eqref{eq:Bbc} simply changes its sign. 

\begin{example}
Let $b=c=0$, i.e. the top two rows of the $m\times 2$ extended 
exchange matrix consist entirely of $0$'s.
Then the two mutations commute, because each 
$\mu_k$ changes the sign of the entries in column $k$ of the extended
exchange matrix while leaving the other column untouched; as the two
matrix columns do not affect each other,
 the story reduces to two rank~1 exchange patterns. 
We get four cluster variables $x_1,x_2,x_1',x_2'$, 
 four clusters
$(x_1,x_2)$, $(x_1',x_2)$, $(x_1,x_2')$, and $(x_1',x_2')$, and 
two exchange relations of the form $x_1x_1'=M_1+M_2$ and
\hbox{$x_2x_2'=M_3+M_4$},
where $M_1,M_2,M_3,M_4$ are monomials in the frozen variables.
\end{example}

For the rest of this section, we assume that $b>0$ and $c>0$. 
We denote the cluster variables in our cluster algebra $\AA$ of rank~2 by
\[
\dots, z_{-2}, z_{-1}, z_0, z_1, z_2, \dots,
\] 
so that the seed pattern looks like this:

\begin{equation*}
\cdots 
\overunder{1}{}
\begin{blockarray}{cc}
\begin{block}{(cc)}
z_1 & z_0 \\
\end{block}\\[-.2in]
\begin{block}{[cc]}
0 & -b\\
c & 0\\
\end{block}
\end{blockarray}
\overunder{2}{}
\begin{blockarray}{cc}
\begin{block}{(cc)}
z_1 & z_2 \\
\end{block}\\[-.2in]
\begin{block}{[cc]}
0 & b\\
-c & 0\\
\end{block}
\end{blockarray}
\overunder{1}{}
\begin{blockarray}{cc}
\begin{block}{(cc)}
z_3 & z_2 \\
\end{block}\\[-.2in]
\begin{block}{[cc]}
0 & -b\\
c & 0\\
\end{block}
\end{blockarray}
\overunder{2}{}
\begin{blockarray}{cc}
\begin{block}{(cc)}
z_3 & z_4 \\
\end{block}\\[-.2in]
\begin{block}{[cc]}
0 & b\\
-c & 0\\
\end{block}
\end{blockarray}
\overunder{1}{}
\cdots
\vspace{-.1in}
\end{equation*}
where we placed each cluster on top of the corresponding exchange
matrix.
(The extended exchange matrix may have additional rows.)  

We denote by $\AA=\AA(b,c)$ a cluster algebra of rank~2
which has exchange matrices 
$\pm\left[\begin{smallmatrix}
0 & b\\
-c & 0
\end{smallmatrix}\right]$
and no frozen variables. 
(Cluster algebras without frozen variables are generally said to have 
\emph{trivial coefficients}.) 
\index{trivial coefficients}
The exchange relations in $\AA(b,c)$ are, in the notation introduced above: 
\begin{equation}
\label{eq:Abc}
z_{k-1}\,z_{k+1}=
\begin{cases}
z_k^c + 1 & \text{if $k$ is even;}\\[.05in]
z_k^b + 1 & \text{if $k$ is odd.}
\end{cases}
\end{equation}

\begin{example}
\label{example:A(1,1)}
The cluster variables in the cluster algebra $\AA(1,1)$ 
with trivial coefficients satisfy the recurrence 
\begin{equation}
\label{eq:pentagon-recurrence}
z_{k-1}\,z_{k+1}=z_k+1. 
\end{equation}
Expressing everything in terms of the initial cluster $(z_1,z_2)$, we get: 
\begin{equation}
\label{eq:z3...z6-A2}
z_3=\frac{z_2+1}{z_1},\ \ 
z_4
=\frac{z_1+z_2+1}{z_1 z_2},\ \ 
z_5=\frac{z_1+1}{z_2},\ \ 
z_6=z_1,\ \ 
z_7=z_2, \ \dots, 
\end{equation}
so the sequence is $5$-periodic! 
Thus in this case, we have only 5 distinct cluster variables.
In the seed pattern, we will have:
\begin{equation*}
\cdots 
\overunder{2}{}
\begin{blockarray}{cc}
\begin{block}{(cc)}
z_1 & z_2 \\
\end{block}\\[-.2in]
\begin{block}{[cc]}
0 & 1\\
-1 & 0\\
\end{block}
\end{blockarray}
\overunder{1}{}
\begin{blockarray}{cc}
\begin{block}{(cc)}
z_3 & z_2 \\
\end{block}\\[-.2in]
\begin{block}{[cc]}
0 & -1\\
1 & 0\\
\end{block}
\end{blockarray}
\overunder{2}{}
\cdots
\overunder{1}{}
\begin{blockarray}{cc}
\begin{block}{(cc)}
z_7 & z_6 \\
\end{block}\\[-.2in]
\begin{block}{[cc]}
0 & -1\\
1 & 0\\
\end{block}
\end{blockarray}
\overunder{2}{}
\cdots
\vspace{-.1in}
\end{equation*}
Note that even though the labeled seeds containing the clusters
$(z_1,z_2)$ and $(z_7,z_6)$ are different,
the corresponding unlabeled seeds coincide.
Just switch $z_6$ and~$z_7$,
and interchange the rows and the columns in the associated exchange
matrix. 
Thus, this exchange pattern has 5 distinct (unlabeled)~seeds.
\end{example}

\begin{remark}
The recurrence \eqref{eq:pentagon-recurrence} arises in different
mathematical contexts such as dilogarithm identities 
(cf., e.g., bibliographical pointers in~\cite[Section~1.1]{pcmi}), 
the Napier-Gauss \emph{Pentagramma Mirificum}
(cf.\ \cite{cayley-pentagramma} and
\cite[Section~12.7]{coxeter-non-euclidean}) 
and Coxeter's frieze patterns~\cite{coxeter-frieze}. 
\end{remark}

\begin{example}
We now keep the same exchange matrices but introduce a single frozen variable~$y$. 
Consider a seed pattern that looks like this: 
{\small \vspace{-5pt}
\begin{equation*}
\cdots
\begin{blockarray}{cc}
z_1 & \!\!\!z_2 \\
\begin{block}{[cc]}
0 & \!\!\!1\\
-1 &\!\! \!0\\
p & \!\!\!q \\
\end{block}
\end{blockarray}
\smalloverunder{1}{}
\begin{blockarray}{cc}
z_3 & \!\!\!z_2 \\
\begin{block}{[cc]}
0 & \!\!\!-1\\
1 & \!\!\!0\\
-p & \!\!\!p\!+\!q\\
\end{block}
\end{blockarray}
\smalloverunder{2}{}
\begin{blockarray}{cc}
z_3 & \!\!\!z_4 \\
\begin{block}{[cc]}
0 & \!\!\!1\\
-1 & \!\!\!0\\
q & \!\!\!-p\!-\!q\\
\end{block}
\end{blockarray}
\smalloverunder{1}{}
\begin{blockarray}{cc}
z_5 & \!\!\!z_4 \\
\begin{block}{[cc]}
0 & \!\!\!-1\\
1 & \!\!\!0\\
-q & \!\!\!-p\\
\end{block}
\end{blockarray}
\smalloverunder{2}{}
\begin{blockarray}{cc}
z_5 & \!\!\!z_6 \\
\begin{block}{[cc]}
0 & \!\!\!1\\
-1 & \!\!\!0\\
-q & \!\!\!p\\
\end{block}
\end{blockarray}
\smalloverunder{1}{}
\begin{blockarray}{cc}
z_7 & \!\!\!z_6 \\
\begin{block}{[cc]}
0 & \!\!\!-1\\
1 & \!\!\!0\\
q & \!\!\!p\\
\end{block}
\end{blockarray}
\cdots,
\vspace{-.15in}
\end{equation*}}where 
$p$ and $q$ are nonnegative integers. 
Relabeling the rows and columns to keep the $2\times 2$ exchange matrices
invariant, we get 
{\small \vspace{-5pt}
\begin{equation*}
\cdots
\begin{blockarray}{cc}
z_1 & \!\!\!z_2 \\
\begin{block}{[cc]}
0 & \!\!\!1\\
-1 &\!\! \!0\\
p & \!\!\!q \\
\end{block}
\end{blockarray}
\smalloverunder{}{}
\begin{blockarray}{cc}
z_2 & \!\!\!z_3 \\
\begin{block}{[cc]}
0 & \!\!\!1\\
-1 & \!\!\!0\\
p\!+\!q & \!\!\!-p\\
\end{block}
\end{blockarray}
\smalloverunder{}{}
\begin{blockarray}{cc}
z_3 & \!\!\!z_4 \\
\begin{block}{[cc]}
0 & \!\!\!1\\
-1 & \!\!\!0\\
q & \!\!\!-p\!-\!q\\
\end{block}
\end{blockarray}
\smalloverunder{}{}
\begin{blockarray}{cc}
z_4 & \!\!\!z_5 \\
\begin{block}{[cc]}
0 & \!\!\!1\\
-1 & \!\!\!0\\
-p & \!\!\!-q\\
\end{block}
\end{blockarray}
\smalloverunder{}{}
\begin{blockarray}{cc}
z_5 & \!\!\!z_6 \\
\begin{block}{[cc]}
0 & \!\!\!1\\
-1 & \!\!\!0\\
-q & \!\!\!p\\
\end{block}
\end{blockarray}
\smalloverunder{}{}
\begin{blockarray}{cc}
z_6 & \!\!\!z_7 \\
\begin{block}{[cc]}
0 & \!\!\!1\\
-1 & \!\!\!0\\
p & \!\!\!q\\
\end{block}
\end{blockarray}
\cdots,
\vspace{-.15in}
\end{equation*}}so 
the sequence of extended exchange matrices remains 5-periodic. 
We then compute the cluster variables: \vspace{-5pt}
\[
z_3 = \frac{z_2+y^p}{z_1},{\ }
z_4 = \frac{y^{p+q} z_1 + z_2 + y^p}{z_1 z_2},{\ }
z_5 = \frac{y^q z_1 + 1}{z_2},{\ }
z_6 = z_1, {\ }
z_7 = z_2\,;
\]
the 5-periodicity persists! 
Just as in the case of trivial coefficients, there are five
distinct cluster variables overall, and five distinct
unlabeled~seeds.

The above computations were based on the assumption that 
both entries in the third row of the initial extended exchange matrix 
are nonnegative. 
In fact, this condition is not required for 5-periodicity. 
Note that we could start with an initial seed containing the
cluster $(z_i,z_{i+1})$,
for any $i\in\{1,2,3,4,5\}$, together with the associated extended exchange matrix in the relabeled sequence above, 
and get the same 5-periodic behavior. 
Since any row vector in $\ZZ^2$ has the form $(p,q)$, $(p+q,-p)$,
$(q,-p-q)$, $(-p,-q)$, or $(-q,p)$, for some $p,q\ge 0$
(see  Figure~\ref{fig:5-partition}), we conclude that 
any seed pattern with extended exchange matrices of the form 
$\pm\left[\begin{smallmatrix}
0 & 1\\
-1 & 0\\
* & *
\end{smallmatrix}\right]$
has exactly five seeds. 

\begin{figure}[h]
\begin{center}
\setlength{\unitlength}{1.6pt}
\begin{picture}(80,60)(0,3)
\thicklines
\put(20,20){\makebox(0,0){\small $(-p,-q)$}}
\put(20,55){\makebox(0,0){\small $(-q,p)$}}
\put(60,55){\makebox(0,0){\small $(p,q)$}}
\put(57,3){\makebox(0,0){\small $(q,-p\!-\!q)$}}
\put(75,28){\makebox(0,0){\small $(p\!+\!q,-p)$}}
\put(0,40){\line(1,0){80}}
\put(40,0){\line(0,1){70}}
\put(40,40){\line(1,-1){40}}
\end{picture}
\vspace{-8pt}
\end{center}
\caption{Five types of ``frozen rows'' in extended exchange matrices
with top rows $(0,1)$ and $(-1,0)$. 
Within each of the five cones, 
the points are parameterized by $p,q\ge 0$.}
\label{fig:5-partition}
\end{figure}

As we shall later see, the general case of a seed pattern with exchange
matrices $\pm\left[\begin{smallmatrix}
0 & 1\\
-1 & 0
\end{smallmatrix}\right]$
and an arbitrary number of frozen variables
exhibits the~same qualitative behaviour: 
there will still be five cluster variables and five~seeds.
\end{example}

\begin{example}
\label{example:A(1,2)}
The cluster variables in the cluster algebra $\AA(1,2)$ 
satisfy the recurrence 
\begin{equation}
\label{eq:exch-rel-B2}
z_{k-1}\,z_{k+1}=
\begin{cases}
z_k^2 + 1 & \text{if $k$ is even;}\\[.05in]
z_k + 1 & \text{if $k$ is odd.}
\end{cases}
\end{equation}
Expressing everything in terms of the initial cluster $(z_1,z_2)$, we get: 
\[
z_3=\frac{z_2^2+1}{z_1},\ \ 
z_4
=\frac{z_2^2+z_1+1}{z_1 z_2},\ \ 
z_5=\frac{z_1^2+z_2^2+2z_1 +1}{z_1 z_2^2},\ \ 
z_6=\frac{z_1+1}{z_2},\ \ 
\]
and then 
$z_7=z_1$ and 
$z_8=z_2$,
so the sequence is $6$-periodic! 
Thus in this case, we have only 6 distinct cluster variables,
and 6 distinct seeds.
\end{example}

\begin{exercise}
Compute the cluster variables for the cluster algebra with the initial
extended exchange matrix $\left[\begin{smallmatrix} 
0 & 1\\
-2 & 0\\
p & q
\end{smallmatrix}\right]$. 
\end{exercise}

\begin{exercise}
Compute the cluster variables of the 
cluster algebra~$\mathcal A(1,3)$.
(Start by evaluating them in the specialization
$z_1=z_2 = 1$; notice
that all the numbers will be integers.) 
\end{exercise}

\begin{example}
Consider the cluster algebra $\AA(1,4)$.
Setting $z_1 = z_2 = 1$ and applying the recurrence~\eqref{eq:Abc}, 
we see that the cluster variables~$z_3,z_4,\dots$ 
specialize to the following values: 
\[
2, 3, 41, 14, 937, 67, 21506, 321, 493697, 1538,
11333521, 7369, 260177282, \dots
\]
It is not hard to show that this sequence is not periodic ---
so the (unspecialized) sequence of cluster variables is not periodic
either.

The good news is that all these numbers are integers. 
Why does this happen? 
To understand this, let us recur\-sively compute the 
cluster variables $z_3,z_4,\dots$ in terms of $z_1$ and~$z_2$: 
\begin{align*}
z_3 &= \frac{z_2^4+1}{z_1},\\
z_4 &= \frac{z_3+1}{z_2} = \frac{z_2^4+z_1+1}{z_1 z_2},\\
z_5 &= \frac{z_4^4+1}{z_3} \\
&= 
\frac{z_2^{12} + 4 z_1 z_2^8 + 3 z_2^8 + 6 z_1^2 z_2^4+8 z_1 z_2^4
+z_1^4+3 z_2^4+4 z_1^3+6 z_1^2+4 z_1 + 1}
{z_1^3 z_2^4},\\
z_6 &= \frac{z_5+1}{z_4} 
= \frac{z_2^8+3z_1 z_2^4+2z_2^4+z_1^3+3 z_1^2+
3z_1 + 1}{z_1^2 z_2^3},\ \text{etc.}
\end{align*}
Now we see what is going on: 
the evaluations of these expressions at $z_1 \!=\! z_2 =1$ are integers 
because they are 
\emph{Laurent polynomials} in~$z_1$ and~$z_2$, i.e., 
their denominators are monomials. 
(This is by no means to be expected: 
for example, the computation of $z_6$ involves
dividing by $z_4 \!=\! \frac{z_2^4+z_1+1}{z_1 z_2}$.) 
\end{example}

\newpage

\section{The Laurent phenomenon}
\label{sec:laurent-geometric}

The examples of Laurentness that we have seen before are special cases
of the following general phenomenon. 

\begin{theorem}
\label{thm:Laurent}
In a cluster algebra of geometric type, 
each cluster variable can be expressed as a Laurent polynomial with
integer coefficients in the elements
of any extended cluster.
\end{theorem}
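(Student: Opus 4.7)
The plan is to fix an initial labeled seed $(\tilde\xx_\circ, \tilde B_\circ)$ at some vertex $t_\circ \in \TT_n$, with $\tilde\xx_\circ = (x_1, \dots, x_m)$, and to show that every cluster variable arising at any vertex $t \in \TT_n$ lies in the Laurent polynomial ring $\ZZ[x_1^{\pm 1}, \dots, x_m^{\pm 1}]$. I would proceed by induction on the distance $d = d(t_\circ, t)$ in the tree. The base cases $d = 0, 1$ are immediate: for $d=0$ the cluster variables are already the elements of $\tilde\xx_\circ$, and after one mutation in direction $k$ the exchange relation \eqref{eq:exch-rel-geom} produces $(M_+ + M_-)/x_k$, which is patently Laurent in $\tilde\xx_\circ$ since $M_\pm$ are monomials in the remaining elements of $\tilde\xx_\circ$.

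The main obstacle is the inductive step. Suppose $t$ is reached from $t'$ (at distance $d-1$) by mutation at index $k$, so that the new cluster variable is
\[
z \;=\; \frac{M_+ + M_-}{x_k(t')},
\]
where $x_k(t')$ and the cluster variables appearing in $M_\pm$ belong to the extended cluster at $t'$. By the inductive hypothesis these are all Laurent polynomials in $\tilde\xx_\circ$, so $z$ is at least a rational function in $\tilde\xx_\circ$. But $x_k(t')$, viewed inside $\ZZ[\tilde\xx_\circ^{\pm 1}]$, is typically a genuine Laurent polynomial with many monomial terms rather than a single monomial, and there is no a priori reason for $M_+ + M_-$ to be divisible by it. Establishing this divisibility is the crux of the theorem.

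To handle this, I would follow the strategy of Fomin and Zelevinsky based on the \emph{Caterpillar Lemma}: one strengthens the inductive statement to prove simultaneously that, along a carefully chosen ``caterpillar'' subtree of $\TT_n$ (a backbone path with short attached spines containing $t_\circ$ and $t$), both (i) every cluster variable appearing on the subtree is Laurent in $\tilde\xx_\circ$, and (ii) appropriate pairs of such cluster variables are coprime as elements of the unique factorization domain $\ZZ[\tilde\xx_\circ^{\pm 1}]$. The crucial instance of (ii) is the coprimality of the current $x_k(t')$ with the numerator $M_+ + M_-$; together with the Laurentness of the numerator supplied by (i), it forces $z$ to lie in $\ZZ[\tilde\xx_\circ^{\pm 1}]$ and so completes the inductive step.

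The most delicate part will be proving and propagating the coprimality clause (ii). This will require a careful tracking of how the monomials $M_+, M_-$ and the divisor $x_k(t')$ transform under successive mutations, together with a judicious construction of the caterpillar so that the coprimality needed at each new node follows from the coprimality already established at its predecessors; the combinatorics of the matrix mutation rule \eqref{eq:matrix-mutation} controls the monomial exponents just enough to make the case analysis feasible. Once the strengthened lemma is in place, the Laurent phenomenon follows immediately, because every target cluster variable lies on some caterpillar based at~$t_\circ$.
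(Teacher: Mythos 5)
Your overall plan---induction on the distance in $\TT_n$ combined with a coprimality argument in the unique factorization domain $\ZZ[x_1^{\pm1},\dots,x_m^{\pm1}]$, in the spirit of the Fomin--Zelevinsky Caterpillar Lemma---is the right family of ideas and is close to what the text does. But the step where you explain how coprimality closes the induction is stated backwards, and as written it would prove the opposite of what you want. You first (correctly) identify the crux as the \emph{divisibility} of $M_+ + M_-$ by the non-monomial part of $x_k(t')$ inside $\ZZ[\tilde\xx_\circ^{\pm1}]$; you then assert that the \emph{coprimality} of $x_k(t')$ with $M_+ + M_-$, together with Laurentness of the numerator, forces $z=(M_++M_-)/x_k(t')$ to be Laurent. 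If $x_k(t')$ is a non-unit coprime to the numerator, the quotient is \emph{not} a Laurent polynomial. The coprimality that actually does the work is of a different kind: one writes the target cluster variable $x$ in two ways, as a Laurent polynomial in $\tilde\xx_\circ$ divided by a power of $x_j'$ (coming from the seed at $t_1$) and as a Laurent polynomial divided by powers of $x_k'$ and $x_j''$ (coming from a second seed at distance $d-1$ from one containing $x$), and then uses the coprimality of $x_j'$ with each of $x_k'$ and $x_j''$ to conclude that the reduced denominator is a monomial. Your proposal never sets up these two competing expressions, so even with the intended coprimality in hand the inductive step would not close.

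The second gap is that the entire content of your clause (ii)---actually proving and then propagating the coprimality---is deferred to ``careful tracking'' and a ``judicious construction of the caterpillar,'' which is where all the work lies. The text's device for this is worth knowing: by Lemma~\ref{lem:trivialization} one may freely adjoin auxiliary frozen variables (extra rows of $\tilde B_\circ$ with a single entry $1$ in column $j$ or $k$); this makes the relevant exchange binomials linear in a fresh variable, hence irreducible, and turns the coprimality of $x_j'$ with $x_k'$ and $x_j''$ (Lemma~\ref{lem:coprime}) into an explicit computation rather than a hypothesis to be carried along a caterpillar. One also needs the case $d=3$ handled separately (Lemma~\ref{lem:3mutations}, the Laurentness of $x_j''$), and the case $b_{jk}=b_{kj}=0$ treated by commuting the first two mutations; your induction as set up isolates neither. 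Without these ingredients, or a worked-out substitute for them, the proposal is a plan rather than a proof.
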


The rest of this section is devoted to the proof of Theorem~\ref{thm:Laurent}. 
First, we state a simple auxiliary lemma which can be obtained by direct inspection of
the exchange relations~\eqref{eq:exch-rel-geom}. 

\begin{lemma}
\label{lem:trivialization}
Let $\tilde B_\circ$ be an $m \times n$ extended exchange matrix. 
Let $\tilde B'_\circ$ be the matrix obtained from $\tilde B_\circ$
by deleting the rows labeled by a subset $I\subset\{n+1,\dots,m\}$.
Then the formulas expressing the cluster variables in a cluster algebra 
$\AA(\tilde\xx'_\circ,\tilde B'_\circ)$ in terms of the initial extended
cluster~$\tilde\xx'_\circ$ 
can be obtained from their counterparts for 
$\AA(\tilde\xx_\circ,\tilde B_\circ)$
by specializing the frozen variables $x_i$ ($i\in I$) 
to~$1$, and relabeling the remaining variables accordingly. 
\end{lemma}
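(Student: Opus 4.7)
The plan is to proceed in two stages: first establish a structural compatibility between mutation and row deletion at the matrix level, then use this to push the specialization through each exchange relation by induction on the distance in $\TT_n$ from the initial vertex.

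First I would verify that matrix mutation commutes with deletion of any subset of rows indexed by $I\subset\{n+1,\dots,m\}$. Inspecting formula \eqref{eq:matrix-mutation}, the entry $b'_{ij}$ of $\mu_k(\tilde B)$ depends only on $b_{ij}$, $b_{ik}$, and $b_{kj}$. Since $k\in[1,n]$ and the row index $k$ is never deleted, the transformation law for any surviving row $i\notin I$ makes no reference to the deleted rows. Consequently, for every vertex $t\in\TT_n$, the extended exchange matrix $\tilde B'(t)$ in the reduced pattern coincides with the result of deleting the rows indexed by $I$ from $\tilde B(t)$. In particular, writing $b_{ij}(t)$ for the entries of $\tilde B(t)$, we have $b_{ij}(t)=b'_{ij}(t)$ for all $i\notin I$ and all $j\in[1,n]$ at every~$t$.

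Next I would induct on the graph distance $d(t_\circ,t)$ in $\TT_n$, where $t_\circ$ is the initial vertex. Let $\pi\colon\CC(x_1,\dots,x_m)\to\CC(x_j:j\notin I)$ be the $\CC$-algebra homomorphism sending $x_i\mapsto 1$ for $i\in I$ and fixing the remaining variables; the claim is that $\pi(x_j(t))=x'_j(t)$ for every mutable index $j\in[1,n]$ and every $t\in\TT_n$, as rational functions in $\tilde\xx'_\circ$. The base case $t=t_\circ$ is immediate. For the inductive step, let $t\overunder{k}{}t'$ with $t'$ closer to $t_\circ$. All of $x_j(t)$ with $j\neq k$ equal their counterparts $x_j(t')$, so the induction hypothesis handles them. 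For $x_k(t)$, apply $\pi$ to the exchange relation \eqref{eq:exch-rel-geom} at $t'$; using the compatibility from the first step, the right-hand side becomes exactly the right-hand side of the reduced exchange relation for $x'_k(t)$ in $\AA(\tilde\xx'_\circ,\tilde B'_\circ)$, while the left-hand side becomes $\pi(x_k(t'))\,\pi(x_k(t))=x'_k(t')\,\pi(x_k(t))$. Dividing in the field of fractions gives $\pi(x_k(t))=x'_k(t)$.

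The main obstacle, such as it is, is purely bookkeeping: one must check that $\pi$ is defined on every rational expression that arises in the recursion, i.e., that we never accidentally specialize a denominator to zero. This is harmless here because the denominator appearing when we solve for $\pi(x_k(t))$ is $\pi(x_k(t'))=x'_k(t')$, a nonzero element of the ambient field of the reduced algebra by the free-generation condition on initial extended clusters together with the inductive hypothesis. Everything else is direct translation through the specialization homomorphism, and the conclusion of the lemma follows.
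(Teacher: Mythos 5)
Your proposal is correct and follows exactly the route the paper intends: the text offers no written proof beyond the remark that the lemma "can be obtained by direct inspection of the exchange relations," and your argument is precisely that inspection carried out in full — observing that mutation of frozen rows $i\notin I$ never references the deleted rows (so row-deletion commutes with matrix mutation), then pushing the specialization $x_i\mapsto 1$ ($i\in I$) through each exchange relation by induction on distance in $\TT_n$. The care you take with well-definedness of the specialization (the denominator at each step specializes to a nonzero cluster variable of the reduced pattern) is a worthwhile detail the paper leaves implicit.
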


\begin{remark}
\label{rem:trivialization}
A specialization of the kind described in
Lemma~\ref{lem:trivialization} sends Laurent polynomials to Laurent
polynomials. This means that if we add extra frozen variables to
the initial seed and establish Laurentness of an arbitrary  cluster
variable in this modified setting,
it would then imply the Laurentness of the cluster variable's counterpart in the original setting.
\end{remark}

Let us set up the notation needed for the proof of
Theorem~\ref{thm:Laurent}:
\begin{itemize}[leftmargin=.15in]
\item
$t_\circ\in\TT_n$ is an (arbitrarily chosen) initial vertex; 
\item
$(\tilde\xx_\circ,\tilde B_\circ)$ is the initial seed;
\item
$\tilde\xx_\circ=(x_1,\dots,x_m)$ is the initial extended cluster;
\item
$\tilde B_\circ=(b_{ij}^0)$ is the initial $m\times n$ extended exchange matrix;
\item 
$t\in\TT_n$ is an arbitrary vertex;
\item
$x\in\xx(t)$ is a cluster variable at~$t$;
\item $t_1$ and $t_2$ are the first two vertices on the unique
	path 
 in~$\TT_n$ connecting~$t_\circ$~to~$t$, obtained via 
		mutations in direction $j$ then $k$, 
		so that locally we have
$t_\circ
\!\!\begin{array}{c}
\scriptstyle{j}\\[-.1in]
-\!\!-\\[-.1in]
\scriptstyle{}
\end{array}
\! 
t_1
\!\!\begin{array}{c}
\scriptstyle{k}\\[-.1in]
-\!\!-\\[-.1in]
\scriptstyle{}
\end{array}
\! 
t_2$; 
\item
$d$ is the length of this path,
i.e., the distance in $\TT_n$ between
  $t_\circ$ and~$t$;
\item
$\tilde\xx(t_1)=(\tilde\xx(t_\circ)-\{x_j\}) \cup\{x_j'\}$; 
\item
$\tilde\xx(t_2)=(\tilde\xx(t_1)-\{x_k\}) \cup\{x_k'\}$. 
\end{itemize}
We will prove the Laurentness of~$x$, viewed as a function
of~$\xx_\circ$, by induction on~$d$. 
(More precisely, the statement we prove by induction concerns
arbitrary seeds at distance $d$ from each other in arbitrary cluster
algebras of geometric type.) 
The base cases $d=1$ and $d=2$ are trivial. 

There are two possibilities to consider.

\smallskip

\noindent
\textbf{Case~1}: $b_{jk}^0=b_{kj}^0=0$.
Let $t_3$ be the vertex in~$\TT_n$ connected to~$t_\circ$ by an edge
labeled~$k$. 
Since $\mu_j$ and $\mu_k$ commute at~$t_\circ$
(cf.\ Exercise~\ref{ex:mat-mut-simple}(\ref{ex:mat-mut-simple-4})),
each of the two seeds at $t_1$ and~$t_3$, respectively,
lies at distance $d-1$ from a seed containing~$x$, 
and $\tilde\xx(t_3)=(\tilde\xx(t_\circ)-\{x_k\}) \cup\{x_k'\}$.

By the induction assumption, the cluster variable~$x$ is expressed as
a Laurent polynomial in terms of the extended
cluster~$\tilde\xx(t_1)=(x_1,\dots,x_j',\dots,x_m)$. 
Also, $x'_j= \frac{M_1+M_2}{x_j}$, where $M_1$ and $M_2$ are monomials in $x_1,\dots,x_m$. 
Substituting this into the aforementioned Laurent polynomial, 
we obtain a formula expressing $x$ in terms of~$\tilde\xx_\circ$.  
Another such formula is obtained by taking the Laurent polynomial
expression for $x$ in terms of~$\tilde\xx(t_3)$,
and substituting $x'_k= \frac{M_3+M_4}{x_k}$,
with $M_3$ and $M_4$ some monomials in $x_1,\dots,x_m$. 
Removing common factors, we obtain 
(necessarily identical) expressions for~$x$ as a ratio of coprime
polynomials in $x_1,\dots,x_m$, with monic denominator. 

Note that in the first computation, all non-monomial factors that
can potentially remain in the denominator must come from
$M_1+M_2$; in the second one, they can only come from~$M_3+M_4$.
If $M_1+M_2$ and $M_3+M_4$ were coprime to each other,
the Laurentness of~$x$ would follow. 
This coprimality however does not hold in general.  
(For example, if columns $j$ and~$k$ of~$\tilde B_\circ$ are equal to
each other, then $M_1+M_2=M_3+M_4$.) 
We can however~use a trick based on Lemma~\ref{lem:trivialization},
cf.\ Remark~\ref{rem:trivialization}. 
Let us introduce a new frozen variable~$x_{m+1}$ and
extend the matrix~$\tilde B_\circ$ by an extra row 
in which the $(m+1,j)$-entry is~1, and all other entries are~0. 
Now $M_1+M_2$ has become a binomial which has degree~1
in the variable~$x_{m+1}$. 
Note that the supports of $M_1$ and $M_2$ are disjoint,
so $M_1+M_2$ does not have a monomial factor, 
 and if it had a nontrivial factorization, that factorization
would give a nontrivial factorization of $1+x_{m+1}$ after specializing
the other cluster variables to $1$.  Therefore $M_1+M_2$ is irreducible;
moreover it 
cannot divide $M_3+M_4$ as the latter does not depend on~$x_{m+1}$.
So $M_1+M_2$ and $M_3+M_4$ are coprime to each other, and we are done with Case~1.

\smallskip

\noindent
\textbf{Case~2}: $b_{jk}^0 b_{kj}^0<0$.
This case is much harder. The general shape of the proof 
remains the same: we use induction on~$d$ together with a coprimality
argument assisted by the introduction of additional frozen variables. 
One new aspect of the proof is that we need to separately 
consider the case $d=3$ since the induction step relies on it. 

Without loss of generality we assume that $b_{jk}^0<0$ and $b_{kj}^0>0$.
Otherwise, change the signs of all extended exchange matrices; this
will not affect the formulas relating extended clusters to each
other, see
Exercise~\ref{ex:mat-mut-simple}(\ref{ex:mat-mut-simple-3}).  

We denote by $t_3\in\TT_n$ the vertex connected to~$t_2$ by an edge
labeled~$j$, and introduce notation 
\[
\tilde\xx(t_3)=(\tilde\xx(t_2)-\{x_j'\}) \cup\{x_j''\}
=(x_1,\dots,x_j'',\dots,x_k',\dots,x_m). 
\]
(Whether $j<k$ or $k<j$ is immaterial.)
See Figure~\ref{fig:3mutations}. 
Note that $t_3$ may or may not lie
on the unique path in $T_n$ connecting $t_0$ to~$t$.

\begin{figure}[ht]
\begin{center}
\setlength{\unitlength}{3pt}
\begin{picture}(65,11)(-5,16)
\thicklines
  \put(0,20){\line(1,0){60}}
  \multiput(0,20)(20,0){4}{\circle*{1.5}}
\put(10,22){\makebox(0,0){$j$}}
\put(30,22){\makebox(0,0){$k$}}
\put(50,22){\makebox(0,0){$j$}}
\put(0,23){\makebox(0,0){$t_\circ$}}
\put(20,23){\makebox(0,0){$t_1$}}
\put(40,23){\makebox(0,0){$t_2$}}
\put(60,23){\makebox(0,0){$t_3$}}
\put(0,15){\makebox(0,0){$x_j, x_k$}}
\put(20,15){\makebox(0,0){$x'_j, x_k$}}
\put(40,15){\makebox(0,0){$x'_j, x'_k$}}
\put(60,15){\makebox(0,0){$x''_j, x'_k$}}
\end{picture}
\end{center}
\caption{Cluster variables obtained via successive mutations $\mu_j$,
  $\mu_k$, $\mu_j$.}
\label{fig:3mutations}
\end{figure}

Note that among cluster variables obtained by at most three
mutations from the initial seed, 
those of the form~$x_j''$ are the only ones whose Laurentness is not
 obvious.  
Therefore to establish Case 2 when $d=3$, it is enough 
to prove \cref{lem:3mutations} below.

\begin{lemma}
\label{lem:3mutations}
The cluster variable $x_j''$ 
is a Laurent polynomial in~$\tilde\xx_\circ$. 
\end{lemma}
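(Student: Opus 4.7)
The plan is to prove Lemma~\ref{lem:3mutations} by direct computation. By Exercise~\ref{ex:mat-mut-simple}(\ref{ex:mat-mut-simple-3}) we may assume $b_{jk}<0<b_{kj}$; abbreviate $b=b_{kj}$ and $c=-b_{jk}$. Using the matrix-mutation rule~\eqref{eq:matrix-mutation} I will first compute the $k$-th column of $\mu_j(\tilde B_\circ)$ and the $j$-th column of $\mu_k\mu_j(\tilde B_\circ)$, from which the three exchange relations along $t_\circ\to t_1\to t_2\to t_3$ take the form
\begin{align*}
x_j x_j' &= x_k^b M_0 + N_0 \;=:\; M+N,\\
x_k x_k' &= (x_j')^c P_0 + Q_0 \;=:\; P+Q,\\
x_j' x_j'' &= (x_k')^b U_0 + V_0 \;=:\; U+V,
\end{align*}
where $M_0,N_0,P_0,Q_0,U_0,V_0\in\CC[x_i:i\ne j,k]$ are monomials whose exponents are signed combinations of $b_{ij},b_{ik},b,c$ read off from the three exchange matrices.

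Next I substitute $x_k'=(P+Q)/x_k$ and then $x_j'=(M+N)/x_j$ into $x_j''=((x_k')^b U_0+V_0)/x_j'$, obtaining
\[
x_j'' \;=\; \frac{\displaystyle\sum_{\ell=0}^{b}\binom{b}{\ell}(M+N)^{c\ell}x_j^{c(b-\ell)}P_0^{\ell}Q_0^{b-\ell}U_0 \;+\; x_j^{bc}x_k^b V_0}{x_j^{bc-1}x_k^b(M+N)}.
\]
To establish Laurentness I must show that the numerator is divisible by $M+N$. Modulo $M+N$ all terms in the binomial sum with $\ell\geq 1$ vanish, so the numerator reduces to $x_j^{bc}\bigl(Q_0^b U_0 + x_k^b V_0\bigr)$, which is coprime to $x_j$. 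Using the congruence $x_k^b M_0 = M \equiv -N_0\pmod{M+N}$, this expression will be divisible by $M+N$ (up to multiplication by the monomial $M_0$, which can be harmlessly absorbed into the denominator) provided that
\[
M_0\,Q_0^b\,U_0 \;=\; N_0\,V_0,
\]
because this identity is equivalent to $M_0\bigl(Q_0^b U_0 + x_k^b V_0\bigr) = V_0(M+N)$.

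The main technical obstacle is verifying the monomial identity $M_0 Q_0^b U_0 = N_0 V_0$. For each $i\neq j,k$, the exponent of $x_i$ on each side is an explicit signed combination of $b_{ij}, b_{ik}, b, c$ obtained from the mutation formula and the expressions for the three exchange matrices; equality of the two sides should follow by a short case analysis on the sign of $(\mu_j\tilde B_\circ)_{ik}$, exploiting the fact that $(\mu_j\tilde B_\circ)_{kj}=-b<0$ eliminates one of the two nontrivial branches of the matrix-mutation rule. I will sanity-check the whole scheme in the cases $b=c=1$ with no additional variables (which yields the clean expression $x_j''=(x_j+1)/x_k$) and with a single auxiliary variable $x_l$ attached to $j$, where the predicted cancellation of $M+N$ in the final step can be carried out by hand and matches the general formula above.
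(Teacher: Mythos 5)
Your proposal is correct and follows essentially the same route as the paper's proof: both reduce the numerator of $x_j''$ modulo the binomial $M+N$ (the paper's $P_j$, up to a Laurent monomial factor), observe that the terms involving positive powers of $x_j'$ vanish, and then verify that the surviving Laurent-monomial identity (your $M_0\,Q_0^b\,U_0=N_0\,V_0$) is exactly the matrix mutation rule for $b^{(2)}_{ij}$ in the branch forced by $b^{(1)}_{kj}=-b<0$. The only difference is presentational: the paper reduces $x_k'$ modulo $P_j$ \emph{before} raising it to the power $b$, which avoids your binomial expansion, but the content is identical and your key identity does check out.
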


\proof
Let $\mu_j(\tilde B_\circ)=\tilde{B}(t_1)=(b_{ij})$
and $\mu_k(\tilde{B}(t_1))=\tilde{B}(t_2)= (b'_{ij})$ be the extended exchange
matrices at $t_1$ and~$t_2$, respectively. 
Our assumption $b_{jk}^0<0$ implies that $b_{jk}>0$ and $b^{'}_{kj}>0$. 

We will view each of the cluster variables $x_j'$, $x_k'$, $x_j''$ as
a rational function in the elements $x_1,\dots,x_m$ of the initial
extended cluster~$\tilde\xx_\circ$. 
The notation $P \sim Q$ will mean that 
$P$ and $Q$ differ by a monomial factor, i.e., 
$P=QM$ where $M$ is a Laurent monomial in $x_1,\dots,x_m$.
Given a polynomial $P$, the notation $Q\equiv R \mod P$ will
mean that  $Q-R=PS$ for
some Laurent polynomial~$S$.
We also define 
\[
P_j=P_j(x_1,\dots,x_m)= \prod_i x_i^{b_{ij}}+1.
\]

The relevant instances of the exchange
relation~\eqref{eq:exch-rel-geom} imply that
\begin{align}
\label{eq:exch-rel-xj'}
x'_j &\sim x_j^{-1} \biggl( \prod_i x_i^{b_{ij}}+1 \biggr) \sim P_j,\\
\label{eq:exch-rel-xk'}
x'_k &= x_k^{-1} \biggl(\! (x'_j)^{b_{jk}} \prod_{\substack{b_{ik}>0\\ i\neq j}} x_i^{b_{ik}}
+ 
\prod_{b_{ik}<0} x_i^{-b_{ik}} 
\!\biggr) \equiv 
x_k^{-1} 
\prod_{b_{ik}<0} x_i^{-b_{ik}} \bmod P_j 
,\\
\label{eq:exch-rel-xj''}
x''_j &\sim (x'_j)^{-1} \biggl( (x'_k)^{b^{'}_{kj}} \prod_{i\neq k} x_i^{b^{'}_{ij}}+1 \biggr).
\end{align}
To establish that $x_j''$ is a Laurent polynomial in $x_1,\dots,x_m$, we
need to show
that the second factor in \eqref{eq:exch-rel-xj''} is
divisible by $P_j$.

Working  modulo~$P_j$ we obtain 
\begin{align*}
(x'_k)^{b^{'}_{kj}} \prod_{i\neq k} x_i^{b^{'}_{ij}} +1 
 &\equiv
  \biggl( x_k^{-1} \prod_{b_{ik}<0} x_i^{-b_{ik}} \biggr)^{b^{'}_{kj}} \prod_{i \neq k} x_i^{b^{'}_{ij}} 
   + 1\\
&= x_k^{b_{kj}} 
\prod_{b_{ik}<0} x_i^{b_{ik} b_{kj}}  
\prod_{i \neq k} x_i^{b^{'}_{ij}} 
  + 1
= \prod_i x_i^{b_{ij}} + 1 \equiv 0 ,
\end{align*}
as desired. In the last line, we used the fact that 
$b_{kj}<0$ and consequently 
\[
b^{'}_{ij} =
\begin{cases}
b_{ij}-b_{ik}b_{kj} & \text{if $b_{ik}<0$;}\\
b_{ij} & \text{if $b_{ik}\ge 0$ and $i\neq k$.\quad \qed}
\end{cases} 
\]

After proving the following technical lemma, we will be ready to 
complete the inductive proof of Case 2.

\begin{lemma}
\label{lem:coprime}
Suppose that distinct indices $q,r \in \{n+1,\dots,m\}$ are such that 
$b^0_{qj}=1$ and $b^0_{rk}=1$,
and moreover all other entries in rows 
$q$ and $r$ of $\tilde{B_\circ}$ are equal to~$0$.  
Then $x'_j$ is coprime to both $x'_k$ and~$x''_j$.
\end{lemma}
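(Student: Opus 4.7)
I plan to work in the Laurent polynomial ring $L = \ZZ[x_1^{\pm},\dots,x_m^{\pm}]$, where units are Laurent monomials. Since $x_j' \sim \hat P_j$ for the binomial $\hat P_j := \prod_{b_{ij}>0} x_i^{b_{ij}} + \prod_{b_{ij}<0} x_i^{-b_{ij}}$ (the numerator of the $\mu_j$-exchange at $t_\circ$), both coprimality claims reduce to showing that $\hat P_j$ divides neither $x_k'$ nor $x_j''$ in $L$. The row-$q$ hypothesis $b_{qj}=1$ gives $\hat P_j = x_q \alpha + \beta$ with $\alpha, \beta$ coprime Laurent monomials not involving $x_q$, and an elementary factorization argument (the only units are monomials, and the $x_q$-degrees of any two factors must sum to $1$, forcing one factor to be a monomial unit) shows that $\hat P_j$ is irreducible in $L$ and, by restriction, in the subring $L_{\hat r} := \ZZ[x_i^{\pm} : i \neq r]$, which contains $\hat P_j$ because $b_{rj}=0$.

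For the first claim, the row-$r$ hypothesis and the mutation formula yield $b^{(1)}_{rk} = b_{rk} = 1$ and $b^{(1)}_{rj} = 0$, so the exchange relation at $t_1$ reads $x_k x_k' = x_r A + B$, where $B = \prod_{b^{(1)}_{ik}<0} x_i^{-b^{(1)}_{ik}}$ is a Laurent monomial not involving $x_r$. Viewing $x_k'$ as a polynomial in $x_r$ with coefficients in $L_{\hat r}$, its $x_r^0$-coefficient is $x_k^{-1} B$, a Laurent monomial. If $\hat P_j \mid x_k'$ in $L$, then expanding $x_k' = \hat P_j \cdot (\sum_\ell q_\ell x_r^\ell)$ and matching $x_r^0$-coefficients gives $\hat P_j \mid x_k^{-1} B$ in $L_{\hat r}$, contradicting the fact that a non-monomial irreducible cannot divide a unit.

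For the second claim, I would analyze the exchange relation $x_j' x_j'' = T_+ + T_-$ at $t_2$ for mutation in direction $j$. Applying the mutation formula twice to the row-$q$ and row-$r$ data produces $b^{(2)}_{rj} = 0$ and $b^{(2)}_{qj} = -1$. It follows that $T_+ = (x_k')^{b^{(2)}_{kj}} Y$ where $Y$ is a Laurent monomial involving neither $x_q$ nor $x_r$, and $T_-$ is a Laurent monomial containing $x_q^1$ and no $x_r$. Extracting the $x_r^0$-coefficient of $T_+$ via $x_k' = x_k^{-1}(x_r A + B)$ gives a Laurent monomial $M$ not involving $x_q$, so the $x_r^0$-coefficient of $x_j' x_j''$ in $L_{\hat r}$ is the binomial $M + T_-$, whose $x_q$-degree is $1$. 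If $\hat P_j \mid x_j''$ in $L$, then $x_j' \sim \hat P_j$ would yield $\hat P_j^2 \mid M + T_-$ in $L_{\hat r}$; but $\hat P_j^2$ has $x_q$-degree $2$, and since the $x_q$-degree (the difference between maximal and minimal $x_q$-exponents) is additive under multiplication of nonzero Laurent polynomials, no nonzero element of $x_q$-degree $1$ can be divisible by an element of $x_q$-degree $2$. Contradiction.

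The main obstacle is the careful bookkeeping of mutation formulas needed to extract the four structural identities $b^{(1)}_{qk} = 0$, $b^{(1)}_{rj} = 0$, $b^{(2)}_{rj} = 0$, $b^{(2)}_{qj} = -1$ from the row-$q$ and row-$r$ hypotheses; once these are established, both coprimality statements follow by the irreducibility of $\hat P_j$ combined with elementary degree counts in the distinguished variables $x_q$ and $x_r$.
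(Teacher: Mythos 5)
Your proposal is correct and follows essentially the same route as the paper's proof: you track the rows $q$ and $r$ through the mutations to get the key entries $b^{(1)}_{rk}=1$, $b^{(1)}_{rj}=b^{(1)}_{qk}=b^{(2)}_{rj}=0$, $b^{(2)}_{qj}=-1$, use linearity in $x_q$ with monomial coefficients to show $x_j'$ is irreducible, use the $x_r$-constant term to separate $x_k'$ from $x_j'$, and use the specialization $x_r=0$ together with an $x_q$-degree count to separate $x_j''$ from $x_j'$. Your phrasing in terms of additivity of the $x_q$-spread under multiplication is just a cleaner formalization of the paper's remark that the denominator, being linear in $x_q$, ``cannot divide the numerator more than once.''
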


Here ``coprime'' means that those cluster variables, viewed as Laurent polynomials
in $\xx_\circ$, have no common non-monomial factor. 

\begin{proof}
Let us denote $b_{jk}^0=-b$ and $b^0_{kj}=c$. 
Recall that $b^0_{kj}>0$, so $b,c>0$. 
The local structure of the extended exchange matrices 
at $t_\circ$, $t_1$, and~$t_2$ 
at the intersections of rows $j,k,q, r$ and columns $j,k$ 
is as follows:
\[
\begin{array}{ccc}
\begin{array}{c|cccc}
&&j&k&\\
\hline
 & &\vdots & \vdots\\
j&\cdots&0&-b&\cdots\\
k&\cdots&c&0&\cdots\\
 & &\vdots & \vdots\\
q&\cdots&1&0&\cdots\\
r&\cdots&0&1&\cdots
\end{array}
{\ \ }
&
\begin{array}{c|cccc}
&&j&k&\\
\hline
 & &\vdots & \vdots\\
j&\cdots&0&b&\cdots\\
k&\cdots&-c&0&\cdots\\
 & &\vdots & \vdots\\
q&\cdots&-1&0&\cdots\\
r&\cdots&0&1&\cdots
\end{array}
&
{\ \ }
\begin{array}{c|cccc}
&&j&k&\\
\hline
 & &\vdots & \vdots\\
j&\cdots&0&-b&\cdots\\
k&\cdots&c&0&\cdots\\
 & &\vdots & \vdots\\
q&\cdots&-1&0&\cdots\\
r&\cdots&0&-1&\cdots
\end{array}
\\[.7in]
\ \tilde B_\circ &
\ \ \quad \mu_j(\tilde{B}) &
\qquad \mu_k(\mu_j(\tilde{B}))
\end{array}\!
\]

\noindent
We then have
\begin{align*}
x'_j &= x_j^{-1} (x_k^c \, x_q M_1 + M_2),\\
x'_k &= x_k^{-1} ((x'_j)^b \, x_r M_3 + M_4),\\
x''_j &= (x'_j)^{-1} (x_q M_5 + (x'_k)^c M_6),
\end{align*}
where $M_1,\dots,M_6$ are monomials in the $x_i$'s, with $i\notin\{j,k,q,r\}$. 
We see that 
$x'_j$ is linear in $x_q$
and hence irreducible (as a Laurent polynomial in~$\tilde\xx$), 
i.e., it cannot be written as a product
of two non-monomial factors.
Since $x'_j$ does not depend on~$x_r$, 
we conclude that $x'_k$ is linear in $x_r$ and hence irreducible,
and moreover coprime with~$x'_j$.

It remains to show that $x_j'$ and $x_j''$ are coprime. 
Note that $x'_k$ and $x''_j$ can be regarded as polynomials in~$x_r$;
we denote by $x'_k(0)$ and $x''_j(0)$ their specializations at
$x_r=0$. 
If we show that $x''_j(0)$ is coprime to $x'_j=x_j'(0)$, then we'll be done.
To this end, note that 
$x'_k(0) = x_k^{-1} M_4$.  Therefore
\[
x''_j(0) = x_j\,\frac{x_q M_5 + (x_k^{-1} M_4)^c M_6}{x_k^c \, x_q M_1 + M_2}\,.
\]
Here both the numerator and denominator 
are linear in $x_q$, and therefore the denominator 
(essentially,~$x'_j$) cannot divide the 
numerator more than once.
Also, $x'_j$ is irreducible.  This means that, in order for 
$x''_j(0)$ and $x'_j$ to fail to  be coprime, we would need 
the denominator of $x''_j(0)$ to divide the numerator at least twice.
Hence $x''_j(0)$ and $x'_j$ are coprime, as desired. 
\end{proof}

We are now ready to complete the proof of Case~2 of
Theorem~\ref{thm:Laurent}. 
We begin by augmenting the initial extended exchange 
matrix $\tilde{B}_\circ$ by two additional rows
corresponding to two new frozen variables $x_q$ and~$x_r$.~We~set 
\[
b^0_{qi}=\begin{cases}
1 & \text{if $i=j$,}\\
0 & \text{if $i\neq j$;}
\end{cases}
\qquad
b^0_{ri}=\begin{cases}
1 & \text{if $i=k$,}\\
0 & \text{if $i\neq k$,}
\end{cases}
\]
so as to satisfy the conditions of Lemma~\ref{lem:coprime}. 

By the induction assumption, the cluster variable~$x$ is expressed as
a Laurent polynomial in terms of each of the extended
clusters~$\tilde\xx(t_1)$ and~$\tilde\xx(t_3)$. 
The only elements of these clusters which do not appear
in~$\tilde\xx_\circ$ are $x_j'$, $x_k'$, and~$x_j''$, so
\[
x=\frac{\text{Laurent polynomial in~$\tilde\xx_\circ$}}{(x_j')^a}
=\frac{\text{Laurent polynomial in~$\tilde\xx_\circ$}}{(x_k')^b(x_j'')^c}\,,
\]
for $a,b,c\!\in\!\ZZ$. 
By Lemma~\ref{lem:3mutations}, $x_j'$, $x_k'$, and~$x_j''$ are Laurent polynomials~in~$\tilde\xx_\circ$. 
By Lemma~\ref{lem:coprime}, 
$x_j'$ is coprime to both $x_k'$ and~$x_j''$. 
The theorem~now follows by the same argument 
(based on Lemma~\ref{lem:trivialization}) 
that we used in~Case~1. 
\endproof

Theorem~\ref{thm:Laurent} can be sharpened as follows. 

\begin{theorem}
\label{th:Laurent-sharper}
In a cluster algebra of geometric type, 
frozen variables do not appear in the denominators 
of the Laurent polynomials expressing cluster variables in terms of an
initial extended cluster. 
\end{theorem}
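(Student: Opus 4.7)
The plan is to refine the proof of Theorem~\ref{thm:Laurent} so that it delivers the sharper statement directly. Introduce the subring
\[
R := \CC[x_{n+1}, \ldots, x_m][x_1^{\pm 1}, \ldots, x_n^{\pm 1}] \subset \FFcal,
\]
a localization of the polynomial ring in $m$ variables, hence a UFD, in which the cluster variables $x_1, \ldots, x_n$ are units and the frozen variables $x_{n+1}, \ldots, x_m$ are primes. The sharpened claim is equivalent to saying that every cluster variable of $\AA$ lies in $R$ when expressed in $\tilde{\xx}_\circ$; that is, in the reduced form of the Laurent polynomial supplied by Theorem~\ref{thm:Laurent}, the monomial denominator involves only cluster variables.

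The crucial extra observation is that in every exchange relation $x_k x_k' = M_+ + M_-$ encountered anywhere in the seed pattern, the two monomials $M_+$ and $M_-$ have nonnegative frozen exponents (being monomials in the current extended cluster) and use disjoint sets of variables. Hence in $R$ each $M_\pm$ factors as $u_\pm n_\pm$, with $u_\pm$ a unit (a monomial in $x_1^{\pm 1},\ldots,x_n^{\pm 1}$) and $n_\pm$ a monomial in the frozens $x_{n+1},\ldots,x_m$, and $\gcd(n_+,n_-)=1$ in $R$. When we invoke the augmentation trick from the proof of Theorem~\ref{thm:Laurent}, adjoining a new prime $x_{m+1}$ to the extended exchange matrix so that $x_{m+1}$ multiplies $M_+$, the element $M_+ + M_-$ becomes linear in $x_{m+1}$ with coefficients $u_+ n_+$ and $u_- n_-$. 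Up to a unit of $R$, these coefficients are $v\,n_+$ and $n_-$ with $v$ a unit, so their non-unit parts $n_+, n_-$ are coprime in $R$; therefore $M_+ + M_-$ is irreducible in $R[x_{m+1}]$. Any analogous expression coming from a different mutation direction does not involve $x_{m+1}$, so it lies in $R$ and is automatically coprime to the irreducible just constructed.

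With these irreducibility and coprimality facts in hand, the proof of Theorem~\ref{thm:Laurent}---in both Case~1 and Case~2, including the three-mutation Lemma~\ref{lem:3mutations} and the coprimality Lemma~\ref{lem:coprime}---runs verbatim inside the UFD $R[x_{m+1}]$ (with a few further prime augmentations as in Lemma~\ref{lem:coprime}). The conclusion is that, in the augmented algebra, every cluster variable lies in $R[x_{m+1}]$; equivalently, its reduced-form denominator is a unit of $R[x_{m+1}]$, i.e.\ a monomial in $x_1^{\pm 1},\ldots,x_n^{\pm 1}$. Specializing the auxiliary frozen variables to $1$ via Lemma~\ref{lem:trivialization} transports the result back to the original cluster algebra, proving the theorem. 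The main obstacle is the bookkeeping in Case~2: one must check that every ``$\sim$'' and ``$\equiv$'' manipulation in the proofs of Lemmas~\ref{lem:3mutations} and~\ref{lem:coprime}, along with the extra frozens introduced there, stays inside $R$ (or the appropriate prime extension), so that the exponent arithmetic is unchanged while cluster variables remain units and all auxiliary frozens remain primes. No new idea beyond the coprimality of the frozen parts of $M_+$ and $M_-$ is needed; the same UFD arguments that prove Theorem~\ref{thm:Laurent} then give the sharper statement automatically.
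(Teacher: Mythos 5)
Your strategy is genuinely different from the one the paper uses, so let me first record what the paper actually does: it does \emph{not} revisit the proof of Theorem~\ref{thm:Laurent} at all. It fixes one frozen variable $x_r$, regards each cluster variable $x$ (already known to be a Laurent polynomial by Theorem~\ref{thm:Laurent}) as a Laurent polynomial $x(x_r)$ in $x_r$ alone, and runs a fresh, very short induction on the distance from the initial seed, proving the stronger claim that $x(x_r)$ is a polynomial in $x_r$ whose constant term $x(0)$ is a nonzero subtraction-free expression. The inductive step is a one-line application of the elementary Lemma~\ref{lem:laurent-implies-poly}, powered by the same key fact you isolate---that $x_r$ occurs in at most one of the two monomials of any exchange relation, so the right-hand side does not vanish at $x_r=0$. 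This is much lighter than re-running the two-case Laurentness induction inside a new ring, and it uses Laurentness purely as a black box.

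Your route can likely be completed, but as written it has two concrete soft spots. First, the assertion that in \emph{every} exchange relation occurring in the pattern the monomials $M_\pm$ factor as (unit of your ring $R$)$\,\times\,$(monomial in the frozens) is false beyond the initial seed: at a seed $t\neq t_\circ$ the monomials $M_\pm$ are monomials in the cluster variables \emph{of $t$}, which are not units of $R$---indeed, whether they even lie in $R$ is what is being proved. That factorization is only available for the exchange binomials written in the initial variables, i.e.\ the ones you actually augment. Second, ``runs verbatim'' conceals the real work in Case~2: the divisibility computation in Lemma~\ref{lem:3mutations} is carried out modulo $P_j$ inside the full Laurent polynomial ring $\CC[x_1^{\pm1},\dots,x_m^{\pm1}]$, where monomials in frozen variables are invertible. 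A statement ``$P_j$ divides $N$ there'' yields ``$x_j''\in R$'' only after a content (Gauss-type) argument exploiting the primitivity of $M_1+M_2$ with respect to each frozen variable; likewise the conclusion of Lemma~\ref{lem:coprime} must be upgraded from ``no common non-monomial factor'' to ``no common non-unit factor of $R$.'' These upgrades are true and not difficult (primitivity rules out frozen monomial factors), but they are precisely the extra content of Theorem~\ref{th:Laurent-sharper} and have to be supplied explicitly; in the present write-up they are asserted rather than proved.
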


Stated in our standard notation, \cref{th:Laurent-sharper}
asserts that each cluster variable is a Laurent polynomial 
in the initial cluster variables $x_1,\dots,x_n$, 
with coefficients in $\ZZ[x_{n+1},\dots,x_m]$. 

\begin{proof}
We borrow the notation from the proof of Theorem~\ref{thm:Laurent} above. 
Let $x$ be a cluster variable from a distant seed, and $x_r$ a 
frozen variable ($n<r\le m$). 
We will think of $x$ as a Laurent polynomial
$x(x_r)$ whose coefficients are integral Laurent polynomials in the
variables~$x_i$, with $i\neq r$. 
We want to show that $x$ is in fact a polynomial in~$x_r$;
Theorem~\ref{th:Laurent-sharper} will then follow by varying~$r$.

We will make use of the following trivial lemma.

\begin{lemma}
\label{lem:laurent-implies-poly}
Let $P$ and $Q$ be two polynomials (in any number of variables)
with coefficients in a domain~$S$, and with nonzero constant terms $a$ and~$b$,
respectively. If the ratio $P/Q$ is a Laurent polynomial over~$S$, then it
is in fact a polynomial over $S$ with the constant term~$a/b$.
\end{lemma}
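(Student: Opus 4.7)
The plan is to clear denominators in the identity $P = QL$, obtaining an equation in the polynomial ring $S[x_1, \ldots, x_k]$, and then to repeatedly divide out each offending variable by exploiting the fact that $S$ is a domain together with the nonvanishing of the constant term of $Q$.

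Write $L = M / (x_1^{a_1} \cdots x_k^{a_k})$ for some $M \in S[x_1, \ldots, x_k]$ and nonnegative integers $a_1, \ldots, a_k$. Multiplying $P = QL$ through by $x_1^{a_1} \cdots x_k^{a_k}$ yields the polynomial identity
\[
P \cdot x_1^{a_1} \cdots x_k^{a_k} = QM
\]
in $S[x_1, \ldots, x_k]$. The goal becomes to show that the monomial $x_1^{a_1} \cdots x_k^{a_k}$ divides $M$, for then $L$ is itself a polynomial.

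The main step is to show, for each $i$ with $a_i > 0$, that $x_i$ divides $M$. I would specialize $x_i = 0$ in the above identity: the left-hand side vanishes, while the right-hand side becomes $Q|_{x_i = 0} \cdot M|_{x_i = 0}$, viewed as a product in the domain $S[x_j : j \neq i]$. Since $Q$ has constant term $b \neq 0$, the specialization $Q|_{x_i = 0}$ retains this nonzero constant term and is a nonzero element of the domain, forcing $M|_{x_i = 0} = 0$, i.e., $x_i$ divides $M$. Writing $M = x_i M'$ and canceling a factor of $x_i$ from both sides (valid since $S[x_1, \ldots, x_k]$ is a domain) returns an identity of the same shape with $a_i$ decremented; iterating yields $M = x_i^{a_i} M''$, and then iterating the argument over $i$ produces $M = x_1^{a_1} \cdots x_k^{a_k} M^{\#}$ and $P = QM^{\#}$, so $L = M^{\#} \in S[x_1, \ldots, x_k]$. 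Evaluating $P = QL$ at $x_1 = \cdots = x_k = 0$ then gives $a = b \cdot L(0)$, and hence the constant term of $L$ equals $a/b$, as claimed.

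I do not anticipate any real obstacle. The structural content is simply that a Laurent polynomial equal to a ratio of polynomials whose denominator has a nonzero constant term cannot require any variable in its denominator, since clearing that denominator would force a factor of the variable into the numerator, contradicting the domain property of $S$.
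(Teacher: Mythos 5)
Your proof is correct. Note that the paper itself states this lemma without proof, dismissing it as ``trivial,'' so there is no argument in the text to compare against; your clearing-of-denominators argument --- writing the Laurent polynomial as $M/(x_1^{a_1}\cdots x_k^{a_k})$, specializing each $x_i$ to $0$ in the identity $P\cdot x_1^{a_1}\cdots x_k^{a_k}=QM$, and using that $Q|_{x_i=0}$ has nonzero constant term $b$ in the domain $S[x_j:j\neq i]$ to peel off factors of $x_i$ from $M$ one at a time --- is exactly the standard way to fill this in, and every step (including the final evaluation $a=b\cdot L(0)$, which determines the constant term uniquely since $S$ is a domain) checks out.
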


Our proof of Theorem~\ref{th:Laurent-sharper}
proceeds by induction on~$d$, the smallest distance in~$\TT_n$
between the initial seed and a seed containing~$x$. 
We will inductively prove the following
strengthening of the desired statement:
\begin{center}
\begin{minipage}{4.5in}
\noindent
\emph{$x(x_r)$ is a polynomial in $x_r$
whose constant term $x(0)$ can be written as a subtraction-free rational
expression in the elements of $\tilde\xx_\circ-\{x_r\}$; 
in particular, $x(0)\neq 0$.}
\end{minipage}
\end{center}
\noindent
If $d= 0$, then $x \in \tilde\xx_\circ$, and there is nothing to prove.
If $d> 0$, then $x$ appears on the left-hand side of 
an exchange relation~\eqref{eq:exch-rel-geom}
$x x' = M_1 + M_2$, where $M_1$ and $M_2$ denote the monomials in the 
	exchange relation, and where $x'$ and the cluster variables in 
	 $M_1$ and $M_2$
 come from a seed located at distance $d-1$ from~$\tilde\xx_\circ$.
By definition of the exchange relation, 
the frozen variable $x_r$ will appear in at most one of $M_1$ and $M_2$.
Therefore if we express $x', M_1$, and $M_2$ in terms of~$\tilde\xx_\circ$,
	viewing them as Laurent polynomials in $x_r$ (whose coefficients
	are integral Laurent polynomials in the variables 
	$x_i$ with $i \neq r$), 
the inductive assumption implies that $M_1+M_2$ is a  polynomial
	in $x_r$ with nonzero constant term.
It now follows 
from Lemma~\ref{lem:laurent-implies-poly} 
	that  
	$x = \frac{M_1+M_2}{x'}$ is a polynomial in $x_r$ with 
	nonzero constant term.
\end{proof}

\newpage

\section{Connections to number theory}

\begin{example}[\emph{Markov triples}]
Consider the cluster algebra defined by the Markov quiver given in
Figure~\ref{fig:markov-quiver}. 
Since the quiver is invariant under mutations, exchange relations for
any cluster $(x_1,x_2,x_3)$ will look the same:
\begin{align*}
x_1'\,x_1 &=x_2^2+x_3^2,\\
x_2'\,x_2 &=x_1^2+x_3^2,\\
x_3'\,x_3 &=x_1^2+x_2^2.
\end{align*} \pagebreak[3]
If we start with the triple $(1,1,1)$ and mutate in all possible
directions, we will get an infinite set of triples in~$\ZZ^3$, 
including those shown in Figure~\ref{fig:markov-triples}.

\begin{figure}[ht]
\begin{center}
\setlength{\unitlength}{2pt}
\begin{picture}(150,110)(0,0)
\put(0,55){\makebox(0,0){(1, 1, 1)}} 
\put(30,55){\makebox(0,0){(1, 1, 2)}} 
\put(60,55){\makebox(0,0){(1, 5, 2)}} 
\put(90,25){\makebox(0,0){(29, 5, 2)}}
\put(90,85){\makebox(0,0){(1, 5, 13)}} 
\put(120,10){\makebox(0,0){(29, 169, 2)}} 
\put(120,40){\makebox(0,0){(29, 5, 433)}} 
\put(120,70){\makebox(0,0){(194, 5, 13)}} 
\put(120,100){\makebox(0,0){(1, 34, 13)}} 
\put(150,0){\makebox(0,0){(985,169,2)}} 
\put(150,20){\makebox(0,0){(29,169,14701)}} 
\put(150,30){\makebox(0,0){(29,37666,433)}} 
\put(150,50){\makebox(0,0){(6466,5,433)}} 
\put(150,60){\makebox(0,0){(194,5,2897)}} 
\put(150,80){\makebox(0,0){(194,7561,13)}} 
\put(150,90){\makebox(0,0){(1325,34,13)}} 
\put(150,110){\makebox(0,0){(1, 34, 89)}}

\put(10,55){\line(1,0){10}}
\put(40,55){\line(1,0){10}}
\put(70,60){\line(1,2){10}}
\put(70,50){\line(1,-2){10}}
\put(100,90){\line(1,1){7}}
\put(100,80){\line(1,-1){7}}
\put(100,30){\line(1,1){7}}
\put(100,20){\line(1,-1){7}}

\put(132,103){\line(1,1){5}}
\put(132,74){\line(2,3){2}}
\put(132,44){\line(2,3){3}}
\put(132,14){\line(2,3){2}}
\put(132,6){\line(1,-1){4.5}}
\put(132,36.5){\line(2,-3){2}}
\put(132,66){\line(2,-3){3}}
\put(131,97){\line(1,-1){4.5}}

\end{picture}
\end{center}
\caption{Markov triples.}
\label{fig:markov-triples}
\end{figure}

We next observe that all these triples satisfy the diophantine \emph{Markov
  equation} 
\[
x_1^2+x_2^2+x_3^2 = 3 x_1 x_2 x_3.
\]
To see this, verify that each mutation in our cluster algebra transforms
a solution of this equation (a \emph{Markov triple}) into another
solution. 
(This is an instance of \emph{Vieta jumping}, which replaces one root
of a quadratic equation by another root.)
Starting with the solution $(1,1,1)$, we get the tree of
Markov triples above. 
In fact, every Markov triple appears in this tree.
The celebrated (still open) Uniqueness Conjecture asserts 
that the maximal elements of Markov triples are all distinct. 
See~\cite{aigner-markov} for a detailed account. 

Returning to the cluster-algebraic interpretation of this example, we note
that more generally, the quantity
\[
\frac{x_1^2+x_2^2+x_3^2}{x_1x_2x_3}
\]
is invariant under mutations in this seed pattern. 
This is closely related to \emph{integrability} of the Markov
recurrence; cf.\ Chapter~\ref{ch:dynamical}.
\end{example}

\begin{example}[\emph{Fermat numbers}]

Sometime in the 1640's,
Pierre Fermat conjectured that for every positive integer~$n$,
the number $F_n=2^{2^n}+1$ is prime.
This was disproved in 1732 by Leonhard Euler, who discovered that
\[
F_5=2^{32}+1=641\cdot 6700417.
\]
Curiously, this factorization can be obtained by observing cluster
mutations. 

Consider the rank~$2$ cluster algebra
with the initial seed 
$(\tilde\xx,\tilde B)$ where
\[
\tilde\xx=(x_1,x_2,x_3),\quad 
\tilde B =
\left[
\begin{array}{rr}
0 &4 \\
-1& 0\\
1&-3
\end{array}
\right].
\]
The mutation $\mu_1$ produces a new extended cluster $\tilde\xx'=(x_1',x_2,x_3)$
where 
\[
x'_1\,x_1=x_2+x_3\,.
\]
Taking the specialization
\[
(x_1,x_2,x_3)=(3,-1,16), 
\]
we see that the mutated extended cluster specializes to
\[
(x_1',x_2,x_3)=(5,-1,16). 
\]
Applying the sharp version of the Laurent phenomenon 
(Theorems~\ref{thm:Laurent} and~\ref{th:Laurent-sharper})
to the initial extended cluster~$\tilde\xx$, 
we see that every cluster variable specializes to 
an integer (possibly) divided by a power of~$3$; applying the
same result to~$\tilde\xx'$, we conclude that every cluster variable is
an integer (possibly)  divided by a power of $5$.
Thus, every cluster variable specializes to an integer!
Now let us see which integers we get.
Alternately applying the mutations $\mu_1$
and~$\mu_2$, we obtain 
the following sequence,
with specialized cluster variables written on top of the extended exchange
matrices:
\begin{align*}
\begin{array}{rr}
3 &-1 \\
\hline
0 &4 \\
\!\!-1& 0\\
1&-3
\end{array}
\stackrel{\textstyle\mu_1}{\longrightarrow}
\begin{array}{rr}
5 &-1\\
\hline
0 &-4\\
1& 0\\
\!\!-1&1
\end{array}
\stackrel{\textstyle\mu_2}{\longrightarrow}
\begin{array}{rr}
5 &\!-641 \\
\hline
0 & 4 \\
\!\!-1& 0\\
0 &-1
\end{array}&
\stackrel{\textstyle\mu_1}{\longrightarrow}
\begin{array}{rr}
\!\!-128 &\!-641 \\
\hline
0 & -4 \\
1& 0\\
0 & -1
\end{array}
\stackrel{\textstyle\mu_2}{\longrightarrow}
\begin{array}{rr}
\!\!-128 &\!\frac{-F_5}{641} \\
\hline
0 & 4 \\
-1& 0\\
0 & 1
\end{array}
\end{align*}
This shows that $F_5/641$ is an integer, reproducing  Euler's discovery.
\end{example}

\pagebreak[3]

\begin{example} 
\label{ex:Somos4}
The \emph{Somos-4 sequence} $z_0,z_1,z_2,\dots$ 
is defined by
the initial conditions $z_0=z_1=z_2=z_3=1$ and the recurrence
\[
z_{m+2}z_{m-2}=z_{m+1}z_{m-1}+z_m^2\,.
\]
This sequence is named after M.~Somos who discovered it (and its various
generalizations) sometime in the 1980s;
see, e.g., \cite{bousquet-propp-west,hone-superintegrable} and references therein. 
 
The first several terms of the Somos-4 sequence are
\[
1,1,1,1,2,3,7,23,59,314,1529,8209,83313,620297,7869898,\dots
\]
---all integers! 
An explanation of the integrality of this sequence
can be given using cluster algebras.

Consider the exchange matrix shown in Figure~\ref{fig:4vertexquiver} along with the corresponding quiver.  
It is easy to check that mutating at the vertex labeled~1 produces a quiver that differs
from the original one by clockwise rotation by~$\pi/2$. 
It follows that subsequent quiver mutations at $2,3,4,1,2,3,4,1,\dots$ 
will generate a sequence of cluster variables satisfying the
Somos-4 recurrence above. 
In view of the Laurent phenomenon, the initial conditions
$z_0=z_1=z_2=z_3=1$ 
will result in a sequence of integers. 
\end{example}

\begin{figure}[h]
\begin{center}
\vspace{-.2in}
\begin{tabular}{cc}
$B = \tilde B = \begin{bmatrix}0 & -1 & 2 & -1 \\
                   1 & 0 & -3 & 2  \\
                   -2 & 3 & 0 & -1 \\
                   1 & -2 & 1 & 0 
  \end{bmatrix}
$
\hspace{.5in}
&
\setlength{\unitlength}{2.4pt}
\begin{picture}(20,20)(0,9)
\put(0,20){\makebox(0,0){$1$}} 
\put(20,20){\makebox(0,0){$2$}} 
\put(20,0){\makebox(0,0){$3$}} 
\put(0,0){\makebox(0,0){$4$}} 
\thicklines
\put(18,20){\vector(-1,0){16}}
\put(0,2){\vector(0,1){16}}
\put(17,18){\vector(-1,-1){16}}
\put(18,17){\vector(-1,-1){16}}
\put(2,0){\vector(1,0){16}}
\put(20,2){\vector(0,1){16}}
\put(21,2){\vector(0,1){16}}
\put(19,2){\vector(0,1){16}}
\put(2,19){\vector(1,-1){16}}
\put(1,18){\vector(1,-1){16}}
\end{picture}
\end{tabular}
\end{center}
		\caption{The exchange matrix and the quiver~\cite{fordy-marsh} associated with the Somos-4 sequence. 
		All four vertices of the quiver are mutable. } 
		\label{fig:4vertexquiver}
	\end{figure}

\begin{remark}
An alternative approach to establishing integrality of the Somos-4 and
other related sequences is based on explicit combinatorial
interpretations of their terms. 
In particular, the numbers~$z_m$ defined above can be shown to count
\emph{perfect matchings} in certain planar bipartite graphs,
see~\cite{speyer-perfect-matchings} and Figure~\ref{fig:somos-4}. 
\end{remark}

\begin{figure}[h]
\centering
\resizebox{5in}{!}{
        \begin{tikzpicture}
        
        \node[circle, very thick, draw = black, scale = 0.75, fill] (A) at (0, 0) {}; 

        \node[circle, very thick, draw = black, scale = 0.75] (B) at (1, 0) {};

        \node[circle, very thick, draw = black, scale = 0.75, fill] (C) at (1, 1) {}; 

        \node[circle, very thick, draw = black, scale = 0.75] (D) at (0, 1) {};

        \draw[very thick] (A)--(B)--(C)--(D)--(A); 

        picture 2

        \node[circle, very thick, draw = black, scale = 0.75, fill] (A') at (2.5, 0) {}; 

        \node[circle, very thick, draw = black, scale = 0.75] (B') at (3.5, 0) {};

        \node[circle, very thick, draw = black, scale = 0.75, fill] (C') at (3.5, 1) {}; 

        \node[circle, very thick, draw = black, scale = 0.75] (D') at (2.5, 1) {};

        \draw[very thick] (A')--(B')--(C')--(D')--(A'); 

        \node[circle, very thick, draw = black, scale = 0.75] (E') at (4.5, 2) {}; 

        \node[circle, very thick, draw = black, scale = 0.75, fill] (F') at (5.5, 2) {};  

        \node[circle, very thick, draw = black, scale = 0.75] (G') at (5.5, -1) {}; 

        \node[circle, very thick, draw = black, scale = 0.75, fill] (H') at (4.5, -1) {}; 

        \draw[very thick] (C')--(E')--(F')--(G')--(H')--(B'); 

        picture 3

        \node[circle, very thick, draw = black, scale = 0.75, fill] (A') at (7, 0) {}; 

        \node[circle, very thick, draw = black, scale = 0.75] (B') at (8, 0) {};

        \node[circle, very thick, draw = black, scale = 0.75, fill] (C') at (8, 1) {}; 

        \node[circle, very thick, draw = black, scale = 0.75] (D') at (7, 1) {};

        \draw[very thick] (A')--(B')--(C')--(D')--(A'); 

        \node[circle, very thick, draw = black, scale = 0.75] (E') at (9, 2) {}; 

        \node[circle, very thick, draw = black, scale = 0.75, fill] (F') at (10, 2) {};  

        \node[circle, very thick, draw = black, scale = 0.75] (G') at (10, -1) {}; 

        \node[circle, very thick, draw = black, scale = 0.75, fill] (H') at (9, -1) {}; 

        \draw[very thick] (C')--(E')--(F')--(G')--(H')--(B');

        \node[circle, very thick, draw = black, scale = 0.75] (I') at (11, 2) {};

        \node[circle, very thick, draw = black, scale = 0.75, fill] (J') at (11, 3) {}; 

        \node[circle, very thick, draw = black, scale = 0.75] (K') at (10, 3) {};

        \draw[very thick] (F')--(I')--(J')--(K')--(F'); 

        \node[circle, very thick, draw = black, scale = 0.75, fill] (O') at (12, 1) {}; 

        \draw[very thick] (O')--(I');

        \node[circle, very thick, draw = black, scale = 0.75] (I'') at (11, -2) {};

        \node[circle, very thick, draw = black, scale = 0.75, fill] (J'') at (11, -1) {}; 

        \node[circle, very thick, draw = black, scale = 0.75, fill] (K'') at (10, -2) {};

        \draw[very thick] (K'')--(I'')--(J'')--(G')--(K''); 

        \node[circle, very thick, draw = black, scale = 0.75] (L'') at (12, 0) {}; 

        \draw[very thick] (L'')--(J'');

        \draw[very thick] (L'')--(O');

picture 4 

        \node[circle, very thick, draw = black, scale = 0.75, fill] (A''') at (13.5, 0) {}; 

        \node[circle, very thick, draw = black, scale = 0.75] (B''') at (14.5, 0) {};

        \node[circle, very thick, draw = black, scale = 0.75, fill] (C''') at (14.5, 1) {}; 

        \node[circle, very thick, draw = black, scale = 0.75] (D''') at (13.5, 1) {};

        \draw[very thick] (A''')--(B''')--(C''')--(D''')--(A'''); 

        \node[circle, very thick, draw = black, scale = 0.75] (E''') at (15.5, 2) {}; 

        \node[circle, very thick, draw = black, scale = 0.75, fill] (F''') at (16.5, 2) {};  

        \node[circle, very thick, draw = black, scale = 0.75] (G''') at (16.5, -1) {}; 

        \node[circle, very thick, draw = black, scale = 0.75, fill] (H''') at (15.5, -1) {}; 

        \draw[very thick] (C''')--(E''')--(F''')--(G''')--(H''')--(B''');

        top right

        f' is bottom left 

        \node[circle, very thick, draw = black, scale = 0.75] (I''') at (17.5, 2) {};

        \node[circle, very thick, draw = black, scale = 0.75, fill] (J''') at (17.5, 3) {}; 

        \node[circle, very thick, draw = black, scale = 0.75] (K''') at (16.5, 3) {};

        \draw[very thick] (F''')--(I''')--(J''')--(K''')--(F'''); 

        \node[circle, very thick, draw = black, scale = 0.75] (L''') at (18.5, 4) {}; 

        \node[circle, very thick, draw = black, scale = 0.75, fill] (M''') at (19.5, 4) {};  

        \node[circle, very thick, draw = black, scale = 0.75] (N''') at (19.5, 1) {}; 

        \node[circle, very thick, draw = black, scale = 0.75, fill] (O''') at (18.5, 1) {}; 

        \draw[very thick] (J''')--(L''')--(M''')--(N''')--(O''')--(I''');

        \node[circle, very thick, draw = black, scale = 0.75] (I-) at (17.5, -2) {};

        \node[circle, very thick, draw = black, scale = 0.75, fill] (J-) at (17.5, -1) {}; 

        \node[circle, very thick, draw = black, scale = 0.75, fill] (K-) at (16.5, -2) {};

        \draw[very thick] (K-)--(I-)--(J-)--(G''')--(K-); 

        \node[circle, very thick, draw = black, scale = 0.75] (L-) at (18.5, 0) {}; 

        \node[circle, very thick, draw = black, scale = 0.75, fill] (M-) at (19.5, 0) {};  

        \node[circle, very thick, draw = black, scale = 0.75] (N-) at (19.5, -3) {}; 

        \node[circle, very thick, draw = black, scale = 0.75, fill] (O-) at (18.5, -3) {}; 

        \draw[very thick] (J-)--(L-)--(M-)--(N-)--(O-)--(I-);

        \draw[very thick] (O''')--(L-); 

        \draw[very thick] (N''')--(M-);

        \node[circle, very thick, draw = black, scale = 0.75] (Z) at (20.5, 0) {};

        \node[circle, very thick, draw = black, scale = 0.75, fill] (Y) at (20.5, 1) {};

        \draw[very thick] (N''')--(Y)--(Z)--(M-);

    \end{tikzpicture}
    }
\caption{The number of perfect matchings in each of these bipartite graphs is 2,
3, 7, 23, respectively (cf.\ the Somos-4
sequence).}
\label{fig:somos-4}
    \end{figure}

\begin{remark}\label{rem:elliptic}
Somos sequences and their various generalizations are intimately
related to the arithmetic of elliptic curves;
see, e.g., \cite{hone-swart} and references therein. 
Here is a typical result, stated here without proof 
(this version is due to D.~Speyer). 
Consider the elliptic curve
\[
y^2=1-8x+12x^2-4x^3,
\]
and let $x_m$ be the $x$-coordinate of the point 
$P+mQ$ where $P=(0,1)$, $Q=(1,-1)$,
and we are using the standard group law of the elliptic curve.
Then 
\[
x_m=\frac{z_{m-1}z_{m+1}}{z_m^2},
\]
where $(z_m)$ is the Somos-4 sequence above. 
\end{remark}

\begin{exercise}
Use the Laurent phenomenon to show that the sequence  
$z_0,z_1,z_2,\dots$ defined by
the initial conditions $z_0=z_1=z_2=1$ and the recurrence
\[
z_{m+3}z_{m}=z_{m+2}z_{m+1}+1
\]
consists entirely of integers. 
\end{exercise}

\begin{exercise}
Show that all elements of the sequence  
$z_0,z_1,z_2,\dots$ defined by the generalized Somos-4 recurrence
\[
z_{m+2}z_{m-2}=az_{m+1}z_{m-1}+bz_m^2
\]
are Laurent polynomials in $z_0,z_1,z_2,z_3$, with coefficients in $\ZZ[a,b]$. 
\end{exercise}

\begin{example} 
\label{ex:gen-somos}
The \emph{Somos-5} sequence 
\[
1, 1, 1, 1, 1, 2, 3, 5, 11, 37, 83, 274, 1217, 6161, 22833,
165713,\dots
\]
is defined by the recurrence relation
\begin{equation}
\label{eq:somos-5-recurrence}
z_m z_{m+5}  = z_{m+1} z_{m+4} + z_{m+2} z_{m+3} \quad (m = 1, 2, \dots)
\end{equation}
with the initial conditions $z_1 = \dots = z_5 = 1$.
\emph{A~priori}, one expects the numbers~$z_m$ to be rational---but
in fact, all of them are integers.
Once again, this is a consequence of a stronger statement: 
viewed as a function of $z_1,\dots, z_5$, every $z_m$ is a 
Laurent polynomial with integer coefficients.
To~prove this, we need to find a cluster algebra with an initial
cluster 
$(z_1, \dots, z_5)$ (no frozen variables) which has all
relations~\eqref{eq:somos-5-recurrence} 
among its exchange relations, so that
all the $z_m$ are among its cluster variables.
\end{example}

\begin{exercise}
Establish the integrality of all terms of the Somos-5 sequence
by examining the sequence of mutations 
\[
\mu_1,\mu_2,\mu_3,\mu_4,\mu_5,\mu_1,\mu_2,\mu_3,\mu_4,\mu_5,\dots,
\]
in the cluster algebra whose initial exchange matrix
(and the corresponding quiver) are shown below.

	\begin{figure}[h]
$B = \tilde B = \begin{bmatrix}0 & -1 & 1 & 1 & -1\\
                   1 & 0 & -2 & 0 & 1 \\
                   -1 & 2 & 0 & -2 & 1\\
                   -1 & 0 & 2 & 0 & -1\\
                   1 & -1 & -1 & 1 & 0
  \end{bmatrix}
$
\hspace{1in}
\setlength{\unitlength}{2.8pt}
\begin{picture}(20,20)(0,8)
\put(0,20){\makebox(0,0){$5$}} 
\put(20,20){\makebox(0,0){$2$}} 
\put(20,0){\makebox(0,0){$3$}} 
\put(0,0){\makebox(0,0){$4$}} 
\put(10,10){\makebox(0,0){$1$}} 
\thicklines
\qbezier(22,0)(35,35)(2,23)
\put(2,23){{\vector(-3,-1){1}}}
\put(18,20){\vector(-1,0){16}}
\put(17.5,17.5){\vector(-1,-1){7}}
\put(8.5,8.5){\vector(-1,-1){7}}
\put(0,18){\vector(0,-1){16}}
\put(2,0){\vector(1,0){16}}
\put(2,-1){\vector(1,0){16}}
\put(20,2){\vector(0,1){16}}
\put(21,2){\vector(0,1){16}}
\put(1.5,18.5){\vector(1,-1){7}}
\put(11,9){\vector(1,-1){7}}
\end{picture}
		\caption{The exchange matrix and the quiver associated with the Somos-5 sequence.}
	\end{figure}
\end{exercise}
\medskip

\begin{example}[see 
\cite{fordy-marsh}] 
Fix a positive integer $n$, and let $(a_1, \dots, a_{n-1})$ be a
\emph{palindromic} integer 
vector, that is, $a_i = a_{n-i}$ for $i = 1, \dots, n-1$.
Consider the sequence $z_1, z_2, \dots$ given by the recurrence
\begin{equation}
\label{eq:fordy-marsh-recurrence}
z_m z_{m+n}  = \prod_{i=1}^{n-1} z_{m+i}^{[a_i]_+} + \prod_{i=1}^{n-1} z_{m+i}^{[-a_i]_+}
 \quad (m = 1, 2, \dots)
\end{equation}
with indeterminates $z_1, \dots, z_n$ as the initial terms;
here we use the notation
\begin{equation}
\label{eq:positive-part}
[a]_+=\max(a,0). 
\end{equation}
(The recurrence \eqref{eq:somos-5-recurrence} is a special
case with $n=5$ and $(a_1, \dots, a_{n-1}) = (1,-1,-1,1)$.)
Then all the terms $z_m$ are integer Laurent polynomials in $z_1, \dots, z_n$.

To show this, we find an $n \times n$ skew-symmetric
integer matrix~$B=(b_{ij})$ 
such that $\mu_1(B)$ is obtained from~$B$ by the cyclic permutation
of its rows and columns, 
and such that its first column is given by $b_{i1} = a_{i-1}$ for
$i = 2, \dots, n$. If we can do so, then the sequence of mutations
\[
\mu_1,\mu_2,\mu_3,\dots,\mu_n,\mu_1,\mu_2,\dots,
\]
will produce the recurrence 
\eqref{eq:fordy-marsh-recurrence}.
It is not hard to see that setting 
\begin{equation*}
- b_{ji} = b_{ij} = a_{i-j} + \sum_{k=1}^{j-1}
([- a_{i-k}]_+ [a_{j-k}]_+  \ - \ [a_{i-k}]_+ [- a_{j-k}]_+) \quad
\end{equation*}
for $1 \leq j < i \leq n$ produces a matrix $B$ with 
	the requisite properties.
\end{example}

\section{$Y$-patterns}
\label{sec:y-patterns}

We keep the notational conventions used in
Section~\ref{sec:laurent-geometric}. 

One of our goals is to show that many structural properties of a seed pattern
are determined by (the mutation class formed by) 
its $n\times n$ exchange matrices~$B(t)$,
and do not depend on the bottom $m-n$ rows of the matrices~$\tilde B(t)$.

Theorem~\ref{th:y-hat} below concerns certain Laurent monomials in the
elements of a given seed;
each of these Laurent monomials is simply a ratio of the two terms
appearing on the
right-hand side of an exchange relation~\eqref{eq:exch-rel-geom}.
Surprisingly, the evolution of these ratios is
completely controlled by the matrices~$B(t)$. 
That is, the laws governing this evolution do not depend on the bottom
parts of the matrices~$\tilde B(t)$.

\begin{theorem}
\label{th:y-hat}
Let $(\tilde\xx, \tilde B)$ and $(\tilde\xx', \tilde B')$
be two labeled seeds related by mutation at~$k$, 
with extended clusters 
\begin{equation*}
\tilde\xx = (x_1, \dots, x_m),\quad
\tilde\xx' = (x'_1, \dots, x'_m)
\end{equation*}
and $m\times n$ extended exchange matrices 
\begin{equation*}
\tilde B= (b_{ij}),\quad 
\tilde B'= (b'_{ij}). 
\end{equation*}
Define the $n$-tuples $\hat\yy=(\hat y_1, \dots, \hat y_n)$
and $\hat\yy'=(\hat y'_1, \dots, \hat y'_n)$~by
\begin{equation}
\label{eq:yhat-yhat'}
\hat y_j = 
\prod_{i=1}^m x_i^{b_{ij}} ,\quad
\hat y'_j = 
\prod_{i=1}^m (x'_i)^{b'_{ij}}.
\end{equation}
Then 
\vspace{-.1in}
\begin{equation}
\label{eq:y-hat-mutation}
\hat y'_j =
\begin{cases}
\hat y_k^{-1}  & \text{if $j = k$};\\[.05in]
\hat y_j \, 
(\hat y_k + 1)^{- b_{kj}} & \text{if $j \neq k$ and $b_{kj}\le
  0$;}\\[.05in]
\hat y_j \, 
(\hat y_k^{-1} + 1)^{- b_{kj}} & \text{if $j \neq k$ and $b_{kj}\ge
  0$.}
\end{cases}
\end{equation}
\end{theorem}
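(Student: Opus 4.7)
The plan is to establish \eqref{eq:y-hat-mutation} by a direct monomial calculation, using the exchange relation for $x_k'$ together with the matrix mutation rule in a sign-uniform form. The proof splits into the trivial case $j = k$ and the substantive case $j \neq k$, with the latter further bifurcating according to the sign of $b_{kj}$.

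For $j = k$, first observe that $b'_{ik} = -b_{ik}$ for all~$i$ (in particular $b'_{kk} = 0 = b_{kk}$), and that $x'_i = x_i$ for $i \neq k$. Since $b_{kk} = 0$, the variable $x'_k$ contributes trivially to~$\hat y'_k$, so
\[
\hat y'_k = \prod_{i=1}^m (x'_i)^{b'_{ik}} = \prod_{i \neq k} x_i^{-b_{ik}} = \hat y_k^{-1},
\]
giving the first case of \eqref{eq:y-hat-mutation}.

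For $j \neq k$, my first step is to recast the matrix mutation rule using the notation $[a]_+ = \max(a,0)$ from \eqref{eq:positive-part}: a direct comparison of the four sign patterns of $(b_{ik}, b_{kj})$ shows that for $i, j \neq k$,
\[
b'_{ij} \;=\; b_{ij} + [b_{ik}]_+ [b_{kj}]_+ - [-b_{ik}]_+ [-b_{kj}]_+,
\]
while $b'_{kj} = -b_{kj}$. I also rewrite the exchange relation \eqref{eq:exch-rel-geom} as
\[
x_k x'_k \;=\; \prod_{i=1}^m x_i^{[b_{ik}]_+} + \prod_{i=1}^m x_i^{[-b_{ik}]_+},
\]
and note that $\hat y_k = \prod_i x_i^{b_{ik}} = \prod_i x_i^{[b_{ik}]_+ - [-b_{ik}]_+}$. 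Substituting $x'_i = x_i$ for $i \neq k$, isolating the $i = k$ factor, and inserting $x_k^{b_{kj}} \cdot x_k^{-b_{kj}} = 1$ in order to reconstruct~$\hat y_j$, I obtain
\[
\hat y'_j \;=\; (x_k x'_k)^{-b_{kj}}\, \hat y_j \prod_{i=1}^m x_i^{[b_{ik}]_+ [b_{kj}]_+ - [-b_{ik}]_+ [-b_{kj}]_+}.
\]

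Finally, if $b_{kj} \ge 0$ then $[-b_{kj}]_+ = 0$ and $[b_{kj}]_+ = b_{kj}$, so the trailing product equals $\prod_i x_i^{[b_{ik}]_+ b_{kj}}$, and hence
\[
\hat y'_j \;=\; \hat y_j \Bigl( (x_k x'_k) \prod_{i=1}^m x_i^{-[b_{ik}]_+}\Bigr)^{-b_{kj}}
\;=\; \hat y_j\bigl(1 + \hat y_k^{-1}\bigr)^{-b_{kj}},
\]
the second equality using the rewritten exchange relation; this is the third case of \eqref{eq:y-hat-mutation}. Symmetrically, if $b_{kj} \le 0$ one factors out $\prod_i x_i^{[-b_{ik}]_+}$ instead and obtains $\hat y'_j = \hat y_j(1 + \hat y_k)^{-b_{kj}}$, the second case; the two formulas agree at $b_{kj} = 0$, where both yield $\hat y'_j = \hat y_j$. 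There is no serious obstacle: the only conceptual point is recognizing that the $[\cdot]_+$ form of matrix mutation aligns exactly with the exponents produced by the exchange relation, after which the verification is pure bookkeeping with Laurent monomials in $x_1,\dots,x_m$.
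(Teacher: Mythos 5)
Your proof is correct and follows essentially the same route as the paper's: a direct case-by-case monomial computation, isolating the $(x'_k)^{-b_{kj}}$ factor and invoking the exchange relation to produce $\hat y_k+1$ or $\hat y_k^{-1}+1$. Your use of the $[\,\cdot\,]_+$ form of the mutation rule is only a cosmetic repackaging of the sign analysis the paper carries out via the products over $b_{ik}>0$ and $b_{ik}<0$.
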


\begin{proof}
We check~\eqref{eq:y-hat-mutation} case by case. 
The case $j=k$ is easy: 
\[
\hat y'_k 
= \prod_{i} (x'_i)^{b'_{ik}}
= \prod_{i \neq k} x_i^{b'_{ik}}
= \prod_{i \neq k} x_i^{-b_{ik}} =
\hat y_k^{-1} .
\]
If $j \neq k$ and  $b_{kj}\le 0$, then 
\begin{align*}
\hat y'_j =& (x'_k)^{b'_{kj}}\prod_{i \neq k} x_i^{b'_{ij}} 
= (x'_k)^{-b_{kj}}\prod_{i \neq k} x_i^{b_{ij}} \prod_{b_{ik}<0} x_i^{-b_{ik}b_{kj}} 
\\
=&x_k^{b_{kj}}\left(\prod_{b_{ik}>0} x_i^{b_{ik}} 
+ \prod_{b_{ik}<0} x_i^{-b_{ik}} \right)^{-b_{kj}}
\prod_{i \neq k} x_i^{b_{ij}} \prod_{b_{ik}<0} x_i^{-b_{ik}b_{kj}} 
\\
=& \hat y_j (\hat y_k + 1)^{-b_{kj}} \,.
\end{align*}
If $j \neq k$ and $b_{kj}\ge 0$, then 
we can check~\eqref{eq:y-hat-mutation} directly as before; alternatively,
it follows from the previous case, using the fact that mutation is an 
involution and switching the roles of the two seeds.
\end{proof}

Theorem~\ref{th:y-hat} suggests the following definitions. 

\begin{definition}
\label{def:y-seed}
A \emph{$Y$-seed} of rank~$n$ in a field~$\FFcal$
is a pair $(Y,B)$ where
\begin{itemize}[leftmargin=.15in]
\item
$Y
$ 
is an $n$-tuple of elements of~$\FFcal$; 
\item
$B$ is a skew-symmetrizable $n\times n$ integer matrix. 
\end{itemize}
We say that two $Y$-seeds $(Y,B)$ and~$(Y',B')$ of rank~$n$
are related by 
a \emph{$Y$-seed mutation} $\mu_k$ in direction~$k$ (here $1\le k\le
n$) if
\begin{itemize}[leftmargin=.15in]
\item
the matrices $B=(b_{ij})$ and $B'=(b'_{ij})$ 
are related via mutation at~$k$; 
\item
the $n$-tuple $Y'\!=\!(Y'_1,\dots,Y'_n)$ 
is obtained from $Y\!=\!(Y_1,\dots,Y_n)$ by 
\vspace{-5pt}
\begin{equation}
\label{eq:y-mutation}
Y_j' =
\begin{cases}
Y_k^{-1}  & \text{if $j = k$;}\\[.05in]
Y_j \, (Y_k + 1)^{- b_{kj}} & \text{if $j \neq k$ and $b_{kj}\le
  0$;}\\[.05in]
Y_j \, (Y_k^{-1} + 1)^{- b_{kj}} & \text{if $j \neq k$ and $b_{kj}\ge
  0$.}
\end{cases}
\end{equation}
\end{itemize}
\vspace{-5pt}
It is easy to check that mutating $(Y',B')$ at~$k$ recovers~$(Y,B)$. 

A \emph{$Y$-pattern} of rank~$n$ 
is a collection of $Y$-seeds $(Y(t),B(t))_{t\in\TT_n}$ labeled by 
the vertices of the $n$-regular tree~$\TT_n$, 
such that for any edge $t\overunder{k}{}t'$ in~$\TT_n$, 
the $Y$-seeds $(Y(t),B(t))$ and $(Y(t'),B(t'))$ are related to each
other by the $Y$-seed mutation in direction~$k$. 
\end{definition}

\begin{remark}
In Definition~\ref{def:y-seed},
we do not require the elements~$Y_i$ to be
algebraically independent,
one reason being that this condition does not always hold 
for the monomials $\hat y_j$ in Theorem~\ref{th:y-hat}. 
Consequently, one can not \emph{a~priori} guarantee that the mutation process can
propagate to all vertices in~$\TT_n$ (what if $Y_k=0$
in~\eqref{eq:y-mutation}?). 
To ensure the existence of a $Y$-pattern with a given initial seed $(Y,B)$, 
one can for example
require all elements of~$Y$ to be given by
subtraction-free expressions in some set of variables,
or alternatively take positive values under a particular
specialization of these variables. 
As each of these conditions reproduces under mutations of $Y$-seeds,
the mutation process can then proceed without hindrance. 
\end{remark}

\begin{example}
[\emph{$Y$-pattern of type~$A_2$
}]
\label{example:A2-Y-pattern}
Consider the $Y$-pattern of rank~$2$ 
\vspace{-5pt}
\[
\cdots
\overunder{2}{} (Y(0),B(0))
\overunder{1}{} (Y(1),B(1))
\overunder{2}{} (Y(2),B(2))
\overunder{1}{} \cdots \,
\]
with the exchange matrices 
\vspace{-5pt}
\begin{equation}
\label{eq:B(t)-A2}
B(t)=(-1)^t \left[
\begin{matrix}
0 & 1\\
-1&0
\end{matrix}
\right]. 
\end{equation}
(The corresponding quivers are orientations of the type 
$A_2$ Dynkin diagram.) 

\pagebreak[3]

The rule~\eqref{eq:y-mutation} of $Y$-seed mutation gives the
following recurrence for the $Y$-seeds $Y(t)=(Y_{1;t},Y_{2;t})$. 
For $t$ even, we have 
\[
Y(t+1)=\mu_1(Y(t)),\quad   
Y_{1;t+1} = Y_{1;t}^{-1}, \quad
Y_{2;t+1} = Y_{2;t} \,(Y_{1;t}^{-1}+1)^{-1}, 
\]
whereas for $t$ odd, we have
\[
Y(t+1)=\mu_2(Y(t)),\quad 
Y_{1;t+1} = Y_{1;t} \,(Y_{2;t}^{-1}+1)^{-1},\quad
Y_{2;t+1} = Y_{2;t}^{-1}. 
\]
We then recursively obtain the pairs
$Y(t)=(Y_{1;t},Y_{2;t})$ 
listed in Figure~\ref{fig:y-seeds-A2}.
\end{example}
\vspace{-.1in}

\begin{figure}[ht]
{
\begin{equation*}
\begin{array}{|c|c|c|}
\hline
&&\\[-4mm]
t & Y_{1;t} & Y_{2;t} \\[1mm]
\hline
&&\\[-4mm]
0 & y_1 & y_2 \\[1mm]
\hline
&&\\[-3.5mm]
1 & y_1^{-1} & y_1 \, y_2 (y_1+1)^{-1} \\[1mm]
\hline
&&\\[-3.5mm]
2 & y_2 (y_1y_2+y_1+1)^{-1} & (y_1+1)\,  y_1^{-1} y_2^{-1} \\[1mm]
\hline
&&\\[-3.5mm]
3 & (y_1y_2+y_1+1)\,  y_2^{-1} & y_1^{-1}(y_2+1)^{-1} \\[1mm]
\hline
&&\\[-3.5mm]
4 & y_2^{-1} & y_1(y_2+1) \\[1mm]
\hline
&&\\[-4mm]
5 & y_2 & y_1 \\[1mm]
\hline
&&\\[-3.5mm]
6 & y_1 y_2 (y_1+1)^{-1} & y_1^{-1} \\[1mm]
\hline
&&\\[-3.5mm]
7 & (y_1+1)\,  y_1^{-1} y_2^{-1} & y_2 (y_1y_2+y_1+1)^{-1} \\[1mm]
\hline
&&\\[-3.5mm]
8 & y_1^{-1}(y_2+1)^{-1} & (y_1y_2+y_1+1)\,  y_2^{-1} \\[1mm]
\hline
&&\\[-3.5mm]
9 & y_1(y_2+1) & y_2^{-1} \\[1mm]
\hline
&&\\[-4mm]
10 & y_1 & y_2  \\[1mm]
\hline
\end{array}
\end{equation*}
}
\vspace{-.17in}
\caption{The $Y$-seeds $(Y(t),B(t))=((Y_{1;t},Y_{2;t}),B(t))$
in type~$A_2$. 
The exchange matrices $B(t)$ are given by~\eqref{eq:B(t)-A2}. 
The initial $Y$-seed is $(Y(0),B(0))$, with 
$Y(0)=(Y_{1;0},Y_{2;0})=(y_1,y_2)$. 
This sequence of $Y$-seeds is $10$-periodic: $Y(t+10)=Y(t)$.}
\label{fig:y-seeds-A2}
\end{figure}

Using the terminology introduced in Definition~\ref{def:y-seed}, 
we can state the following direct corollary of 
Theorem~\ref{th:y-hat}. 

\begin{corollary}
\label{cor:y-hat}
Let $(\tilde\xx(t), \tilde B(t))_{t\in\TT_n}$ be a seed pattern
in~$\FFcal$, 
with
\begin{equation*}
\tilde\xx(t) = (x_{1;t}, \dots, x_{m;t}),\quad
\tilde B(t) = (b^t_{ij}). 
\end{equation*}
Let $B(t)= (b^t_{ij})_{i,j\le n}$ denote the exchange matrix at a vertex~$t \in \TT_n$, 
and let $\hat \yy(t) = (\hat y_{1;t}, \dots, \hat y_{n;t})$ be the $n$-tuple of
elements in $\FFcal$ given~by 
\begin{equation}
\label{eq:y-hat-pattern}
\hat y_{k;t} = 
\prod_{i=1}^m x_{i;t}^{b^t_{ik}} \,.
\end{equation}
Then $(\hat\yy(t),B(t))_{t\in\TT_n}$ is a $Y$-pattern in~$\FFcal$. 
\end{corollary}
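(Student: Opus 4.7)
The corollary is essentially a direct unpacking of Theorem~\ref{th:y-hat} applied along every edge of the tree $\TT_n$, so the proof plan is short.

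The plan is to verify the two defining conditions of a $Y$-seed mutation (as in Definition~\ref{def:y-seed}) for each edge $t \overunder{k}{} t'$ of $\TT_n$, with the data $(\hat\yy(t), B(t))$ and $(\hat\yy(t'), B(t'))$. First I would check that $B(t') = \mu_k(B(t))$. This follows by inspection of the matrix mutation formula~\eqref{eq:matrix-mutation}: the entries $b'_{ij}$ with $i,j \le n$ depend only on the entries $b_{ij}$, $b_{ik}$, $b_{kj}$ with indices in $[1,n]$, so the top $n\times n$ block of $\tilde B(t') = \mu_k(\tilde B(t))$ is precisely $\mu_k$ applied to the top $n\times n$ block of $\tilde B(t)$. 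Hence the ``matrix half'' of the $Y$-seed mutation condition is automatic.

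Second, I would verify the transformation rule~\eqref{eq:y-mutation} for the tuples $\hat\yy(t), \hat\yy(t')$. This is where Theorem~\ref{th:y-hat} does all the work: the definition~\eqref{eq:y-hat-pattern} of $\hat y_{k;t}$ coincides with the definition~\eqref{eq:yhat-yhat'} of $\hat y_j$ appearing in that theorem, applied to the labeled seeds $(\tilde\xx(t), \tilde B(t))$ and $(\tilde\xx(t'), \tilde B(t'))$. Therefore Theorem~\ref{th:y-hat} immediately yields
\[
\hat y_{j;t'} =
\begin{cases}
\hat y_{k;t}^{-1} & \text{if } j=k,\\
\hat y_{j;t}\,(\hat y_{k;t}+1)^{-b^t_{kj}} & \text{if } j\neq k,\ b^t_{kj}\le 0,\\
\hat y_{j;t}\,(\hat y_{k;t}^{-1}+1)^{-b^t_{kj}} & \text{if } j\neq k,\ b^t_{kj}\ge 0,
\end{cases}
\]
which is precisely formula~\eqref{eq:y-mutation} for the $Y$-seed mutation of $(\hat\yy(t),B(t))$ at~$k$.

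Since both conditions hold for every edge of $\TT_n$, the assignment $t \mapsto (\hat\yy(t), B(t))$ defines a $Y$-pattern, completing the proof. There is no real obstacle here; the only micro-point worth mentioning is that one should note that the edge was arbitrary, and that the ``mutation propagates'' issue does not arise because the $\hat y_{j;t}$ are already Laurent monomials in the algebraically independent $x_{i;t}$, so all the expressions in~\eqref{eq:y-mutation} are well defined elements of $\FFcal$ at every vertex.
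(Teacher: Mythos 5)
Your proposal is correct and matches the paper's treatment: the paper presents this statement as a ``direct corollary'' of Theorem~\ref{th:y-hat} with no further argument, and your edge-by-edge verification (the top $n\times n$ block commuting with mutation, plus Theorem~\ref{th:y-hat} supplying exactly the rule~\eqref{eq:y-mutation}) is precisely the unpacking the paper intends. Your closing remark about well-definedness of the $\hat y_{j;t}$ as Laurent monomials is a sensible extra observation, consistent with the paper's own caveat in the remark following Definition~\ref{def:y-seed}.
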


\pagebreak[3]

\begin{remark}
The rules governing the evolution of $Y$-seeds 
may seem simpler than the corresponding rules of seed mutation:
\begin{itemize}[leftmargin=.15in]
\item
$Y$-seed mutations are driven by the $n\times n$
matrices~$B$ whereas \linebreak[3]
ordinary seed mutations require the extended 
$m\times n$ matrices~$\tilde B$;
\item
in the $Y$-seed setting, there are no frozen variables; 
\item
each recurrence~\eqref{eq:y-mutation} only involves two
variables $Y_j$ and~$Y_k$
whereas the exchange relation~\eqref{eq:exch-rel-geom}
potentially involves all cluster variables of the current seed. 
\end{itemize}
On the other hand,
\begin{itemize}[leftmargin=.15in]
\item
a seed mutation only changes one cluster variable whereas a $Y$-seed
mutation may potentially change all the variables $Y_1,\dots,Y_n$;
\item
consequently, we end up getting ``more'' $Y$-variables than cluster variables 
(if the number of seeds is finite, then this is a precise statement); 
\item
the $Y$-pattern recurrences do not, generally speaking, 
exhibit the Laurent phenomenon. 
\end{itemize}
\end{remark}

\begin{remark}
In various examples, including many cluster algebras arising as 
	coordinate rings of algebraic varieties, the cluster
	algebra under investigation 
 has a distinguished (multi-)grading,
and its exchange relations are all (multi-)homo\-geneous. 
It follows that the rational expressions $\hat y_{k;t}$ 
defined by~\eqref{eq:y-hat-pattern} have (multi-)degree~$0$. 
It~is not surprising, then, that $Y$-patterns naturally arise in the
study of configurations (of points, lines, flags, etc.) in projective
spaces. 
See Examples~\ref{example:configs-of-points-on-proj-line}
and~\ref{example:pentagram} below. 
\end{remark}

\pagebreak[3]

\begin{example}[\emph{Configurations of points on the projective
      line}] 
\label{example:configs-of-points-on-proj-line}
The \emph{cross-ratio} is a quantity associated with an 
ordered quadruple of collinear points, particularly points on
the projective line~$\mathbb{P}^1$ (say over~$\CC$). 
For our purposes, it will be convenient to use the following version
of the cross-ratio. 
Let $P_1, P_2, P_3, P_4\in\mathbb{P}^1$
be four distinct points on the projective line, 
with projective coordinates
$(a_1:b_1)$, $(a_2:b_2)$, $(a_3:b_3)$, and $(a_4:b_4)$, 
respectively.
We then define
\begin{equation}
\label{eq:Y1234}
Y(P_1,P_2, P_3, P_4)=\frac{P_{14}\,P_{23}}{P_{12}\,P_{34}},
\end{equation}
where we use the notation 
\begin{equation}
\label{eq:Pij}
P_{ij} = \det\begin{pmatrix}
a_i & a_j \\
b_i & b_j 
\end{pmatrix}
=a_i b_j - a_j b_i\,.
\end{equation}
This quantity is related to the conventional cross-ratio via the formula 
\[
Y(P_1,P_2, P_3, P_4)=-(P_1,P_3;P_4,P_2).
\]

The symmetric group~$\mathcal{S}_4$ acts on quadruples of collinear points 
by permuting the points in a quadruple. 
The permutations in~$\mathcal{S}_4$ which preserve the cross-ratio 
form a subgroup isomorphic to the Klein four-group. 
There are therefore six different versions of the cross-ratio,
all of which are uniquely determined by any one of them. 

The cross-ratio is 
essentially the only projective invariant of a quadruple of collinear points.
More generally (see, e.g., \cite[Section~7.4]{richter-gebert}), 
any rational function of an ordered $m$-tuple of points on the projective line 
which is invariant under projective transformations 
can be expressed in terms of cross-ratios associated to various
quadruples of points.
In~fact, one only needs cross-ratios associated with $m-3$ quadruples to get 
all $\binom{m}{4}$ of them. 
One way to make this explicit is by using the machinery of
$Y$-patterns. 
A~configuration of $m$ distinct ordered points
$P_1,\dots, P_m\in\mathbb{P}^1$ with projective coordinates
$(a_1:b_1),\dots,(a_m:b_m)$ 
can be encoded by a $2 \times m$ matrix
\begin{equation*}
z = \begin{bmatrix}
a_1 & a_2 & \dots & a_m\\
b_1 & b_2 & \dots & b_m
\end{bmatrix}. 
\end{equation*}
Recall that the Pl\"ucker coordinates $P_{ij}=P_{ij}(z)$ are defined
by the formula~\eqref{eq:Pij},
for $1\le i<j\le m$. 

We now associate a $Y$-seed 
to an arbitrary triangulation~$T$ of 
a convex $m$-gon $\mathbf{P}_m$
(cf.\ Sections~\ref{sec:Ptolemy}
and~\ref{sec:triangulations})
by $m-3$ pairwise noncrossing diagonals. 
Recall that $\mathbf{P}_m$ has $m$ vertices labeled $1,\dots,m$, 
in clockwise order.  
We label the diagonals of $T$ by the numbers $1, \dots, m-3$, 
and define the exchange matrix $B_T$ to be the $(m-3) \times (m-3)$
matrix associated to the mutable part of the quiver $Q(T)$, see 
Definition~\ref{def:Q(T)-polygon}.  (Ignore the frozen 
vertices associated with the sides of the polygon.)
Consider a diagonal of~$T$ labeled~$d$. 
This diagonal triangulates a quadrilateral with vertices
labeled $i, j, k, \ell$ in clockwise order, connecting vertices
$i$ and~$k$, cf.\ Figure~\ref{fig:type-a-exch}.  
Define 
\[
Y_d = Y(P_i,P_j,P_k,P_\ell),
\] 
cf.~\eqref{eq:Y1234}.
Note that since 
$Y(P_i,P_j,P_k,P_\ell)=Y(P_k,P_\ell,P_i,P_j)$,
there is no ambiguity in this definition.
Finally, define the $Y$-seed 
associated with $T$ to be 
the pair $(Y_T, B_T)$, where 
$Y_T = (Y_1,\dots, Y_{m-3})$.

It is now an exercise to verify that 
these $Y$-seeds transform under flips 
by the $Y$-seed mutation rule 
\eqref{eq:y-mutation}.  
Note that this example is nothing but the application of the
construction in Theorem~\ref{th:y-hat} to 
the seed pattern associated to the $\Gr_{2,m}$ example
(cf.\ Section~\ref{sec:triangulations}).
\end{example}

\begin{exercise}
Given six points $P_1,\dots,P_6$ on the projective line,
express the cross-ratios for the quadruples 
$\{P_i,P_4,P_5,P_6\}$ in terms of the cross-ratios for the quadruples
$\{P_i,P_1,P_2,P_3\}$. 
\end{exercise}

\pagebreak[3]

\begin{example}[\emph{The pentagram map}] 
\label{example:pentagram}
The \emph{pentagram map}, introduced in~\cite{Schwartz}, 
is a transformation of generic projective polygons 
(i.e., cyclically ordered tuples of
points on the projective plane~$\mathbb{P}^2$) 
defined by the following construction: given a polygon~$A$
as input, draw all of its ``shortest" diagonals, and output
the ``smaller'' polygon~$A'$ which they cut out.
See Figure~\ref{fig:pentagram}.

\begin{figure}[ht]
\begin{center}
\hspace{-.2in}
\setlength{\unitlength}{2.2pt}
\begin{picture}(60,66)(0,-3)

\linethickness{2pt}
  \multiput(0,20)(60,0){2}{\line(0,1){20}}
  \multiput(20,0)(0,60){2}{\line(1,0){20}}
  \multiput(0,40)(40,-40){2}{\line(1,1){20}}
  \multiput(20,0)(40,40){2}{\line(-1,1){20}}

  \multiput(20,0)(20,0){2}{\circle*{1}}
  \multiput(20,60)(20,0){2}{\circle*{1}}
  \multiput(0,20)(0,20){2}{\circle*{1}}
  \multiput(60,20)(0,20){2}{\circle*{1}}

\thicklines
  \put(70, 30){{\vector(1,0){8}}}
\end{picture}
\hspace{2cm}
\begin{picture}(60,66)(0,-3)
\put(-4,20){\makebox(0,0){$A_7$}}
\put(20,-4){\makebox(0,0){$A_0\!=\!A_8$}}
\put(40,-4){\makebox(0,0){$A_1$}}
\put(64,20){\makebox(0,0){$A_2$}}
\put(64,40){\makebox(0,0){$A_3$}}
\put(40,64){\makebox(0,0){$A_4$}}
\put(20,64){\makebox(0,0){$A_5$}}
\put(-4,40){\makebox(0,0){$A_6$}}

\put(29,11){\makebox(0,0){$A'_{\frac12}$}}
\put(42,17){\makebox(0,0){$A'_{1\!\frac12}$}}
\put(49,30){\makebox(0,0){$A'_{2\!\frac12}$}}
\put(43,42){\makebox(0,0){$A'_{3\!\frac12}$}}
\put(30,49){\makebox(0,0){$A'_{4\!\frac12}$}}
\put(18,42){\makebox(0,0){$A'_{5\!\frac12}$}}
\put(11,29){\makebox(0,0){$A'_{6\!\frac12}$}}
\put(17,16){\makebox(0,0){$A'_{7\!\frac12}$}}

\thinlines
  \multiput(0,20)(60,0){2}{\line(0,1){20}}
  \multiput(20,0)(0,60){2}{\line(1,0){20}}
  \multiput(0,40)(40,-40){2}{\line(1,1){20}}
  \multiput(20,0)(40,40){2}{\line(-1,1){20}}

  \multiput(20,0)(20,0){2}{\circle*{1}}
  \multiput(20,60)(20,0){2}{\circle*{1}}
  \multiput(0,20)(0,20){2}{\circle*{1}}
  \multiput(60,20)(0,20){2}{\circle*{1}}

\put(40,0){\line(1,2){20}}
\put(0,20){\line(1,2){20}}
\put(20,60){\line(2,-1){40}}
\put(40,60){\line(1,-2){20}}
\put(20,0){\line(2,1){40}}
\put(0,20){\line(2,-1){40}}
\put(0,40){\line(1,-2){20}}
\put(40,60){\line(-2,-1){40}}

\linethickness{2pt}
\put(13.3,13.3){\line(2,-1){16.6}}
\put(13.3,13.3){\line(-1,2){8.3}}
\put(13.3,46.3){\line(-1,-2){8.3}}
\put(13.3,46.3){\line(2,1){16.6}}
\put(46.6,13.3){\line(-2,-1){16.7}}
\put(46.6,13.3){\line(1,2){8.3}}
\put(46.6,46.3){\line(1,-2){8.3}}
\put(46.6,46.3){\line(-2,1){16.6}}
\end{picture}

\end{center}
\caption{The pentagram map.}
\label{fig:pentagram}
\end{figure}

As shown in 
\cite{Glick}, the pentagram map is related to $Y$-seed mutation.
To explain the connection, one needs to describe the pentagram map in
properly chosen coordinates. 
We shall view a polygon with $n$ vertices as an $n$-periodic sequence 
$A = (A_i)_{i\in \ZZ}$ of points in~$\mathbb{P}^2$. 
Given two polygons related 
by the pentagram map,
it is convenient to index the points of one of them by the integers $\ZZ$ 
and the points of the other by the half-integers
$\ZZ+\frac{1}{2}$, as shown at the right in Figure \ref{fig:pentagram}.

Recall the definition~\eqref{eq:Y1234} of 
the projective invariant $Y(P_1,P_2, P_3, P_4)$
(a negative cross-ratio)
associated with a quadruple of distinct collinear points $P_1,P_2, P_3, P_4$. 
One can associate 
a similar invariant to a quadruple of distinct concurrent lines 
$L_1,L_2,L_3,L_4$ in~$\mathbb{P}^2$ 
passing through a point~$Q$: 
any line~$L$ not passing
through~$Q$ intersects these lines in four distinct points
$P_1,P_2,P_3,P_4$, and the number $Y(L_1,L_2,L_3,L_4):=Y(P_1,P_2, P_3, P_4)$
does not depend on the choice of the line~$L$.

\begin{definition} 
\label{def:y-params}
Let $A$ be a polygon with $n$ vertices indexed as above by 
	either  $\ZZ$ or $\ZZ+\frac{1}{2}$.
The \emph{$y$-parameters of $A$} 
	are the numbers $y_j(A)$ (for $1 \leq j \leq 2n$) defined by
\begin{align}
\label{eq:y-pentagram}
y_{2k}(A) &= 
Y(\overleftrightarrow{A_k A_{k-1}},
               \overleftrightarrow{A_k A_{k+2}},
               \overleftrightarrow{A_k A_{k+1}},
               \overleftrightarrow{A_k A_{k-2}})^{-1},
\\
\nonumber
y_{2k+1}(A) &= Y(A_k,
                 \overleftrightarrow{A_{k+2}A_{k+3}} \cap L, 
                 A_{k+1}, 
                 \overleftrightarrow{A_{k-2}A_{k-1}} \cap L),
\end{align}
where $L \!=\! \overleftrightarrow{A_k A_{k+1}}$.
(Here $\overleftrightarrow{A_iA_j}$ denotes the line passing through
$A_i$ and~$A_j$.)
See Figure~\ref{fig:y-parameters}. 

\begin{figure}[ht]
\begin{center}

\setlength{\unitlength}{2.4pt}

\begin{picture}(60,66)(2,-3)
\put(20,-4){\makebox(0,0){$A_{k-2}$}}
\put(40,-4){\makebox(0,0){$A_{k-1}$}}
\put(64,20){\makebox(0,0){$A_k$}}
\put(67,40){\makebox(0,0){$A_{k+1}$}}
\put(40,64){\makebox(0,0){$A_{k+2}$}}
\put(20,64){\makebox(0,0){$A_{k+3}$}}

\thinlines
  \multiput(0,20)(60,0){2}{\line(0,1){20}}
  \multiput(20,0)(0,60){2}{\line(1,0){20}}
  \multiput(0,40)(40,-40){2}{\line(1,1){20}}
  \multiput(20,0)(40,40){2}{\line(-1,1){20}}

  \multiput(20,0)(20,0){2}{\circle*{1}}
  \multiput(20,60)(20,0){2}{\circle*{1}}
  \multiput(0,20)(0,20){2}{\circle*{1}}
  \multiput(60,20)(0,20){2}{\circle*{1}}

\linethickness{1.5pt}
\put(40,0){\line(1,1){20}}
\put(60,20){\line(0,1){20}}
 \put(40,60){\line(1,-2){20}}
 \put(20,0){\line(2,1){40}}
\end{picture}
\hspace{.6in}
\begin{picture}(60,66)(5,-3)
\put(20,-4){\makebox(0,0){$A_{k-2}$}}
\put(40,-4){\makebox(0,0){$A_{k-1}$}}
\put(64,20){\makebox(0,0){$A_k$}}
\put(67,40){\makebox(0,0){$A_{k+1}$}}
\put(40,64){\makebox(0,0){$A_{k+2}$}}
\put(20,64){\makebox(0,0){$A_{k+3}$}}

\put(63,8){\makebox(0,0){$L$}}

\thinlines
  \multiput(0,20)(60,0){2}{\line(0,1){20}}
  \multiput(20,0)(0,60){2}{\line(1,0){20}}
  \multiput(0,40)(40,-40){2}{\line(1,1){20}}
  \multiput(20,0)(40,40){2}{\line(-1,1){20}}

  \multiput(20,0)(20,0){2}{\circle*{1}}
  \multiput(20,60)(20,0){2}{\circle*{1}}
  \multiput(0,20)(0,20){2}{\circle*{1}}
  \multiput(60,20)(0,20){2}{\circle*{1}}

\put(63,0){\line(-1,0){6}}
\multiput(55,0)(-6,0){3}{\line(-1,0){4}}
\put(63,60){\line(-1,0){6}}
\multiput(55,60)(-6,0){3}{\line(-1,0){4}}

\linethickness{0.8pt}

\put(60,-3){\line(0,1){66}}
\multiput(60,0)(0,20){4}{\circle*{2.5}}

\end{picture}

\end{center}
\caption{The $y$-parameters of a polygon.}
\label{fig:y-parameters}
\end{figure}

We next define the $2n \times 2n$
exchange matrix $B  = (b_{ij})$ by 
\begin{equation}
\label{eq:B-pentagram}
b_{ij} = 
\begin{cases}
(-1)^j & \text{if $i-j \equiv \pm 1 \mod 2n$;}\\
(-1)^{j+1} & \text{if $i-j \equiv \pm 3 \mod 2n$;}\\
0 & \text{otherwise.}
\end{cases}
\end{equation}
We set $Y(A)=(y_1(A),\dots,y_{2n}(A))$. 
Thus $(Y(A), B)$ is a $Y$-seed of rank~$2n$.
\end{definition}

The following result, obtained in~\cite{Glick}, is included without
proof. 

\begin{proposition} 
Let $A$ be an $n$-gon indexed by~$\ZZ$, and let $A'$ 
be the $n$-gon (indexed by $\ZZ+\frac12$)
obtained from $A$ via the pentagram map. 
Then applying the composition of $Y$-seed mutations
\[
\mu_{even} = \mu_2 \circ \mu_4 \circ \dots \circ \mu_{2n} 
\]
to the $Y$-seed $(Y(A),B)$ 
(cf.\ \eqref{eq:y-pentagram}--\eqref{eq:B-pentagram})
produces the $Y$-seed $(Y(A'),-B)$. 

Similarly, let $A'$ be an $n$-gon indexed by $\ZZ+\frac12$. 
Then applying
\[
\mu_{odd} = \mu_1 \circ \mu_3 \circ \dots \circ \mu_{2n-1} 
\]
to the $Y$-seed $(Y(A'),-B)$ produces the 
$Y$-seed $(Y(A''),B)$ associated with the $n$-gon~$A''$ 
obtained from $A'$ via the pentagram map. 
\end{proposition}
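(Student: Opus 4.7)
The plan is to prove the proposition in two largely independent pieces: the matrix part (showing $\mu_{even}(B)=-B$ and $\mu_{odd}(-B)=B$) and the $y$-variable part (showing that the tuple $Y(A)$ transforms to $Y(A')$ under $\mu_{even}$). Before anything else one observes that for any two even indices $i$ and $j$, the difference $i-j$ is even, hence congruent to neither $\pm 1$ nor $\pm 3$ modulo $2n$, so by \eqref{eq:B-pentagram} we have $b_{ij}=b_{ji}=0$. By Exercise~\ref{ex:mut-simple}(\ref{ex:mut-commute}), the individual mutations in $\mu_{even}$ pairwise commute and the composition is therefore well-defined; the same reasoning applies to $\mu_{odd}$.

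For the matrix part, I would compute $\mu_{even}(B)$ entry by entry using \eqref{eq:matrix-mutation}. Entries $b'_{ij}$ with both $i,j$ even remain zero (nothing is mutated there in a way that could introduce a nonzero entry, since the mutable neighbors of a fixed odd vertex between them produce no contribution when $b_{ij}=0$). Entries with one even index flip sign directly by the first case of \eqref{eq:matrix-mutation}. For entries $b'_{ij}$ with both $i,j$ odd, every even vertex $k$ with $b_{ik}b_{kj}\ne 0$ contributes $\pm b_{ik}b_{kj}$; a short case analysis (distinguishing $|i-k|\in\{1,3\}$ and $|k-j|\in\{1,3\}$, and tracking the $(-1)^k$ and $(-1)^{k+1}$ prefactors) shows that the sum of these contributions equals $-2b_{ij}$, so $b'_{ij}=-b_{ij}$ overall. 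Thus $\mu_{even}(B)=-B$, and the analogous computation (or the involutivity of mutation combined with Exercise~\ref{ex:mat-mut-simple}(\ref{ex:mut-simple-3})) gives $\mu_{odd}(-B)=B$.

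The heart of the proof is the $y$-variable identity, which I would reduce to local projective-geometric statements. For each even index $j=2k$, the formulas \eqref{eq:y-mutation} and the equality $\mu_{even}(B)=-B$ predict $y'_{2k}=y_{2k}(A)^{-1}$; this is almost immediate from \eqref{eq:y-pentagram}, since the pencil at $A_k$ in the definition of $y_{2k}(A)$ is swapped with the pencil at the corresponding vertex of $A'$ (namely the intersection of two adjacent short diagonals through $A_k$), and the swap inverts the cross-ratio. For odd indices $j=2k+1$, the prediction is a monomial expression in $y_{j-3},y_{j-1},y_j,y_{j+1},y_{j+3}$ of the form read off from \eqref{eq:y-mutation}. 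I would verify this by projecting from a judiciously chosen vertex of $A$ (typically $A_{k+1}$ or $A_{k+2}$) to turn the cross-ratios of concurrent lines appearing in $y_{\mathrm{even}}(A)$ into cross-ratios of collinear points lying on a single line through that vertex, so that both sides of the desired identity live on a common line.

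The main obstacle is precisely this last step: the algebraic identity among cross-ratios is not formal, and requires the combinatorial fact that every point $A'_{k+1/2}$ is the intersection $\overleftrightarrow{A_{k-1}A_{k+1}}\cap\overleftrightarrow{A_kA_{k+2}}$. Working in a local model with the seven points $A_{k-3},\ldots,A_{k+3}$ and the four new points $A'_{k-3/2},A'_{k-1/2},A'_{k+1/2},A'_{k+3/2}$, the identity reduces to a single multiplicative relation among cross-ratios on a chosen transversal, which can be checked by choosing projective coordinates and computing. Once this local identity is in hand, $n$-periodicity and the fact that only $y_{j\pm 1},y_{j\pm 3}$ appear on the mutation side (matching the support of the $j$-th column of $B$) assemble the global statement.
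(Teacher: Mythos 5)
The paper does not actually prove this proposition: it is stated as ``obtained in~\cite{Glick}'' and explicitly ``included without proof,'' so there is no in-text argument to compare yours against. Judged on its own, your outline follows the standard (Glick) strategy and gets the easy half right. The observation that $b_{ij}=0$ whenever $i-j$ is even, so that the mutations inside $\mu_{even}$ (and inside $\mu_{odd}$) commute, is correct and is exactly what makes the composition well defined. Your matrix computation is also correct, though it can be stated more cleanly: since $b_{ij}\ne 0$ forces $i-j$ odd, the matrix $B$ is supported on pairs of opposite parity, so the odd--odd entries are zero \emph{before} mutation as well; what you actually need (and what a short check of the four sign patterns for $k=j\pm1,\,j\pm3$ confirms) is that the contributions $\pm b_{ik}b_{kj}$ from the even vertices cancel in pairs, leaving $b'_{ij}=0=-b_{ij}$. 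Together with the sign flip on the mixed-parity entries this gives $\mu_{even}(B)=-B$, and $\mu_{odd}(-B)=-\mu_{odd}(B)=B$ follows as you say.

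The genuine gap is in the $y$-variable half, which is where all the content of the proposition lives. You correctly read off from \eqref{eq:y-mutation} what must be shown --- $y_j(A')=y_j(A)^{-1}$ for $j$ even, and for $j$ odd the monomial identity
\[
y_j(A')=y_{j-3}(A)\,y_j(A)\,y_{j+3}(A)\,
\frac{\bigl(1+y_{j-1}(A)\bigr)\bigl(1+y_{j+1}(A)\bigr)}
     {\bigl(1+y_{j-3}(A)\bigr)\bigl(1+y_{j+3}(A)\bigr)}
\]
--- but neither identity is actually established. Even the ``almost immediate'' even case is not a formal swap of pencils: one must check that the four lines through $A_k$ appearing in \eqref{eq:y-pentagram} meet the relevant edge of $A'$ precisely in the four points prescribed by the second formula of \eqref{eq:y-pentagram} applied to $A'$ (in the correct order), using the description of each $A'_{k+1/2}$ as $\overleftrightarrow{A_{k-1}A_{k+1}}\cap\overleftrightarrow{A_kA_{k+2}}$; and the odd case is deferred entirely to ``choose projective coordinates and compute.'' Since the proposition is essentially equivalent to these two cross-ratio identities, a proof must exhibit that computation (or an incidence-theoretic argument replacing it); as written, your text is a correct and well-organized plan for a proof rather than a proof.
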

(Note that the individual mutations in each of $\mu_{even}$
and $\mu_{odd}$ commute.)  
\end{example}

\pagebreak[3]

$Y$-patterns have arisen in many other mathematical contexts.  
An incomplete list includes: 
\begin{itemize}[leftmargin=.15in]
\item
Thurston's \emph{shear coordinates} in Teichm\"uller spaces 
and their generalizations (see, e.g.,
\cite{fock-goncharov-dual} and references therein); 
\item
recursively defined sequences of points on \emph{elliptic curves},
and associated Somos-like sequences, 
cf.\ Example~\ref{ex:Somos4} and Remark~\ref{rem:elliptic};
\item
\emph{wall-crossing formulas} for motivic Donaldson-Thomas invariants 
introduced by M.~Kontsevich and Y.~Soibelman (see,
e.g.,~\cite{kontsevich-soibelman-survey}), and related wall-crossing
phenomena for BPS states in theoretical physics; 
\item
Fock-Goncharov varieties~\cite{fock-goncharov-ihes}, including moduli
spaces of point configurations in basic affine spaces; 
\item
Zamolodchikov's $Y$-systems \cite{yga, zamolodchikov} in the theory of the Thermodynamic Bethe Ansatz. 
\end{itemize}
We will return to some of the aforementioned applications in the
subsequent chapters. 

\pagebreak[3]

\section{Tropical semifields}

In this section, we re-examine the combinatorics of matrix mutations,
relating it to the concept of $Y$-seeds and their mutations
discussed in Section~\ref{sec:y-patterns}. 
We begin by introducing the notion of semifield,
and in particular, the tropical semifield, which 
will give us an important alternative way to encode the
bottom part of an extended exchange matrix $\tilde B$.

\begin{definition}
A \emph{semifield} is an abelian group~$P$, written
multiplicatively, endowed with an operation of ``auxiliary
addition''~$\oplus$
which is required to be commutative and associative, and satisfy the distributive law with
respect to the multiplication in~$P$. 
\end{definition}

We emphasize that $(P,\oplus)$ does not have to be a group, just a semigroup. 
Since every element of~$P$ has a multiplicative inverse,
  $P$ does not contain an additive identity element (unless $P$ is trivial).

\begin{definition}
Let $\Trop(q_1,\dots, q_{\ell})$
denote the multiplicative group of Laurent
monomials 
in the variables $q_1,\dots,q_{\ell}$. 
We equip $\Trop(q_1,\dots,q_{\ell})$ with the binary operation of
\emph{tropical addition}~$\oplus$ defined by
\begin{equation}
\label{eq:tropical-addition}
\prod_{i=1}^{\ell} q_i^{a_i} \oplus \prod_{i=1}^{\ell} q_i^{b_i} =
\prod_{i=1}^{\ell} q_i^{\min (a_i, b_i)} \ .
\end{equation}
\end{definition}

\begin{lemma}\label{lem:distributive}
Tropical addition 
is commutative and associative, and it satisfies the distributive law with
respect to the ordinary multiplication:
\begin{equation*}
(p\oplus q)r=pr\oplus qr. 
\end{equation*}
\end{lemma}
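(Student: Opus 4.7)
The plan is to reduce each of the three asserted properties to a corresponding elementary property of the $\min$ operation on integers, exploiting the fact that the tropical addition rule in \eqref{eq:tropical-addition} is defined coordinatewise on exponent vectors.

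First I would fix notation: write arbitrary elements of $\Trop(q_1,\dots,q_\ell)$ as $p=\prod_i q_i^{a_i}$, $q=\prod_i q_i^{b_i}$, $r=\prod_i q_i^{c_i}$ with $a_i,b_i,c_i\in\ZZ$, so that each identity to be verified becomes an identity about the exponents of~$q_i$ for each fixed~$i$. Commutativity of $\oplus$ then reduces to $\min(a_i,b_i)=\min(b_i,a_i)$, which is immediate. Associativity reduces to $\min(\min(a_i,b_i),c_i)=\min(a_i,\min(b_i,c_i))$, i.e.\ to the fact that $\min$ on the totally ordered set $\ZZ$ is associative (both sides equal the smallest of $a_i,b_i,c_i$).

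For the distributive law, I would compute both sides in exponents. On the left, ordinary multiplication in $\Trop(q_1,\dots,q_\ell)$ adds exponents, so
\[
(p\oplus q)\,r=\prod_i q_i^{\min(a_i,b_i)+c_i}.
\]
On the right,
\[
pr\oplus qr=\prod_i q_i^{\min(a_i+c_i,\,b_i+c_i)}.
\]
These agree because translation preserves order on~$\ZZ$: $\min(a+c,b+c)=\min(a,b)+c$ for all $a,b,c\in\ZZ$. The main (and essentially only) obstacle is purely bookkeeping — making sure the three axioms are correctly translated into statements about $\min$ on exponent vectors; once that is done, each property is a one-line verification.
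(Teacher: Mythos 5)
Your proof is correct: the paper states this lemma without proof, treating it as a routine verification, and your reduction of each axiom to the corresponding property of $\min$ on $\ZZ$ (commutativity, associativity, and translation-invariance $\min(a+c,b+c)=\min(a,b)+c$ for distributivity) is exactly the intended argument. Nothing is missing.
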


Thus $\Trop(q_1,\dots,q_\ell)$ is a semifield,
which we call 
the \emph{tropical semifield} generated by
$q_{1},\dots,q_{\ell}$. 

\begin{remark}
The above terminology differs from the one used in \emph{tropical
  geometry} by what is essentially a notational convention:
replacing Laurent monomials by the corresponding vectors of exponents,
one gets a semifield in which multiplication is the ordinary addition,
and auxiliary addition amounts to taking the minimum.
\end{remark}

The formalism of the tropical semifield and its auxiliary addition
allows us to 
restate the rules of matrix mutation in the following way.

Let~$\tilde B$ be an $m\times n$ extended exchange matrix.
As before, $x_{n+1},\dots,x_m$ are the frozen variables. 
We encode the bottom $(m-n)\times n$ submatrix of~$\tilde B$ 
by the \emph{coefficient tuple} $\yy = (y_1, \dots, y_n)\in\Trop(x_{n+1},\dots,x_m)^n$ defined by
\begin{equation}
\label{eq:yj}
y_j = \prod_{i=n+1}^m x_i^{b_{ij}} \quad (j \in \{1,\dots,n\}) \ .
\end{equation}
Thus the matrix $\tilde B$ contains the same information as its top $n\times n $
submatrix~$B$ together with the coefficient tuple~$\yy$.

\begin{proposition}
\label{pr:matrix-mutation-trop}
Let $\tilde B=(b_{ij})$ and $\tilde B'$
be two extended skew-symmetri\-zable matrices related by a
mutation~$\mu_k$, 
and let $\yy=(y_1,\dots,y_n)$ and  $\yy'=(y'_1,\dots,y'_n)$ 
be the corresponding coefficient tuples (cf.~\eqref{eq:yj}). 
Then 
\begin{equation}
\label{eq:y-mutation-trop}
y'_j =
\begin{cases}
y_k^{-1} & \text{if $j = k$;}\\[.05in]
y_j (y_k \oplus 1)^{- b_{kj}} & \text{if $j \neq k$ and $b_{kj}\le
  0$;}\\[.05in]
y_j (y_k^{-1} \oplus 1)^{- b_{kj}} & \text{if $j \neq k$ and $b_{kj}\ge
  0$.}
\end{cases}
\end{equation}
\end{proposition}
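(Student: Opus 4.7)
The plan is to verify the formula \eqref{eq:y-mutation-trop} by direct computation from the definitions, handling the three cases ($j=k$; $j\neq k$, $b_{kj}\le 0$; $j\neq k$, $b_{kj}\ge 0$) separately. The main observation to keep at hand is the translation between $[\,\cdot\,]_+$ and $\oplus$: for any integer~$a$, the Laurent monomial $x^{[a]_+}$ can be read off from $\min(a,0)=-[-a]_+$, so that for a Laurent monomial $y=\prod_i x_i^{a_i}$ we have $y\oplus 1=\prod_i x_i^{\min(a_i,0)}=\prod_i x_i^{-[-a_i]_+}$, and similarly $y^{-1}\oplus 1=\prod_i x_i^{-[a_i]_+}$. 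This simple dictionary is what converts the piecewise-linear matrix mutation rule into the multiplicative rule \eqref{eq:y-mutation-trop}.

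First I would dispose of $j=k$: since $b'_{ik}=-b_{ik}$ for every~$i$, the definition $y'_k=\prod_{i>n}x_i^{b'_{ik}}$ immediately gives $y'_k=y_k^{-1}$. For $j\neq k$, rather than splitting into the four sign combinations of $b_{ik}$ and $b_{kj}$, I would invoke the compact form \eqref{eq:matrix-mutation-restated} of matrix mutation. Choosing $\varepsilon=+1$ when $b_{kj}\le 0$ kills the last summand (because $[b_{kj}]_+=0$) and leaves
\[
b'_{ij}=b_{ij}+[-b_{ik}]_+\, b_{kj}\qquad (i>n).
\]
Multiplying out, $y'_j=y_j\prod_{i>n}x_i^{[-b_{ik}]_+\,b_{kj}}$. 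On the other hand, by the dictionary above, $y_k\oplus 1=\prod_{i>n}x_i^{-[-b_{ik}]_+}$, so raising to the non-negative power $-b_{kj}$ gives $(y_k\oplus 1)^{-b_{kj}}=\prod_{i>n}x_i^{[-b_{ik}]_+\,b_{kj}}$, matching $y'_j/y_j$ on the nose. The case $b_{kj}\ge 0$ is symmetric: taking $\varepsilon=-1$ in \eqref{eq:matrix-mutation-restated} yields $b'_{ij}=b_{ij}+[b_{ik}]_+\,b_{kj}$, and one checks likewise that $(y_k^{-1}\oplus 1)^{-b_{kj}}=\prod_{i>n}x_i^{[b_{ik}]_+\,b_{kj}}$, again giving $y'_j=y_j(y_k^{-1}\oplus 1)^{-b_{kj}}$.

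There is no real obstacle here beyond sign bookkeeping; the statement is essentially a tropical shadow of Theorem~\ref{th:y-hat}. Indeed, the monomials $\hat y_j$ of \eqref{eq:yhat-yhat'} factor as a ``mutable part'' in the $x_1,\dots,x_n$ times a ``frozen part'' which is exactly the coefficient $y_j$ of~\eqref{eq:yj}; passing from ordinary addition in \eqref{eq:y-hat-mutation} to tropical addition amounts to discarding the mutable factor and keeping only the dominant monomial in the frozen variables. I would mention this parallel as a concluding remark, since it both explains why the formula has the shape it does and why the verification is the same combinatorics dressed in different notation.
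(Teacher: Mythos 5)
Your computation is correct, and the sign bookkeeping goes through exactly as you say: the identity $y\oplus 1=\prod_i x_i^{-[-a_i]_+}$ for $y=\prod_i x_i^{a_i}$, combined with the $\varepsilon$-form \eqref{eq:matrix-mutation-restated} of matrix mutation (choosing $\varepsilon=\sgn(b_{kj})$ so that one of the two correction terms vanishes), reduces each case to matching exponents of the frozen variables. This is, however, not the route the paper takes. The paper explicitly remarks that the proposition ``can be proved by translating the rules of matrix mutation into the language of the tropical semifield'' --- which is precisely your argument --- and then deliberately presents a different proof: it introduces the universal semifield $\Qsf(x_1,\dots,x_m)$ and the semifield homomorphism $f$ sending each mutable $x_i$ to $1$ and each frozen $x_i$ to itself, observes that $f(x_k')=1$ because the two monomials in the exchange relation are coprime (so their tropical sum is $1$), concludes that $f(\hat\yy)=\yy$ and $f(\hat\yy')=\yy'$, and obtains \eqref{eq:y-mutation-trop} by applying $f$ to \eqref{eq:y-hat-mutation}. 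Your closing remark about the tropical shadow of Theorem~\ref{th:y-hat} is in fact the skeleton of that official proof, made rigorous by the homomorphism rather than by the informal phrase ``keeping the dominant monomial.'' The trade-off: your direct verification is shorter and entirely self-contained, while the paper's argument explains structurally \emph{why} the tropical rule mirrors \eqref{eq:y-hat-mutation} and introduces the universal semifield and specialization homomorphisms, which are reused later in the development. Both are complete proofs.
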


Comparing \eqref{eq:y-mutation-trop} with~\eqref{eq:y-mutation},
we can informally say that the coefficient tuple~$\yy$ undergoes a
``tropical $Y$-seed mutation'' at~$k$. 

Proposition~\ref{pr:matrix-mutation-trop} can be proved by
translating the rules of matrix mutation into the
language of the tropical semifield.
We outline a different 
proof which explains the connection 
between the formulas
\eqref{eq:y-hat-mutation}--\eqref{eq:y-mutation} and~\eqref{eq:y-mutation-trop},
and introduces some notions that will be useful in the sequel. 

\begin{definition}
Let $\Qsf(x_1,\dots,x_m)$ denote the set of nonzero rational functions in 
$x_1,\dots,x_m$ which can be written as subtraction-free rational
expressions in these variables, with positive rational coefficients. 
Thus, each element of $\Qsf(x_1,\dots,x_m)$ 
can be written in the form 
$\frac{P(x_1,\dots,x_m)}{Q(x_1,\dots,x_m)}$, where $P$ and $Q$ are 
polynomials with positive coefficients.
The set $\Qsf(x_1,\dots,x_m)$ is a semifield with respect to the
ordinary operations of addition and multiplication. 
We call it the \emph{universal semifield} generated by
$x_1,\dots,x_m$. 
\end{definition}

This terminology is justified by the following easy lemma,
whose proof we omit (see~\cite[Lemma~2.1.6]{bfz-tp}).
Informally speaking, this lemma says that the generators 
$x_1,\dots,x_m$ of the semifield $\Qsf(x_1,\dots,x_m)$ do not 
satisfy any relations, save for those which are implied by the axioms 
of a semifield.

\begin{lemma} 
\label{lem:universal-semifield}
For any semifield~$\mathcal{S}$, any map 
$f:\{x_1,\dots,x_m\}\to\mathcal{S}$ extends uniquely to a semifield
homomorphism $\Qsf(x_1,\dots,x_m)\to\mathcal{S}$. 
\end{lemma}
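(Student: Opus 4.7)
The plan is to define the extension explicitly on subtraction-free representatives and then verify well-definedness, the homomorphism property, and uniqueness.

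\emph{Uniqueness} is handled first and is straightforward: any semifield homomorphism $\varphi\colon\Qsf(x_1,\dots,x_m)\to\mathcal{S}$ extending~$f$ must satisfy $\varphi(x_i)=f(x_i)$ and must commute with multiplication, addition, and multiplicative inversion. Since every element of $\Qsf(x_1,\dots,x_m)$ is built from the generators $x_1,\dots,x_m$ using these three operations, $\varphi$ is forced on the whole semifield. This also tells us what the extension must be.

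For \emph{existence}, given a subtraction-free rational expression $P/Q$ with $P,Q$ polynomials with positive rational coefficients, I would define its image to be $\tilde P\cdot\tilde Q^{-1}\in\mathcal{S}$, where $\tilde P,\tilde Q$ are obtained by replacing each $x_i$ by $f(x_i)$ and interpreting $+$ and $\cdot$ as the two semifield operations in $\mathcal{S}$. Since any polynomial expression with positive rational coefficients evaluates to an element of $\mathcal{S}$ (positive integer coefficients are realized as iterated sums of $1\in\mathcal{S}$, and positive rational coefficients $a/b$ are realized by multiplying by the inverse of the sum of $b$ copies of $1$), both $\tilde P$ and $\tilde Q$ are honest elements of~$\mathcal{S}$ and hence invertible.

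The main obstacle is \emph{well-definedness}: if $P_1/Q_1$ and $P_2/Q_2$ represent the same element of $\Qsf(x_1,\dots,x_m)\subset\CC(x_1,\dots,x_m)$, then $P_1Q_2=P_2Q_1$ as ordinary polynomials. After uniformly clearing the rational denominators appearing in the coefficients, this becomes an equality of polynomials with positive \emph{integer} coefficients, which upon expansion is literally an equality of finite formal sums of monomials with matching positive integer multiplicities. Any such identity is derivable from the commutative semiring axioms (commutativity and associativity of $+$ and $\cdot$, plus distributivity), all of which hold in the semifield $(\mathcal{S},\oplus,\cdot)$. Therefore $\tilde P_1\tilde Q_2=\tilde P_2\tilde Q_1$ in $\mathcal{S}$, so $\tilde P_1\tilde Q_1^{-1}=\tilde P_2\tilde Q_2^{-1}$, showing the image is independent of the chosen representative.

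It remains to check that the resulting map $\varphi$ preserves the semifield operations, which is immediate from the definition once well-definedness is in hand. One caveat worth recording is that in $\mathcal{S}$ the ``integer'' $1\oplus\cdots\oplus 1$ may collapse (e.g. in tropical semifields), so integers need not embed faithfully; this causes no difficulty because well-definedness is established at the level of formal multisets of monomials with positive integer multiplicities, not at the level of the integers themselves.
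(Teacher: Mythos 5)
Your proof is correct. Note that the paper does not actually prove this lemma---it is stated as ``easy'' with the proof omitted and deferred to \cite{bfz-tp}---so there is no in-text argument to compare yours against; what you have written is the standard argument, and it is complete in all essentials. The crux is exactly where you place it: equality of $P_1/Q_1$ and $P_2/Q_2$ in $\Qsf(x_1,\dots,x_m)$ cross-multiplies to an identity $P_1Q_2=P_2Q_1$ of polynomials with positive coefficients, which after clearing denominators becomes a literal coincidence of formal multisets of monomials with positive integer multiplicities, and such a coincidence evaluates identically in any commutative semiring; the possible collapse of the integers in $\mathcal{S}$ (as happens in $\Trop(q_1,\dots,q_\ell)$, where $1\oplus 1=1$) is indeed irrelevant because the comparison is made before evaluation. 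The only two points I would spell out slightly more are (i) that evaluation of polynomials is multiplicative, $\widetilde{AB}=\tilde{A}\,\tilde{B}$, which is distributivity applied to the expansion of the product and is what lets you pass from $P_1Q_2=P_2Q_1$ to $\tilde P_1\tilde Q_2=\tilde P_2\tilde Q_1$ (and likewise yields the homomorphism property at the end), and (ii) that in the uniqueness step the positive rational constants are themselves forced, since $\varphi(1)=1_{\mathcal{S}}$ and hence $\varphi$ is determined on $\QQ_{>0}$ by iterated auxiliary addition and inversion. Both are routine and entirely in the spirit of what you wrote.
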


\begin{proof}[Proof of Proposition~\ref{pr:matrix-mutation-trop}]
Let $\tilde\xx=(x_1,\dots,x_m)$ be a collection of indeterminates. 
Define the semifield homomorphism
\[
f: \Qsf(x_1,\dots,x_m)\to\Trop(x_{n+1},\dots,x_m)
\]
by setting (cf.\ Lemma~\ref{lem:universal-semifield}) 
\[
f(x_i)=
\begin{cases}
1 & \text{if $i\le n$;}\\
x_i & \text{if $i>n$.}
\end{cases}
\]
Applying mutation at~$k$ to the seed $(\tilde\xx,\tilde B)$, 
we get a new seed $(\tilde\xx',\tilde B')$,
in which the only new cluster variable $x_k'$ satisfies an exchange
relation 
of the form $x_k x_k'=M_1+M_2$. 
The two monomials $M_1$ and~$M_2$ are coprime, and in particular do not
share a frozen variable~$x_i$. 
Applying the semifield homomorphism~$f$, we obtain
$1\cdot f(x_k')=f(M_1)\oplus f(M_2)=1$, so $f(x_k')=1$. 

Now let $\hat \yy$ and $\hat \yy'$ 
(resp., $\yy$ and~$\yy'$) 
be defined 
by~\eqref{eq:yhat-yhat'} (resp.,~\eqref{eq:yj}). 
Since all cluster
variables in $\tilde\xx$ and~$\tilde\xx'$ are sent to~1 by~$f$, 
we conclude that $f(\hat \yy)=\yy$ and $f(\hat \yy')=\yy'$. 
Therefore applying $f$ to~\eqref{eq:y-hat-mutation}
yields~\eqref{eq:y-mutation-trop}. 
\end{proof}

Let $(\tilde \xx,\tilde B)$ be a labeled seed as before.
Since the extended exchange matrix~$\tilde B$ contains the same information 
as the exchange matrix $B$ together with the coefficient tuple~$\yy$
defined by~\eqref{eq:yj}, we can identify the seed $(\tilde \xx,\tilde B)$
with the triple $(\xx, \yy, B)$.  Abusing notation, we will 
also refer to such triples as (labeled) seeds: 

\begin{definition}
\label{def:seed-triple}
Let $\FFcal$ be a field of rational functions (say over~$\CC$) 
in some $m$ variables which include the \emph{frozen variables}
$x_{n+1},\dots,x_m$. 
A labeled \emph{seed} (of geometric type)
of rank~$n$ 
is a triple
$\Sigma = (\xx, \yy, B)$ consisting of 
\begin{itemize}[leftmargin=.15in]
\item
a \emph{cluster} $\xx$, an $n$-tuple of elements of~$\FFcal$
such that the \emph{extended cluster}
$\xx\cup\{x_{n+1},\dots,x_m\}$ freely generates~$\FFcal$;
\item
an \emph{exchange matrix}~$B$, a skew-symmetrizable integer matrix;
\item
a \emph{coefficient tuple} $\yy$, an $n$-tuple of Laurent monomials in
the tropical semifield $\Trop(x_{n+1},\dots,x_m)$. 
\end{itemize}
\end{definition}

We can now restate the rules of seed mutation in this language. 

\pagebreak[3]

\begin{proposition}
\label{pr:xyB-mutation}
Let $(\xx, \yy, B)$, with $B=(b_{ij})$ and $\yy=(y_1,\dots,y_n)$, 
and \linebreak[3]
$(\xx', \yy', B')$, with 
$\yy'=(y'_1,\dots,y'_n)$, 
be  
two labeled seeds 
related by a mutation~$\mu_k$. 
Then $(\xx', \yy', B')$ is obtained from $(\xx, \yy, B)$ as follows: 
\begin{itemize}[leftmargin=.15in]
\item
$B'=\mu_k(B)$;
\item
$\yy'$ is given by the ``tropical $Y$-seed mutation rule''~\eqref{eq:y-mutation-trop}; 
\item
$\xx'=(\xx-\{x_k\}) \cup\{x'_k\}$, where $x'_k$ is defined by
the exchange relation 
\begin{equation}
\label{eq:exchange-rel-xx}
x_k \, x_k'= \frac{y_k}{y_k \oplus 1}
\prod_{b_{ik}>0} x_i^{b_{ik}}
+ 
\frac{1}{y_k \oplus 1}
\prod_{b_{ik}<0} x_i^{-b_{ik}} \,; 
\end{equation}
\end{itemize}
\end{proposition}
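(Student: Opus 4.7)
The plan is to show that Proposition~\ref{pr:xyB-mutation} is really just a re-encoding of Definition~\ref{def:seed-mutation} under the dictionary that identifies the extended exchange matrix $\tilde B$ with the pair $(B,\yy)$ via~\eqref{eq:yj}. The three bullets then correspond respectively to (i)~the top $n\times n$ block of matrix mutation, (ii)~the already proved Proposition~\ref{pr:matrix-mutation-trop}, and (iii)~a direct rewrite of the exchange relation~\eqref{eq:exch-rel-geom} using elementary manipulations in the tropical semifield.

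First I would dispense with the first two bullets quickly. The formula \eqref{eq:matrix-mutation} depends only on the entries $b_{ij}$ with $i,j\in\{1,\dots,n\}$ (for $i\le n$, $j\le n$, $i,j\ne k$) and on $b_{ik}$, $b_{kj}$, so the top $n\times n$ submatrix of $\mu_k(\tilde B)$ is $\mu_k(B)$; this gives $B'=\mu_k(B)$. For the transformation of~$\yy$, I would simply invoke Proposition~\ref{pr:matrix-mutation-trop}, which has already been established.

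The heart of the argument is the third bullet. Starting from the exchange relation~\eqref{eq:exch-rel-geom}, which uses the \emph{extended} matrix~$\tilde B$, I would split each of the two products according to whether the index $i$ corresponds to a cluster variable ($i\le n$) or a frozen variable ($i>n$):
\[
x_k x'_k
= \Biggl(\prod_{\substack{i\le n\\ b_{ik}>0}} x_i^{b_{ik}}\Biggr)\Biggl(\prod_{\substack{i> n\\ b_{ik}>0}} x_i^{b_{ik}}\Biggr)
+ \Biggl(\prod_{\substack{i\le n\\ b_{ik}<0}} x_i^{-b_{ik}}\Biggr)\Biggl(\prod_{\substack{i> n\\ b_{ik}<0}} x_i^{-b_{ik}}\Biggr).
\]
The remaining step is the key tropical identity: writing $y_k=\prod_{i>n}x_i^{b_{ik}}\in\Trop(x_{n+1},\dots,x_m)$ and using the definition~\eqref{eq:tropical-addition} of~$\oplus$, one has $y_k\oplus 1=\prod_{i>n}x_i^{\min(b_{ik},0)}$, and hence
\[
\frac{1}{y_k\oplus 1}=\prod_{\substack{i>n\\ b_{ik}<0}} x_i^{-b_{ik}},
\qquad
\frac{y_k}{y_k\oplus 1}=\prod_{\substack{i>n\\ b_{ik}>0}} x_i^{b_{ik}}.
\]
Substituting these expressions into the two halves of the displayed equation yields exactly~\eqref{eq:exchange-rel-xx} with the products now running only over $i\in\{1,\dots,n\}$.

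There is no real obstacle here; everything reduces to the tropical identity above and the observation that the min-plus arithmetic of~$\Trop$ precisely segregates the positive and negative exponents of~$y_k$. The only thing to be careful about is the convention for the range of the index~$i$ in \eqref{eq:exchange-rel-xx}: once one fixes it to mean $i\in\{1,\dots,n\}$ (as is forced by the fact that $(\xx,\yy,B)$ retains only the mutable part of the data), the computation goes through verbatim.
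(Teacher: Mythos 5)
Your proposal is correct and matches the paper's own argument: the paper likewise treats the first two bullets as immediate (the second being Proposition~\ref{pr:matrix-mutation-trop}) and proves only \eqref{eq:exchange-rel-xx} by observing that it is a rewriting of \eqref{eq:exch-rel-geom}. Your explicit tropical identities $\frac{1}{y_k\oplus 1}=\prod_{i>n,\,b_{ik}<0}x_i^{-b_{ik}}$ and $\frac{y_k}{y_k\oplus 1}=\prod_{i>n,\,b_{ik}>0}x_i^{b_{ik}}$ are exactly the computation the paper leaves to the reader.
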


\begin{proof}
The only statement requiring proof is~\eqref{eq:exchange-rel-xx},
which can be easily seen to be a rewriting of~\eqref{eq:exch-rel-geom}.
\end{proof}

We will use \cref{eq:exchange-rel-xx} in 
Chapter~\ref{ch:general-cluster-algebras} to define cluster 
algebras over an arbitrary semifield.

\pagebreak[3]

We can now re-define the notion of a labeled seed pattern.

\begin{definition} \label{def:triple-pattern}
A \emph{labeled seed pattern} of rank~$n$ is obtained  
by assigning a triple $\Sigma(t) = (\xx(t), \yy(t), B(t))$
as above to every vertex $t$ in the $n$-regular tree~$\TT_n$,
and requiring that the triples assigned to adjacent vertices of the tree are related by 
the corresponding mutation, as described in Proposition~\ref{pr:xyB-mutation}.
\end{definition} 

The advantage of the latest version of the definition of seed pattern
is that it enables us to perform calculations for arbitrary 
extensions of a given exchange matrix to an extended exchange matrix;
see  
\eqref{eq:yj}
and the surrounding discussion.

\begin{example}
[\emph{Type~$A_2$}]
\label{example:A2-seed-pattern}
Consider the seed pattern of rank~$2$ 
\begin{equation*}
\cdots
\overunder{2}{} \Sigma(0)
\overunder{1}{} \Sigma(1)
\overunder{2}{} \Sigma(2)
\overunder{1}{} \Sigma(3)
\overunder{2}{} \Sigma(4)
\overunder{1}{} \Sigma(5)
\overunder{2}{} \cdots \,
\end{equation*}
formed by the seeds 
$\Sigma(t)=(\xx(t),\yy(t),B(t))$, for $t\in\TT_2\cong\ZZ$,
with the exchange matrices
\begin{equation}
\label{eq:exchange-matrices-A2}
B(t)=(-1)^t\,\begin{bmatrix}
 0 & 1\\
-1 & 0
\end{bmatrix}
\end{equation}
(cf.\ Examples~\ref{eg:A2-6seeds} and~\ref{example:A2-Y-pattern}).
Note that we do not specify the bottom part of the initial exchange
matrix, nor even the number of frozen variables. 
Still, we can express all the seeds in terms of the initial one using
the language of the tropical semifield,
and following the recipe formulated in Proposition~\ref{pr:xyB-mutation}. 
The~results of the 
computation 
are shown in Figure~\ref{fig:seeds-A2}.
\end{example}

\begin{figure}[ht]
{\small
\begin{equation*}
\begin{array}{|c|cc|cc|}
\hline
&&&&\\[-4mm]
t & \multicolumn{2}{c}{\yy(t)} & \multicolumn{2}{c}{\xx(t)}  \\[0mm]
\hline
&&&&\\[-3mm]
0 & 
y_1&y_2&x_1& x_2
\\[.5mm]
\hline
&&&&\\[-3mm]
1 & 
\dfrac{1}{y_1}&
\dfrac{y_1y_2}{y_1\oplus 1} &
\dfrac{y_1+x_2}{x_1(y_1\oplus 1)}&
x_2 
\\[4.5mm]
\hline
&&&&\\[-3mm]
2 & 
\dfrac{y_2}{y_1y_2\oplus y_1\oplus 1}&
\dfrac{y_1\oplus 1}{y_1 y_2}&
\dfrac{y_1+x_2}{x_1(y_1\oplus 1)}&
\dfrac{x_1y_1y_2+y_1+x_2}{(y_1y_2\oplus y_1\oplus 1)x_1x_2} 
\\[4.5mm]
\hline
&&&&\\[-3mm]
3 & 
\dfrac{y_1y_2\oplus y_1\oplus 1}{y_2}&
\dfrac{1}{y_1(y_2\oplus 1)}&
\dfrac{x_1y_2+1}{x_2(y_2\oplus 1)}&
\dfrac{x_1y_1y_2+y_1+x_2}{(y_1y_2\oplus y_1\oplus 1)x_1x_2} 
\\[4.5mm]
\hline
&&&&\\[-3mm]
4 & 
\dfrac{1}{y_2} & 
y_1(y_2\oplus 1)& 
\dfrac{x_1y_2+1}{x_2(y_2\oplus 1)} &
x_1 
\\[4.5mm]
\hline
&&&&\\[-3mm]
5 & 
y_2 & y_1 & x_2&x_1
\\[.5mm]
\hline
\end{array}
\end{equation*}
}
	\caption{Seeds in type~$A_2$. 
Exchange matrices are given by~\eqref{eq:exchange-matrices-A2}. 
The initial cluster is $\xx_\circ\!=\!(x_1,x_2)$;  
the initial coefficient tuple is $\yy_\circ\!=\!(y_1,y_2)$. 
The formulas for the coefficient tuples~$\yy(t)$ are 
the tropical versions of the formulas in Figure~\ref{fig:y-seeds-A2}. 
Note that the labeled seed $\Sigma(5)$ is obtained from $\Sigma(0)$
by interchanging the indices~$1$ and $2$; the sequence then continues
by obvious periodicity, so that~$\Sigma(10)$ is identical to~$\Sigma(0)$,~etc.}
\label{fig:seeds-A2}
\end{figure}


\bibliographystyle{acm}
\bibliography{bibliography}
\label{sec:biblio}


\end{document}